\documentclass[10pt,reqno]{amsart}

\usepackage{mathrsfs}
\usepackage{amsmath,amsthm, amscd}
\usepackage[psamsfonts]{amssymb}
\usepackage[all]{xy}
\usepackage{wrapfig}

\usepackage{tikz}
\usepackage{tikz-cd}
\usepackage{booktabs}
\usepackage{pinlabel}
\usepackage{ytableau}
\usepackage{braket}

\usetikzlibrary{arrows.meta, angles, quotes, calc, decorations.pathreplacing,calligraphy,intersections}
\usetikzlibrary{positioning, decorations.markings,patterns}
\usepackage{comment}
\usepackage{tikz-3dplot}
\usepackage[normalem]{ulem} % ulemは通常\emphを波線に変えてしまうため、[normalem]をつけて元の強調(\emph)を維持しつつ下線機能だけ使うのが安全です
\usepackage[absolute,overlay]{textpos}
\usepackage[legacycolonsymbols]{mathtools}
\usepackage{subcaption}
\usepackage{thmtools} % 定理環境をA,B,Cにするため
\usepackage{extpfeil} %xtwoheadrightarrowを使うため

\usepackage[backend=biber, style=alphabetic]{biblatex}
\usepackage{config}
\title[Rational homotopy of long embeddings]{Infinite-dimensionality of the rational homotopy groups of the space of long embeddings of codimension 2}
\author{Daiki Irikura}
\address{Department of Mathematics, Kyoto University, Kyoto 606-8502, Japan}
\email{irikura.daiki.45m(at)st.kyoto-u.ac.jp}
\date{\today}
\newsavebox{\equationchordsII}
\newsavebox{\MyRibbonTypeI}
\newsavebox{\MyRibbonTypeII}
\newsavebox{\MyRibbonTypeIPerturbed}

\definecolor{ribbonfill}{gray}{0.85} % 薄いグレー
\begin{document}

%inputfiles==========================

% =============================================
% 1. 箱の名前を定義
% =============================================
% \newsavebox{\equationchordsII}

% =============================================
% 2. 図の中身をボックスに保存（等倍で作成）
% =============================================
\savebox{\equationchordsII}{%
% =============================================
% 整数 k の設定 (k >= 4 推奨)
\def\numK{4} 
% =============================================

\begin{tikzpicture}[
    % --- スタイル定義 (維持) ---
    vertex/.style={circle, fill=black, inner sep=2pt},
    Wvertex/.style={circle, draw=black, thick, fill=white, inner sep=2pt},
    mid arrow/.style={postaction={decorate,decoration={
        markings,
        mark=at position 0.55 with {\arrow{Latex[length=2.5mm, width=1.8mm]}}
    }}},
    solid edge/.style={thick, draw=black},
    dashed edge/.style={thick, dashed, draw=black}
]

    % 図全体の縮尺
    \begin{scope}[scale=0.8]

    % =========================================================
    % 1. 上段左 (LHS): 全て破線, p0のみ黒
    % =========================================================
    \begin{scope}[shift={(0, 0)}]
        % 座標定義
        \coordinate (p0) at (-1, 1.5);
        \coordinate (p1) at (-1, 0);
        \draw[dashed edge, mid arrow] (p0) -- (p1);

        % p0 の上の縦線
        % \draw[dashed edge] (p1) -- ++(0, 1.5);

        % ループ: p1 -> ... -> pk
        \foreach \i in {2,...,\numK} {
            \pgfmathsetmacro{\x}{-\i}
            \coordinate (p\i) at (\x, 0);
            
            % 前の点からのエッジ (p_{i-1} <- p_i)
            \pgfmathsetmacro{\prev}{int(\i-1)}
            \draw[dashed edge, mid arrow] (p\i) -- (p\prev);
            
            % 縦の破線
            \draw[dashed edge] (p\i) -- ++(0, 1.5);
        }
        % pk から左への破線
        \draw[dashed edge] (p\numK) -- ++(-1, 0);

         % p0 から右への破線 (外へ向かう線)
        \draw[dashed edge] (p1) -- ++(1,0);

        % ノード描画
        \node[vertex, label=above:\Large $p_0$] at (p0) {};
        \foreach \i in {1,...,\numK} {
            \node[Wvertex, label=below:\Large $p_\i$] at (p\i) {};
        }
    \end{scope}

    % =========================================================
    % 等号 (=)
    % =========================================================
    \node at (1.5, 0.75) {\Huge $=$};

    % =========================================================
    % 2. 上段右 (RHS1): p1->p0 実線, p0,p1黒
    % =========================================================
    \begin{scope}[shift={(\numK + 3.5, 0)}]
        \coordinate (p0) at (0, 0);
        
        % 右への線
        \draw[dashed edge] (p0) -- ++(1, 0);
        % 縦線
        \draw[dashed edge] (p0) -- ++(0, 1.5);

        % p1 (位置は -1,0)
        \coordinate (p1) at (-1, 0);
        % p1->p0 は実線 (solid edge)
        \draw[solid edge, mid arrow] (p1) -- (p0);
        \draw[dashed edge] (p1) -- ++(0, 1.5);

        % p2 -> ... -> pk (破線)
        \foreach \i in {2,...,\numK} {
            \pgfmathsetmacro{\x}{-\i}
            \coordinate (p\i) at (\x, 0);
            \pgfmathsetmacro{\prev}{int(\i-1)}
            \draw[dashed edge, mid arrow] (p\i) -- (p\prev);
            \draw[dashed edge] (p\i) -- ++(0, 1.5);
        }
        \draw[dashed edge] (p\numK) -- ++(-1, 0);

        % ノード: p0, p1 は黒(vertex), それ以外は白(Wvertex)
        \node[vertex, label=below:\Large $p_0$] at (p0) {};
        \node[vertex, label=below:\Large $p_1$] at (p1) {};
        \foreach \i in {2,...,\numK} {
            \node[Wvertex, label=below:\Large $p_\i$] at (p\i) {};
        }
    \end{scope}

    % =========================================================
    % 2行目への移動 (等号)
    % =========================================================
    \node at (1.5, -3.0) {\Huge $=$};

    % =========================================================
    % 3. 下段左 (RHS2): p2->p1->p0 実線, p0,p1,p2黒
    % =========================================================
    \begin{scope}[shift={(\numK + 3.5, -3.75)}]
        \coordinate (p0) at (0, 0);
        \draw[dashed edge] (p0) -- ++(1, 0);
        \draw[dashed edge] (p0) -- ++(0, 1.5);

        \coordinate (p1) at (-1, 0);
        \draw[solid edge, mid arrow] (p1) -- (p0);
        \draw[dashed edge] (p1) -- ++(0, 1.5);

        \coordinate (p2) at (-2, 0);
        \draw[solid edge, mid arrow] (p2) -- (p1); % p2->p1 実線
        % \draw[dashed edge] (p2) -- ++(0, 1.5);

        % p3 -> ... -> pk
        \foreach \i in {3,...,\numK} {
            \pgfmathsetmacro{\x}{-\i}
            \coordinate (p\i) at (\x, 0);
            \pgfmathsetmacro{\prev}{int(\i-1)}
            \draw[dashed edge, mid arrow] (p\i) -- (p\prev);
            \draw[dashed edge] (p\i) -- ++(0, 1.5);
        }
        \draw[dashed edge] (p\numK) -- ++(-1, 0);

        % ノード: p0, p1, p2 黒
        \node[vertex, label=below:\Large $p_0$] at (p0) {};
        \node[vertex, label=below:\Large $p_1$] at (p1) {};
        \node[vertex, label=below:\Large $p_2$] at (p2) {};
        
        \foreach \i in {3,...,\numK} {
            \node[Wvertex, label=below:\Large $p_\i$] at (p\i) {};
        }
    \end{scope}

    % =========================================================
    % プラス (+)
    % =========================================================
    % 左図の幅を考慮して配置 (k=4の場合、幅は約5)
    \node at (\numK + 5.0, -3.0) {\Huge $+$};

    % =========================================================
    % 4. 下段右 (RHS3): p1->p0実線, p2->p0実線(曲線), p0,p1,p2黒
    % =========================================================
    \begin{scope}[shift={(2*\numK + 7.5, -3.75)}]
        \coordinate (p0) at (0, 0);
        \draw[dashed edge] (p0) -- ++(1, 0);
        \draw[dashed edge] (p0) -- ++(0, 1.5);

        \coordinate (p1) at (-1, 0);
        \draw[dashed edge] (p1) -- ++(0, 1.5);

        \coordinate (p2) at (-2, 0);
        \draw[solid edge, mid arrow] (p2) -- (p1);

        % p2 -> p0 への曲線 (p1をスキップ)
        \draw[solid edge, mid arrow] (p2) to [out=-60, in=-120, looseness=0.8] (p0);
        % \draw[dashed edge] (p2) -- ++(0, 1.5);
        
        % p2 -> p1 の間の破線 (つながっていないことを明示する場合)
        % 元のコードには描画がなかったので省略、または破線でつなぐなら以下:
        % \draw[dashed edge, mid arrow] (p2) -- (p1);

        % p3 -> ... -> pk
        \foreach \i in {3,...,\numK} {
            \pgfmathsetmacro{\x}{-\i}
            \coordinate (p\i) at (\x, 0);
            \pgfmathsetmacro{\prev}{int(\i-1)}
            \draw[dashed edge, mid arrow] (p\i) -- (p\prev);
            \draw[dashed edge] (p\i) -- ++(0, 1.5);
        }
        \draw[dashed edge] (p\numK) -- ++(-1, 0);

        % ノード: p0, p1, p2 黒
        \node[vertex, label=below:\Large $p_0$] at (p0) {};
        \node[vertex, label=below:\Large $p_1$] at (p1) {};
        \node[vertex, label=below:\Large $p_2$] at (p2) {};
        
        \foreach \i in {3,...,\numK} {
            \node[Wvertex, label=below:\Large $p_\i$] at (p\i) {};
        }
    \end{scope}

    % =========================================================
    % 2行目への移動 (等号)
    % =========================================================
    \node at (1.5, -6.75) {\Huge $=$};

    % =========================================================
    % 3. 下段左 (RHS2): p2->p1->p0 実線, p0,p1,p2黒
    % =========================================================
    \begin{scope}[shift={(\numK + 3.5, -7.5)}]
        \coordinate (p0) at (0, 0);
        \draw[dashed edge] (p0) -- ++(1, 0);
        % \draw[dashed edge] (p0) -- ++(0, 1.5);

        \coordinate (p1) at (-1, 0);
        \draw[solid edge, mid arrow] (p1) -- (p0);
        \draw[dashed edge] (p1) -- ++(0, 1.5);

        \coordinate (p2) at (-2, 0);
        \draw[solid edge, mid arrow] (p2) -- (p1); % p2->p1 実線
        \draw[dashed edge] (p2) -- ++(0, 1.5);

        \coordinate (p3) at (-3, 0);
        \draw[solid edge, mid arrow] (p3) -- (p2); % p2->p1 実線
        % \draw[dashed edge] (p3) -- ++(0, 1.5);

        % \coordinate (p4) at (-4, 0);
        % \draw[solid edge, mid arrow] (p4) -- (p3); % p2->p1 実線
        % \draw[dashed edge] (p3) -- ++(0, 1.5);

        % p3 -> ... -> pk
        \foreach \i in {4,...,\numK} {
            \pgfmathsetmacro{\x}{-\i}
            \coordinate (p\i) at (\x, 0);
            \pgfmathsetmacro{\prev}{int(\i-1)}
            \draw[dashed edge, mid arrow] (p\i) -- (p\prev);
            \draw[dashed edge] (p\i) -- ++(0, 1.5);
        }
        \draw[dashed edge] (p\numK) -- ++(-1, 0);

        % ノード: p0, p1, p2 黒
        \node[vertex, label=below:\Large $p_0$] at (p0) {};
        \node[vertex, label=below:\Large $p_1$] at (p1) {};
        \node[vertex, label=below:\Large $p_2$] at (p2) {};
        \node[vertex, label=below:\Large $p_3$] at (p3) {};

        \foreach \i in {4,...,\numK} {
            \node[Wvertex, label=below:\Large $p_\i$] at (p\i) {};
        }
    \end{scope}

    % =========================================================
    % プラス (+)
    % =========================================================
    % 左図の幅を考慮して配置 (k=4の場合、幅は約5)
    \node at (\numK + 5.0, -6.75) {\Huge $+$};

    % =========================================================
    % 4. 下段右 (RHS3): p1->p0実線, p2->p0実線(曲線), p0,p1,p2黒
    % =========================================================
    \begin{scope}[shift={(2*\numK + 7.5, -7.5)}]
        \coordinate (p0) at (0, 0);
        \draw[dashed edge] (p0) -- ++(1, 0);
        \draw[dashed edge] (p0) -- ++(0, 1.5);

        \coordinate (p1) at (-1, 0);
        \draw[solid edge, mid arrow] (p1) -- (p0);
        \draw[dashed edge] (p1) -- ++(0, 1.5);
        
        \coordinate (p2) at (-2, 0);
        % \draw[solid edge, mid arrow] (p2) -- (p1); % p2->p1 実線
        \draw[dashed edge] (p2) -- ++(0, 1.5);

        \coordinate (p3) at (-3, 0);
        \draw[solid edge, mid arrow] (p3) -- (p2); % p2->p1 実線
        % \draw[dashed edge] (p3) -- ++(0, 1.5);

        %p2 -> p0 への曲線 (p1をスキップ)
        \draw[solid edge, mid arrow] (p3) to [out=-60, in=-120, looseness=0.8] (p1);
        % \draw[dashed edge] (p2) -- ++(0, 1.5);
        
        % p2 -> p1 の間の破線 (つながっていないことを明示する場合)
        % 元のコードには描画がなかったので省略、または破線でつなぐなら以下:
        % \draw[dashed edge, mid arrow] (p2) -- (p1);

        % p3 -> ... -> pk
        \foreach \i in {4,...,\numK} {
            \pgfmathsetmacro{\x}{-\i}
            \coordinate (p\i) at (\x, 0);
            \pgfmathsetmacro{\prev}{int(\i-1)}
            \draw[dashed edge, mid arrow] (p\i) -- (p\prev);
            \draw[dashed edge] (p\i) -- ++(0, 1.5);
        }
        \draw[dashed edge] (p\numK) -- ++(-1, 0);

        % ノード: p0, p1, p2 黒
        \node[vertex, label=below:\Large $p_0$] at (p0) {};
        \node[vertex, label=below:\Large $p_1$] at (p1) {};
        \node[vertex, label=below:\Large $p_2$] at (p2) {};
        \node[vertex, label=below:\Large $p_3$] at (p3) {};

        \foreach \i in {4,...,\numK} {
            \node[Wvertex, label=below:\Large $p_\i$] at (p\i) {};
        }
    \end{scope}
    % =========================================================
    % プラス (+)
    % =========================================================
    % 左図の幅を考慮して配置 (k=4の場合、幅は約5)
    \node at (2*\numK  + 10, -6.75) {\Huge $+$};

    % =========================================================
    % 4. 下段右 (RHS3): p1->p0実線, p2->p0実線(曲線), p0,p1,p2黒
    % =========================================================
    \begin{scope}[shift={(3*\numK + 12.5, -7.5)}]
       \coordinate (p0) at (0, 0);
        \draw[dashed edge] (p0) -- ++(1, 0);
        \draw[dashed edge] (p0) -- ++(0, 1.5);

        \coordinate (p1) at (-1, 0);
        \draw[dashed edge] (p1) -- ++(0, 1.5);
        
        \coordinate (p2) at (-2, 0);
        \draw[solid edge, mid arrow] (p2) -- (p1);
        \draw[dashed edge] (p2) -- ++(0, 1.5);

        \coordinate (p3) at (-3, 0);
        \draw[solid edge, mid arrow] (p3) -- (p2); % p2->p1 実線
        % \draw[dashed edge] (p3) -- ++(0, 1.5);

        %p2 -> p0 への曲線 (p1をスキップ)
        \draw[solid edge, mid arrow] (p3) to [out=-60, in=-120, looseness=0.8] (p0);
        % \draw[dashed edge] (p2) -- ++(0, 1.5);
        
        % p2 -> p1 の間の破線 (つながっていないことを明示する場合)
        % 元のコードには描画がなかったので省略、または破線でつなぐなら以下:
        % \draw[dashed edge, mid arrow] (p2) -- (p1);

        % p3 -> ... -> pk
        \foreach \i in {4,...,\numK} {
            \pgfmathsetmacro{\x}{-\i}
            \coordinate (p\i) at (\x, 0);
            \pgfmathsetmacro{\prev}{int(\i-1)}
            \draw[dashed edge, mid arrow] (p\i) -- (p\prev);
            \draw[dashed edge] (p\i) -- ++(0, 1.5);
        }
        \draw[dashed edge] (p\numK) -- ++(-1, 0);

        % ノード: p0, p1, p2 黒
        \node[vertex, label=below:\Large $p_0$] at (p0) {};
        \node[vertex, label=below:\Large $p_1$] at (p1) {};
        \node[vertex, label=below:\Large $p_2$] at (p2) {};
        \node[vertex, label=below:\Large $p_3$] at (p3) {};

        \foreach \i in {4,...,\numK} {
            \node[Wvertex, label=below:\Large $p_\i$] at (p\i) {};
        }
    \end{scope}
    
     % =========================================================
    % プラス (+)
    % =========================================================
    % 左図の幅を考慮して配置 (k=4の場合、幅は約5)
    \node at (3*\numK  + 15, -6.75) {\Huge $+$};

    % =========================================================
    % 4. 下段右 (RHS3): p1->p0実線, p2->p0実線(曲線), p0,p1,p2黒
    % =========================================================
    \begin{scope}[shift={(4*\numK + 17.5, -7.5)}]
       \coordinate (p0) at (0, 0);
        \draw[dashed edge] (p0) -- ++(1, 0);
        \draw[dashed edge] (p0) -- ++(0, 1.5);

        \coordinate (p1) at (-1, 0);
        % \draw[solid edge, mid arrow] (p1) -- (p0);
        \draw[dashed edge] (p1) -- ++(0, 1.5);
        
        \coordinate (p2) at (-2, 0);
        % \draw[solid edge, mid arrow] (p2) -- (p1); % p2->p1 実線
        \draw[dashed edge] (p2) -- ++(0, 1.5);

        \coordinate (p3) at (-3, 0);
        \draw[solid edge, mid arrow] (p3) -- (p2); % p2->p1 実線
        % \draw[dashed edge] (p3) -- ++(0, 1.5);

        %p2 -> p0 への曲線 (p1をスキップ)
        \draw[solid edge, mid arrow] (p2) to [out=-60, in=-120, looseness=0.8] (p0);
        \draw[solid edge, mid arrow] (p3) to [out=-60, in=-120, looseness=0.8] (p1);
        % \draw[dashed edge] (p2) -- ++(0, 1.5);
        
        % p2 -> p1 の間の破線 (つながっていないことを明示する場合)
        % 元のコードには描画がなかったので省略、または破線でつなぐなら以下:
        % \draw[dashed edge, mid arrow] (p2) -- (p1);

        % p3 -> ... -> pk
        \foreach \i in {4,...,\numK} {
            \pgfmathsetmacro{\x}{-\i}
            \coordinate (p\i) at (\x, 0);
            \pgfmathsetmacro{\prev}{int(\i-1)}
            \draw[dashed edge, mid arrow] (p\i) -- (p\prev);
            \draw[dashed edge] (p\i) -- ++(0, 1.5);
        }
        \draw[dashed edge] (p\numK) -- ++(-1, 0);

        % ノード: p0, p1, p2 黒
        \node[vertex, label=below:\Large $p_0$] at (p0) {};
        \node[vertex, label=below:\Large $p_1$] at (p1) {};
        \node[vertex, label=below:\Large $p_2$] at (p2) {};
        \node[vertex, label=below:\Large $p_3$] at (p3) {};

        \foreach \i in {4,...,\numK} {
            \node[Wvertex, label=below:\Large $p_\i$] at (p\i) {};
        }
    \end{scope}

    \end{scope} % End scale scope
\end{tikzpicture}
}

% =============================================
% 2. 図の中身をボックスに保存（等倍で作成）
% =============================================
\savebox{\MyRibbonTypeI}{%

%========本文=========================================================================

\begin{tikzpicture}[scale=1.0, z={(-0.3cm,-0.2cm)}, line join=round, line cap=round]

    % ==========================================
    % === 設定パラメータ ===
    % ==========================================
    \def\LineH{4.0}      % L_j の長さ (片側)
    \def\DiskR{0.3}      % D_{i:k} (円盤) の半径
    \def\SphereR{0.55}   % Q_r (球) の半径
    
    % --- 既存のリボン設定 ---
    \def\AngSt{-75}      % リボン1: 右側の接続角度
    \def\AngEn{-105}     % リボン1: 左側の接続角度

    % --- 新しいディスクとリボンの設定 ---
    \def\DiskTwoZ{1.4}      
    \def\RibTwoRootX{1.2}   % リボン2: 長方形側の右端のX座標
    \def\RibTwoWidth{0.15}  % リボン2の幅
    \def\RibTwoAngSt{-45}   % リボン2: ディスク側の開始角度 (右)
    \def\RibTwoAngEn{-85}   % リボン2: ディスク側の終了角度 (左)

    % --- 座標・ボックス設定 ---
    \def\DiskY{1.0}       % 円盤と直線の中心Y座標
    \def\RootY{-3.1}      % 下の長方形(D_{i:0})の上辺のY座標
    \def\BoxTopY{3.0}     % ボックスの上端Y座標
    
    % D_s のY座標
    \def\DsY{-2.5}
    
    % --- 内部変数の計算 ---
    \def\BoxW{1.6}            
    \def\BoxBot{\RootY - 0.3}
    \def\BoxZ{\LineH}         
    \def\DosZ{0.5*\BoxZ} % D_{0,s} のZ座標 (約2.0)

    % ==========================================
    % === 描画開始 ===
    % ==========================================

    % --- 1. Background (Box Back/Left) ---
    % 奥側の枠線（破線）
    \draw[dashed, gray!60] (\BoxW, \BoxTopY, -\BoxZ) -- (-\BoxW, \BoxTopY, -\BoxZ) -- (-\BoxW, \BoxBot, -\BoxZ) -- (\BoxW, \BoxBot, -\BoxZ);
    \draw[dashed, gray!60] (-\BoxW, \BoxBot, -\BoxZ) -- (-\BoxW, \BoxBot, \BoxZ);

    % --- 2. Line L_j (Bottom part) ---
    \draw[thick, blue] (0, \DiskY, -\LineH) -- (0, \DiskY, 0);

    % --- 3. Root Rectangle D_{0,p} & D_{0,s} ---
    % D_{0, p} (手前/左のボックス)
    \filldraw[fill=gray!20, draw=black] (-1.5, {\RootY - 0.3}, 0) rectangle (1.5, \RootY, 0);
    \node at (-1.8, {\RootY - 0.15}, 0) {\scriptsize $D_{0}$};

    % D_{0, s} (奥/右のボックス: Z方向にシフト)
    \filldraw[fill=gray!30, draw=black] (-1.5, {\RootY - 0.3}, \DosZ ) rectangle (1.5, \RootY, \DosZ);
    \node at (-1.8, {\RootY - 0.15}, \DosZ) {\scriptsize $D_{0}'$};

    % =======================================================
    % ★ D_s 描画エリア (奥半分 -> リボン -> 手前半分)
    % =======================================================
     % --- 4a. Sphere Q Equator (Back Part: z < 0) ---
    % % 赤いバンドより先に描くことで、バンドの後ろに隠れるようにする
    % \begin{scope}[canvas is xz plane at y=\DsY]
    %     % z < 0 は sin(theta) < 0 なので 180度〜360度
    %     \draw[dashed, orange!80!black] (180:\SphereR) arc (180:360:\SphereR);
    % \end{scope}

    % --- 4a. Disk D_s (奥側半分: z < 0) ---
    \begin{scope}[canvas is zx plane at y=\DsY]
        \fill[green!10, opacity=0.8] (0,0) -- (90:\DiskR) arc (90:270:\DiskR) -- cycle;
        \draw[green!60!black] (90:\DiskR) arc (90:270:\DiskR);
    \end{scope}

    % --- 4b. Band 1 (Center at z=0, from top Disk to D_s) ---
    \begin{scope}[canvas is xy plane at z=0]
        \filldraw[fill=red!3, draw=black, opacity=0.9] 
            ({ \DiskR * cos(\AngSt) }, \RootY)
            -- ({ \DiskR * cos(\AngSt) }, { \DiskY + \DiskR * sin(\AngSt) })
            arc (\AngSt:\AngEn:\DiskR)
            -- ({ \DiskR * cos(\AngEn) }, \RootY)
            -- cycle;
            
        % ★追加: ラベル B_+
        \node[red] at (-0.4, -1.7) {\scriptsize $B_+$};
    \end{scope}
    
    % --- 4c. Intersection Line (BandとD_sの交差線) ---
    \draw[green!60!black, ultra thick] 
        ({ \DiskR * cos(\AngEn) }, \DsY, 0) -- ({ \DiskR * cos(\AngSt) }, \DsY, 0);

    % =======================================================
    % ★追加: D_s から D_{0,s} への新しいリボン (修正版)
    % =======================================================
    % 計算: リボンの開始点(D_s上)
    \pgfmathsetmacro{\RibXsRight}{\DiskR * cos(\AngSt)}
    \pgfmathsetmacro{\RibXsLeft}{\DiskR * cos(\AngEn)}
    % 円の方程式 z = sqrt(r^2 - x^2) でz座標を計算 (D_sの円周上の点)
    \pgfmathsetmacro{\RibZsRight}{sqrt(\DiskR*\DiskR - \RibXsRight*\RibXsRight)}
    \pgfmathsetmacro{\RibZsLeft}{sqrt(\DiskR*\DiskR - \RibXsLeft*\RibXsLeft)}

    % リボンの描画: ベジェ曲線 (controls) を使用して滑らかに曲げる
    % Start(D_s) -> End(D_{0,s})
    % Yは -2.5 -> -3.0 へ下がる
    % Zは RibZs(約0) -> DosZ(2.0) へ奥へ進む
    
    \filldraw[fill=green!5, draw=black, opacity=0.8] 
        % 1. 右上の点 (D_s上)
        (\RibXsRight, \DsY, \RibZsRight) 
        % 2. 右側の曲線 (下へ、奥へ)
        .. controls (\RibXsRight, {\DsY - 0.3}, {\RibZsRight + 0.5}) and (\RibXsRight, {\RootY + 0.3}, {\DosZ - 0.5}) ..
        % 3. 右下の点 (D_{0,s}上)
        (\RibXsRight, \RootY, \DosZ)
        % 4. 下辺 (D_{0,s}上を左へ)
        -- (\RibXsLeft, \RootY, \DosZ)
        % 5. 左側の曲線 (上へ、手前へ)
        .. controls (\RibXsLeft, {\RootY + 0.3}, {\DosZ - 0.5}) and (\RibXsLeft, {\DsY - 0.3}, {\RibZsLeft + 0.5}) ..
        % 6. 左上の点 (D_s上)
        (\RibXsLeft, \DsY, \RibZsLeft)
        % 7. 閉じる
        -- cycle;

    % --- 4d. Disk D_s (手前半分: z > 0) ---
    \begin{scope}[canvas is zx plane at y=\DsY]
        \fill[green!10, opacity=0.6] (0,0) -- (-90:\DiskR) arc (-90:90:\DiskR) -- cycle;
        \draw[green!60!black] (-90:\DiskR) arc (-90:90:\DiskR);
        \node[green!60!black] at (0, {-\DiskR -0.2}) {\scriptsize $D'$}; % ラベル位置調整
    \end{scope}
    
    % =======================================================

    % --- 7. Disk D_+ (Top Red) ---
    \begin{scope}[canvas is xy plane at z=0]
        \filldraw[fill=red!10, draw=red, opacity=0.8] (0, \DiskY) circle (\DiskR);
        \node[red] at ({\DiskR + 0.2}, {\DiskY + 0.2}) {\scriptsize $D_{+}$};
        \fill[red] (0, \DiskY) circle (1.5pt);
    \end{scope}

    % --- 8. Middle Line (Inside Sphere) ---
    \draw[thick, blue] (0, \DiskY, 0) -- (0, \DiskY, \DiskTwoZ);

    % --- 9. Second Disk & Ribbon ---
    % Disk D_{new} (D_-)
    \begin{scope}[canvas is xy plane at z=\DiskTwoZ]
        \filldraw[fill=blue!10, draw=blue, opacity=0.9] (0, \DiskY) circle (\DiskR);
        \node[blue] at (0, {\DiskY}) [xshift=0.5cm, yshift=0.4cm] {\scriptsize $D_{-}$};
        \fill[blue] (0, \DiskY) circle (1.5pt);
    \end{scope}

  % Ribbon B_{new} (Twisted Ribbon with Arc Top)
    \filldraw[fill=blue!3, draw=black, opacity=0.9] 
        % 1. Start at Bottom Right (D_{0,p} side)
        (\RibTwoRootX, \RootY, 0)
        
        % 2. Curve up to Top Left (D_{-} side)
        .. controls (\RibTwoRootX, {\RootY+1.5}, -3.0) and ({ \DiskR * cos(\RibTwoAngEn) }, {\DiskY - 1.5}, {\DiskTwoZ - 1.0}) ..
        ({ \DiskR * cos(\RibTwoAngEn) }, { \DiskY + \DiskR * sin(\RibTwoAngEn) }, \DiskTwoZ)
        
        % 3. Arc at Top
        -- plot[domain=\RibTwoAngEn:\RibTwoAngSt, variable=\t, samples=25] 
        ({ \DiskR * cos(\t) }, { \DiskY + \DiskR * sin(\t) }, \DiskTwoZ)
        
        % 4. Curve down to Bottom Left (D_{0,p} side)
        .. controls ({ \DiskR * cos(\RibTwoAngSt) }, {\DiskY - 1.5}, 1.0) and ({\RibTwoRootX - \RibTwoWidth}, {\RootY+1.5}, 2.0) ..
        ({\RibTwoRootX - \RibTwoWidth}, \RootY, 0) 
        
        % 5. Close loop
        -- cycle;

    % --- 10. Q_r (Sphere) ---
    % \shade[ball color=orange, opacity=0.15] (0, \DsY, 0) circle (\SphereR);
    %  % --- 4d. Sphere Q Equator (Front Part: z > 0) ---
    % \begin{scope}[canvas is xz plane at y=\DsY]
    %     % z > 0 は sin(theta) > 0 なので 0度〜180度
    %     \draw[dashed, orange!80!black] (0:\SphereR) arc (0:180:\SphereR);
    % \end{scope}
    % \node[orange!80!black] at ({-\SphereR-0.1}, {\DsY+0.5}, 0) {\scriptsize $Q$};

    % --- 11. Line L_j (Top) ---
    \draw[thick, blue] (0, \DiskY, \DiskTwoZ) -- (0, \DiskY, \LineH) node[above] {$L$};

    % --- 12. Box 1-Skeleton (Front) ---
    \draw[black] (-\BoxW, \BoxBot, \BoxZ) rectangle (\BoxW, \BoxTopY, \BoxZ); % Front Face
    \draw[black] (-\BoxW, \BoxTopY, -\BoxZ) -- (-\BoxW, \BoxTopY, \BoxZ); % Top Left connection
    \draw[black] (\BoxW, \BoxBot, \BoxZ) -- (\BoxW, \BoxBot, -\BoxZ) -- (\BoxW, \BoxTopY, -\BoxZ) -- (\BoxW, \BoxTopY, \BoxZ); % Right side connection
    \draw[black] (-\BoxW, \BoxTopY, -\BoxZ) -- (\BoxW, \BoxTopY, -\BoxZ); % Top Back

    % --- 13. Coordinate Axes ---
    \coordinate (AxisOrg) at (-\BoxW - 0.8, \BoxBot-0.2, \BoxZ); 
    \draw[->, >=stealth, thick] (AxisOrg) -- ++(1.0, 0, 0) node[right] {\scriptsize $x$};
    \draw[->, >=stealth, thick] (AxisOrg) -- ++(0, 0, -1.6) node[right] {\scriptsize $y$}; 
    \draw[->, >=stealth, thick] (AxisOrg) -- ++(0, 1.0, 0) node[above] {\scriptsize $z$};

\end{tikzpicture}

}

\savebox{\MyRibbonTypeII}{%

\begin{tikzpicture}[scale=1.0, z={(-0.3cm,-0.2cm)}, line join=round, line cap=round]

    % ==========================================
    % === 1. 設定パラメータ ===
    % ==========================================
    \def\LineH{4.0}      % L_j の長さ (片側)
    \def\DiskR{0.5}      % D_{i:k} (円盤) の半径
    
    % --- リボン設定 ---
    \def\AngSt{-75}      % リボン1: 右側の接続角度
    \def\AngEn{-105}     % リボン1: 左側の接続角度

    % --- ディスクとリボン2の設定 ---
    \def\DiskTwoZ{1.4}      
    \def\RibTwoRootX{1.2}    
    \def\RibTwoWidth{0.15}   
    \def\RibTwoAngSt{-45}    
    \def\RibTwoAngEn{-85}    

    % --- 座標・ボックス設定 ---
    \def\DiskY{1.0}        
    \def\RootY{-3.1}       
    \def\BoxTopY{3.0}      
    
    % --- 内部変数の計算 ---
    \def\BoxW{1.6}             
    \def\BoxBot{\RootY - 0.3}
    \def\BoxZ{\LineH}          

    % ShiftX (正の値)
    \pgfmathsetmacro{\ShiftX}{0.5 * \BoxW}

    % ==========================================
    % === 2. 背景 (Background Box) ===
    % ==========================================
    \draw[dashed, gray!60] (\BoxW, \BoxTopY, -\BoxZ) -- (-\BoxW, \BoxTopY, -\BoxZ) -- (-\BoxW, \BoxBot, -\BoxZ) -- (\BoxW, \BoxBot, -\BoxZ);
    \draw[dashed, gray!60] (-\BoxW, \BoxBot, -\BoxZ) -- (-\BoxW, \BoxBot, \BoxZ);

    % ==========================================
    % ★追加: 矢印 (奥側: z < 0)
    % ==========================================
    % D_+ (Left, x=-ShiftX) を中心に、D_+' (Right, x=+ShiftX) から回る
    % 半径 = 距離(Left, Right) = 2 * ShiftX
    % xz平面での角度: 0度=+x, -90度=-z (奥), 180度=-x
    \begin{scope}[canvas is xz plane at y=\RootY]
        \pgfmathsetmacro{\ArrowRadius}{0.75* \ShiftX}
        % 中心を Left Bandの根本 (-ShiftX) に設定
        % スタート地点(角度0)は Right Bandの根本 (+ShiftX) に相当
        % ★修正: dashed を追加
        \draw[very thick, purple, dashed] (-0.25*\ShiftX, 0) ++(0: \ArrowRadius) arc [start angle=0, end angle=-180, radius=\ArrowRadius];
    \end{scope}

    % ==========================================
    % === 3. 下部の直線 ===
    % ==========================================
    \draw[thick, blue] (-\ShiftX, \DiskY, -\LineH) -- (-\ShiftX, \DiskY, 0);
    \draw[thick, blue] (\ShiftX, \DiskY, -\LineH) -- (\ShiftX, \DiskY, 0);

    % ==========================================
    % === 4. z=0 平面のオブジェクト (Root, Band1, Disk+) ===
    % ==========================================
    
    % --- Root Rectangle ---
    \filldraw[fill=gray!20, draw=black] (-1.5, {\RootY - 0.3}, 0) rectangle (1.5, \RootY, 0);
    \node at (-1.8, {\RootY - 0.15}, 0) {\scriptsize $D_{0}$};

    % --- Left Band 1 (Red) / D_+ ---
    \begin{scope}[canvas is xy plane at z=0]
        \pgfmathsetmacro{\BOneBotRightX}{\DiskR * cos(\AngSt)}
        \pgfmathsetmacro{\BOneBotLeftX}{\DiskR * cos(\AngEn)}
        \pgfmathsetmacro{\BOneTopRightX}{-\ShiftX + \DiskR * cos(\AngSt)}
        \pgfmathsetmacro{\BOneTopLeftX}{-\ShiftX + \DiskR * cos(\AngEn)}
        \pgfmathsetmacro{\BOneTopRightY}{\DiskY + \DiskR * sin(\AngSt)}
        \pgfmathsetmacro{\BOneTopLeftY}{\DiskY + \DiskR * sin(\AngEn)}

        \filldraw[fill=red!3, draw=black, opacity=0.9] 
            (\BOneBotRightX, \RootY) -- (\BOneTopRightX, \BOneTopRightY)
            arc[start angle=\AngSt, end angle=\AngEn, radius=\DiskR]
            -- (\BOneBotLeftX, \RootY) -- cycle;
    \end{scope}

    % --- Right Band 1 (Red) ---
    \begin{scope}[canvas is xy plane at z=0]
        \pgfmathsetmacro{\BOneBotRightX}{\DiskR * cos(\AngSt)}
        \pgfmathsetmacro{\BOneBotLeftX}{\DiskR * cos(\AngEn)}
        % 右側用に座標計算 (ShiftXの符号反転など)
        \pgfmathsetmacro{\BOneTopRightX}{\ShiftX + \DiskR * cos(\AngSt)}
        \pgfmathsetmacro{\BOneTopLeftX}{\ShiftX + \DiskR * cos(\AngEn)}
        \pgfmathsetmacro{\BOneTopRightY}{\DiskY + \DiskR * sin(\AngSt)}
        \pgfmathsetmacro{\BOneTopLeftY}{\DiskY + \DiskR * sin(\AngEn)}

        \filldraw[fill=red!3, draw=black, opacity=0.9] 
            (\BOneBotRightX+0.5*\ShiftX, \RootY)
            -- (\BOneTopRightX, \BOneTopRightY)
            arc[start angle=\AngSt, end angle=\AngEn, radius=\DiskR]
            -- (\BOneBotLeftX+0.5*\ShiftX, \RootY)
            -- cycle;
    \end{scope}
            
    % --- Disks ---
    \begin{scope}[canvas is xy plane at z=0]
        % Left D+
        \filldraw[fill=red!10, draw=red, opacity=0.8] (-\ShiftX, \DiskY) circle (\DiskR);
        \node[red] at ({-\ShiftX + \DiskR + 0.2}, {\DiskY + 0.2}) {\scriptsize $D_{+}$};
        \fill[red] (-\ShiftX, \DiskY) circle (1.5pt);
        
        % Right D+'
        \filldraw[fill=red!10, draw=red, opacity=0.8] (\ShiftX, \DiskY) circle (\DiskR);
        \node[red] at ({\ShiftX + \DiskR + 0.2}, {\DiskY + 0.2}) {\scriptsize $D_{+}'$};
        \fill[red] (\ShiftX, \DiskY) circle (1.5pt);
    \end{scope}

    % ==========================================
    % ★追加: 矢印 (手前側: z > 0)
    % ==========================================
    % 奥側と同じ軌道で、手前側 (角度 -180 -> -350 相当、または 180 -> 10) を描く
    % xz平面: 180度=-x, 90度=+z(手前), 0度=+x
    \begin{scope}[canvas is xz plane at y=\RootY]
        \pgfmathsetmacro{\ArrowRadius}{0.75* \ShiftX}
        % ★修正: dashed を追加
        \draw[->, very thick, purple, >=stealth, dashed] 
            (-0.25*\ShiftX, 0) ++(180: \ArrowRadius) 
            arc [start angle=180, end angle=10, radius=\ArrowRadius];
    \end{scope}

    % ==========================================
    % === 5. 中間の構造 (Middle Lines & Ribbon2) ===
    % ==========================================
    \draw[thick, blue] (-\ShiftX, \DiskY, 0) -- (-\ShiftX, \DiskY, \DiskTwoZ);
    \draw[thick, blue] (\ShiftX, \DiskY, 0) -- (\ShiftX, \DiskY, \DiskTwoZ);

    % Left Ribbon 2
    \filldraw[fill=blue!3, draw=black, opacity=0.9] 
        (-\RibTwoRootX, \RootY, 0)
        .. controls (\RibTwoRootX, {\RootY+1.5}, -1.0) and ({ -\ShiftX + \DiskR * cos(\RibTwoAngEn) }, {\DiskY - 1.5}, {\DiskTwoZ - 1.0}) ..
        ({ -\ShiftX + \DiskR * cos(\RibTwoAngEn) }, { \DiskY + \DiskR * sin(\RibTwoAngEn) }, \DiskTwoZ)
        -- plot[domain=\RibTwoAngEn:\RibTwoAngSt, variable=\t, samples=25] 
        ({ -\ShiftX + \DiskR * cos(\t) }, { \DiskY + \DiskR * sin(\t) }, \DiskTwoZ)
        .. controls ({ -\ShiftX + \DiskR * cos(\RibTwoAngSt) }, {\DiskY - 1.5}, 1.0) and ({\RibTwoRootX - \RibTwoWidth}, {\RootY+1.5}, -1.0) ..
        ({-\RibTwoRootX - \RibTwoWidth}, \RootY, 0) 
        -- cycle;

    % Right Ribbon 2
    \filldraw[fill=blue!3, draw=black, opacity=0.9] 
        (\RibTwoRootX, \RootY, 0)
        .. controls (\RibTwoRootX, {\RootY+1.5}, -1.0) and ({ \ShiftX + \DiskR * cos(\RibTwoAngEn) }, {\DiskY - 1.5}, {\DiskTwoZ - 1.0}) ..
        ({ \ShiftX + \DiskR * cos(\RibTwoAngEn) }, { \DiskY + \DiskR * sin(\RibTwoAngEn) }, \DiskTwoZ)
        -- plot[domain=\RibTwoAngEn:\RibTwoAngSt, variable=\t, samples=25] 
        ({ \ShiftX + \DiskR * cos(\t) }, { \DiskY + \DiskR * sin(\t) }, \DiskTwoZ)
        .. controls ({ \ShiftX + \DiskR * cos(\RibTwoAngSt) }, {\DiskY - 1.5}, 1.0) and ({\RibTwoRootX - \RibTwoWidth}, {\RootY+1.5}, -1.0) ..
        ({\RibTwoRootX - \RibTwoWidth}, \RootY, 0) 
        -- cycle;

    % ==========================================
    % === 6. z=1.4 平面のオブジェクト (Disk-) ===
    % ==========================================
    \begin{scope}[canvas is xy plane at z=\DiskTwoZ]
        \filldraw[fill=blue!10, draw=blue, opacity=0.9] (-\ShiftX, \DiskY) circle (\DiskR);
        \node[blue] at (-\ShiftX, {\DiskY}) [xshift=0.5cm, yshift=0.4cm] {\scriptsize $D_{-}$};
        \fill[blue] (-\ShiftX, \DiskY) circle (1.5pt);
    \end{scope}

    \begin{scope}[canvas is xy plane at z=\DiskTwoZ]
        \filldraw[fill=blue!10, draw=blue, opacity=0.9] (\ShiftX, \DiskY) circle (\DiskR);
        \node[blue] at (\ShiftX, {\DiskY}) [xshift=0.5cm, yshift=0.4cm] {\scriptsize $D_{-}'$};
        \fill[blue] (\ShiftX, \DiskY) circle (1.5pt);
    \end{scope}

    % ==========================================
    % === 7. 上部の直線 ===
    % ==========================================
    \draw[thick, blue] (-\ShiftX, \DiskY, \DiskTwoZ) -- (-\ShiftX, \DiskY, \LineH) node[above] {$L$};
    \draw[thick, blue] (\ShiftX, \DiskY, \DiskTwoZ) -- (\ShiftX, \DiskY, \LineH) node[above] {$L'$};

    % ==========================================
    % === 8. 前景 ===
    % ==========================================
    \draw[black] (-\BoxW, \BoxBot, \BoxZ) rectangle (\BoxW, \BoxTopY, \BoxZ); 
    \draw[black] (-\BoxW, \BoxTopY, -\BoxZ) -- (-\BoxW, \BoxTopY, \BoxZ); 
    \draw[black] (\BoxW, \BoxBot, \BoxZ) -- (\BoxW, \BoxBot, -\BoxZ) -- (\BoxW, \BoxTopY, -\BoxZ) -- (\BoxW, \BoxTopY, \BoxZ); 
    \draw[black] (-\BoxW, \BoxTopY, -\BoxZ) -- (\BoxW, \BoxTopY, -\BoxZ); 

    \coordinate (AxisOrg) at (-\BoxW - 0.8, \BoxBot-0.2, \BoxZ); 
    \draw[->, >=stealth, thick] (AxisOrg) -- ++(1.0, 0, 0) node[right] {\scriptsize $x$};
    \draw[->, >=stealth, thick] (AxisOrg) -- ++(0, 0, -1.6) node[right] {\scriptsize $y$};
    \draw[->, >=stealth, thick] (AxisOrg) -- ++(0, 1.0, 0) node[above] {\scriptsize $z$};

\end{tikzpicture}
}
\savebox{\MyRibbonTypeIPerturbed}{%

\begin{tikzpicture}[scale=1.0, z={(-0.3cm,-0.2cm)}, line join=round, line cap=round]

    % ==========================================
    % === 設定パラメータ ===
    % ==========================================
    \def\LineH{4.0}
    \def\DiskR{0.3}      
    \def\SphereR{0.55}   
    
    \def\AngSt{-75}      
    \def\AngEn{-105}     

    \def\DiskTwoZ{1.4}      
    \def\RibTwoRootX{1.2}   
    \def\RibTwoWidth{0.15}  
    \def\RibTwoAngSt{-45}   
    \def\RibTwoAngEn{-85}   

    \def\DiskY{1.0}       
    \def\RootY{-3.1}      
    \def\BoxTopY{3.0}     
    \def\DsY{-2.5}
    
    \def\BoxW{1.6}             
    \def\BoxBot{\RootY - 0.3}
    \def\BoxZ{\LineH}         
    \def\DosZ{0.5*\BoxZ} 

    % ==========================================
    % === 描画開始 ===
    % ==========================================

    % --- 1. Background ---
    \draw[dashed, gray!60] (\BoxW, \BoxTopY, -\BoxZ) -- (-\BoxW, \BoxTopY, -\BoxZ) -- (-\BoxW, \BoxBot, -\BoxZ) -- (\BoxW, \BoxBot, -\BoxZ);
    \draw[dashed, gray!60] (-\BoxW, \BoxBot, -\BoxZ) -- (-\BoxW, \BoxBot, \BoxZ);

    % --- 2. Line L_j ---
    \draw[thick, blue] (0, \DiskY, -\LineH) -- (0, \DiskY, 0);

    % --- 3. Root Rectangle ---
    \filldraw[fill=gray!20, draw=black] (-1.5, {\RootY - 0.3}, 0) rectangle (1.5, \RootY, 0);
    \node at (-1.8, {\RootY - 0.15}, 0) {\scriptsize $D_{0}$};

    \filldraw[fill=gray!30, draw=black] (-1.5, {\RootY - 0.3}, \DosZ ) rectangle (1.5, \RootY, \DosZ);
    \node at (-1.8, {\RootY - 0.15}, \DosZ) {\scriptsize $D_{0}'$};

    % =======================================================
    % ★ Sphere Q (奥側) & Red Band 描画エリア 
    % =======================================================

    % % --- 4a. Sphere Q Equator (Back Part: z < 0) ---
    % % 赤いバンドより先に描くことで、バンドの後ろに隠れるようにする
    % \begin{scope}[canvas is xz plane at y=\DsY]
    %     % z < 0 は sin(theta) < 0 なので 180度〜360度
    %     \draw[dashed, orange!80!black] (180:\SphereR) arc (180:360:\SphereR);
    % \end{scope}

    % 共通パラメータ計算
    \def\CutGap{0.30} 
    \def\CutYTop{\DsY + \CutGap}
    \def\CutYBot{\DsY - \CutGap}
    \pgfmathsetmacro{\Xr}{\DiskR * cos(\AngSt)}
    \pgfmathsetmacro{\Xl}{\DiskR * cos(\AngEn)}
    \pgfmathsetmacro{\YrTop}{\DiskY + \DiskR * sin(\AngSt)}
    \pgfmathsetmacro{\YlTop}{\DiskY + \DiskR * sin(\AngEn)}

    % --- 4b. Band 1 (Top & Bottom Parts: z=0) ---
    \begin{scope}[canvas is xy plane at z=0]
        % (1) 上部パーツ
        \fill[fill=red!3, opacity=0.9] 
            (\Xr, \YrTop) -- (\Xr, \CutYTop) -- (\Xl, \CutYTop) -- (\Xl, \YlTop) 
            arc (\AngEn:\AngSt:\DiskR) -- cycle;
        \draw[black] 
            (\Xr, \CutYTop) -- (\Xr, \YrTop) arc (\AngSt:\AngEn:\DiskR) -- (\Xl, \YlTop) -- (\Xl, \CutYTop);

        % (3) 下部パーツ
        \fill[fill=red!3, opacity=0.9] (\Xr, \RootY) rectangle (\Xl, \CutYBot);
        \draw[black] (\Xr, \CutYBot) -- (\Xr, \RootY) -- (\Xl, \RootY) -- (\Xl, \CutYBot);
    \end{scope}

    % --- 4b'. Band 1 (Middle Part: Bending to z < 0) ---
    \def\BendZ{-0.80} 
    \fill[fill=red!3, opacity=0.9] 
        (\Xr, \CutYTop, 0) 
        .. controls (\Xr, \DsY, \BendZ) .. (\Xr, \CutYBot, 0)
        -- (\Xl, \CutYBot, 0)
        .. controls (\Xl, \DsY, \BendZ) .. (\Xl, \CutYTop, 0)
        -- cycle;
    \draw[black] (\Xr, \CutYTop, 0) .. controls (\Xr, \DsY, \BendZ) .. (\Xr, \CutYBot, 0);
    \draw[black] (\Xl, \CutYTop, 0) .. controls (\Xl, \DsY, \BendZ) .. (\Xl, \CutYBot, 0);

    % =======================================================
    % ★ D_s (Disk) & Sphere Q (手前)
    % =======================================================

    % --- 4c. Disk D_s (Whole Disk) ---
    % バンドを描き終わった後に一気に描く
    \begin{scope}[canvas is zx plane at y=\DsY]
        \fill[green!10, opacity=0.8] (0,0) circle (\DiskR);
        \draw[green!60!black] (0,0) circle (\DiskR);
        \node[green!60!black] at (0, {-\DiskR -0.2}) {\scriptsize $D'$}; 
    \end{scope}

    % =======================================================
    % ★ D_s から D_{0,s} への新しいリボン
    % =======================================================
    \pgfmathsetmacro{\RibXsRight}{\DiskR * cos(\AngSt)}
    \pgfmathsetmacro{\RibXsLeft}{\DiskR * cos(\AngEn)}
    \pgfmathsetmacro{\RibZsRight}{sqrt(\DiskR*\DiskR - \RibXsRight*\RibXsRight)}
    \pgfmathsetmacro{\RibZsLeft}{sqrt(\DiskR*\DiskR - \RibXsLeft*\RibXsLeft)}
    
    \filldraw[fill=green!5, draw=black, opacity=0.8] 
        (\RibXsRight, \DsY, \RibZsRight) 
        .. controls (\RibXsRight, {\DsY - 0.3}, {\RibZsRight + 0.5}) and (\RibXsRight, {\RootY + 0.3}, {\DosZ - 0.5}) ..
        (\RibXsRight, \RootY, \DosZ)
        -- (\RibXsLeft, \RootY, \DosZ)
        .. controls (\RibXsLeft, {\RootY + 0.3}, {\DosZ - 0.5}) and (\RibXsLeft, {\DsY - 0.3}, {\RibZsLeft + 0.5}) ..
        (\RibXsLeft, \DsY, \RibZsLeft)
        -- cycle;

    %     % --- 4d. Sphere Q Equator (Front Part: z > 0) ---
    % \begin{scope}[canvas is xz plane at y=\DsY]
    %     % z > 0 は sin(theta) > 0 なので 0度〜180度
    %     \draw[dashed, orange!80!black] (0:\SphereR) arc (0:180:\SphereR);
    % \end{scope}

    % =======================================================

    % --- 7. Disk D_+ ---
    \begin{scope}[canvas is xy plane at z=0]
        \filldraw[fill=red!10, draw=red, opacity=0.8] (0, \DiskY) circle (\DiskR);
        \node[red] at ({\DiskR + 0.2}, {\DiskY + 0.2}) {\scriptsize $D_{+}$};
        \fill[red] (0, \DiskY) circle (1.5pt);
    \end{scope}

    % --- 8. Middle Line ---
    \draw[thick, blue] (0, \DiskY, 0) -- (0, \DiskY, \DiskTwoZ);

    % --- 9. Second Disk & Ribbon ---
    \begin{scope}[canvas is xy plane at z=\DiskTwoZ]
        \filldraw[fill=blue!10, draw=blue, opacity=0.9] (0, \DiskY) circle (\DiskR);
        \node[blue] at (0, {\DiskY}) [xshift=0.5cm, yshift=0.4cm] {\scriptsize $D_{-}$};
        \fill[blue] (0, \DiskY) circle (1.5pt);
    \end{scope}

    % Ribbon B_{new}
    \filldraw[fill=blue!3, draw=black, opacity=0.9] 
        (\RibTwoRootX, \RootY, 0)
        .. controls (\RibTwoRootX, {\RootY+1.5}, -3.0) and ({ \DiskR * cos(\RibTwoAngEn) }, {\DiskY - 1.5}, {\DiskTwoZ - 1.0}) ..
        ({ \DiskR * cos(\RibTwoAngEn) }, { \DiskY + \DiskR * sin(\RibTwoAngEn) }, \DiskTwoZ)
        -- plot[domain=\RibTwoAngEn:\RibTwoAngSt, variable=\t, samples=25] 
        ({ \DiskR * cos(\t) }, { \DiskY + \DiskR * sin(\t) }, \DiskTwoZ)
        .. controls ({ \DiskR * cos(\RibTwoAngSt) }, {\DiskY - 1.5}, 1.0) and ({\RibTwoRootX - \RibTwoWidth}, {\RootY+1.5}, 2.0) ..
        ({\RibTwoRootX - \RibTwoWidth}, \RootY, 0) 
        -- cycle;

    % --- 10. Q_r (Sphere Shading) ---
    % 全体を覆うシェーディングは最後に描画
    % \shade[ball color=orange, opacity=0.15] (0, \DsY, 0) circle (\SphereR);
    % \node[orange!80!black] at ({-\SphereR-0.1}, {\DsY+0.5}, 0) {\scriptsize $Q$};

    % --- 11. Line L_j (Top) ---
    \draw[thick, blue] (0, \DiskY, \DiskTwoZ) -- (0, \DiskY, \LineH) node[above] {$L$};

    % --- 12. Box 1-Skeleton (Front) ---
    \draw[black] (-\BoxW, \BoxBot, \BoxZ) rectangle (\BoxW, \BoxTopY, \BoxZ); 
    \draw[black] (-\BoxW, \BoxTopY, -\BoxZ) -- (-\BoxW, \BoxTopY, \BoxZ); 
    \draw[black] (\BoxW, \BoxBot, \BoxZ) -- (\BoxW, \BoxBot, -\BoxZ) -- (\BoxW, \BoxTopY, -\BoxZ) -- (\BoxW, \BoxTopY, \BoxZ); 
    \draw[black] (-\BoxW, \BoxTopY, -\BoxZ) -- (\BoxW, \BoxTopY, -\BoxZ); 

    % --- 13. Coordinate Axes ---
    \coordinate (AxisOrg) at (-\BoxW - 0.8, \BoxBot-0.2, \BoxZ); 
    \draw[->, >=stealth, thick] (AxisOrg) -- ++(1.0, 0, 0) node[right] {\scriptsize $x$};
    \draw[->, >=stealth, thick] (AxisOrg) -- ++(0, 0, -1.6) node[right] {\scriptsize $y$}; 
    \draw[->, >=stealth, thick] (AxisOrg) -- ++(0, 1.0, 0) node[above] {\scriptsize $z$};

\end{tikzpicture}

}
%abstract======================================

\begin{abstract}
    In this paper, we study the space of compactly supported embeddings between Euclidean spaces, $\Emb_c(\R^j, \R^n)$.
    By utilizing hairy graphs, we construct elements in the homotopy groups $\pi_{\bullet}(\bEmb_c(\R^{j}, \R^{n})) \otimes \Q$ corresponding to certain uni-trivalent graphs in the model.
    We then prove that these elements are nontrivial.
    Consequently, we show that the rational homotopy groups of $\Emb_c(\R^{n-2}, \R^n)$ are infinite-dimensional in infinitely many degrees when $n \ge 5$ is odd.
\end{abstract}

%===========================================

{\noindent\footnotesize {\rm Preprint}}\par
\vspace{15mm}
\maketitle
\vspace{-6mm}
\setcounter{tocdepth}{2}
\numberwithin{equation}{section}

\par\vspace{3mm}

\def\baselinestretch{1.06}\small\normalsize

\tableofcontents

\section{Introduction}

A \emph{long embedding} is an embedding
$\mathbb{R}^j \to \mathbb{R}^n$ which coincides,
outside the unit ball in $\mathbb{R}^j$, with the standard embedding
\[
    \iota\colon \mathbb{R}^j \to \mathbb{R}^n,\qquad
    (x_1,\dots,x_j) \mapsto (x_1,\dots,x_j,0,\dots,0).
\]
A long immersion is defined analogously.
We denote the space of long embeddings (resp.\ long immersions) by
$\Emb_c(\mathbb{R}^j,\mathbb{R}^n)$
(resp.\ $\Imm_c(\mathbb{R}^j,\mathbb{R}^n)$).
The motivation for this paper is to understand the homotopy type of
$\Emb_c(\mathbb{R}^j,\mathbb{R}^n)$.
By the Smale--Hirsch theorem, the homotopy type of
$\Imm_c(\mathbb{R}^j,\mathbb{R}^n)$ is well understood
(\cite{Sma59,Hir59}).
We therefore consider the homotopy fiber of the map
\[
    \Emb_c(\mathbb{R}^j,\mathbb{R}^n)
    \longrightarrow
    \Imm_c(\mathbb{R}^j,\mathbb{R}^n),
\]
and denote this space by $\bEmb_c(\mathbb{R}^j,\mathbb{R}^n)$.

In 2017, Fresse, Turchin, and Willwacher~\cite{FTW17,FTW20},
following Arone and Turchin~\cite{AT14,AT15}, showed that the graph
complex $HGC_{n,j}$ describes the rational homotopy type of
$\bEmb_c(\mathbb{R}^j,\mathbb{R}^n)$ when the codimension $n-j$ is
greater than $2$.
More precisely, they proved the following theorem.

\begin{Thm}[\cite{FTW17}]
    Assume that $n-j>2$.
    Then, after applying the degree shift in \cite{FTW17}, there is an
    isomorphism of graded $\mathbb{Q}$-vector spaces
    \[
        \left(
            \bigoplus_l H_l({}^*HGC_{n,j})
        \right)_{>0}
        \cong
        \bigoplus_{k>0}
            \pi_k(\bEmb_c(\mathbb{R}^j,\mathbb{R}^n),\iota)
            \otimes \mathbb{Q}.
    \]
    Here, ${}^*HGC_{n,j}$ denotes the dual of $HGC_{n,j}$, and the
    subscript $>0$ denotes the positive-degree part with respect to the
    shifted grading.
\end{Thm}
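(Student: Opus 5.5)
The plan is to reproduce the route of Fresse--Turchin--Willwacher, building on Arone--Turchin. It proceeds in four steps: a delooping, Goodwillie--Weiss calculus to pass to an operadic mapping space, rational homotopy theory of operads to pass to a deformation complex, and finally identification of that complex with $HGC_{n,j}$.

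\textbf{Step 1: delooping.} Since $n-j>2$, the Dwyer--Hess / Turchin / Ducoulombier--Turchin delooping theorem gives a weak equivalence
\[
\bEmb_c(\R^j,\R^n)\simeq \Omega^{j+1}\operatorname{Map}^h_{\mathrm{Op}}(E_j,E_n),
\]
a derived mapping space of operads. Its input is the Goodwillie--Weiss embedding calculus: the Taylor tower converges in codimension $\ge 3$, and its layers assemble into infinitesimal bimodule maps over the little disks operads.

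\textbf{Step 2: rationalization.} I would then use that $E_n$ is formal and that the rationalization $E_n^{\Q}$ has an explicit Sullivan-type model. The rational model is the dual graph cooperad $\mathsf{Graphs}_n$, equivalently Kontsevich's formality. This identifies the rationalized mapping space with the Maurer--Cartan space of a complete dg Lie algebra, namely the (homotopy) deformation complex $\mathrm{Def}(\Omega(\mathsf{e}_j^{\vee})\to \mathsf{Graphs}_n)$. The first technical point is to ensure the mapping space is nilpotent, or at least that each component has the expected homotopy groups. This needs $n-j\ge 3$, so that the relevant Lie algebra is concentrated in positive degrees after the $(j+1)$-fold looping.

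\textbf{Step 3: identifying the deformation complex.} This is the main obstacle. One must show the deformation complex is quasi-isomorphic to the hairy graph complex $HGC_{n,j}$, up to a shift and to graph complex pieces that do not contribute. The approach I would take has three parts. First, filter by the number of hairs (the arity in $E_j$). Second, use Koszulness of $\mathsf{e}_j$, so that the bar construction collapses and maps out of it become ``graphs with external legs decorated by $\mathsf{e}_j$-cooperations''. Third, show by a spectral sequence argument that the internal vertices of valence $\le 2$ and the $\mathsf{e}_j$-decorations can be contracted, leaving hairy graphs with at least trivalent internal vertices. Willwacher's computation of $H(\mathrm{Def}(\mathsf{e}_n\to\mathsf{Graphs}_n))$ in terms of $GC_n$ handles the hairless part. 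The hairless part is then seen to disappear after looping $j+1$ times, since it sits in a non-positive degree range.

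\textbf{Step 4: reading off homotopy groups.} Since the looped mapping space is an $H$-space, or more precisely since the positive-degree part of the nilpotent MC space has rational homotopy equal to the homology of the Lie algebra with degrees shifted, we have
\[
\pi_k(\bEmb_c)\otimes\Q\cong H_{k+j}\bigl(\mathrm{Def}\bigr)
\]
for $k>0$. Dualizing gives ${}^*HGC_{n,j}$ with the stated shift. For $k\ge1$ this group is abelian, so no issues arise at the level of $\pi_1$ beyond a check that the Lie bracket induced on homotopy is the one compatible with the $H$-space structure.
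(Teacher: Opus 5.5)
The paper gives no proof of this statement; it only cites Fresse--Turchin--Willwacher. Your four-step architecture is theirs, but Step~2 contains a genuine error at exactly the point that is their main theorem.

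The Maurer--Cartan space of $\mathrm{Def}(\Omega(\mathsf{e}_j^{\vee})\to\mathsf{Graphs}_n)$ models the derived mapping space of \emph{dg operads}, i.e.\ of the linearized, chain-level operads. That is not the rationalization of $\mathrm{Map}^h(E_j,E_n)$. The homology of this linear deformation complex is the \emph{full} hairy graph complex, in which disconnected graphs are allowed. Up to regrading it computes the rational \emph{homology} of $\bEmb_c(\mathbb{R}^j,\mathbb{R}^n)$ (Arone--Turchin), essentially the graded-symmetric algebra on the rational homotopy. Already for $j=1$ your complex is quasi-isomorphic, up to regrading, to the Hochschild cochain complex of $\mathsf{As}\to\mathsf{e}_n$. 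Its cohomology is the rational homology of long knots (Lambrechts--Turchin--Voli\'c), which contains products of lower classes. Nothing in your Step~3 (filtering by hairs, Koszulness of $\mathsf{e}_j$, contracting low-valence vertices and decorations) discards disconnected graphs. So Steps~2--4, carried out literally, produce $S^{+}$ of the answer rather than $H({}^*HGC_{n,j})$, which is spanned by connected graphs.

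The missing idea is that rational homotopy types of operads are modelled by \emph{Hopf} cooperads (Fresse), so you must keep the graded-commutative algebra structure on cochains. Then $\mathrm{Map}^h(E_j,E_n^{\mathbb{Q}})$ is a derived mapping space of Hopf ($\Lambda$-)cooperads from $\mathsf{Graphs}_n$ to a model of $E_j$, which by formality can be taken to be $\mathsf{e}_j^{\vee}$. The key lemma of Fresse--Turchin--Willwacher is that $\mathsf{Graphs}_n$ is arity-wise quasi-free as a graded-commutative algebra on internally connected graphs. Hence a Hopf map out of it is determined by its restriction to these generators. This is what linearizes the problem, and it yields a complete $L_\infty$-algebra built only from internally connected graphs, which is then identified with the connected hairy graph complex.

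Two smaller points:
\begin{enumerate}
\item For $j<n$ there is no hairless part to dispose of, since every component of a graph in $\mathsf{Graphs}_n(r)$ meets an external vertex. Your appeal to $\mathrm{Def}(\mathsf{e}_n\to\mathsf{Graphs}_n)$ concerns self-maps of $E_n$ and is beside the point.
\item The comparison between $\Omega^{j+1}\mathrm{Map}^h(E_j,E_n)$ and $\Omega^{j+1}\mathrm{Map}^h(E_j,E_n^{\mathbb{Q}})$ is a separate step and is not automatic. This is where $n-j\ge 3$ genuinely enters, through convergence and finite-type properties of the Goodwillie--Weiss (arity) tower, not through positivity of degrees of the Lie algebra.
\end{enumerate}
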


Their approach uses embedding calculus, developed by Goodwillie, Klein,
and Weiss~\cite{Wei99,GW99,GKW01}, together with rational homotopy
theory.

On the other hand, there is a more geometric approach based on
Kontsevich's configuration space integrals, which give integral invarinats of knots \cite{Kon94}.
This approach was further developed by Bott--Taubes~\cite{BT94}
and Kohno~\cite{Koh94}.

Motivated by Bott's suggestion to extend this framework to
higher-dimensional knotting phenomena~\cite{Bot96}, configuration space
integrals were subsequently developed for spaces of long embeddings
$\mathbb{R}^j \hookrightarrow \mathbb{R}^n$ by
Cattaneo--Rossi~\cite{CR05} and Watanabe~\cite{Wat07}, with further
refinements by Sakai~\cite{Sak10} and Sakai--Watanabe~\cite{SW12}.

More recently, Yoshioka constructed a combinatorial cochain complex
$DGC_{n,j}$, which is related to $HGC_{n,j}$ by a zigzag of
quasi-isomorphisms, and defined a cochain map
\[
  \overline{I}\colon DGC_{n,j}
  \longrightarrow
  A_{dR}\bigl(\bEmb_c(\mathbb{R}^j,\mathbb{R}^n)\bigr),
\]
called the \emph{modified configuration space integral}~\cite{Yos25b}.

Notably, this approach remains valid and effective even in codimension
two.

These developments suggest the following conjecture.

\begin{Conj} \label{conj:Nondegeneracy}
    If $n-j \ge 3$, then there exists a map
    \[
        c \colon H_l({}^{*}HGC_{n,j}(k,g))
        \longrightarrow
        \pi_{(g-1)(j-1)+(n-j-2)k+l}
        \bigl(\bEmb_c(\mathbb{R}^j,\mathbb{R}^n)\bigr) \otimes \R
    \]
    such that, for every pair
    $[H]\otimes[\gamma]\in
    H^{\bullet}(HGC_{n,j})\otimes H_{\bullet}({}^* HGC_{n,j})$,
    the natural pairing
    $\langle \overline{I}(H), c(\gamma) \rangle$
    coincides with the natural pairing
    $\langle H,\gamma\rangle$.
\end{Conj}

From now on, we focus on the case $l=0$, which we sometimes call the
top term.

Some results are already known in this direction.

\begin{Thm}[\cite{SW12,Yos25c}]
\label{weaker version of the conjecture}
    Assume that $n-j\ge 2$, $g\le 3$, and $j\ge 2$.
    Then the modified configuration space integral
    \[
    \overline{I} \colon
    H_{(j-1)(g-1)+(n-j-2)k}
    \bigl(\bEmb_c(\mathbb{R}^j,\mathbb{R}^n)\bigr)
    \otimes \mathbb{R}
    \longrightarrow
    H_{\mathrm{top}}({}^\ast HGC_{n,j}(k,g))
    \]
    is surjective.
    Furthermore, if $g\le 1$, or if $n-j=2$ and $g=2,3$, then its
    precomposition with the Hurewicz map is also surjective.
\end{Thm}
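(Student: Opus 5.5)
The statement is a cited theorem (Sakai--Watanabe, Yoshioka), so the plan is to reconstruct its proof by duality. Surjectivity of $\overline{I}$ onto $H_{\mathrm{top}}({}^\ast HGC_{n,j}(k,g))$ is equivalent to the following: for every class $\gamma$ in that top homology, there is a cycle in $\bEmb_c(\mathbb{R}^j,\mathbb{R}^n)$ on which the integrals $\overline{I}(H)$ pair with it as $\langle H,\gamma\rangle$. For the Hurewicz statement this cycle must in addition be a spherical class.

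First, I will reduce to an explicit spanning set of top-degree graph homology for $g\le 3$. In the top degree $l=0$, cycles are linear combinations of uni-trivalent graphs modulo IHX/STU-type relations, with $k$ hairs and loop order $g$.
\begin{itemize}
\item For $g=0$ (trees) and $g=1$ (wheels and hairy loops), these spaces are spanned by a few explicit families.
\item For $g=2,3$ they are still finite-dimensional for each $k$, and are spanned by graphs obtained by attaching hairs to the theta graph and to the $K_4$/tetrahedral graph.
\end{itemize}

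Second, for each spanning graph $\Gamma$ I will build a cycle $c_\Gamma$ by the standard ``graphing'' construction. Each trivalent vertex is realized as a Borromean-type family of linked spheres of dimensions $(j-1)$ and $(n-j-1)$, and each edge as a family of small perturbations (tubing/clasping) of the trivial long embedding. Taking the product of the parameter families gives a map from a product of spheres into $\bEmb_c$, of dimension $(j-1)(g-1)+(n-j-2)k$. Hairs correspond to perturbations along the standard $\mathbb{R}^j$.

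Third, I will compute $\langle \overline{I}(H), c_\Gamma\rangle$ by a localization argument. The configuration space integral over the family localizes, via degree counting, to configurations where each vertex point lies near the corresponding Borromean piece. The integral then factors as a product of linking numbers, which gives $\langle H,\Gamma\rangle$ up to a universal nonzero constant and automorphism factors. Triangularity with respect to the spanning set then yields surjectivity in homology.

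For the Hurewicz refinement, I need $c_\Gamma$ to factor through a sphere. When $g\le1$ the torus-type families can be replaced by a single sphere using commutators/Whitehead products of spheres in the $H$-space $\bEmb_c$: it is an $H$-space by stacking long embeddings, so products of spheres map through the smash, and the top cell of a torus is hit. For $n-j=2$ and $g=2,3$, I will use the extra room coming from $(n-j-2)=0$: hairs contribute no degree, and each hair is a ribbon-disk modification. This lets the family be concentrated on a single sphere $S^{(j-1)(g-1)}$.

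The main obstacle is the localization and vanishing of the correction and boundary contributions in $\overline{I}$, especially the hidden faces in codimension $2$, where the modification in $\overline{I}$ is needed. I would first try to show that the graph cycle can be made supported in disjoint balls, so that every mixed contribution vanishes by a dimension argument.
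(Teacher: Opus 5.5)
The paper does not prove this statement; it quotes it from Sakai--Watanabe and Yoshioka. Its Sections~3--5, however, run the same machinery for $G_{p,g}$. Measured against that, your outline has two genuine gaps.

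\emph{Hurewicz part.} The mechanism you propose does not work.

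A product family $(x,y)\mapsto f(x)\cdot g(y)$ in the $H$-space $\bEmb_c(\R^j,\R^n)$ does not factor through the smash product. Its restriction to $S^a\vee S^b$ is $f\vee g$, not a constant map. Its fundamental class goes to the Pontryagin product $[f]\cdot[g]$, which is not primitive unless it vanishes, whereas spherical classes are always primitive.

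Whitehead products vanish in any $H$-space. Commutators (Samelson products) do factor through the smash, but their Hurewicz images are graded commutators in the Pontryagin ring. These vanish here, because stacking in two coordinate directions of $\R^j$ ($j\ge 2$) makes the space homotopy commutative.

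A warning sign is that your argument, if it worked, would give the Hurewicz refinement for every $g$ and every codimension. The statement only asserts it for $g\le 1$, or for $n-j=2$ and $g=2,3$.

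What is actually needed is a property of the cycles themselves. They are assembled from Brunnian pieces, so the family is null-homotopic on the fat wedge of the parameter torus and therefore factors through the top sphere. This is how the paper proves Proposition~\ref{Thm:HurewiczMap}, using Propositions~\ref{Prop:TypeIisBrunnian} and~\ref{Prop:TypeIIBrunnianLink}, the iterated-surgery reconstruction, and Lemma~\ref{Lem:InjectivityOfDegree1Map}.

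Your remark about $n-j=2$ covers at most $g=2$, where the only positive-dimensional parameter is a single $S^{j-1}$. For $g=3$ you still have to factor $S^{j-1}\times S^{j-1}$ through $S^{2j-2}$, which again requires a null-homotopy on the wedge.

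\emph{Surjectivity part.} The evaluation step carries all the content, and your sketch of it is not what happens.

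First, $\overline{I}$ is defined on $DGC_{n,j}$. So the cocycle must be lifted through $HGC_{n,j}\leftarrow PGC'_{n,j}\to PGC_{n,j}\leftarrow DGC_{n,j}$, and by Yoshioka's injectivity theorem it may be taken to consist of good graphs. After the Localizing lemma, the only contributing graphs have $2k$ black vertices on $\R^j$, no white vertices, $k$ dashed chords, and solid $\eta$-edges. So the integral does not reduce to one configuration point per trivalent vertex of the hairy graph.

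Second, the nonzero factors carried by $\eta$-edges come from Gauss maps to $S^{j-1}$ that sweep the sphere. They do so because the configuration points lie in \emph{nested} pieces of $\R^j$: a disk and the annulus around it, or a disk rotated about another as in the Type~II planetary system. Making the cycle supported in disjoint balls, as you propose, kills exactly these terms; that is the content of the Same system lemma.

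The correct outcome is the counting formula $\langle\overline{I}(w),c_C\rangle=\sum_{\Gamma\in G(C)}w_\Gamma$. This is a sum over all plain graphs compatible with the chord diagram $C$ (already $2^{2g-2}$ graphs for $D(G_{p,g})$), not a single product of linking numbers.

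So your ``triangularity'' is not free. You must identify $\sum_{\Gamma\in G(C)}[\Gamma]$ with a hairy class, as in the lemma following Theorem~\ref{Thm:NonTrivialityOfCycleAssociatedToGkg}. You must then show that the classes realized this way span $H_{\mathrm{top}}({}^*HGC_{n,j}(k,g))$. That spanning step is where $g\le 3$ is used; the paper notes that $H_{\mathrm{top}}$ becomes hard to compute for $g\ge 4$. Nothing in your proposal accounts for this hypothesis.

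A minor slip: a top-degree graph of order $k$ and loop order $g$ has $k-g+1$ hairs, not $k$.
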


Sakai and Watanabe~\cite{SW12} proved this theorem for the case $g=1$.
Yoshioka~\cite{Yos25c} proved it for the cases $g=2,3$. Moreover, in the
case where $n-j=2$, he showed that the corresponding rational homotopy
groups are infinite-dimensional.

Related results were obtained independently by Budney--Gabai~\cite{BG21}
and Watanabe~\cite{Wat23b}: before Yoshioka's work, they proved
infinite-dimensionality results in the degrees corresponding to the case
$g=2$ and $n-j=2$.

For even $n\ge 6$ with $n-j=2$, Fernandes--Muñoz-Echániz~\cite{FM26}
obtained results analogous to those of Budney--Gabai. They also studied
nearby positive degrees: in the even-dimensional case, they proved
vanishing in all positive degrees below the degree detected by
Budney--Gabai. In the case where $n-j=2$ and $n$ is odd, they proved
finite-dimensionality in positive degrees up to two degrees below that
degree, and infinite-dimensionality one degree below it.

For $g\ge 4$, the computation of $ H_{\mathrm{top}} (^{*}HGC_{n,n-2}(\bullet,g))$ becomes more
difficult. Nevertheless, when $n$ is odd, one can prove the nontriviality of
certain graphs $G_{p,g}$ by evaluating them with the
$\mathfrak{sl}_2$ weight system introduced by Bar-Natan~\cite{Bar95}.
In particular, $H_{\mathrm{top}}({}^*HGC_{n,n-2}(\bullet,g))$ is infinite-dimensional.

The aim of this paper is to realize these graph homology classes as
elements of the rational homotopy groups of the space of long embeddings.
More precisely, we prove the following theorem.

\begin{Thm}
    Let $n\ge 5$ be an odd integer and let $g\ge 2$.
    Then the rational homotopy group
    \[
        \pi_{(n-3)(g-1)}
        \bigl(\bEmb_c(\mathbb{R}^{n-2},\mathbb{R}^n)\bigr)
        \otimes \mathbb{Q}
    \]
    is infinite-dimensional.
\end{Thm}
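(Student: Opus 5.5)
The strategy has two halves: construct explicit homotopy classes from graphs, then detect them with the modified configuration space integral $\overline{I}$ paired against graph cocycles, using the $\mathfrak{sl}_2$ weight system to see infinitely many independent classes. Set $j=n-2$, so the degree $(g-1)(j-1)+(n-j-2)k$ of Conjecture~\ref{conj:Nondegeneracy} becomes $(n-3)(g-1)$, independent of the number $k$ of hairs. Hence for fixed $g$ all uni-trivalent graphs of first Betti number $g$, with any number of hairs, land in the same homotopy degree. This is the source of infinite-dimensionality. The argument proceeds in three steps.

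\textbf{Step 1: construction.} For each uni-trivalent graph $\Gamma$ (the graphs $G_{p,g}$, $p$ varying) we build a map $S^{(n-3)(g-1)}\to \bEmb_c(\R^{n-2},\R^n)$. We follow the Budney--Gabai/Watanabe/Yoshioka method of \emph{graph claspers}. Each univalent vertex (hair) becomes a copy of a meridian-type sphere linking the trivial long embedding; this is the role of the disks $D_\pm$, $D'$ and the ribbons $B_\pm$ in the figures. Each edge and trivalent vertex is realized by a Borromean-type parametrized surgery on spheres in the complement. Each vertex contributes a family of dimension $n-3$ per unit of Euler characteristic, and the parameters assemble into a sphere of dimension $(n-3)(g-1)$. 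The ribbon pictures indicate that the hairs are attached by ribbon disks to a fixed base disk $D_0$. The essential point is that the construction is local, so the family is compactly supported and null at the basepoint. We then lift it to $\bEmb_c$ by trivializing the derivative via a canonical null-homotopy in $\Imm_c$, using that the family is built from embedded (ribbon) pieces.

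\textbf{Step 2: evaluation.} We compute $\langle \overline{I}(H), c(\Gamma)\rangle$ for graph cocycles $H\in DGC_{n,n-2}$ of the top term. Following the localization arguments of \cite{SW12,Yos25c}, the configuration space integral over the family localizes to configurations where each vertex of $H$ sits in the support of a clasper piece. The integral then factors as a product of local linking-number contributions. The outcome should be that the pairing equals the combinatorial pairing $\langle H,\Gamma\rangle$ up to a nonzero universal sign or constant. This is the main obstacle. In codimension $2$ the hairs are genuinely linked with the knot itself, so extra boundary or anomaly contributions can appear; the ribbon perturbation pictured is presumably designed to kill them. I would first verify the local computation for a single hair, then for a single trivalent vertex, and then argue by a degree argument that mixed configurations contribute zero.

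\textbf{Step 3: conclusion.} By the $\mathfrak{sl}_2$ weight system argument recalled in the introduction, the classes $[G_{p,g}]\in H_{\mathrm{top}}({}^*HGC_{n,n-2}(\bullet,g))$ span an infinite-dimensional space for odd $n$, and the oddness is needed for the right orientation signs. Dually, for every finite set of them there are cocycles $H_i$ with $\langle H_i,G_{p_l,g}\rangle$ forming a nondegenerate matrix. By Step 2 the realized homotopy classes $c(G_{p,g})$ then pair nondegenerately with the $\overline{I}(H_i)$. Since $\overline{I}(H_i)$ evaluated on the Hurewicz image is a linear functional on $\pi_{(n-3)(g-1)}\otimes\Q$, the classes $c(G_{p,g})$ are linearly independent. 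This holds for arbitrarily many $p$, which proves the theorem.
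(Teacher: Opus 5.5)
Your architecture matches the paper's: realize $G_{p,g}$ by a ribbon/clasper family, detect it by pairing with $\overline{I}$ of a lifted $\mathfrak{sl}_2$-type cocycle, and separate different $p$ by the order $k=2p+g-1$. But the two steps that carry the content are asserted rather than proved, and the mechanism you sketch in Step~2 does not reach the claimed formula.

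\textbf{Step~2.} For a general uni-trivalent $\Gamma$, the claim $\langle\overline{I}(H),c(\Gamma)\rangle=\pm\langle H,\Gamma\rangle$ is essentially Conjecture~\ref{conj:Nondegeneracy}, not something localization hands you. The integral $\overline{I}$ is defined on $DGC_{n,j}$, not on hairy graphs. You must lift $H$ along $HGC_{n,j}\leftarrow PGC'_{n,j}\to PGC_{n,j}\leftarrow DGC_{n,j}$, and the lift $\widetilde H$ has coefficients on plain graphs with solid ($\eta$) edges that $H$ does not determine.

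Localization also does not give a product of local linking numbers. It leaves one black vertex in each source/target piece of a chord diagram $C=D(G_{p,g})$. The $S^{n-3}$-parameters of the type~$\rII$ vertices are then detected through Gauss maps of \emph{solid} edges; the planetary-like systems are designed for exactly this, and their branch contributions are killed by induction on depth using the Same System and Ingoing lemmas. What comes out is the counting formula $\langle\overline{I}(\widetilde H),c_C\rangle=\sum_{\Gamma\in G(C)}w_\Gamma$, which is a sum over plain graphs, not over hairy graphs.

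Turning this into $\pm w_{G_{p,g}}$ needs two ingredients missing from your plan:
\begin{itemize}
\item Yoshioka's injectivity $H_{\mathrm{top}}({}^*PGC'_{n,j})\to H_{\mathrm{top}}({}^*gdPGC'_{n,j})$, so that only good graphs occur in $\widetilde H$.
\item The homology relation $\pm[G_{p,g}]=\sum_{\Gamma\in G}[\Gamma]$, obtained by recursively converting dashed edges into solid ones. This produces the $2^{2g-2}$ good graphs whose solid path first decreases and then increases.
\end{itemize}
Only with both is the pairing independent of the choice of lift.

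Your codimension-$2$ anomaly worry is misplaced. $\overline{I}$ is already a cochain map in that case by Yoshioka's construction, and the ribbon perturbation plays no role there.

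\textbf{Step~1.} The phrase ``the parameters assemble into a sphere'' is precisely the Hurewicz statement that needs proof. A ribbon or clasper construction naturally gives a family over $(S^{n-3})^{g-1}\times(S^0)^{2p+g-1}$: type~$\rII$ vertices carry $S^{n-3}$, while type~$\rI$ vertices carry only $S^0$. Such a family is only a homology class. Locality, compact support and triviality at the base point do not make it factor through the collapse to $S^{(n-3)(g-1)}$.

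The paper obtains the factorization in four steps:
\begin{itemize}
\item It rebuilds $c_{p,g}$ as an iterated surgery of Type~$\rI$ and Type~$\rII$ boxed pieces.
\item It proves each piece is Brunnian (Propositions~\ref{Prop:TypeIisBrunnian} and~\ref{Prop:TypeIIBrunnianLink}).
\item It factors inductively through smash products, using injectivity of $[S^{d+d'},Y]_*\to[S^d\times S^{d'},Y]_*$ (Lemma~\ref{Lem:InjectivityOfDegree1Map}).
\item It checks that the surgery family is homologous to the ribbon cycle on which $\overline{I}$ was evaluated (Lemma~\ref{Lem:CoinsidenceOfTwoCycle}).
\end{itemize}
Your proposal needs an argument of this kind, on a model where both the factorization and the integral computation can actually be carried out.
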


The construction is based on decomposing Yoshioka's
cycle~\cite{Yos25a,Yos25c} into local models and modifying these local
models so that the resulting cycles lie in the image of the Hurewicz map.
The modified local model agrees with the one used in
Watanabe's families of clasper surgeries~\cite{Wat09a}; this agreement
will be discussed in~\cite{Iri26}. 
Watanabe's families of clasper surgeries are high-dimensional analogues
of clasper surgery, also called $Y$-surgery, introduced independently by
Habiro~\cite{Hab00} and Goussarov~\cite{Gou99}.

\begin{Assum}
    Throughout this paper, we assume that $n$ and $j$ are odd and that
    $n-j\ge 2$.
\end{Assum}

\begin{Rem}
    The assumption that $n$ and $j$ are both odd is imposed mainly to simplify notations and arguments of signs.
    The results remain valid for the other parity cases as well, although
    treating them requires additional sign conventions and technical
    modifications which we do not discuss in this paper.

    Moreover, the construction is not intrinsically restricted to the
    particular graphs considered below.  It can be extended to arbitrary
    graphs, but doing so requires additional choices and verifications.
    We therefore restrict ourselves to the graphs needed for the proof of
    the main theorem.

    The main reason for working under the above assumption is that, in
    the case $j=n-2$, it provides a convenient setting in which
    $H_{\mathrm{top}}({}^*HGC_{n,n-2}(\bullet,g))$ is
    infinite-dimensional.
\end{Rem}

The paper is organized as follows.
In Section~\ref{sec:Preliminary}, we recall the fundamental notions,
such as chord diagrams and graph complexes, that are needed to construct
the cycles.
In Section~\ref{sec:Construction of the cycle}, we construct infinitely
many cycles $c_{p,g}$ from infinitely many nontrivial cycles $G_{p,g}$
in $H_{\mathrm{top}}({}^*HGC_{n,j}(2p+g-1,g))$ via ribbon presentations.
In Section~\ref{sec:Non-triviality}, we prove that the cycles $c_{p,g}$
are nontrivial by using Yoshioka's modified configuration space
integral.
In Section~\ref{Sec:The cycle is in the image of Hurewicz map}, we
reconstruct $c_{p,g}$ and show that they lie in the image of the
Hurewicz map.

\section*{Acknowledgments}

I would like to thank my supervisor Tadayuki Watanabe for his guidance and helpful discussions.
I also thank Leo Yoshioka for helpful email correspondence and Keiichi Sakai for an online discussion.
Finally, I am grateful to Koki Yamaguchi for his comments on the infinite dimensionality of the space of open Jacobi diagrams.

\section{Preliminaries} \label{sec:Preliminary}

\subsection{Basic Notations}

\noindent
We first fix the notation for the standard inclusion.

\begin{Not}
    We denote by $\iota$ the standard linear embedding
    $\R^{j} \hookrightarrow \R^{n}$ defined by
    \[
        \iota(x_1,\dots,x_j)
        =
        (x_1,\dots,x_j,0,\dots,0).
    \]
\end{Not}

\noindent
Using the standard inclusion $\iota$, we define the long version of
the embedding space. This formulation is technically convenient because
it replaces boundary conditions by a fixed behavior at infinity.

\begin{Def}
    A \emph{long embedding} is an embedding $f\colon \R^j \to \R^n$
    such that $f$ agrees with $\iota$ outside a compact subset of $\R^j$.
    We denote the space of long embeddings by
    $\Emb_c(\R^j,\R^n)$.
    This space is weakly equivalent to the space of embeddings fixed on
    the boundary $\Emb_{\partial}(D^j,D^n)$.
    Similarly, we denote by $\Imm_c(\R^j,\R^n)$ the space of long
    immersions.
\end{Def}

\noindent
To study the space of embeddings, which is often difficult to analyze
directly, we compare it with the corresponding space of immersions.
The difference between these two spaces is encoded homotopy theoretically
by the following homotopy fiber.

\begin{Def}
    We denote by $\bEmb_c(\R^j,\R^n)$ the homotopy fiber of the natural
    inclusion
    \[
        \Emb_c(\R^j,\R^n)
        \hookrightarrow
        \Imm_c(\R^j,\R^n)
    \]
    over the base point $\iota \in \Imm_c(\R^j,\R^n)$.
    Concretely, an element of $\bEmb_c(\R^j,\R^n)$ can be regarded as a
    pair $(f,\gamma)$, where $f \in \Emb_c(\R^j,\R^n)$ and
    $\gamma\colon [0,1]\to \Imm_c(\R^j,\R^n)$ is a path satisfying
    \[
        \gamma(0)=f,
        \qquad
        \gamma(1)=\iota .
    \]
    This space is called the space of \emph{embeddings modulo immersions}.
\end{Def}

\subsection{Chord diagrams}

We recall the definition of a chord diagram.

\begin{Def}[Chord diagram on directed lines]
    \label{chord diagram on directed lines}
    A \emph{chord diagram} $C$ of order $k$ on $s$ directed lines consists of the following data:
    \begin{itemize}
        \item Integers $t_i \ge 1$ ($i = 1, \dots, s$) satisfying
        \[
            \sum_{i=1}^{s} (t_i +1) = 2k.
        \]
        \item An ordered set of pairs $\{p_m\}_{m=1}^k$ forming a partition of the set of $2k$ points
        \[
            V(C) = \{ (i, l) \in \mathbb{Z}^2 \mid 1 \le i \le s,\; 0 \le l \le t_i  \}.
        \]
    \end{itemize}
    These data are required to satisfy the following condition:
    \begin{itemize}
        \item For each $1 \le i \le s$, the initial point $(i, 0)$ must be the first element of some pair, and the terminal point $(i, t_i)$ must be the second element of some pair.
    \end{itemize}
\end{Def}

We often regard each pair $p_m$ as an oriented edge (or oriented chord).
Such a chord diagram $C$ is depicted schematically in Figure~\ref{depiction of chord diagram}.
We define an ordering on the vertex set $V(C)$, called the \emph{induced ordering}, denoted by $(v_1, v_2, \dots, v_{2k})$. This ordering is determined by the chords such that for each $m \in \{1, \dots, k\}$, the vertices $v_{2m-1}$ and $v_{2m}$ correspond to the first and second elements of the $m$-th chord $p_m$, respectively.

\begin{figure}[h]
    \centering
    %begin-DepictionOfChordDiagram================================================
    \begin{tikzpicture}[
        scale=0.5,
        >=Latex,
        % --- スタイル定義 ---
        mid arrow/.style={postaction={decorate,decoration={
            markings,
            mark=at position 0.55 with {\arrow{>}}
        }}},
        dot/.style={circle, fill=black, inner sep=1.5pt},
        dashed edge/.style={thick, dashed},
        num/.style={fill=white, inner sep=1.5pt, font=\small}
    ]
    
        % ============================================
        % 1. 左側の図
        % ============================================
        \begin{scope}[shift={(0,0)}]
            \coordinate (m) at (0,-1);
            \coordinate (O) at (0, 0);
            \coordinate (Top) at (0, 6.5);
            \coordinate (P1) at (0, 1.0);
            \coordinate (P2) at (0, 2);
            \coordinate (P3) at (0, 3.0);
            \coordinate (P4) at (0, 4.0);
            \coordinate (P5) at (0, 5.0);
    
            \draw[->,thick] (m) -- (Top);
            % \node at (O) [below left] {O};
    
            \foreach \p in {O,P1, P2, P3, P4, P5} {
                \node[dot] at (\p) {};
            }
    
            % --- ★変更箇所: bend right -> out/in ---
            % 左側に膨らむように out=180, in=180 を指定
            % ラベル位置も right -> left に変更
            \draw[dashed edge, mid arrow] (P1) to[out=180, in=180, looseness=1.5] node[pos=0.5, left] {1} (P4);
            \draw[dashed edge, mid arrow] (P2) to[out=180, in=180, looseness=1.5] node[pos=0.5, left] {2} (P3);
        \end{scope}

        % ============================================
        % 2番目の図
        % ============================================
        \begin{scope}[shift={(3,0)}] 
            \coordinate (m2) at (0,-1);
            \coordinate (O2) at (0, 0);
            \coordinate (Top2) at (0, 6.5);
            \coordinate (P12) at (0, 1.0);
            
            \draw[->,thick] (m2) -- (Top2);
            \foreach \p in {O2, P12} { \node[dot] at (\p) {}; }
        \end{scope}
    
        % ============================================
        % 3番目の図
        % ============================================
        \begin{scope}[shift={(6,0)}] 
            \coordinate (m3) at (0,-1);
            \coordinate (O3) at (0, 0);
            \coordinate (Top3) at (0, 6.5);
            \coordinate (P13) at (0, 1.0);
            
            \draw[->,thick] (m3) -- (Top3);
            \foreach \p in {O3, P13} { \node[dot] at (\p) {}; }
        \end{scope}
        
        % ============================================
        % 4番目の図
        % ============================================
        \begin{scope}[shift={(9,0)}] 
            \coordinate (m4) at (0,-1);
            \coordinate (O4) at (0, 0);
            \coordinate (Top4) at (0, 6.5);
            \coordinate (P14) at (0, 1.0);
            
            \draw[->,thick] (m4) -- (Top4);
            \foreach \p in {O4, P14} { \node[dot] at (\p) {}; }
        \end{scope}
    
        % ============================================
        % ★変更箇所: 破線による接続 (out/in で曲げる)
        % ============================================
        
        % 3, 4, 5: 右(0度)に出て、左(180度)から入る -> S字カーブ
        \draw[->, dashed edge] (O)  --  node[midway, above] {3} (P12);
        \draw[->, dashed edge] (O2) --  node[midway, above] {4} (P13);
        \draw[->, dashed edge] (O3) --  node[midway, below] {5} (P14);
        
        % 6: 右端(O4) から 左端上部(P5) への戻り
        % 右(0度)に出て、右(0度)から入る -> 右側を大きく回るC字カーブ
        % looseness で膨らみ具合を調整
        \draw[->, dashed edge] (O4) --  node[midway, above] {6} (P5);
    
    \end{tikzpicture}
%end-depectionofchordDiagram==============================================
    \caption{Depiction of a chord diagram}
    \label{depiction of chord diagram}
\end{figure}

\subsection{Graph complexes}

% --- Introduction / Overview ---
We recall Yoshioka's modified configuration space integral and the graph complexes $HGC_{n,j}$, $PGC'_{n,j}$, $PGC_{n,j}$, and $DGC_{n,j}$.
These complexes are connected by a zigzag of projections which are quasi-isomorphisms:
\[ 
    HGC_{n,j} \xtwoheadleftarrow{\simeq} PGC_{n,j}' \xtwoheadrightarrow{\simeq} PGC_{n,j} \xtwoheadleftarrow{\simeq} DGC_{n,j}.
\]

% --- Definition of Plain Graph ---
\begin{Def}[Plain graph] \label{Def of plain graph}
    A \emph{plain graph} consists of three types of vertices:
    \emph{white vertices} \tikz[baseline=-0.6ex]{\draw (0,0) circle (0.08);},
    \emph{external black vertices} \tikz[baseline=-0.6ex]{\filldraw[black] (0,0) circle (0.08);},
    and \emph{internal black vertices} \tikz[baseline=-0.6ex]{\filldraw[black] (-0.07,-0.07) rectangle (0.07,0.07);};
    and two types of edges:
    \emph{solid edges} \tikz[baseline=-0.6ex]{\draw[thick] (0,0) -- (0.6,0);} (or \emph{$\eta$-edges})
    and \emph{dashed edges} \tikz[baseline=-0.6ex]{\draw[dashed, thick] (0,0) -- (0.6,0);} (or \emph{$\theta$-edges}).

    These vertices and edges must satisfy the following properties:
    \begin{itemize}
        \item Every internal black vertex has at least three dashed edges and no solid edges.
        \item Every white vertex has at least three dashed edges and no solid edges.
        \item Each connected component has at least one external vertex.
    \end{itemize}

    A plain graph is called \emph{admissible} if every external black vertex is incident to at least one dashed edge.

    A plain graph is called \emph{good} if its restriction to the solid edges is a disjoint union of lines such as 
    % \begin{center}
        \begin{tikzpicture}[baseline=-0.5ex]
            % 共通設定: 黒丸(vertex)と太線(edge)のスタイル
            \tikzset{
                v/.style={fill, circle, inner sep=1.5pt}, 
                e/.style={thick} 
            }
    
            % --- 左側: 辺が2つ (2 edges) ---
            % (0,0) から描画開始
            \draw[e] (0,0) -- (1.4,0);
            \foreach \x in {0, 0.7, 1.4} {
                \node[v] at (\x,0) {};
            }
            % \node at (0.7, -0.4) {\tiny 2 edges};
    
            % --- 右側: 辺が3つ (3 edges) ---
            % scope環境で座標を右(x=3.0)にずらす
            \begin{scope}[shift={(2.0, 0)}]
                \draw[e] (0,0) -- (2.1,0);
                \foreach \x in {0, 0.7, 1.4, 2.1} {
                    \node[v] at (\x,0) {};
                }
                % \node at (1.05, -0.4) {\tiny 3 edges};
            \end{scope}
        \end{tikzpicture}.
    % \end{center}
\end{Def}

% --- Notation for graph components ---
\noindent
We fix the notation for the components of a graph as follows.

\begin{Not}
    Let $\Gamma$ be a plain graph. We decompose the set of edges $E(\Gamma)$ and vertices $V(\Gamma)$ as follows:
    \[
        E(\Gamma) = E_{\theta}(\Gamma) \cup E_{\eta}(\Gamma), \qquad
        V(\Gamma) = W(\Gamma) \cup \underbrace{B_i(\Gamma) \cup B_e(\Gamma)}_{B(\Gamma)},
    \]
    where $E_{\theta}$ (resp.~$E_{\eta}$) denotes the set of dashed (resp.~solid) edges, and $W, B_i, B_e$ denote the sets of white, internal black, and external black vertices, respectively.
\end{Not}

\begin{Def}
    A \emph{labeled} plain graph is a plain graph $\Gamma$ equipped with a
    labeling of the vertex set $V(\Gamma)$ and an orientation of each edge in
    $E(\Gamma)$.
    We impose the following orientation relations: swapping two vertex labels or
    reversing the orientation of an edge changes the sign.
\end{Def}

\begin{Exa}
    Here is an example of a labeled plain graph with $g=5$, $k= 5$ ,$l=3$, $|B_e|=4$, $|B_i|=1$, $|W|=2$, $|E_{\theta}| =7$, and $ |E_{\eta}|=4$. 
%----------------------------begin{exampleofpaingraph}-------------------------------
\begin{figure}[h] \label{examplePlaingraph}
    \centering
    \begin{tikzpicture}[
        % --- 共通設定 ---
        % >={Stealth[length=2mm]}, % 矢印の形状
        >=Latex, % 矢印の形状
        % 辺の真ん中に矢印を置くスタイル
        mid arrow/.style={postaction={decorate,decoration={
            markings,
            mark=at position 0.55 with {\arrow{>}}
        }}},     
        % --- 頂点のスタイル定義 ---
        % White vertex: 白抜きの円
        white v/.style={circle, draw=black, fill=white, thick, inner sep=2pt},
        % External vertex: 黒塗りの円
        ext v/.style={circle, fill=black, inner sep=2pt},
        % Internal vertex: 四角形 (バツ印なし)
        int v/.style={rectangle, draw=black, fill=black, thick, minimum size=0.2cm},      
        % 線のスタイル
        dashed edge/.style={dashed, thick},
        solid edge/.style={thick}
    ]
        % --- 座標の定義 ---
        \coordinate (W1) at (-1.0, 2.0);
        \coordinate (W2) at (1.0, 2.0);
    
        \coordinate (E1) at (-2, 0);
        \coordinate (E2) at (-0.7, 0);
        \coordinate (E3) at (0.7, 0);
        \coordinate (E4) at (2, 0);
    
        \coordinate (I) at (0, -1.5);
    
        % --- ノードの配置 (ラベルを脇に配置) ---
        
        % 1, 2: 上に配置
        \node[white v, label=above:1] (w1) at (W1) {};
        \node[white v, label=above:2] (w2) at (W2) {};
    
        % 3, 4, 5, 6: 左右や斜めに配置
        \node[ext v, label=left:3] (e1) at (E1) {};
        \node[ext v, label=225:4] (e2) at (E2) {}; % 左下
        \node[ext v, label=315:5] (e3) at (E3) {}; % 右下
        \node[ext v, label=right:6] (e4) at (E4) {};
    
        % 7: 下に配置 (四角のみ)
        \node[int v, label=below:7] (i) at (I) {};
        
        % バツ印の描画コードを削除しました
    
        % --- 辺の描画 (矢印付き) ---
    
        % 1. 上段
        \draw[dashed edge, mid arrow] (w1) -- (w2);
    
        % 2. 中段への接続
        \draw[dashed edge, mid arrow] (w1) -- (e1);
        \draw[dashed edge, mid arrow] (e2) -- (w1);
        \draw[dashed edge, mid arrow] (w2) -- (e4);
        \draw[dashed edge, mid arrow] (w1) -- (e3); 
        \draw[dashed edge, mid arrow] (w2) -- (e2);
    
        % 3. 下段への接続
        \draw[solid edge, mid arrow] (i) -- (e1);
        \draw[solid edge, mid arrow] (e2) -- (i);
        \draw[solid edge, mid arrow] (i) -- (e3);

        %4. コード
        \draw[dashed edge, mid arrow] (e1) to [bend left=50] (e3);

        %5 中段
        \draw[solid edge, mid arrow] (e2) to [bend right = 40] (e4);
        
    \end{tikzpicture}
    \caption{An example of a labeled plain graph.}
\end{figure}
%end{examplePlaingraph}-------------------------------------------------------
\end{Exa}

% --- Convention on Orientation ---
\noindent
In the following definitions, we assume that $n$ and $j$ are odd integers. If they are even, the sign conventions would differ (see, for example, \cite[Definition 2.5]{Yos25b}).

% --- Specific types of graphs ---
Hairy graphs play a crucial role in constructing cycles in $\bEmb_c(\R^j,\R^n)$.
\begin{Def}[Hairy graph]
    A plain graph $\Gamma$ is called a \emph{hairy graph} if every black vertex is incident to exactly one dashed edge and no solid edges.
    Note that this implies $\Gamma$ has no internal black vertices (i.e., $B_i(\Gamma) = \emptyset$).
\end{Def}

\begin{Def}[\cite{Yos25b}]
    We define the vector spaces $PGC'_{n,j}, HGC_{n,j}$ as the quotient:
    \begin{align*}
        PGC'_{n,j} &= \frac{\mathbb{Q} \{ \text{admissible labeled connected plain graphs} \}}{\text{orientation relations} }\\
        HGC_{n,j} &= \frac{\mathbb{Q} \{ \text{admissible labeled connected hairy graphs} \}}{\text{orientation relations} } 
    \end{align*}        
\end{Def}

\noindent

% --- Differential ---
\noindent
Similarly, $PGC_{n,j}$ is defined as the quotient of $PGC'_{n,j}$
by the subspace spanned by graphs containing double edges or self-loops.

We first define a differential $d_{PGC'}$ on $PGC'_{n,j}$ by
\[
  d_{PGC'}(\Gamma)
  =
  \sum_{e\in E(\Gamma)} \operatorname{sign}(e)\,(\Gamma/e),
\]
where the sum is taken over all edges $e$ that are neither loop edges
nor chord edges. Here, a chord edge means an edge connecting two black
vertices. This differential descends to a differential $d_{PGC}$ on
$PGC_{n,j}$.

\begin{Prop}
    The pair $(PGC'_{n,j}, d_{PGC'})$ forms a cochain complex, i.e., $d_{PGC'}^2 = 0$.    
\end{Prop}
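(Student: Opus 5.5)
The plan is the standard argument that an edge-contraction differential squares to zero: expand $d_{PGC'}^2(\Gamma)$ as a sum over ordered pairs of distinct contractible edges, and show the terms cancel in pairs. Applying the definition twice gives
\[
  d_{PGC'}^2(\Gamma)=\sum_{e}\sum_{e'} \operatorname{sign}(e)\operatorname{sign}_{\Gamma/e}(e')\,(\Gamma/e)/e',
\]
where $e$ runs over the non-loop, non-chord edges of $\Gamma$, and $e'$ over the non-loop, non-chord edges of $\Gamma/e$. Every edge of $\Gamma/e$ is the image of a unique edge of $\Gamma$ other than $e$, so each term is indexed by an ordered pair $(e,e')$ of distinct edges of $\Gamma$. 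The first task is to check that the sum is symmetric in $e$ and $e'$.

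\textbf{Step 1: the index set is symmetric.} We show that $(e,e')$ contributes exactly when $(e',e)$ does, and that then $(\Gamma/e)/e'=(\Gamma/e')/e$ as unlabeled graphs.

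The subtle point is that contracting $e$ can change the status of $e'$.
\begin{itemize}
  \item If $e$ and $e'$ join the same two vertices (a double edge), then $e'$ becomes a loop in $\Gamma/e$, and symmetrically $e$ becomes a loop in $\Gamma/e'$. So neither order contributes.
  \item Contracting $e$ may merge a white or internal vertex with a black one. Then an edge $e'$ that was not a chord may become a chord edge, i.e.\ join two black vertices, in $\Gamma/e$.
\end{itemize}
For the second case I will fix a convention for the type of a merged vertex: it is black, and external if either endpoint was external. With that rule, "$e'$ is a chord in $\Gamma/e$" depends only on the types of the three or four vertices touched by $\{e,e'\}$, and this condition is symmetric in $e,e'$.

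We must also check that $\Gamma/e$ is again an admissible connected plain graph. Connectedness and the presence of an external vertex are clearly preserved. The valence and solid-edge conditions at the merged vertex follow from the type rule. Otherwise the term is zero in the quotient, and the same vanishing occurs in the other order.

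\textbf{Step 2: the signs are opposite.} This is the main obstacle. With $n$ and $j$ odd, each edge is an odd object in the orientation data; the labeling of vertices and the orientation of edges encode the orientation of the graph. I will take $\operatorname{sign}(e)$ to be the sign of the standard convention: relabel so that $e$ points from vertex $1$ to vertex $2$ and is listed first, contract it, and relabel the merged vertex. I will then compute that contracting $e$ and then $e'$ versus $e'$ and then $e$ differ by one transposition of the two contracted edges, which gives a factor $-1$. Interchanging the vertex relabelings contributes an even permutation in both orders. I will check this separately in two configurations:
\begin{itemize}
  \item disjoint edges;
  \item edges sharing a vertex, where the merged vertex receives three old vertices.
\end{itemize}
The shared-vertex case is where care is needed. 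I expect it to match Yoshioka's conventions~\cite{Yos25b}, and I would follow his sign computation there.

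\textbf{Conclusion.} Pairing the term for $(e,e')$ with the term for $(e',e)$ gives cancellation, so $d_{PGC'}^2=0$. Double edges and self-loops never contribute in the first place, by Step 1, so the same argument shows that $d_{PGC'}$ preserves the subspace spanned by graphs with double edges or self-loops. Hence $d_{PGC}$ is well defined on the quotient $PGC_{n,j}$.
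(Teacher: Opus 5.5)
\paragraph{Step 2 does not produce the cancellation.}
Your overall strategy is the standard one, and the paper itself only cites \cite[Lemma~2.15]{Yos25b}. You pair the term indexed by $(e,e')$ with the one indexed by $(e',e)$, after checking that the set of contributing ordered pairs is symmetric. Step~1 is fine: whether $(e,e')$ contributes depends only on the types of the vertices involved, and this is symmetric. For the paper's definition of chord edge, an adjacent pair contributes in either order exactly when at most one of its three vertices is black. The gap is in Step~2, which is the real content of $d^2=0$.

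\paragraph{Edges are not odd objects here.}
You write that for $n,j$ odd ``each edge is an odd object'' and attribute the $-1$ to ``one transposition of the two contracted edges''. In the paper's convention the orientation data is a labeling of the vertices plus a direction on each edge. Swapping two labels or reversing an edge changes the sign, but edges carry no ordering. This matches the degrees: vertices correspond to $\R^j$ or $\R^n$, of odd dimension, while edges carry forms of even degree $j-1$ or $n-1$. So transposing the contracted edges contributes nothing. If, as you also assert, the vertex relabelings are even in both orders, your bookkeeping makes the two terms equal instead of opposite. The sign must come from the vertices.

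\paragraph{Correct bookkeeping.}
Use the convention: move the tail and head of $e$ to labels $1,2$ (with the sign of that permutation), merge them into a vertex $m_e$ labeled $1$, and shift the remaining labels down.
\begin{itemize}
\item \emph{Disjoint edges} $e=(1\to 2)$, $e'=(3\to 4)$. Every intermediate relabeling is even (the identity, $3$-cycles, a double transposition). However, the order $(e,e')$ ends with vertex order $(m_{e'},m_e,5,\dots)$ and the order $(e',e)$ ends with $(m_e,m_{e'},5,\dots)$. These differ by a transposition, and that is the $-1$.
\item \emph{Adjacent edges} $e=(1\to2)$, $e'=(2\to3)$. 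The order $(e,e')$ has sign $+1$. In the order $(e',e)$, after contracting $e'$ the edge $e$ runs from label $2$ to label $1$. This forces an odd relabeling, equivalently a reversal of $e$, so the sign is $-1$.
\end{itemize}
So your claim that the relabelings are even in both orders is false in the adjacent case, and misplaces the sign in the disjoint case. Redo Step~2 with the vertices as the odd objects and the pairing argument goes through.

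\paragraph{Minor point.}
Your closing remark, that $d$ preserves the span of graphs with double edges or loops, has the right conclusion but the wrong reason. The reason is that contracting any edge of such a graph leaves a loop or a double edge: contracting one member of a double edge turns its partner into a loop.
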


\begin{proof}
    We omit the proof. See \cite[Lemma 2.15]{Yos25b}.
\end{proof}

% --- Grading (Defect and Order) ---
\noindent
We now introduce bigradings on these complexes.

\begin{Def}
    The \emph{defect} $l(\Gamma)$ of a plain graph $\Gamma$ is defined as:
    \[ l(\Gamma) = 2|E_{\theta}(\Gamma)| - 3|W(\Gamma)| - |B(\Gamma)|. \]
    The \emph{order} $k(\Gamma)$ is defined as:
    \[ k(\Gamma) = |E_{\theta}(\Gamma)| - |W(\Gamma)|. \]
    We denote the first Betti number of $\Gamma$ by $g(\Gamma)$.
\end{Def}

\begin{Not}
    We denote by $PGC^{l}_{n,j}(k,g)$ the subspace of $PGC_{n,j}$ spanned by graphs $\Gamma$ satisfying $k(\Gamma)=k$, $g(\Gamma)=g$, and $l(\Gamma)=l$.
    Since the differential preserves the order and the first Betti number while increasing the defect by $1$, the family $\{PGC^{l}_{n,j}(k,g)\}_l$ forms a subcomplex of $PGC_{n,j}$.
\end{Not}

\noindent
In this paper, we focus on the cohomology $H^{\mathrm{top}}(HGC^{\bullet}_{n,j})$.
We call the defect $0$ subspace the \emph{top term} of the graph complex.
% --- Decorated Graphs (DGC) ---
\noindent
Finally, we introduce the complex of decorated graphs, which plays an important role in constructing a cocycle in $\bEmb_c(\R^j,\R^n)$.

\begin{Def}
    A \emph{decorated graph} is a plain graph equipped with an assignment of an element in
    \[
        Z_{n,j} := A_{n,j} \otimes BA_{n,j} \quad (\text{the acyclic left bar complex of } A_{n,j})
    \]
    to each external vertex (refer to \cite{Yos25b} for $A_{n,j}$).

    % (コメントアウト部分はそのまま)

    We denote the underlying plain graph by $P(\Gamma)$. The decorations are denoted by $D(\Gamma) = \bigotimes_{i=1}^{m} D_i(\Gamma)$, where $D_i(\Gamma)$ is the element attached to the $i$-th external vertex and $m = |B_e(P(\Gamma))|$.
\end{Def}

\noindent
The notion of an \emph{admissible} decorated graph is defined analogously. 
The space $DGC_{n,j}$ is generated by such admissible decorated graphs equipped with the differential $d_{DGC}$ (see \cite[Section 4.4]{Yos25b}). The gradings $g(\Gamma)$ and $k(\Gamma)$ for decorated graphs are also defined in a similar manner (see \cite[Notation 4.13 and 4.19, Remark 4.33]{Yos25b}).

\begin{Thm}[\cite{Yos25b}, Theorems 2.38, 2.46, 4.35]
    The natural projections 
    \[ HGC_{n,j} \xtwoheadleftarrow{p_1} PGC_{n,j}' \xtwoheadrightarrow{p_2} PGC_{n,j} \xtwoheadleftarrow{p_3} DGC_{n,j} \]
    are all quasi-isomorphisms.
\end{Thm}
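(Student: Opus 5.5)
Since this theorem is quoted from \cite{Yos25b}, the plan below only outlines how one would verify it. The strategy is the same for each of the three projections $p_1,p_2,p_3$: show that the kernel of the projection is acyclic, or, for $p_3$, compare the two sides through a spectral sequence. Throughout I would work in a fixed bigrading $(k,g)$. By the order and Betti number constraints together with admissibility, each piece $PGC'^{\bullet}_{n,j}(k,g)$ is finite dimensional and bounded in the defect, so every filtration used below is finite and its spectral sequence converges.

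For $p_2\colon PGC'_{n,j}\to PGC_{n,j}$, the kernel is spanned by graphs with a self-loop or a multiple edge. I would filter by the number of vertices not lying on a loop or multi-edge. On the associated graded I would construct a contracting homotopy. A self-loop at a vertex can be "opened up" by inserting a new bivalent-free configuration, and conversely the differential contracts the edge adjacent to the loop. Pairing each graph with a loop or double edge to the graph obtained by contracting or expanding one distinguished edge gives the homotopy. With $n,j$ odd the signs are fixed by the orientation relations; in several cases the symmetry of a double edge even forces the graph to vanish.

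For $p_1\colon PGC'_{n,j}\to HGC_{n,j}$, the kernel consists of admissible graphs in which some black vertex either carries solid edges, or carries more than one dashed edge, or is internal. I would filter by the number of white vertices and the first Betti number, so that the associated graded differential only contracts edges inside the "black part" of the graph: the solid-edge forests together with the dashed edges at black vertices. The associated graded then splits as a tensor product over the hair attachment points. Each factor is a small complex of trees of black vertices and solid edges, which I would show is acyclic except for the single-hair summand, for instance by identifying it with the cellular chains of a contractible space of trees or by an explicit vertex-splitting homotopy. This step, handling solid edges together with the goodness condition and keeping signs consistent, is the one I expect to be the main obstacle.

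For $p_3\colon DGC_{n,j}\to PGC_{n,j}$, which forgets decorations, I would filter $DGC$ by the underlying plain graph, using the number of edges. The $E_0$ differential acts only on the decorations, and by Künneth the $E_1$ page is $\bigoplus_{P} \bigl(\bigotimes_{v\in B_e(P)} H(Z_{n,j})\bigr)$ modulo automorphisms. Since $Z_{n,j}=A_{n,j}\otimes BA_{n,j}$ is the acyclic bar construction, $H(Z_{n,j})\cong\mathbb{Q}$, concentrated in the augmentation. Hence $E_1$ is identified with $PGC_{n,j}$ and $p_3$ induces an isomorphism there. Taking invariants under finite automorphism groups is exact over $\mathbb{Q}$, so by the comparison theorem $p_3$ is a quasi-isomorphism.
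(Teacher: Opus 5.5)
The paper gives no argument of its own here; it only cites \cite{Yos25b}. Your outline therefore has to stand alone, and for $p_2$ and $p_1$ it does not: in both places the associated graded of your chosen filtration is demonstrably not acyclic.

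\textbf{The step for $p_2$.} Your own filtration rules out the contracting homotopy.
\begin{itemize}
\item Take $G_{1,1}$: two white vertices joined by a double edge, with one hair at each. It is nonzero, it lies in $\ker p_2$, and it sits in defect $0$, which is the lowest defect.
\item Contracting either parallel edge produces a self-loop. A self-loop is zero, because reversing an edge changes the sign.
\item Contracting a hair lowers the number of vertices not lying on a loop or multiple edge from $2$ to $1$, so these terms are not in the associated graded.
\item Hence $d_0G_{1,1}=0$, and since nothing maps into defect $0$, $G_{1,1}$ is a nonzero class of the associated graded. No contracting homotopy of $E_0$ exists.
\end{itemize}
The only terms of $d_{PGC'}G_{1,1}$ that can be nonzero are the two hair contractions. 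Each creates a black vertex joined to a white vertex by a double dashed edge. Any proof that $\ker p_2$ is acyclic must therefore be organized around these black-vertex terms. A homotopy that only opens or contracts an edge at the loop or multiple edge cannot work. Indeed, in $HGC_{n,j}$ these terms are absent, and there $G_{1,1}$ is a genuine nonzero class of the double-edge part; the graphs $G_{p,g}$ with $g\ge 2$ also carry such a double edge. Note too that under the stated sign conventions self-loops are already zero. Double edges survive, since swapping two parallel edges carries no sign, so they are the real content, and your sketch gives them no concrete homotopy.

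\textbf{The step for $p_1$.} Filtering by $|W(\Gamma)|$ does not give the associated graded you describe.
\begin{itemize}
\item Every contraction of a dashed edge with a white endpoint, whether white--white or a hair, lowers $|W|$ by one. So $d_0$ consists at most of contractions of solid edges, and the ``dashed edges at black vertices'' are not in $E_0$.
\item On this $E_0$ the non-hairy part is not acyclic. A graph whose only non-hairy feature is an internal black vertex is $d_0$-closed. It is not $d_0$-exact, because contracting a solid edge always produces an external black vertex with at least two dashed edges.
\item Take three black vertices on a solid tree, each carrying one dashed leg into an asymmetric white part. They give a $d_0$-complex of dimensions $3,3,1$: three paths, three one-edge trees, one vertex. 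Its Euler characteristic is $1$, which contradicts your claim that the multi-leg tree factors are acyclic.
\end{itemize}
Killing these tree classes requires the STU-type interaction between hair contractions and solid-edge contractions. Both kinds of contraction hit a black vertex with two dashed edges in defect $1$. This PBW-type mechanism is the missing idea; compare the expansion of $G_{p,g}$ into graphs with solid paths used after Theorem~\ref{Thm:NonTrivialityOfCycleAssociatedToGkg}. A filtration that separates the two kinds of contraction pushes the mechanism into higher differentials, which your outline does not control.

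The claimed tensor-product splitting over hair attachment points also fails. A black vertex may carry dashed edges to several white vertices, and solid edges may join black vertices attached at different places.

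\textbf{The step for $p_3$.} This is the standard acyclic-bar-construction spectral sequence and is plausible, provided two things hold. The graph-preserving part of $d_{DGC}$ must be exactly the tensor product of the internal differentials of the $Z_{n,j}$. The underlying plain graphs of $DGC_{n,j}$ must be exactly those of $PGC_{n,j}$. Otherwise $E_1$ is not $PGC_{n,j}$.
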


\begin{proof}
    See \cite{Yos25b} for the detailed proof.
\end{proof}

\section{Construction of the cycle} \label{sec:Construction of the cycle}

In this section, we construct a nontrivial cycle $c_{p,g}$ associated with $G_{p,g}$ representing a nontrivial homology class in $H_{\mathrm{top}}({}^* HGC_{n,j}(2p+g-1,g))$.

Our construction strategy is summarized in the following diagram:

\begin{center}
\begin{tikzpicture}[
    node distance=0.8cm and 0.6cm,
    every node/.style={align=center, font=\small},
    arrow/.style={-latex, thick}
]
    % Nodes
    \node (G) [draw, rounded corners] {nontrivial cycle $G_{p,g}$ \\ in $H_{\mathrm{top}}({}^*HGC_{n,j})$};
    \node (D) [draw, rounded corners, right=of G] {Chord diagram \\ $D(G_{p,g})$};
    \node (P) [draw, rounded corners, right=of D] {Ribbon presentation \\ $P(G_{p,g})$};
    \node (C) [draw, rounded corners, right=of P, fill=gray!10] {Cycle $c_{p,g}$ in \\ $\bEmb_c(\R^j, \R^n)$};

    % Arrows
    \draw[arrow] (G) -- (D);
    \draw[arrow] (D) -- (P);
    \draw[arrow] (P) -- (C);
\end{tikzpicture}
\end{center}

In \S~\ref{subsec:RibbonPresentation}, we recall the notion of a \emph{ribbon presentation}. 
In \S~\ref{subsec:Cycles associated with chord diagrams}, we explain how to obtain a ribbon presentation from a chord diagram; this construction is a novel aspect of this paper.
In \S~\ref{subsec:Infiniteness_of_HGC}, we define $G_{p,g}$ and prove that its class in $H_{\mathrm{top}}(^{*}HGC_{n,j}(2p+g-1 ,g))$ is nontrivial.
Finally, in \S~\ref{subsec:The_chord_diagram_D(G(p,g))}, we explain the construction of the chord diagram $D(G_{p,g})$ associated with the hairy graph $G_{p,g}$.

\begin{Rem}
    In this paper, cycles are constructed using ribbon presentations, and their nontriviality is established in Section 4. 
    However, these cycles can also be described as families of claspers obtained by iterated plumbing, from which their nontriviality can be proved directly.
\end{Rem}

\subsection{Ribbon presentation}\label{subsec:RibbonPresentation}

We recall the notion of a ribbon presentation and review some of its basic properties.% This section is based on \cite[Section 1.2]{Yos25c}

\begin{Def}[Ribbon presentation \cite{HS01}] \label{Def of ribbon presentation}
    A \emph{ribbon presentation} $P= \mathcal{D} \cup \mathcal{B}$ is an immersed based oriented 2-disk in $\R^3$, where

    \begin{itemize}
        \item \( \mathcal{D} = D_0 \cup \dots\cup D_d \) is a union of disjoint disks.
        \item $\mathcal{B} = B_1 \cup \dots \cup B_{d'}, \quad  (B_i \cong I \times I) $  is a union of disjoint bands.

    \end{itemize}

    Each band connects distinct disks and the interior of a disk can intersect a band transversally.
    We take a base point in $\partial D_0$.
    We call $D_0$ the base disk of $P$.
    \end{Def}

% We consider the following additional structure.

% \begin{Def}
%     A \emph{labeled ribbon presentation} is a pair consisting of a ribbon presentation $P$ and a collection of 3-disks $\mathcal{Q} = Q_1 \cup \dots \cup Q_k$ in $\mathbb{R}^3$. For each 3-disk $Q_i$, let $I_i$ (resp. $J_i$) be the set of bands (resp. disks) that intersect $Q_i$. The local model in $Q_i$ is required to satisfy the following:
%     \begin{align*}
%         B &= \{ (x_1,x_2,x_3) \in \mathbb{R}^3 \mid |x_1| \le 1/2 ,\, |x_2| < 3 ,\, x_3 = p_B  \} \\
%         D &= \{ (x_1,x_2,x_3) \in \mathbb{R}^3 \mid |x_1|^2 + |x_3|^2 \le 1,\,  x_2 = q_D \}
%     \end{align*}
%     Here, $\{p_B\}_{B \in I_i}$ and $\{q_D\}_{D \in J_i}$ are sets of distinct real numbers satisfying $|p_B|, |q_D| < 1/1000$. The base point is assumed to lie on the boundary of $D_0$. The \emph{order} of a labeled ribbon presentation is the number $k$ of 3-disks in $\mathcal{Q}$.
% \end{Def}
    %ここに図を挿入
    
\begin{Def}
    An intersection of a disk with a band is called a \emph{crossing}.
    A disk without any crossings is called a \emph{node}.
    A \emph{leaf} is a disk with crossings.
\end{Def}

\begin{figure}[htbp]
    \centering
    \includegraphics[width=0.3\linewidth]{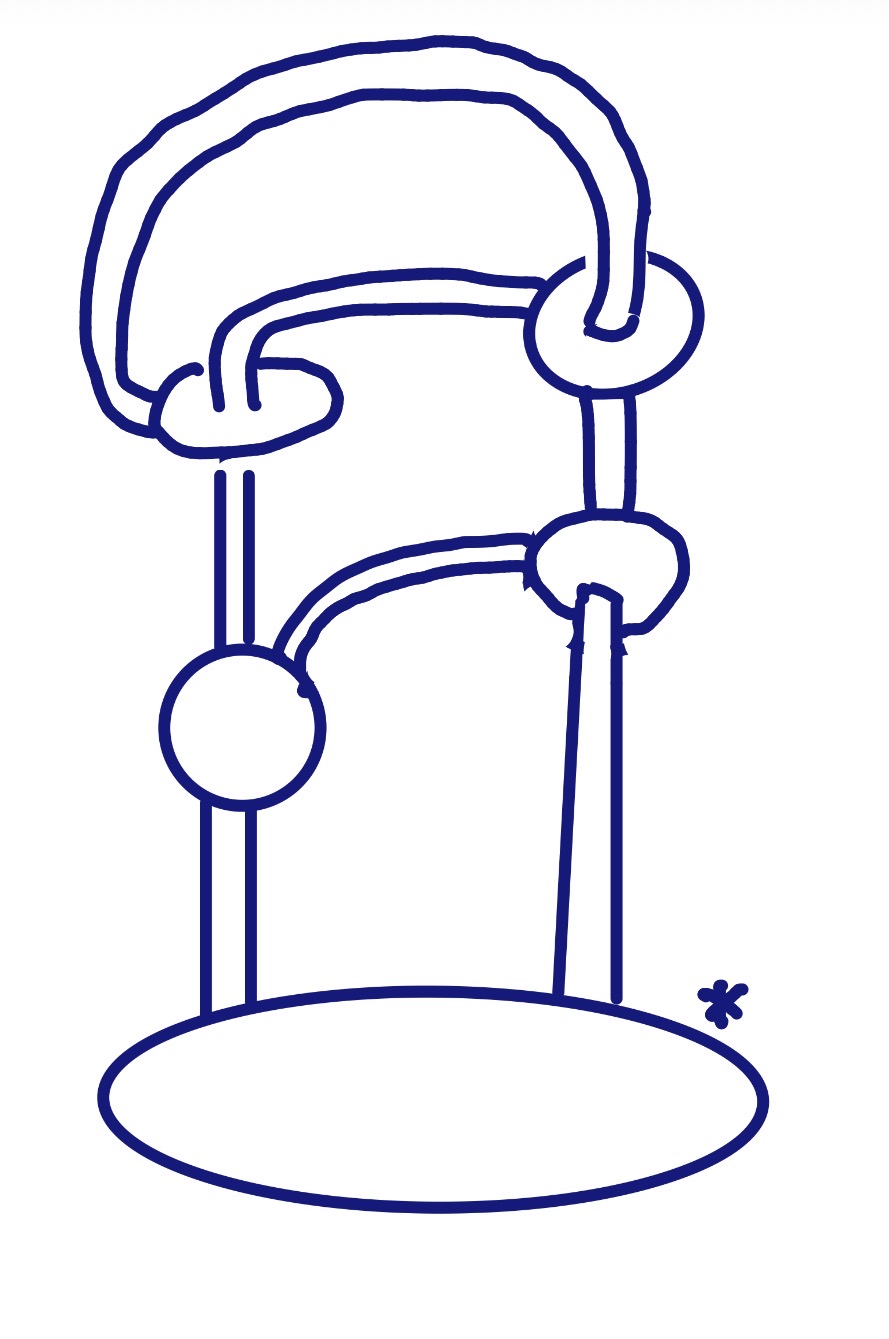}
    \caption{Example of a ribbon presentation}
    \label{fig:exampleofRibbonPresentation}
\end{figure}

\begin{Not} [Orientation of a crossing]
    Since $\mathcal{D} \cup \mathcal{B}$ is an oriented disk, each disk $D$ is oriented.  
    We orient the core of each band $B_i$ so that it goes from a leaf to the base disk $D_0$.  
    A crossing of a band with a disk (respectively, a crossing of a line with a disk) is called \emph{positive} 
    if the core of the band  gives the oriented normal vector of $D_{i}$ in $\R^3$.  
    Otherwise, it is called \emph{negative}.
\end{Not}

We recall deformations of ribbon presentations that preserve embeddings modulo immersions.

\begin{Not} [{\cite[Definition~3.2]{HS01}}]
   We define the S1, S3, S4 and S7 moves as illustrated in Figure \ref{fig:SMoves}.
% The gray-colored region in the diagram of the S4-move represents either a part of a band or a part of a line segment.

    \begin{figure}[htbp]
        \centering
    
        % ==================================================
        % 左上: S1-move
        % ==================================================
        \begin{subfigure}[b]{0.48\textwidth}
            \centering
            % scaleboxで図全体を縮小（0.5倍）
            \scalebox{0.4}{
                \begin{tikzpicture}
                    \def\R{1.2} \def\Cx{2.0} \def\Angle{15} \def\Sep{8.5}
                    % --- Left Figure ---
                    \begin{scope}[shift={(-\Sep/2, 0)}]
                        \draw[thick, dashed, line width=2pt] (-\Cx, \R) arc [start angle=90, end angle=270, radius=\R];
                        \draw[thick, line width=2pt] (-\Cx, -\R) arc [start angle=270, end angle=360, radius=\R];
                        \draw[thick, line width=2pt] (-\Cx, \R) arc [start angle=90, end angle=0, radius=\R];
                        \draw[thick, dashed, line width=2pt] (\Cx, -\R) arc [start angle=-90, end angle=90, radius=\R];
                        \draw[thick, line width=2pt] (\Cx, \R) arc [start angle=90, end angle=180, radius=\R];
                        \draw[thick, line width=2pt] (\Cx, -\R) arc [start angle=270, end angle=180, radius=\R];
                        \draw[thick, line width=2pt] ({-\Cx + \R*cos(\Angle)}, {\R*sin(\Angle)}) -- ({\Cx - \R*cos(\Angle)}, {\R*sin(\Angle)});
                        \draw[thick, line width=2pt] ({-\Cx + \R*cos(\Angle)}, {-\R*sin(\Angle)}) -- ({\Cx - \R*cos(\Angle)}, {-\R*sin(\Angle)});
                    \end{scope}
                    % --- Arrow ---
                    \draw[line width=2pt, <->, >=latex] (-0.5, 0) -- (0.5, 0); 
                    % --- Right Figure ---
                    \begin{scope}[shift={(\Sep/2, 0)}]
                        \draw[thick, dashed, line width=2pt] (-\Cx, \R) arc [start angle=90, end angle=270, radius=\R];
                        \draw[thick, dashed, line width=2pt] (\Cx, -\R) arc [start angle=-90, end angle=90, radius=\R];
                        \draw[thick, line width=2pt] (-\Cx, \R) -- (\Cx, \R);
                        \draw[thick, line width=2pt] (-\Cx, -\R) -- (\Cx, -\R);
                    \end{scope}
                \end{tikzpicture}
            }
            \caption{S1-move}
            \label{fig:S1}
        \end{subfigure}
        \hfill
        % ==================================================
        % 右上: S3-move
        % ==================================================
        \begin{subfigure}[b]{0.48\textwidth}
            \centering
            \scalebox{0.4}{
                \begin{tikzpicture}
                    \def\R{1.2} \def\Cx{1.3} \def\Angle{15} \def\Sep{7.0}
                    % --- Left Figure ---
                    \begin{scope}[shift={(-\Sep/2, 0)}]
                        \draw[thick, line width=2pt] (\R, 0) arc [start angle=0, end angle=360, radius=\R];
                        \draw[thick, line width=2pt] ({\R*cos(\Angle)}, {\R*sin(\Angle)}) -- ({\Cx + \R*cos(\Angle)}, {\R*sin(\Angle)});
                        \draw[thick, line width=2pt] ({\R*cos(\Angle)}, {-\R*sin(\Angle)}) -- ({\Cx + \R*cos(\Angle)}, {-\R*sin(\Angle)});
                        \draw[thick, line width=2pt] ({-\R*cos(\Angle)}, {\R*sin(\Angle)}) -- ({-\Cx - \R*cos(\Angle)}, {\R*sin(\Angle)});
                        \draw[thick, line width=2pt] ({-\R*cos(\Angle)}, {-\R*sin(\Angle)}) -- ({-\Cx - \R*cos(\Angle)}, {-\R*sin(\Angle)});
                    \end{scope}
                    % --- Arrow ---
                    \draw[line width=2pt, <->, >=latex] (-0.5, 0) -- (0.5, 0);
                    % --- Right Figure ---
                    \begin{scope}[shift={(\Sep/2, 0)}]
                        \draw[thick, line width=2pt] ({\Cx + \R*cos(\Angle)}, {\R*sin(\Angle)}) -- ({-\Cx - \R*cos(\Angle)}, {\R*sin(\Angle)});
                        \draw[thick, line width=2pt] ({\Cx + \R*cos(\Angle)}, {-\R*sin(\Angle)}) -- ({-\Cx - \R*cos(\Angle)}, {-\R*sin(\Angle)});
                    \end{scope}
                \end{tikzpicture}
            }
            \caption{S3-move}
            \label{fig:S2}
        \end{subfigure}
    
        \vspace{1cm} % 上下の間隔
    
        % ==================================================
        % 左下: S4-move
        % ==================================================
        \begin{subfigure}[b]{0.48\textwidth}
            \centering
            \scalebox{0.4}{
                \begin{tikzpicture}[line cap=round,line join=round]
                    % --- Left Figure ---
                    \begin{scope}[shift={(0,0)}]
                        \def\R{2.2} \def\gap{0.55} \def\off{0.16}
                        % Fill Under
                        \fill[fill=ribbonfill] ($(-\R,\R)+(\off,\off)$) -- ($(-\gap,\gap)+(\off,\off)$) -- ($(-\gap,\gap)+(-\off,-\off)$) -- ($(-\R,\R)+(-\off,-\off)$) -- cycle;
                        \fill[fill=ribbonfill] ($( \gap,-\gap)+(\off,\off)$) -- ($( \R,-\R)+(\off,\off)$) -- ($( \R,-\R)+(-\off,-\off)$) -- ($( \gap,-\gap)+(-\off,-\off)$) -- cycle;
                        % Draw Under lines
                        \draw[line width=1.8pt] ($(-\R,\R)+(\off,\off)$) -- ($(-\gap,\gap)+(\off,\off)$);
                        \draw[line width=1.8pt] ($(-\R,\R)+(-\off,-\off)$) -- ($(-\gap,\gap)+(-\off,-\off)$);
                        \draw[line width=1.8pt] ($( \gap,-\gap)+(\off,\off)$) -- ($( \R,-\R)+(\off,\off)$);
                        \draw[line width=1.8pt] ($( \gap,-\gap)+(-\off,-\off)$) -- ($( \R,-\R)+(-\off,-\off)$);
                        % Draw Over
                        \draw[line width=1.8pt] ($(-\R,-\R)+(-\off,\off)$) -- ($( \R, \R)+(-\off,\off)$);
                        \draw[line width=1.8pt] ($(-\R,-\R)+(\off,-\off)$) -- ($( \R, \R)+(\off,-\off)$);
                    \end{scope}
                    % --- Arrow ---
                    \begin{scope}[shift={(4.0,0)}]
                        \draw[line width=2pt, <->, >=latex] (-0.7, 0) -- (0.7, 0); 
                    % S4 label removed
                    \end{scope}
                    % --- Right Figure ---
                    \begin{scope}[shift={(8.0,0)}]
                        \def\R{2.2} \def\gap{0.55} \def\off{0.16} \def\Cr{0.45}
                        \coordinate (C1) at ($( \R,-\R)!0.5!( \gap,-\gap) $);
                        \coordinate (C2) at ($( -\R,\R)!0.5!( -\gap,\gap) $);
                        \coordinate (C3) at ($( -\R,-\R)!0.5!( -\gap,-\gap) $);
                        \path[name path=circ1] (C1) circle[radius=\Cr];
                        \path[name path=circ2] (C2) circle[radius=\Cr];
                        % Connecting Ribbons
                        \def\AngA{-35} \def\AngB{200} \def\AngC{-50} \def\AngD{220} 
                        \draw[white, line width=5pt] ($(C3)+(\AngA:\Cr)$) to[out=\AngA, in=\AngB] ($(C1)+(\AngB:\Cr)$);
                        \draw[white, line width=5pt] ($(C3)+(\AngC:\Cr)$) to[out=\AngC, in=\AngD] ($(C1)+(\AngD:\Cr)$);
                        \draw[line width=1.8pt] ($(C3)+(\AngA:\Cr)$) to[out=\AngA, in=\AngB] ($(C1)+(\AngB:\Cr)$);
                        \draw[line width=1.8pt] ($(C3)+(\AngC:\Cr)$) to[out=\AngC, in=\AngD] ($(C1)+(\AngD:\Cr)$);
                        \def\AngE{120} \def\AngF{220} \def\AngG{140} \def\AngH{200} 
                        \draw[white, line width=5pt] ($(C3)+(\AngE:\Cr)$) to[out=\AngE, in=\AngF] ($(C2)+(\AngF:\Cr)$);
                        \draw[white, line width=5pt] ($(C3)+(\AngG:\Cr)$) to[out=\AngG, in=\AngH] ($(C2)+(\AngH:\Cr)$);
                        \draw[line width=1.8pt] ($(C3)+(\AngE:\Cr)$) to[out=\AngE, in=\AngF] ($(C2)+(\AngF:\Cr)$);
                        \draw[line width=1.8pt] ($(C3)+(\AngG:\Cr)$) to[out=\AngG, in=\AngH] ($(C2)+(\AngH:\Cr)$);
                        % Main Ribbons
                        \draw[line width=1.8pt] ($(-\R,-\R)+(-\off,\off)$) -- ($(-\gap,-\gap)+(-\off,\off)$);
                        \draw[line width=1.8pt] ($(-\R,-\R)+(\off,-\off)$) -- ($(-\gap,-\gap)+(\off,-\off)$);
                        \draw[line width=1.8pt] ($( \gap, \gap)+(-\off,\off)$) -- ($( \R, \R)+(-\off,\off)$);
                        \draw[line width=1.8pt] ($( \gap, \gap)+(\off,-\off)$) -- ($( \R, \R)+(\off,-\off)$);
                        \path[name path=lineTop] ($( -\R,\R)+(\off,\off)$) -- ($( \R,-\R)+(\off,\off)$);
                        \path[name intersections={of=lineTop and circ2, sort by=lineTop, by={T2a, T2b}}];
                        \path[name intersections={of=lineTop and circ1, sort by=lineTop, by={T1a, T1b}}];
                        \path[name path=lineBot] ($( -\R,\R)+(-\off,-\off)$) -- ($( \R,-\R)+(-\off,-\off)$);
                        \path[name intersections={of=lineBot and circ2, sort by=lineBot, by={B2a, B2b}}];
                        \path[name intersections={of=lineBot and circ1, sort by=lineBot, by={B1a, B1b}}];
                        % Draw Parts
                        \fill[fill=ribbonfill] ($( -\R,\R)+(\off,\off)$) -- (T2a) -- (B2a) -- ($( -\R,\R)+(-\off,-\off)$) -- cycle;
                        \draw[line width=1.8pt, shorten >=-2pt] ($( -\R,\R)+(\off,\off)$) -- (T2a); 
                        \draw[line width=1.8pt, shorten >=-2pt] ($( -\R,\R)+(-\off,-\off)$) -- (B2a); 
                        \coordinate (T_start) at ($(T2b)!-10pt!(T1a)$);
                        \coordinate (T_end)   at ($(T1a)!-10pt!(T2b)$);
                        \coordinate (B_start) at ($(B2b)!-10pt!(B1a)$);
                        \coordinate (B_end)   at ($(B1a)!-10pt!(B2b)$);
                        \draw[line width=1.8pt, fill=ribbonfill] (T_start) -- (T_end) -- (B_end) -- (B_start) -- cycle;
                        \fill[fill=ribbonfill] (T1b) -- ($( \R,-\R)+(\off,\off)$) -- ($( \R,-\R)+(-\off,-\off)$) -- (B1b) -- cycle;
                        \draw[line width=1.8pt, shorten <=-2pt] (T1b) -- ($( \R,-\R)+(\off,\off)$);
                        \draw[line width=1.8pt, shorten <=-2pt] (B1b) -- ($( \R,-\R)+(-\off,-\off)$);
                        % Disks
                        \draw[white, line width=5pt] (C1) ++(-100:\Cr) arc (-100:85:\Cr);
                        \draw[line width=1.8pt] (C1) ++(-180:\Cr) arc (-180:80:\Cr);
                        \draw[white, line width=5pt] (C2) ++(30:\Cr) arc (30:180:\Cr);
                        \draw[line width=1.8pt] (C2) ++(0:\Cr) arc (0:260:\Cr);
                        \filldraw[fill=white, line width=1.8pt] (C3) circle[radius=\Cr];
                    \end{scope}
                \end{tikzpicture}
            }
            \caption{S4-move}
            \label{fig:S4}
        \end{subfigure}
        \hfill
        % ==================================================
        % 右下: S7-move
        % ==================================================
        \begin{subfigure}[b]{0.48\textwidth}
                \scalebox{0.4}{
                \begin{tikzpicture}[thick, >=latex, yscale=2.5]
                    % --- パラメータ設定 ---
                    \def\BigR{15}
                    \def\ArcAng{8}
                    \def\BandW{0.5}
                    \def\BandH{1.8}
                    \def\Tilt{45}
                    \def\Sep{8.5}
                    \def\ArcYOffset{1.5}
                    \def\RibbonAng{3.5}
                
                    % --- Left Figure ---
                    \begin{scope}[shift={(-\Sep/2, \ArcYOffset-\BigR)}]
                        % 弧の輪郭を太く
                        \draw [line width=2pt] ({90-\ArcAng}:\BigR) arc ({90-\ArcAng}:{90+\ArcAng}:\BigR);
                        % 左のバンド
                        \draw[line width=2pt] ({90+\RibbonAng}:\BigR) -- ++({180-\Tilt}:\BandH);
                        \draw[line width=2pt] ({90+\RibbonAng-\BandW/0.25}:\BigR) -- ++({180-\Tilt}:\BandH);
                        % 右のバンド
                        \draw[line width=2pt] ({90-\RibbonAng}:\BigR) -- ++({\Tilt}:\BandH);
                        \draw[line width=2pt] ({90-\RibbonAng+\BandW/0.25}:\BigR) -- ++({\Tilt}:\BandH);
                    \end{scope}
                
                    % --- Arrow ---
                     \draw[line width=2pt, <->, >=latex] (-0.8, \ArcYOffset + 0.8) -- (0.8, \ArcYOffset + 0.8);
                
                    % --- Right Figure ---
                % --- Right Figure ---
                    \begin{scope}[shift={(\Sep/2, \ArcYOffset-\BigR)}]
                        % 弧の輪郭を太く
                        \draw[line width = 2pt] ({90-\ArcAng}:\BigR) arc ({90-\ArcAng}:{90+\ArcAng}:\BigR);
                
                        % 1. 奥にあるバンド（左から右へ）を描画
                        \draw[line width=2pt] ({90+\RibbonAng}:\BigR) -- ++({\Tilt}:\BandH);
                        \draw[line width=2pt] ({90+\RibbonAng-\BandW/0.25}:\BigR) -- ++({\Tilt}:\BandH);
                
                        % 2. 手前のバンドの中身を白塗り
                        % 【修正点】
                        %  - 始点を弧から少し離す (+0.4) ことで、弧の線を消さないようにします。
                        %  - calcライブラリを使って座標を正確に計算します。
                        
                        \fill[white]
                            % 1点目: 右側の根元から少し進んだ点
                            ($ ({90-\RibbonAng}:\BigR) + ({180-\Tilt}:0.4) $) --
                            % 2点目: 右側の先端
                            ++({180-\Tilt}:{\BandH-0.4}) --
                            % 3点目: 左側の先端 (根元座標 + 全長ベクトル)
                            ($ ({90-\RibbonAng+\BandW/0.25}:\BigR) + ({180-\Tilt}:\BandH) $) --
                            % 4点目: 左側の根元から少し進んだ点
                            ($ ({90-\RibbonAng+\BandW/0.25}:\BigR) + ({180-\Tilt}:0.4) $) --
                            cycle;
                
                        % 3. 手前にあるバンドの輪郭線を描画（ここは元のまま全区間引きます）
                        \draw[line width=2pt] ({90-\RibbonAng}:\BigR) -- ++({180-\Tilt}:\BandH);
                        \draw[line width=2pt] ({90-\RibbonAng+\BandW/0.25}:\BigR) -- ++({180-\Tilt}:\BandH);
                    \end{scope}
                \end{tikzpicture} 
                }
            \caption{S7-move}
            \label{fig:S7}
        \end{subfigure}
    
        \caption{S1, S3, S4, S7-moves}
        \label{fig:SMoves}
        \end{figure}
    
\end{Not}

\begin{Lem}
    Let $P$ be a ribbon presentation.
    By applying S1-moves finitely many times, we can arrange that every disk intersects at most one band.
\end{Lem}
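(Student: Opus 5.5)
Reading the statement as: every disk is crossed by at most one band (counting crossings on the disk's interior). The S1-move, as drawn in Figure~\ref{fig:SMoves}, replaces a single band by a chain "band -- new disk -- band"; equivalently, it inserts a new node into a band. I will use it to split a leaf with several crossings into several disks, each carrying exactly one crossing.

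The plan is an induction on the complexity
\[
    N(P)=\sum_{D}\max\bigl(0,\,c(D)-1\bigr),
\]
where the sum runs over the disks of $P$ and $c(D)$ is the number of crossings in the interior of $D$. We have $N(P)=0$ exactly when every disk meets at most one band, so it suffices to find finitely many S1-moves that decrease $N(P)$ by one.

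Suppose $c(D)\ge 2$. The crossings of $D$ are finitely many disjoint arcs $\alpha_1,\dots,\alpha_m$ in the interior of $D$, each one the intersection of $D$ with a transverse band.
\begin{itemize}
    \item Choose a properly embedded arc $\beta\subset D$ that avoids all the $\alpha_i$, leaves $\alpha_m$ on one side, and leaves $\alpha_1,\dots,\alpha_{m-1}$ on the other. Such an arc exists because the $\alpha_i$ are disjoint arcs in a disk; draw $\beta$ just outside a small regular neighborhood of $\alpha_m$, running from $\partial D$ around it and back to $\partial D$.
    \item Thicken $\beta$ to a thin strip. Cutting along the strip's core expresses $D$ as a union of two disks $D',D''$ joined by a short band.
    \item Read in reverse, this is precisely an S1-move: a disk is replaced by two disks connected by a band, with the original attaching bands distributed according to where their attaching arcs on $\partial D$ lie.
\end{itemize}

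The new band is short and lies inside a neighborhood of $D$ disjoint from the other bands, so it creates no new crossings. The attaching bands of $D$ and the bands through $D$ are unchanged. Hence the resulting object is again a ribbon presentation, with $c(D')=1$ and $c(D'')=m-1$. It remains an immersed based oriented disk; if $D=D_0$, keep the base point on whichever piece contains it, which is again a disk. Thus $N$ drops by exactly one, and induction finishes the proof in $N(P)$ moves.

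The main obstacle is checking that this splitting really is an instance of the S1-move as defined in \cite{HS01}. Their move is drawn as two disks joined by a band becoming a single band (or a single disk), and one must confirm the move is allowed when the disk has interior crossings and arbitrary other bands attached along its boundary. I would handle this locally. Isotope $D$ so that the strip around $\beta$ looks exactly like the model picture (two half-disks with dashed outer boundary joined by a band). Every crossing and every attaching band then lies in the dashed, unchanged part, since the move only alters a neighborhood of $\beta$. A minor point is making $\beta$ compatible with the cyclic order of attaching bands on $\partial D$. This is automatic: $\beta$'s endpoints may be placed anywhere on $\partial D$ away from the attaching arcs, and attaching bands simply go to whichever of $D',D''$they meet.
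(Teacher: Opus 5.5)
Your proposal is correct and takes essentially the same approach as the paper, whose proof consists only of applying S1-moves (one disk $\to$ two disks joined by a crossing-free band) to split each disk carrying several crossings. Your complexity $N(P)$ and the separating arc $\beta$ make that figure-based argument explicit. One small point: your opening description of S1 as inserting a node into a band does not match the figure, which shows two disks joined by a band $\leftrightarrow$ a single disk, but the step you actually carry out uses this correct disk-splitting form.
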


\begin{proof}
    We apply the S1-moves as shown in Figure~\ref{fig:s1_move}.

  \begin{figure}[htbp]
        \centering
        \includegraphics[width=0.5\linewidth]{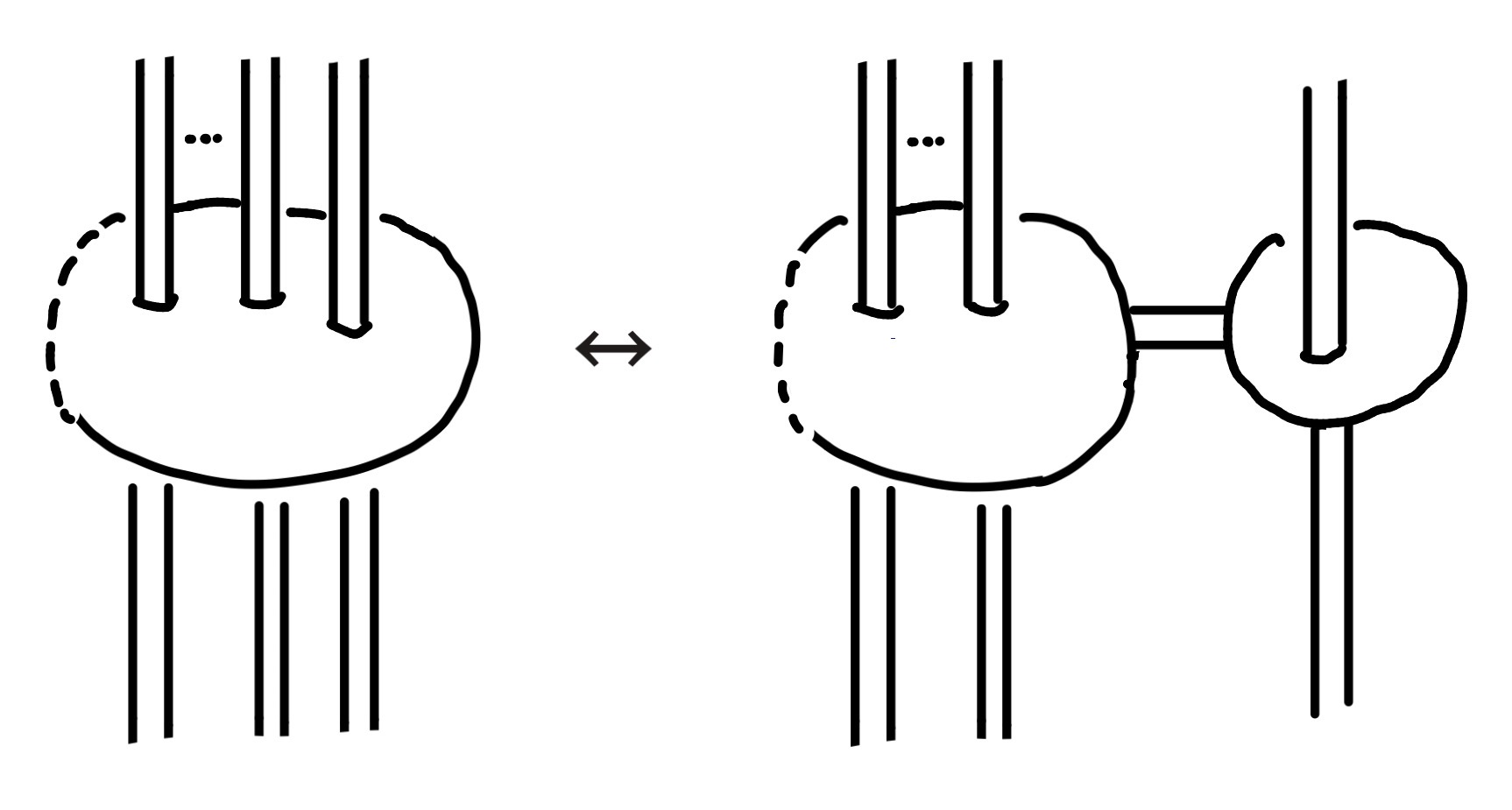}
        \caption{Applying S1-moves as shown in the figure.}
        \label{fig:s1_move}
    \end{figure}
\end{proof}

By this lemma, in this section, we assume that every disk of a ribbon presentation intersects at most one band.
Moreover, near each intersection, we use the local model
\begin{align*}
    B &= \{(x_1, x_2, 0) \mid |x_1| \le 3, |x_2| \le \frac{1}{2} \}, \\
    D &= \{(0,x_2, x_3 ) \mid x_2^2 +x_3^2 \le 1 \}.
\end{align*}

\begin{Not}
    We regard $\R^n$ as $\R^{3} \times \R^{n-j-2} \times \R^{j-1} $.
    Set
    \begin{equation*}
        V_P = \left(\mathcal{B} \times \left[ -\frac{1}{4}, \frac{1}{4} \right]^{j-1} \right) \bigcup \left(\mathcal{D} \times \left[  -\frac{1}{2}, \frac{1}{2}  \right]^{j-1} \right) \subset \R^3 \times \boldsymbol{0} \times \R^{j-1}
    \end{equation*}
    We write the manifold with boundary obtained by rounding the corners of $V_P$ as $V_P'$.
\end{Not}

\begin{Def}
    The long embedding $\varphi_{P}: \R^j \longrightarrow \R^n$ associated with $P$ is defined by the submanifold $(\partial V_P') \# \R^j$. At this stage, $\varphi_P$ is treated primarily as a submanifold; its explicit structure as an embedding map will be specified later in Definition \ref{def of canonical path}.
\end{Def}

Next, we define a family of embeddings obtained from $\varphi_P$. 

\begin{Not}\label{perturbated band}
    We denote by $S$ the sphere $S^{n-j-2}$ defined as
    \begin{equation*}
        S = S^{n-j-2} = \{ (x_3, \dots, x_{n-j+1}) \in \mathbb{R}^{n-j-1} \mid (x_3-1)^2 + \dots + x_{n-j+1}^2 = 1 \}.
    \end{equation*}
    Then, the \emph{perturbed band} $B(v)$ obtained by perturbing $B$ in the direction of $v \in S$ is defined by
    \begin{equation*}
        B(v) = \left\{ (x_1, x_2, \gamma(x_2)v) \in \mathbb{R}^2 \times \mathbb{R}^{n-j-1} \mid |x_1| \le 3, |x_2| < 1/2 \right\}.
    \end{equation*}
    Here, the function $\gamma$ is a smooth test function defined as
    \begin{equation*} 
        \gamma(y) = 
        \begin{cases}
            \exp\left( - \frac{y^2}{\sqrt{9-y^2}} \right) \quad &(|y| < 3), \\
            0 &(|y| \ge 3).
        \end{cases}
    \end{equation*}
\end{Not}

% \begin{Def}
%     Let $ v  \in S^{n-j-2}$ and $1\le i \le k$
%     We define $B_i(v)$ as the bands in $\R^3 \times \R^{n-j-2} \times \boldsymbol{0}$ obtained from bands $B$ having intersection with $Q_i$ by replacing each band $B \times \boldsymbol{0}$ with $B(v)$ at its crossing in $Q_i$.
% \end{Def}

\begin{Def}
    A \emph{labeled ribbon presentation of order $k$} is a pair consisting of a ribbon presentation $P$ and a surjection $\alpha \colon \{ \text{crossings in } P \} \twoheadrightarrow \{ 1, 2, \dots, k \}$.
\end{Def}

\begin{Not}
    Let $\boldsymbol{v}=(v_1,\dots,v_k) \in (S)^k$. We define the perturbed collection of bands $\mathcal{B}_{\boldsymbol{v}}$ as follows: 
    for each band $B$ and for each crossing at $B$ labeled by $i$, we replace the band $B$ with $B(v_i)$ at the crossing.
    We denote the union of all perturbed bands by $\mathcal{B}_{\boldsymbol{v}}$. 
    The \emph{perturbed ribbon presentation} associated with $P$ and $\boldsymbol{v}$ is defined as $P_{\boldsymbol{v}} = \mathcal{D} \cup \mathcal{B}_{\boldsymbol{v}}$. Furthermore, we define the following neighborhood $V_{P_{\boldsymbol{v}}}$ of the image:
    \begin{equation*}
        V_{P_{\boldsymbol{v}}} = \left( \mathcal{B}_{\boldsymbol{v}} \times \left[ -\frac{1}{4}, \frac{1}{4} \right]^{j-1} \right) \cup \left( \mathcal{D} \times \left[ -\frac{1}{2}, \frac{1}{2} \right]^{j-1} \right).
    \end{equation*}
\end{Not}

\begin{Def}
    For $\boldsymbol{v} \in (S)^k$, a long embedding $\varphi_{P_{\boldsymbol{v}}}: \R^j \longrightarrow \R^n $
    is defined as 
    $\varphi_{P_{\boldsymbol{v}}} = \partial V_{P_{\boldsymbol{v}}}' \# \R^j $.
    Here, $V_{P_{\boldsymbol{v}}}'$ is obtained from $V_{P_{\boldsymbol{v}}}$ by smoothing its corners.
    Note that $\varphi_{P_{\boldsymbol{0}}} = \varphi_P $.
\end{Def}

\begin{Def} \label{def of canonical path}
    We define a cycle 
    \[
    c_P: (S^{n-j-2})^{k} 
    \longrightarrow 
    \bEmb_{c}\bigl(\R^j, \R^n \bigr)
    \]
    as follows. 
    Consider the following sequence of resolutions of crossings, 
    which are possible in the space of immersions:
    \begin{itemize}
        \item[($m1$)] 
        Pull each disk $D_{i'}$ corresponding to a crossing 
        with a band of $\mathcal{B}$ 
        in the $x_1$ direction, so that no $D_{i'}$, 
        $B_{i''}$ remain intersecting.
        \item[($m2$)] 
        Pull back $D_{i'}$ and $B_{i''}$ to the base disk $D_0$.
    \end{itemize}
    This operation yields a path from $\varphi_{P_{\boldsymbol{v}}}$ to the trivial embedding $\R^j \longrightarrow \R^n$.  
    Using this path and the standard coordinates of $\R^j$, 
    we equip each point of the submanifold $\varphi_{P_{\boldsymbol{v}}}$ with a coordinate system.  
    The same path also defines a lift
    \[
    \widetilde{\varphi}_{\boldsymbol{v}} : 
    (S^{n-j-2})^k \to \bEmb_{c}(\R^j, \R^n )
    \]
    of ${\varphi}_{\boldsymbol{v}}$.
    We define the cycle associated with $P$ as 
    \[ c_P: (S^{n-j-2})^k \longrightarrow \bEmb_c(\R^j,\R^n), \quad v \mapsto \widetilde{\varphi}_{P_{\boldsymbol v}}  \].
\end{Def}

\subsection{Cycles associated with chord diagrams} \label{subsec:Cycles associated with chord diagrams}

We first construct a modified ribbon cycle associated with a chord diagram.
In \S~\ref{Sec:The cycle is in the image of Hurewicz map}, 
we will reconstruct this generalized ribbon cycle to explicitly demonstrate how the cycle constructed below arises from the homotopy groups of the space of long embeddings.

Let $C$ be a chord diagram of order $k$, and let $\{p_i\}_i$ denote its ordered chords.
Let $g-1$ be the number of chords whose initial points do not lie on the $x$-axis.
Based on these data, in this section we construct a cycle $\psi = \psi(C)$ of degree $k(n-j-2)+(g-1)(j-1)$.

First, we introduce type $\mathrm{I}$ and type $\mathrm{II}$ planetary-like systems, which are analogues of the planetary systems introduced in \cite{Yos25a}.
We fix a sufficiently small parameter $\epsilon$ satisfying $0 < \epsilon < \frac{1}{10000}$.

\begin{Def}[Type $\mathrm{I}$ planetary-like system] \label{def:TypeIPlanetaryLikeSystem}
    A \emph{type $\mathrm{I}$ planetary-like system} $\mathcal{S}_{\mathrm{I}}$ is a subset $D_+ \cup D_- \cup (I \times S) \subset \mathbb{D}^j$, where $D_+$ and $D_-$ are disks of radius $\epsilon^2$ centered at the origin and at $(1/2, 0, \dots, 0)$, respectively.
    The set $I \times S$ is an annulus defined by
    \begin{equation*}
        I \times S = \{ x \in \mathbb{D}^j \mid \tfrac{1}{2}\epsilon \le \|x\| \le \epsilon \}.
    \end{equation*}
    (See Figure~\ref{fig:TypeIPranetaryLikeSystem}.)
\end{Def}

\begin{figure}[h]
    \centering
    \begin{tikzpicture}[
      scale=0.4,
        >=Latex, % 矢印の形状
        % --- スタイル定義 ---
        annulus/.style n args={3}{
            draw=#1, thick, 
            fill=#1!10,
            even odd rule
        },
        hatched disk/.style={
            draw=#1, thick,
            pattern=north east lines,
            pattern color=#1
        }
    ]
    \begin{scope}[shift={(0,0)}]
            % 赤いアニュラス (外側)
            \path[annulus={red}{3.2}{2.4}] (0,0) circle (8.1) (0,0) circle (8.0);
            
            \begin{scope}[shift={(0, 0)}]
                % 青いアニュラス (I x S)
                \path[annulus={blue}{0.9}{0.6}] (0,0) circle (3.1) (0,0) circle (3.0);
                
                % 真ん中のディスク (D_+)
                \path[hatched disk=black] (0,0) circle (1.0);
                \coordinate (CenterT2) at (0,0); 
            \end{scope}
            
            \begin{scope}[shift={(5.5, 0)}]
                % 右のディスク (D_-)
                \path[hatched disk=black] (0,0) circle (1.0);
                \coordinate (CenterR1) at (0,0);
            \end{scope}
    \end{scope}

    % --- 以下、ラベルと矢印の追加 ---
    
    % 1. D_+ (真ん中のディスクへの矢印)
    %    位置: 左上から中心へ
    \draw[->, thick] (-2.5, 2.5) node[above left] {$D_+$} -- (-0.8, 0.8);

    % 2. I x S (青いアニュラスへの矢印)
    %    位置: 上方向から半径3.05の地点へ (角度90度付近)
    \draw[->, thick] (0, 5.0) node[above] {$I \times S$} -- (0, 3.2);

    % 3. D_- (右のディスクへの矢印)
    %    位置: 右上からディスク中心(5.5, 0)の縁へ
    \draw[->, thick] (8.0, 2.5) node[above right] {$D_-$} -- (6.3, 0.8);

\end{tikzpicture}
    \caption{Type~$\mathrm{I}$ planetary-like system.}
    \label{fig:TypeIPranetaryLikeSystem}
\end{figure}

\begin{Def}[Type $\mathrm{II}$ planetary-like system]\label{def:TypeIIPlanetaryLikeSystem}
    A \emph{type $\mathrm{II}$ planetary-like system} $\mathcal{S}_{\mathrm{II}}$ is a subset $D_+ \cup D_- \cup D'_+ \cup D'_- \subset \mathbb{D}^j$, where $D_+, D_-, D'_+$, and $D'_-$ are disks, each with radius $\epsilon^2$. The centers of these disks are given as follows:
    \begin{itemize}
        \item $D_+$: $(0,0, \dots, 0)$
        \item $D_-$: $(-\frac{1}{2}, 0, \dots, 0)$
        \item $D'_+$: $(\epsilon, 0, \dots, 0)$
        \item $D'_-$: $(\frac{1}{2}, 0, \dots, 0)$
    \end{itemize}
\end{Def}

\begin{figure}[h]
    \centering
    \begin{tikzpicture}[
      scale=2.0,
        >=Latex, % 矢印の形状
        % --- スタイル定義 ---
        annulus/.style n args={3}{
            draw=#1, thick, 
            fill=#1!10,
            even odd rule
        },
        hatched disk/.style={
            draw=#1, thick,
            pattern=north east lines,
            pattern color=#1
        }
    ]
    \begin{scope}[shift={(0,0)}]
            % 赤いアニュラス (外枠)
            \path[annulus={red}{0.9}{0.6}] (0,0) circle (1.4) (0,0) circle (1.39);
            
            % 青いアニュラス (内側の軌道)
            % 第1層 (半径0.6付近)
            \path[annulus={blue}{0.9}{0.6}] (0,0) circle (0.54) (0,0) circle (0.5);
            
            % --- パーツ類 ---
            
            % 左の青いdisk (D'_+) : 座標 (0.52, 0)
            \path[hatched disk=blue] (0:0.52) circle (0.1);
            
            % 真ん中のdisk (D_+) : 座標 (0, 0)
            \path[hatched disk=black] (0,0) circle (0.1);
            
            % 左側のdisk (D_-) : 座標 (-1.0, 0)
            \path[hatched disk=black] (-1.0,0) circle (0.1);
            
            % 一番右の青いdisk (D'_-) : 座標 (1.0, 0)
            \path[hatched disk=blue] (1.0,0) circle (0.1);
    \end{scope}
    
    % --- ラベルと矢印の追加 ---
    
    % 1. D_- (左側の黒ディスク)
    \draw[->, thick] (-1.2, 0.4) node[above] {$D_-$} -- (-1.05, 0.1);

    % 2. D_+ (真ん中の黒ディスク)
    \draw[->, thick] (-0.15, 0.4) node[above] {$D_+$} -- (-0.05, 0.1);

    % 3. D'_+ (左の青いディスク / 内側)
    \draw[->, thick] (0.52, 0.4) node[above] {$D'_+$} -- (0.52, 0.13);

    % 4. D'_- (一番右の青いディスク / 外側)
    \draw[->, thick] (1.2, 0.4) node[above] {$D'_-$} -- (1.05, 0.1);

    \end{tikzpicture}
    \caption{Type~$\mathrm{II}$ planetary-like system.}
    \label{fig:TypeIIPlanetaryLikeSystem}
\end{figure}

A planetary-like system is defined as the composition of Type $\mathrm{I}$ and Type $\mathrm{II}$ planetary-like systems.

% =========================================================
% Definition: Monostellar System
% =========================================================
\begin{Def}[Monostellar planetary-like system]
    The class of \emph{monostellar planetary-like systems} is defined inductively as follows.

    \begin{enumerate}
        \item (Base cases) Type $\mathrm{I}$ and Type $\mathrm{II}$ systems are monostellar planetary-like systems.

        For a system $\mathcal{T}$ of type $T \in \{\mathrm{I}, \mathrm{II}\}$, we fix the following \emph{structure embeddings} $e^{T}_+, e^{T}_- : \mathbb{D}^j \to \mathbb{D}^j$:
        \begin{itemize}
            \item $e^{T}_+$: The standard orientation-preserving embedding onto the disk $D_+ \subset \mathcal{T}$.
            \item $e^{T}_-$: The composition of the reflection across the hyperplane $x_1=0$ with the standard embedding onto the disk $D_- \subset \mathcal{T}$.
        \end{itemize}

        \item (Composition) If $\mathcal{S}$ is a monostellar planetary-like system and $\mathcal{S}'$ is a system of type $T \in \{\mathrm{I}, \mathrm{II}\}$, their \emph{composition} $\mathcal{S} \circ \mathcal{S}'$ is defined as the union of the images:
        \begin{equation*}
            \mathcal{S} \circ \mathcal{S}' = e^{T}_+(\mathcal{S}) \cup e^{T}_-(\mathcal{S}) \cup (\mathcal{S}' \setminus (D_+ \cup D_-)).
        \end{equation*}
        (Note that this definition implies that $\mathcal{S}$ is embedded into $D_+$ standardly and into $D_-$ with reflection).
    \end{enumerate}

    For iterated compositions, we omit parentheses and write $\mathcal{S}_1 \circ \mathcal{S}_2 \circ \dots \circ \mathcal{S}_k$.
\end{Def}

\begin{Rem}
    The order of composition defined here is \emph{opposite to} the standard convention for the little disks operad.
    Our convention is chosen to be consistent with the composition of ribbon presentations (see Section \ref{Sec:The cycle is in the image of Hurewicz map}).
\end{Rem}

 \begin{figure}[htbp]
        \centering
        \begin{tikzpicture}[
            scale=0.8, >=Latex,
            annulus/.style n args={3}{draw=#1, thick, fill=#1!10, even odd rule},
            hatched disk/.style={draw=#1, thick, pattern=north east lines, pattern color=#1}
        ]
            % --- Definitions of contents ---
            \newcommand{\InnerContentSII}{
                \path[annulus={blue}{}{}] (0,0) circle (0.6) (0,0) circle (0.5);
                \path[hatched disk=blue] (30:0.55) circle (0.15);
                \path[hatched disk=black] (0,0) circle (0.2);
                \path[hatched disk=black] (-1.0,0) circle (0.2);
                \path[hatched disk=blue] (1.0,0) circle (0.2);
            }
            \newcommand{\MiddleContentSI}{
                \path[annulus={blue}{}{}] (0,0) circle (3.1) (0,0) circle (3.0);
                \begin{scope}[shift={(0,0)}, scale=1.96] \InnerContentSII \end{scope}
                \begin{scope}[shift={(5.5,0)}, scale=1.41, xscale=-1] \InnerContentSII \end{scope}
            }
            
            % --- Main Drawing ---
            \begin{scope}[shift={(0,0)}]
                \path[annulus={red}{}{}] (0,0) circle (8.1) (0,0) circle (8.0);
                \path[annulus={blue}{}{}] (0,0) circle (3.1) (0,0) circle (3.0);
                \begin{scope}[shift={(0,0)}, scale=0.4] \MiddleContentSI \end{scope}
                \begin{scope}[shift={(5.5, 0)}, scale=0.3, xscale=-1] \MiddleContentSI \end{scope}
            \end{scope}
            
            \node[below] at (0, -8.5) {$\mathcal{S}_{\mathrm{II}} \circ \mathcal{S}_{\mathrm{I}} \circ \mathcal{S}_{\mathrm{I}}$};
        \end{tikzpicture}
        \caption{Example of compositions of planetary-like systems. The red circle is the boundary of $\mathbb{D}^j$.}
        \label{fig:exampleOfCompositionOfPlanetaryLikeSystem}
    \end{figure}
% =========================================================
% Definition: System associated with Chord Diagram
% =========================================================
\begin{Def}[Planetary-like system associated with a chord diagram]
    Let $C$ be a chord diagram on $s$ directed lines. For each line $i$ ($1 \le i \le s$) and each vertex index $l$ with $1 \le l \le t_i $, we assign a monostellar planetary-like system $\mathcal{S}_{i,l}$ as follows:
    \begin{itemize}
        \item If $(i,l)$ is the second element of a pair in $C$ (i.e., it is the endpoint of an ingoing chord), we set $\mathcal{S}_{i,l} = \mathcal{S}_{\mathrm{I}}$.
        \item Otherwise, i.e., if it is the endpoint of an outgoing chord, we set $\mathcal{S}_{i,l} = \mathcal{S}_{\mathrm{II}}$.
    \end{itemize}
    We define the composite system $\mathcal{S}_{i}$ for the $i$-th line by:
    \begin{equation*}
        \mathcal{S}_{i} = \mathcal{S}_{i,1} \circ \dots \circ \mathcal{S}_{i, t_i}.
    \end{equation*}

    Now, for each $i \in \{1, \dots, s\}$, let $e_i: \mathbb{D}^j \to \mathbb{R}^j$ be the embedding onto the disk of radius $\epsilon^2$ centered at $c_i = (i, 0, \dots, 0)$, defined by $e_i(x) = \epsilon^2 x + c_i$.

    The \emph{planetary-like system} $\mathcal{S}_C$ associated with $C$ is defined as:
    \begin{equation*}
        \mathcal{S}_C = \bigcup_{i=1}^{s} e_i(\mathcal{S}_i).
    \end{equation*}
\end{Def}

Next, we introduce a rotation of the disk $D'_+$ in the Type $\mathrm{II}$ planetary-like system.

\begin{Def}[Rotated Type $\mathrm{II}$ planetary-like system]
    Let $\theta \in S^{j-1}$. The \emph{rotated type $\mathrm{II}$ planetary-like system}, denoted by $\mathcal{S}_{\mathrm{II}}(\theta)$, is defined by replacing the disk $D'_+$ in $\mathcal{S}_{\mathrm{II}}$ with a disk $D'_+(\theta)$ of the same radius centered at $\epsilon \theta$.
\end{Def}

Compositions of rotated systems are defined in the same manner.
We construct a family of planetary-like systems parameterized by $(S^{j-1})^{\times (g-1)}$ by replacing the Type $\mathrm{II}$ system associated with the $i$-th chord (specifically, for chords whose initial points are not on the $x$-axis) with the rotated system $\mathcal{S}_{\mathrm{II}}(\theta_i)$.

% =========================================================
% Definition: Geometric Components (S_alpha, T_alpha)
% =========================================================
\begin{Def}
    Let $C$ be a chord diagram of order $k$ and $1 \le \alpha \le k$.
    Let $(i, l)$ and $(i', l')$ be the terminal and initial points of the $\alpha$-th chord, respectively.

    We define the geometric components associated with the $\alpha$-th chord as follows:
    \begin{itemize}
        \item The target annulus $S_{\alpha}$ is the image of $I \times S$ corresponding to the terminal point $(i, l)$, (i.e. a union of $2^{t_i-l}$-annuli  $(e_+ \cup e_-)^{t_i-l}(I \times S) $)
        \item The source disk $T_{\alpha}$ is the image of $D'_+ \cup D'_-$ corresponding to the initial point $(i', l')$ (i.e. $(e_+ \cup e_-)^{t_i-l}(D_+' \cup D_-')$) if $l >0$, and if $l=0$, then $T_{\alpha}$ is $(e_{+}\cup e_{-})^{t_i}(D_+ \cup D_-)$.
    \end{itemize}

    We classify the components of these sets into two types:
    \begin{itemize}
        \item \textbf{Primary part:} The component constructed exclusively by iterations of the map $e_+$ (i.e., the "backbone" of the tree structure).
        For example, the primary part of $S_\alpha$ is $e_i(e_+^{t_i-l}(I \times S))$.
        \item \textbf{Secondary part:} Any component involving at least one map $e_-$. These correspond to the "branches" created by the planetary-like system.
    \end{itemize}
\end{Def}

We construct a ribbon cycle that is compatible with the planetary-like system.

\begin{Mysec}[Disk-and-band systems associated with Type $\rI$ and Type $\rII$ planetary-like systems]
   Let $\mathcal{R}_{\rI}$ (resp. $\mathcal{R}_{\rII}$) denote a \emph{disk-and-band system} embedded in $\R^2$.
    Each system consists of two bands and three disks, as illustrated in Figure~\ref{fig:TypeIDisksAndBandSystem} (resp. Figure~\ref{fig:TypeIIDisksAndBandSystem}).
    
    Note that the components of these disk-and-band systems correspond to those of the planetary-like systems $\mathcal{S}_{\rI}$ and $\mathcal{S}_{\rII}$ (Definitions \ref{def:TypeIPlanetaryLikeSystem} and \ref{def:TypeIIPlanetaryLikeSystem}) as follows:
    \begin{itemize}
        \item The disks labeled $D_+^2$ and $D_-^2$ (and $D'^2_+, D'^2_-$ for Type $\rII$) correspond to the respective disks in the planetary-like systems.
        The red arrow in Figure~\ref{fig:TypeIIDisksAndBandSystem} depicts the rotation of $D'_+$ around $D_+$ in the Type~$\rII$ planetary-like system $\mathcal{S}_{\rII}$.
        
        \item The band $B_{\rI}$ in Type $\rI$ corresponds to the annulus $I \times S$.
    \end{itemize}
    
    Furthermore, note that one end of the band $B_0$ is free (i.e., not attached to any disk), serving as a connection point for composition.
    
   \begin{figure}[htbp]
    \centering
    % ==================================================
    % 左: Type I
    % ==================================================
    \begin{subfigure}[b]{0.48\textwidth}
        \centering
        \scalebox{0.4}{
            \begin{tikzpicture}
                % --- パラメータ定義 ---
                \def\R{1.2}      % 円の半径
                \def\Dist{3.8}   % 円の中心間の距離
                \def\HBA{18}     % Half Band Angle
                \def\LW{2pt}     % 線の太さ

                \tikzset{main/.style={thick, line width=\LW}}

                % --- 中央の円 ---
                \draw[main] (0, 0) circle (\R);

                % --- 右上の円 (D_-) ---
                \coordinate (C1) at (45:\Dist);
                \begin{scope}[shift={(C1)}]
                    \draw[main] (0, 0) circle (\R);
                \end{scope}
                \node[font=\large] at ($(C1) + (45:\R+0.6)$) {$D^2_-$};

                % --- 左上の円 (D_+) ---
                \coordinate (C2) at (135:\Dist);
                \begin{scope}[shift={(C2)}]
                    \draw[main] (0, 0) circle (\R);
                \end{scope}
                \node[font=\large] at ($(C2) + (135:\R+0.6)$) {$D^2_+$};

                % --- バンド ---
                \coordinate (C0) at (-90:\Dist);
                \draw[main] (-90-\HBA:\R) -- ($(C0) + (90+\HBA:\R)$);
                \draw[main] (-90+\HBA:\R) -- ($(C0) + (90-\HBA:\R)$);
                \node[font=\large, below] at ($(C0) +(90:\R) $) {$B_0$};

                \draw[main] (45-\HBA:\R) -- ($(C1) + (225+\HBA:\R)$);
                \draw[main] (45+\HBA:\R) -- ($(C1) + (225-\HBA:\R)$);
                \draw[main] (135+\HBA:\R) -- ($(C2) + (315-\HBA:\R)$);
                \draw[main] (135-\HBA:\R) -- ($(C2) + (315+\HBA:\R)$);

                % \emph{ここを添削}: 左側のバンドにラベルを追加
                \node[font=\large] at ($(0,0)!0.5!(C2)$) {$B_{\mathrm{I}}$};

            \end{tikzpicture}
        }
        \caption{$\mathcal{R}_{\mathrm{I}}$}
        \label{fig:TypeIDisksAndBandSystem}
    \end{subfigure}
    \hfill % 図の間隔を調整
    % ==================================================
    % 右: Type II
    % ==================================================
    \begin{subfigure}[b]{0.48\textwidth}
        \centering
        \scalebox{0.4}{
            \begin{tikzpicture}
                % --- パラメータ定義 ---
                \def\R{0.8}      % 円の半径
                \def\Dist{3.8}   % 円の中心間の距離
                \def\HBA{18}     % Half Band Angle (バンド幅の半分)
                \def\LW{2pt}     % 線の太さ
                \def\BandLen{2.5} % 下に伸びるバンドの長さ
                \def\ArR{1.1}    % 矢印の回転半径 (円より少し外側)

                \tikzset{main/.style={thick, line width=\LW}}

                % --- 中央の円 ---
                \draw[main] (0, 0) circle (\R);

                % --- 周囲のDiskとバンド (ループ処理) ---
                % \emph{ここを添削}: 数式部分を {} で囲むことでエラーを回避し、^2 を追加
                \foreach \ang/\lab/\labpos in {
                    15/{D'^2_-}/15,
                    60/{D'^2_+}/60,
                    120/{D^2_+}/120,
                    165/{D^2_-}/165
                } {
                    \coordinate (C) at (\ang:\Dist);
                    
                    % Disk描画
                    \begin{scope}[shift={(C)}]
                        \draw[main] (0, 0) circle (\R);
                    \end{scope}

                    % バンド描画
                    \draw[main] (\ang-\HBA:\R) -- ($(C) + (\ang+180+\HBA:\R)$);
                    \draw[main] (\ang+\HBA:\R) -- ($(C) + (\ang+180-\HBA:\R)$);

                    % ラベル配置
                    \node[font=\large] at ($(C) + (\labpos:\R+0.6)$) {$\lab$};
                }

                % --- 下に伸びるバンド (B_0) ---
                \draw[main] (270-\HBA:\R) -- ++(0, -\BandLen);
                \draw[main] (270+\HBA:\R) -- ++(0, -\BandLen);
                \node[font=\large] at (0, -\R-\BandLen-0.6) {$B_0$};

                % ==================================================
                % 矢印 (Wrapping Arrow)
                % ==================================================
                \draw[thick,red, line width=1.5pt] (75:\ArR) arc [start angle=75, end angle=100, radius=\ArR];
                \draw[thick,red, ->, >=latex, line width=1.5pt] (140:\ArR) to[out=210, in=-50, looseness=1.2] (55:\ArR);

            \end{tikzpicture}
        }
        \caption{$\mathcal{R}_{\mathrm{II}}$}
        \label{fig:TypeIIDisksAndBandSystem}
    \end{subfigure}

    \caption{}
    \label{fig:DisksAndBands}
\end{figure}
\end{Mysec}

Let $\mathcal{R}'$ be a disk-and-band system of Type $\rI$ or Type $\rII$.
The composition $\mathcal{R} \circ \mathcal{R}'$ is defined by attaching $\mathcal{R}$ to the disk $D^2_+$ in $\mathcal{R}'$ via the band $B_0$, and attaching the reflection of a copy of $\mathcal{R}$ to $D^2_-$.

\begin{figure}[htbp]
    \centering
    \scalebox{0.45}{ 
        \begin{tikzpicture}
            % ==================================================
            % パラメータ定義
            % ==================================================
            \def\RadRoot{1.2}      % Level 0 (Root) 半径
            \def\RadChild{0.8}     % Level 1 (Child) 半径
            \def\RadGrand{0.5}     % Level 2 (Grandchild) 半径
            
            \def\DistRoot{4.5}     % Root -> Child の距離
            \def\DistChild{2.8}    % Child -> Grandchild の距離
            
            \def\HBA{18}           % Half Band Angle
            \def\LW{2pt}           % 線幅
            
            \def\ScaleChild{0.7}   % Level 1 (Child) の縮小率
            \pgfmathsetmacro{\RealRadChild}{\RadChild * \ScaleChild}

            % --- カラー定義 ---
            \definecolor{ColorDprime}{RGB}{180, 0, 0}
            \definecolor{ColorDouter}{RGB}{180, 230, 255}
            \definecolor{ColorBandRootLeft}{RGB}{180, 0, 0}
            \definecolor{ColorBandMid}{RGB}{180, 230, 255}
            \definecolor{LabelRed}{RGB}{255, 0, 0}
            \definecolor{LabelBlue}{RGB}{0, 0, 255}

            \tikzset{main/.style={thick, line width=\LW}}

            % ==================================================
            % ラベルセット定義
            % ==================================================
            \newcommand{\SetLabelsLL}{\def\LabA{8}\def\LabB{1}\def\LabC{1}\def\LabD{8}}
            \newcommand{\SetLabelsLR}{\def\LabA{5}\def\LabB{4}\def\LabC{4}\def\LabD{5}}
            \newcommand{\SetLabelsRR}{\def\LabA{6}\def\LabB{3}\def\LabC{3}\def\LabD{6}}
            \newcommand{\SetLabelsRL}{\def\LabA{7}\def\LabB{2}\def\LabC{2}\def\LabD{7}}

            % ==================================================
            % [部品1] Level 2: Type II System (孫)
            % ==================================================
            \newcommand{\DrawLevelTwoTypeII}{
                % 本体中心円
                \draw[main, fill=white] (0, 0) circle (\RadGrand);
                
                % --- D'+, D'- (赤) ---
                \foreach \ang/\labmac in {15/\LabA, 60/\LabB} {
                    \coordinate (Pos) at (\ang:\DistChild);
                    \draw[main] (\ang-\HBA:\RadGrand) -- ($(Pos) + (\ang+180+\HBA:\RadGrand)$);
                    \draw[main] (\ang+\HBA:\RadGrand) -- ($(Pos) + (\ang+180-\HBA:\RadGrand)$);
                    \begin{scope}[shift={(Pos)}]
                        \draw[main, fill=ColorDprime] (0, 0) circle (\RadGrand);
                        \node[font=\bfseries\small, text=LabelRed, anchor=center] at (\ang:\RadGrand+0.35) {\labmac};
                    \end{scope}
                }
                
                % --- D+, D- (青) ---
                \foreach \ang/\labmac in {120/\LabC, 165/\LabD} {
                    \coordinate (Pos) at (\ang:\DistChild);
                    \draw[main] (\ang-\HBA:\RadGrand) -- ($(Pos) + (\ang+180+\HBA:\RadGrand)$);
                    \draw[main] (\ang+\HBA:\RadGrand) -- ($(Pos) + (\ang+180-\HBA:\RadGrand)$);
                    \begin{scope}[shift={(Pos)}]
                        \draw[main, fill=ColorDouter] (0, 0) circle (\RadGrand);
                        
                        % ラベル位置の微調整 (1と7は外側にずらす)
                        \def\LabelDist{0.35}
                        \ifnum\labmac=1 \def\LabelDist{0.6} \fi
                        \ifnum\labmac=7 \def\LabelDist{0.6} \fi
                        
                        \node[font=\bfseries\small, text=LabelBlue, anchor=center] at (\ang:\RadGrand+\LabelDist) {\labmac};
                    \end{scope}
                }

                % 矢印
                \def\LocalArR{\RadGrand * 1.5}
                \draw[thick,red, line width=1.5pt] (75:\LocalArR) arc [start angle=75, end angle=100, radius=\LocalArR];
                \draw[thick,red, ->, >=latex, line width=1.5pt] (140:\LocalArR) to[out=210, in=-50, looseness=1.2] (55:\LocalArR);
            }

            % ==================================================
            % [部品2] Level 1: Type I System (子)
            % ==================================================
            \newcommand{\DrawLevelOneComplex}[2]{ 
                \draw[main, fill=white] (0, 0) circle (\RadChild);

                % --- Left Branch ---
                \coordinate (LTwoLeft) at (135:\DistChild);
                \fill[ColorDouter] (135-\HBA:\RadChild) 
                                  -- ($(LTwoLeft) + (315+\HBA:\RadGrand)$)
                                  -- ($(LTwoLeft) + (315-\HBA:\RadGrand)$)
                                  -- (135+\HBA:\RadChild) -- cycle;
                \draw[main] (135-\HBA:\RadChild) -- ($(LTwoLeft) + (315+\HBA:\RadGrand)$);
                \draw[main] (135+\HBA:\RadChild) -- ($(LTwoLeft) + (315-\HBA:\RadGrand)$);

                \begin{scope}[shift={(LTwoLeft)}, rotate=45]
                    #1 
                    \DrawLevelTwoTypeII
                \end{scope}

                % --- Right Branch ---
                \begin{scope}[xscale=-1]
                    \coordinate (LTwoRight) at (135:\DistChild);
                    \draw[main] (135-\HBA:\RadChild) -- ($(LTwoRight) + (315+\HBA:\RadGrand)$);
                    \draw[main] (135+\HBA:\RadChild) -- ($(LTwoRight) + (315-\HBA:\RadGrand)$);
                    \begin{scope}[shift={(LTwoRight)}, rotate=45]
                        #2 
                        \DrawLevelTwoTypeII
                    \end{scope}
                \end{scope}
                \draw[main, fill=white] (0, 0) circle (\RadChild);
            }

            % ==================================================
            % [Main] Level 0: Root
            % ==================================================
            \draw[main, fill=white] (0, 0) circle (\RadRoot);
            \coordinate (RootC0) at (-90:\DistRoot);
            \draw[main] (-90-\HBA:\RadRoot) -- ($(RootC0) + (90+\HBA:\RadRoot)$);
            \draw[main] (-90+\HBA:\RadRoot) -- ($(RootC0) + (90-\HBA:\RadRoot)$);
            \node[font=\Huge] at ($(RootC0) + (0, -0.8)$) {$B_0$};

            % Attach Left
            \coordinate (LOneLeft) at (135:\DistRoot);
            \fill[ColorDprime] (135-\HBA:\RadRoot)
                                   -- ($(LOneLeft) + (315+\HBA:\RealRadChild)$)
                                   -- ($(LOneLeft) + (315-\HBA:\RealRadChild)$)
                                   -- (135+\HBA:\RadRoot) -- cycle;
            \draw[main] (135-\HBA:\RadRoot) -- ($(LOneLeft) + (315+\HBA:\RealRadChild)$);
            \draw[main] (135+\HBA:\RadRoot) -- ($(LOneLeft) + (315-\HBA:\RealRadChild)$);
            \begin{scope}[shift={(LOneLeft)}, rotate=45, scale=\ScaleChild]
                \DrawLevelOneComplex{\SetLabelsLL}{\SetLabelsLR}
            \end{scope}

            % Attach Right
            \begin{scope}[xscale=-1]
                \coordinate (LOneRight) at (135:\DistRoot);
                \draw[main] (135-\HBA:\RadRoot) -- ($(LOneRight) + (315+\HBA:\RealRadChild)$);
                \draw[main] (135+\HBA:\RadRoot) -- ($(LOneRight) + (315-\HBA:\RealRadChild)$);
                \begin{scope}[shift={(LOneRight)}, rotate=45, scale=\ScaleChild]
                    % \emph{ここを添削}: 引数の順序を修正
                    % xscale=-1 により、#1(Left Branch)が視覚的な右側(外側)へ、
                    % #2(Right Branch)が視覚的な左側(内側)へ移動するため、
                    % 外側用(RL)を第1引数、内側用(RR)を第2引数に渡す。
                    \DrawLevelOneComplex{\SetLabelsRL}{\SetLabelsRR}
                \end{scope}
            \end{scope}
            \draw[main, fill=white] (0, 0) circle (\RadRoot);

        \end{tikzpicture}
    }
    \caption{$\mathcal{R}_{\mathrm{II}} \circ \mathcal{R}_{\mathrm{I}} \circ \mathcal{R}_{\mathrm{I}}$ with labeled disks}
    \label{fig:ExampleOfCompositionsOfDyskBandsSystem}
\end{figure}

\begin{Con} \label{ConstructionOfCycleAssociatedWithChordDiagram}
    Let $C$ be a chord diagram.
    We define $c_C \colon (S^{j-1})^{g-1} \times (S^{n-j-2})^k \to \bEmb_c(\R^j, \R^n)$ as follows.

    We assign Type~$\rI$ or Type~$\rII$ to each vertex in $V(C)$, excluding the vertices lying on the $x$-axis.
    Analogously to the base cases, we define a composed system of embedded disks and bands, denoted by $(\mathcal{R})_{i}$, by composing Type~$\rI$ or Type~$\rII$ systems along the $i$-th line.
    The total system $\mathcal{R}$ is obtained by attaching $(\mathcal{R})_i$ to the base disk $D_0$ via $B_0$.

    Next, we construct a ribbon crossing in $\R^3$.
    Fix a chord $\alpha$.
    We consider the disks corresponding to the $j$-disks in $T_{\alpha}$.
    We arrange these disks so that they form crossings with the family of bands corresponding to $S_{\alpha}$ such that the disk $e_{s_i} e_{s_{i-1}}\dots e_{s_2} (D'_{s_1})$ (or $e_{s_i} e_{s_{i-1}}\dots e_{s_2} (D_{s_1})$) intersects these bands with sign $(-1)^{\# \{ a \mid s_a =- \}}$.
    These crossings are labeled by $\alpha$.
    Finally, by rotating the stems originating from $D_{+}'$ for each Type~$\rII$ planetary-like system, we obtain a family of embeddings $(S^{j-1})^{g-1} \times (S^{n-j-2})^k \to \bEmb_c(\R^j, \R^n)$.

    When $n-j = 2$, we must specify the relative ordering of intersections near the root of the bands.
    Let $D_1 = e_{s_i}\dots e_{s_2}(D'_{s_{1}})$ and $D_2 = e_{s'_i}\dots e_{s'_2}(D'_{s'_{1}})$ be distinct disks in $T_{\alpha}$ with indices $s_a, s'_a \in \{ +, - \}$. 

    Suppose that the index sequences coincide for all $a < b$ (i.e., $s_{a} = s'_a$), and differ at index $b$ with $s_b = -$ and $s'_b = +$.
    Then, $D_1$ intersects the bands closer to the root than $D_2$ does if and only if the number of negative indices preceding $b$ is odd, i.e.,
    \[
        \# \{a < b \mid s_a = - \} \equiv 1 \pmod{2}.
    \]
\end{Con}

The cycle constructed in Construction~\ref{ConstructionOfCycleAssociatedWithChordDiagram} has the following property, which we will use in \S~\ref{sec:Non-triviality} to show the nontriviality of the cycle.

\begin{Lem}
    The map $c_C \colon (S^{j-1})^{g-1} \times (S^{n-j-2})^k \to \bEmb_c(\R^j, \R^n)$ satisfies the following conditions.

    \begin{itemize}
        \item For each $\alpha$, the preimage of the neighborhood of the ribbon crossings labeled by $\alpha$ is contained in $S_{\alpha} \cup T_{\alpha}$.
        Moreover, the preimage of the boundary of the fattened disks of the crossings is contained in $T_{\alpha}$, and the preimage of the boundary of the thickened bands of the crossings is contained in $S_{\alpha}$.
    \end{itemize}
\end{Lem}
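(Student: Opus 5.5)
The lemma is a bookkeeping statement about the construction. My plan is to prove it by tracking where each piece of the embedded $j$-manifold $\varphi_{P_{\boldsymbol v}}=\partial V'_{P_{\boldsymbol v}}\#\R^j$ comes from in the parametrizing disk $\mathbb{D}^j$. The parametrization is the one fixed in Definition~\ref{def of canonical path}: the coordinates on the submanifold are pulled back along the canonical path of moves $(m1)$, $(m2)$ to the trivial embedding.

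First I will identify, for each basic system $\mathcal{R}_{\rI}$ and $\mathcal{R}_{\rII}$, the correspondence between the pieces of $\partial V'$ and the pieces of $\mathcal{S}_{\rI}$ and $\mathcal{S}_{\rII}$.
\begin{itemize}
\item The boundary of a thickened disk $D^2_{\pm}\times[-\tfrac12,\tfrac12]^{j-1}$ (respectively $D'^2_{\pm}\times[-\tfrac12,\tfrac12]^{j-1}$) is parametrized by the $j$-disk $D_{\pm}$ (respectively $D'_{\pm}$).
\item The boundary of the thickened band $B_{\rI}\times[-\tfrac14,\tfrac14]^{j-1}$ is a tube $I\times S^{j-1}$. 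It is parametrized by the annulus $I\times S$.
\item The stems $B_0$ and the remaining bands are parametrized by the complement of these pieces.
\end{itemize}
This is checked directly from the definitions: the move $(m2)$ retracts each disk along its band into the base. Hence the preimage of a disk is a small round disk, and the preimage of a band tube is an annular collar around it.

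Next I will show that this identification is compatible with composition. The composition $\mathcal{R}\circ\mathcal{R}'$ attaches $\mathcal{R}$ and its mirror image to $D^2_+$ and $D^2_-$. Correspondingly, $\mathcal{S}\circ\mathcal{S}'$ inserts $\mathcal{S}$ into $D_+$ via $e_+$ and into $D_-$ via $e_-$. By induction on the length of the composition, the preimage of each disk or band of $(\mathcal{R})_i$ is the image of the corresponding piece of $\mathcal{S}_{i,l}$ under the iterated maps $(e_+\cup e_-)^{\,\cdot}$, followed by $e_i$. In particular, the following hold.
\begin{itemize}
\item The bands corresponding to the terminal point $(i,l)$ of chord $\alpha$ have tube boundaries parametrized by $S_\alpha$.
\item The disks at the initial point have boundaries parametrized by $T_\alpha$. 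This includes the base case $l=0$, where $T_\alpha=(e_+\cup e_-)^{t_i}(D_+\cup D_-)$.
\end{itemize}
The rotation by $\theta\in S^{j-1}$ only moves $D'_+$ to $D'_+(\theta)$. That disk still lies inside the annulus region surrounding $D_+$, so it remains in $T_\alpha$, and the statement is uniform in the $(S^{j-1})^{g-1}$ parameters.

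Finally, the crossings labeled $\alpha$ are by construction exactly the intersections of these $T_\alpha$-disks with the $S_\alpha$-bands. The perturbation $B(v)$ by $v\in S^{n-j-2}$ is supported where $\gamma\neq0$, which lies inside the band's local model, so it does not change which pieces meet the crossing neighbourhood. A small neighbourhood of a crossing in $\R^n$ meets only the thickened disk and the thickened band there. Therefore its preimage lies in $S_\alpha\cup T_\alpha$, with the disk boundary in $T_\alpha$ and the band boundary in $S_\alpha$.

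The main obstacle is making the first step rigorous: showing that the canonical path of Definition~\ref{def of canonical path} really induces the stated round-disk and annulus parametrization, and not some reparametrization that smears a band tube across several regions. I would handle this by choosing the isotopy in $(m1)$, $(m2)$ supported in small neighbourhoods of each band and disk, and reading off the coordinates in the local model.
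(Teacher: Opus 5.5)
Your unpacking of the construction is correct and is essentially what the paper has in mind. The paper gives no separate proof of this lemma and treats it as immediate from Construction~\ref{ConstructionOfCycleAssociatedWithChordDiagram}: the composed disk-and-band systems are built to mirror the planetary-like systems, and the crossings labeled $\alpha$ are by definition those between the disks over $T_\alpha$ and the bands over $S_\alpha$.

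The one point to rephrase is the rotation. $D'_+(\theta)$ is not contained in the unrotated $D'_+$, and $\mathcal{S}_{\mathrm{II}}$ contains no annulus; the circle drawn around $D_+$ in the figure is only the orbit of $D'_+$. So $T_\alpha$ should be taken fiberwise over $(S^{j-1})^{g-1}$ as the image of $D'_+(\theta)\cup D'_-$, or replaced by the full orbit. The latter is consistent with the orbits appearing in the Same System Lemma.
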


\begin{Rem}
    The primary part (obtained by removing all secondary parts) recovers the cycle constructed in \cite[Section~6.2]{Yos25a}.
\end{Rem}

\begin{Exa}
    Here is an example of a chord diagram $C_0$, shown in Figure~\ref{fig:TheChordDiagramConstructionOfCycle}.
    The associated planetary-like system is shown in Figure~\ref{fig:exampleOfCompositionOfPlanetaryLikeSystem} and the associated ribbon presentation is shown in Figure~\ref{fig:ExampleOfCompositionsOfDyskBandsSystem}.
    In Figure~\ref{fig:ExampleOfCompositionsOfDyskBandsSystem}, disks and bands of the same color intersect, and the disks with smaller labels intersect closer to the roots of bands.  
       \begin{figure}[h]
        \centering
        \begin{tikzpicture}[
            xscale=0.5, yscale=0.7, >=Latex,
            mid arrow/.style={postaction={decorate,decoration={markings, mark=at position 0.55 with {\arrow{>}}}}},
            dot/.style={circle, fill=black, inner sep=1.5pt},
            dashed edge/.style={thick, dashed},
            lbl/.style={font=\scriptsize, inner sep=2pt}
        ]
            \begin{scope}[shift={(0,0)}]
                \coordinate (m) at (0,-1);
                \coordinate (Top) at (0, 4);
                \coordinate (O) at (0, 0);
                \coordinate (P1) at (0, 1);
                \coordinate (P2) at (0, 2); 
                \coordinate (P3) at (0, 3);
                
                \draw[->, thick] (m) -- (Top);
                \draw[dashed edge, mid arrow] (P1) to[out=180, in=180, looseness=2.5] node[pos=0.5, left, lbl] {$2$} (P3);
                \draw[dashed edge, mid arrow] (O) to[out=180, in=180, looseness=2.5] node[pos=0.5, left, lbl] {$1$} (P2);
                
                \foreach \p in {O,P1, P2, P3} { \node[dot] at (\p) {}; }
            \end{scope}
        \end{tikzpicture}
        \caption{The chord diagram $C_0$}
        \label{fig:TheChordDiagramConstructionOfCycle}
    \end{figure}
\end{Exa}

\subsection{Infiniteness of the top term of HGC}\label{subsec:Infiniteness_of_HGC}

We write $^{*}HGC_{n,j}$ for the corresponding chain complex
\footnote{ Let \(\Gamma\) and \(\Gamma'\) be two identical labeled admissible hairy graphs
with no odd automorphisms. We regard \(\Gamma\) as an element of
\(HGC_{n,j}\) and \({}^*\Gamma\) as the corresponding element of
\({}^*HGC_{n,j}\). The pairing is defined so that
\[
\langle \Gamma, {}^*\Gamma\rangle = |\operatorname{Aut}(\Gamma)|.
\]   };

\[HGC_{n,j} =\Hom( ^{*}HGC_{n,j} , \Q ) \]

We write $\mathcal{B}= H_{\mathrm{top}}(^{*}HGC_{n,j})$, which is the space of open Jacobi diagrams.

Next, we recall $\mathfrak{sl}_2$ weight system $W_{\mathfrak{sl}_2}$. Instead of the construction using Lie algebra tensors, $W_{\mathfrak{sl}_2}$ can be defined recursively using the local relations \cite{CV97} shown in Figure~\ref{fig:sl2_definitions}.

\begin{Def}\label{def:sl2_weight}
    The \emph{$\mathfrak{sl}_2$ weight system}, denoted by $W_{\mathfrak{sl}_2}$, is a map from the space of open Jacobi diagrams \footnote{The weight system is defined on the space of open Jacobi diagrams  which may be disconnected or have connected components without external vertices} to scalars (or a polynomial ring in $\hbar$). It is uniquely determined by the following five conditions, which are depicted in Figure~\ref{fig:sl2_definitions}:
    \begin{enumerate}
        \item[(A)] This relation allows one to resolve an edge connecting two trivalent vertices into a difference of chord configurations.
        \item[(B)] An isolated loop (bubble) consisting of dashed edges with vertices can be removed by multiplying the weight by $4\hbar$.
        \item[(C)] The value of a single strut (an isolated chord with external vertices) is normalized to $C$.
        % --- 追加箇所 ---
        \item[(D)] A simple closed dashed curve without vertices evaluates to $3$ (the dimension of $\mathfrak{sl}_2$).
        \item[(E)] The weight of the disjoint union of two diagrams $\Gamma$ and $\Gamma'$ is the product of their individual weights.
        % ----------------
    \end{enumerate}
    Since any admissible graph reduces to a scalar via a finite sequence of these operations, $W_{\mathfrak{sl}_2}$ is well-defined.
    In Figure~\ref{fig:sl2_definitions}, the symbol $\simeq$ indicates that the evaluation by $W_{\mathfrak{sl}_2}$ yields the same value for both sides.
\end{Def}

%begin-sl_2relations-------------------------------------------------------------------------------------------------
\begin{figure}[htbp]
    \centering
    % --- 共通スタイル定義 ---
    \tikzset{
        v/.style={circle, draw=black, fill=white, thick, inner sep=2pt},
        ext v/.style={circle, draw=black, fill=black, thick, inner sep=2pt},
        w/.style={thick, dash pattern=on 4pt off 3pt},
        solid w/.style={thick},
        % 抽象的なグラフΓを表すスタイル
        blob/.style={circle, draw=black, thick, minimum size=0.8cm}
    }
    
    % ==========================================
    % 1段目: sl_2 relation
    % ==========================================
    \begin{subfigure}[b]{1.0\textwidth}
        \centering
        \begin{align*}
            \frac{1}{2\hbar}
            \quad
            \begin{tikzpicture}[baseline=-0.5ex, scale=0.8]
                \draw[w] (-1, 0.8)  -- (-0.5, 0); \draw[w] (-1, -0.8) -- (-0.5, 0);
                \draw[w] (1, 0.8) -- (0.5, 0);    \draw[w] (1, -0.8)  -- (0.5, 0);
                \draw[w] (-0.5, 0) -- (0.5, 0);   \node[v] at (-0.5, 0) {}; \node[v] at (0.5, 0) {};
            \end{tikzpicture}
            \quad &\simeq \quad
            \begin{tikzpicture}[baseline=-0.5ex, scale=0.8]
                \draw[w] (-1, 0.8)  to[out=-30, in=210] (1, 0.8);
                \draw[w] (-1, -0.8) to[out=30, in=150] (1, -0.8);
            \end{tikzpicture}
            \quad - \quad
            \begin{tikzpicture}[baseline=-0.5ex, scale=0.8]
                \draw[w] (-1, 0.8) -- (1, -0.8);
                \draw[line width=4pt, white] (-1, -0.8) -- (1, 0.8); 
                \draw[w] (-1, -0.8)  -- (1, 0.8);
            \end{tikzpicture}
        \end{align*}
        \caption{}
    \end{subfigure}

    \par\vspace{0.3cm} % 行間

    % ==========================================
    % 2段目: Loop reduction & Normalization
    % ==========================================
    \begin{subfigure}[b]{0.48\textwidth}
        \centering
        \resizebox{\textwidth}{!}{%
            \begin{tikzpicture}[baseline=(current bounding box.center)]
                \begin{scope}
                    \coordinate (el) at (-1.2, 0); \coordinate (vl) at (-0.7, 0); 
                    \coordinate (vr) at (0.7, 0);  \coordinate (er) at (1.2, 0);  
                    \draw[w] (el) -- (vl); \draw[w] (vr) -- (er);
                    \draw[w] (vl) to[bend left=60] (vr); \draw[w] (vl) to[bend right=60] (vr);
                    \node[v] at (vl) {}; \node[v] at (vr) {}; 
                \end{scope}
                \node at (2, 0) {\Large $\simeq \quad 4 \hbar$};
                \begin{scope}[shift={(4,0)}]
                    \draw[w] (-1, 0) -- (1, 0);
                \end{scope}
            \end{tikzpicture}%
        }
        \caption{}
    \end{subfigure}
    \hfill
    \begin{subfigure}[b]{0.48\textwidth}
        \centering
        \resizebox{0.8\textwidth}{!}{%
            \begin{tikzpicture}[baseline=(current bounding box.center)]
                \begin{scope}
                    \draw[w] (-1, 0) -- (1, 0);
                    \node[ext v] at (-1, 0) {}; \node[ext v] at (1, 0) {}; 
                \end{scope}
                \node at (2, 0) {\Large $\simeq \quad C$};
            \end{tikzpicture}%
        }
        \caption{}
    \end{subfigure}

    \par\vspace{0.3cm} % 行間

    % ==========================================
    % 3段目: Vacuum loop & Product formula (追加)
    % ==========================================
    % (d) Vacuum loop (verticesなし) = 3
    \begin{subfigure}[b]{0.48\textwidth}
        \centering
        \resizebox{0.8\textwidth}{!}{%
            \begin{tikzpicture}[baseline=(current bounding box.center)]
                % 頂点のない破線の円
                \draw[w] (0,0) circle (0.6);
                % 等号
                \node at (1.5, 0) {\Large $\simeq \quad 3$};
            \end{tikzpicture}%
        }
        \caption{}
        \label{fig:vacuum_loop}
    \end{subfigure}
    \hfill
    % (e) Product formula: Gamma Gamma' = Gamma . Gamma'
    \begin{subfigure}[b]{0.48\textwidth}
        \centering
        \resizebox{0.9\textwidth}{!}{%
            \begin{tikzpicture}[baseline=(current bounding box.center)]
                % 左辺: 並置されたグラフ
                \node[blob] (G1) at (-0.6, 0) {$\Gamma$};
                \node[blob] (G2) at (0.6, 0) {$\Gamma'$};
                
                % 等号
                \node at (2.2, 0) {\Large $\simeq \quad \Gamma \cdot \Gamma'$};
            \end{tikzpicture}%
        }
        \caption{}
        \label{fig:product_formula}
    \end{subfigure}

    \caption{The $\mathfrak{sl}_2$ relations and values.}
    \label{fig:sl2_definitions}
\end{figure}

We introduce $G_{p,g}$, which is the key graph used to construct the nontrivial cycle. 

\begin{Def}
    The hairy graph \(G_{p,g}\) is defined as follows.
    It consists of a circle and $g-1$ parallel edges inside the circle (forming a graph with the first Betti number $g$). Furthermore, $2p$ hairs are attached to the upper arc of the circle.
\end{Def}

\begin{figure}[h]
    \centering
\begin{tikzpicture}[
    scale=1.0,
    baseline=(current bounding box.center),
    % --- 矢印の設定 ---
    >={Straight Barb[line width=0.5pt, angle=60:5pt]},  
    % --- 線の中間に矢印を置くスタイル ---
    mid arrow/.style={postaction={decorate,decoration={
        markings,
        mark=at position 0.55 with {\arrow{>}}
    }}},
    % --- 頂点のスタイル ---
    ext/.style={circle, fill=black, inner sep=1pt},
    int/.style={circle, draw=black, fill=white, thick, inner sep=1pt},
    dashed edge/.style={thick, dash pattern=on 2pt off 1.5pt},
    % --- ラベル用スタイル ---
    lbl/.style={font=\tiny, inner sep=1pt},
    % --- ブレース（中括弧）のスタイル ---
    brace/.style={thick, decorate, decoration={calligraphic brace, amplitude=5pt, raise=5pt}}
]

    % --- 左辺 ---
    \begin{scope}[shift={(0,0)}]
        
        \def\rx{1.6} \def\ry{2.4}

        % ==========================================
        % 1. 座標の定義
        % ==========================================
        % Chords (右側)
        \def\angRone{30}   \coordinate (R1) at ({\rx*cos(\angRone)}, {\ry*sin(\angRone)});
        \def\angRtwo{15}   \coordinate (R2) at ({\rx*cos(\angRtwo)}, {\ry*sin(\angRtwo)});
        \def\angRthree{345} \coordinate (R3) at ({\rx*cos(\angRthree)}, {\ry*sin(\angRthree)}); % -10度
        \def\angRfour{330} \coordinate (R4) at ({\rx*cos(\angRfour)}, {\ry*sin(\angRfour)});   % -20度

        % Chords (左側: 180-theta)
        \def\angLone{150}  \coordinate (L1) at ({\rx*cos(\angLone)}, {\ry*sin(\angLone)});
        \def\angLtwo{165}  \coordinate (L2) at ({\rx*cos(\angLtwo)}, {\ry*sin(\angLtwo)});
        \def\angLthree{195} \coordinate (L3) at ({\rx*cos(\angLthree)}, {\ry*sin(\angLthree)});
        \def\angLfour{210} \coordinate (L4) at ({\rx*cos(\angLfour)}, {\ry*sin(\angLfour)});

        % Hairs (上部)
        \def\angBone{45}   \coordinate (B1) at ({\rx*cos(\angBone)}, {\ry*sin(\angBone)});
        \def\angBtwo{55}   \coordinate (B2) at ({\rx*cos(\angBtwo)}, {\ry*sin(\angBtwo)});
        \def\angBthree{125} \coordinate (B3) at ({\rx*cos(\angBthree)}, {\ry*sin(\angBthree)});
        \def\angBfour{135} \coordinate (B4) at ({\rx*cos(\angBfour)}, {\ry*sin(\angBfour)});

        % Hairの先端 (T1~T4)
        \path (B1) ++(0, 0.6) coordinate (T1);
        \path (B2) ++(0, 0.6) coordinate (T2);
        \path (B3) ++(0, 0.6) coordinate (T3);
        \path (B4) ++(0, 0.6) coordinate (T4);

        % ==========================================
        % 2. 楕円の分割描画 (反時計回り矢印付き)
        % ==========================================
        % 点の順番(CCW): R2->R1->B1->B2...
        
        % 右上 (I)
        \draw[dashed edge] (R2) arc [start angle=\angRtwo, end angle=\angRone, x radius=\rx, y radius=\ry];
        \draw[dashed edge] (R1) arc [start angle=\angRone, end angle=\angBone, x radius=\rx, y radius=\ry];
        \draw[dashed edge] (B1) arc [start angle=\angBone, end angle=\angBtwo, x radius=\rx, y radius=\ry];
        
        % B2-B3の間
        \draw[dashed edge] (B2) arc [start angle=\angBtwo, end angle=\angBthree, x radius=\rx, y radius=\ry];
        
        % 左上 (I/II境界)
        \draw[dashed edge] (B3) arc [start angle=\angBthree, end angle=\angBfour, x radius=\rx, y radius=\ry];
        \draw[dashed edge] (B4) arc [start angle=\angBfour, end angle=\angLone, x radius=\rx, y radius=\ry];
        
        % 左側 (II)
        \draw[dashed edge] (L1) arc [start angle=\angLone, end angle=\angLtwo, x radius=\rx, y radius=\ry];
        % L2-L3の間
        \draw[dashed edge] (L2) arc [start angle=\angLtwo, end angle=\angLthree, x radius=\rx, y radius=\ry];
        \draw[dashed edge] (L3) arc [start angle=\angLthree, end angle=\angLfour, x radius=\rx, y radius=\ry];
        
        % 下部 (II -> I)
        \draw[dashed edge] (L4) arc [start angle=\angLfour, end angle=\angRfour, x radius=\rx, y radius=\ry];
        
        % 右下 (I)
        \draw[dashed edge] (R4) arc [start angle=\angRfour, end angle=\angRthree, x radius=\rx, y radius=\ry];
        % R3-R2の間
        \draw[dashed edge] (R3) arc [start angle=\angRthree, end angle=\angRtwo+360, x radius=\rx, y radius=\ry];

        % ==========================================
        % 3. Hairの描画
        % ==========================================
        % Hairのインデックス表示用マクロ
        % \i: ループ変数, \disp: 表示する番号
        \foreach \i/\disp in {1/2p, 2/2p-1, 3/2, 4/1} {
            % 矢印: 先端(T)から根元(B)へ
            \draw[dashed edge] (T\i) -- (B\i);
            
            % 根元 B: (I) を付与
            % ラベル位置調整: B3,B4は左寄り、B1,B2は右寄りに配置
            \node[int] at (B\i) {};
            
            % 先端 T: 番号を付与
            \node[ext] at (T\i) {};
        }

        % ==========================================
        % 4. Chordの描画
        % ==========================================
        \foreach \i in {1,2,3,4} {
            \draw[dashed edge] (R\i) -- (L\i);
            
        %     % 左 L: (II) を付与
            \node[int] at (L\i) {};
            
        %     % 右 R: (I) を付与
            \node[int] at (R\i) {};
        }

        % ==========================================
        % 5. 省略記号 (...)
        % ==========================================
        \node at (0, 2.5) {$\dots$}; % B2-B3
        \node at (-0.6, 0) {$\vdots$}; % L2-L3
        \node at (0.6, 0) {$\vdots$};  % R2-R3

        % ==========================================
        % 6. 本数の明示 (Brace)
        % ==========================================
        
        % (a) Hairの本数 (2p hairs)
        % T4(1番) から T1(2p番) までの範囲をブレースで囲む
        \coordinate (T1north) at ($(T1)+(0,0.4)$);
        \coordinate (T4north) at ($(T4)+(0,0.4)$);
        \draw[brace] (T4north) -- (T1north) node[midway, above=20pt] {$2p$ hairs};

        % (b) Chordの本数 (g-1 chords)
        % R1(上) から R4(下) までの範囲を右側でブレースで囲む
        % 座標を少し右にずらして配置
        \coordinate (BraceTop) at ($(R1)+(0.8,0)$);
        \coordinate (BraceBot) at ($(R4)+(0.8,0)$);
        \draw[brace] (BraceTop) -- (BraceBot) node[midway, right=8pt] {$g-1$ parallel edges};

    \end{scope}

\end{tikzpicture}
    \caption{The hairy graph $G_{p,g}$.}
    \label{fig:Gpg}
\end{figure}

\begin{Prop} \label{Prop:InfinitenessOfHGC}
    Let $n$ be an odd integer with $n \ge 5$, let $g$ be any positive integer, and let $p$ be a positive integer.
    Then, the class $[G_{p,g}]$ is a nontrivial element in $H_{\mathrm{top}}(^{*}HGC_{n,n-2}(2p+g-1,g))$.
    In particular, the space $H_{\mathrm{top}}(^{*}HGC_{n,n-2}(\bullet, g))$ is infinite-dimensional.
\end{Prop}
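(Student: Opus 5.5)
The plan is to show nontriviality by evaluating $[G_{p,g}]$ with the $\mathfrak{sl}_2$ weight system $W_{\mathfrak{sl}_2}$ of Definition~\ref{def:sl2_weight}.

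The first step is to identify the top homology. For $j=n-2$ with $n$ odd (so $j$ odd), $H_{\mathrm{top}}({}^*HGC_{n,n-2})$ is the space $\mathcal{B}$ of open Jacobi diagrams. These are uni-trivalent graphs modulo AS and IHX, with the symmetric (bosonic) convention on univalent vertices. The reason is that the top term has defect $0$, so every white vertex is trivalent and every black vertex is univalent. The cycles of ${}^*HGC$ in defect $0$ are then all graphs, since nothing of lower defect maps in. The boundaries from defect $1$ (one $4$-valent vertex) are exactly the IHX relations, and the orientation relations become AS. The parity conventions for $n,j$ odd still need checking, to confirm that the hairs are symmetric rather than antisymmetric. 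Once this is done, $W_{\mathfrak{sl}_2}$ is a well-defined linear functional on $H_{\mathrm{top}}$, since by \cite{CV97} it respects AS and IHX. So it suffices to show $W_{\mathfrak{sl}_2}(G_{p,g})\neq 0$.

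The second step is to compute $W_{\mathfrak{sl}_2}(G_{p,g})$, tracking only the leading coefficient in a suitable grading (in $\hbar$ or in $C$). I would reduce the $g-1$ parallel edges first. Each chord inside the circle joins two trivalent vertices on the circle, so relation (A) replaces it by $2\hbar$ times a difference of two reconnections. Summing over the choices, each term is a union of dashed loops with the $2p$ hairs attached. I would rather use the standard fact that a chord parallel to the wheel acts through the Casimir. The key structural point is that the $g-1$ parallel edges together with the circle form a ladder, and the hairs all sit on the upper arc, outside the ladder region. I would then use relation (B) (the bubble) and (A) to reduce the ladder to a scalar multiple of a single wheel edge. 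Concretely, adjacent rungs together with the circle arcs between them form squares, and the goal is an eigenvalue recursion for the ladder operator. The wheel with $2p$ spokes is then evaluated via (A) and (C). Its value should be a polynomial in $C$ and $\hbar$ with a nonzero top coefficient, for example of the form $c_p\,\hbar^{p}C^{p}$ with $c_p>0$. I expect the top-degree term in $C$ to come from pairing hairs in adjacent pairs, and all such contributions should carry the same sign.

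The main obstacle is the bookkeeping. The ladder is not literally reducible by a single application of (B), because the two rung endpoints on the left and right arcs are separated by the other rungs and by the hairs. I would therefore track a grading and consider only the leading term. In degree $2p$ in $C$ (the maximum, since there are $2p$ hairs), every hair must pair with another hair through a strut. Thus only terms in which the internal graph is completely dissolved into loops contribute, and each resulting dashed loop contributes $3$ by (D). The leading coefficient becomes a signed count of resolutions, weighted by $3^{\#\text{loops}}$. Since this weight is dominated by configurations maximizing the number of loops, the count should be nonzero. If sign cancellation threatens this argument, I would fall back on an induction in $g$: closing off one rung multiplies the value by a nonzero scalar, using the Casimir eigenvalue on the adjoint representation, $4\hbar$ up to normalization. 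This reduces the problem to the wheel with $2p$ spokes, which is known to be nonzero under $\mathfrak{sl}_2$.

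For infinite-dimensionality, fix $g$. The orders $k=2p+g-1$ are distinct for distinct $p$, so the nonzero classes $[G_{p,g}]$ lie in different graded pieces of $H_{\mathrm{top}}({}^*HGC_{n,n-2}(\bullet,g))$. They are therefore linearly independent, and the total space is infinite-dimensional.
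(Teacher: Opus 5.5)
Your fallback is the whole proof, and it is exactly the paper's argument. The obstacle that pushed you away from it is not there. In $G_{p,g}$ all $2p$ hairs sit on the upper arc. So the lowest of the $g-1$ parallel edges and the lower arc of the circle bound a bigon with no other vertices on it, and each of its two trivalent vertices carries exactly one further edge. Relation (B) therefore applies literally and gives $W_{\mathfrak{sl}_2}(G_{p,g}) = 4\hbar\,W_{\mathfrak{sl}_2}(G_{p,g-1})$; there is no ladder-operator eigenvalue problem to solve.

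For the wheel you need not cite anything. Apply (A) to the arc between two adjacent hair roots:
\begin{itemize}
\item one resolution is a strut times $G_{p-1,1}$, contributing $2\hbar C\,W_{\mathfrak{sl}_2}(G_{p-1,1})$;
\item the other is $-2\hbar$ times a tree with $2p$ legs. For $p\ge 2$ this tree has a trivalent vertex carrying two legs, so it vanishes by AS together with the symmetry of legs. This is where your parity check is actually used. For $p=1$ the tree is just a strut.
\end{itemize}
Together with $W_{\mathfrak{sl}_2}(G_{1,1})=4\hbar C$ (itself a bubble), this yields
\[
  W_{\mathfrak{sl}_2}(G_{p,g}) = 2\,(4\hbar)^{g-1}(2\hbar C)^{p} \neq 0 .
\]
Your final step, that the distinct orders $2p+g-1$ give linearly independent classes, is fine.

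Your primary route, by contrast, is not a proof. Each application of (A) or (B) removes two trivalent vertices and produces exactly one factor of $\hbar$. A fully reduced term consists of $p$ struts and vertex-free loops. Hence every term is an integer multiple of $\hbar^{p+g-1}C^{p}$; note the degree in $C$ is $p$, not $2p$. So there is no grading that isolates a ``leading part''. The signed count weighted by $3^{\#\mathrm{loops}}$ is the entire value. Your claim that it is dominated by loop-maximizing resolutions is precisely the nonvanishing to be proved, stated without justification, and signs and multiplicities could a priori cancel. Drop that argument and present the induction on $g$ plus the explicit wheel computation as the proof.
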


\begin{proof}
    We demonstrate that for any $g$ and $p$, the class $[G_{p,g}]$ is nontrivial in $HGC_{n,n-2}$ (specifically in the $g$-loop part), using the $\mathfrak{sl}_2$ weight system $W_{\mathfrak{sl}_2}$.

    For the inductive step, applying the loop reduction to the bottom loop yields:
    \[
        W_{\mathfrak{sl}_2}(G_{p,g}) = 4\hbar W_{\mathfrak{sl}_2}(G_{p,g-1}).
    \]

    For the base case $g=1$, we consider the relation involving the hairs. We obtain the following relation:
    
    % --- 図1: 関係式 ---
    \[
        \begin{tikzpicture}[
        scale=1.2,
        baseline=(current bounding box.center),
        % --- スタイル定義 ---
        % Hairの先端(external vertex): 黒塗り
        ext/.style={circle, fill=black, inner sep=1.5pt},
        % Hairの付け根(internal vertex): 白塗り、黒枠
        int/.style={circle, draw=black, fill=white, thick, inner sep=2pt},
        % 辺のスタイル
        base/.style={thick, dashed},                 % ベースの円/楕円
        hair/.style={thick, dashed},                 % Hair本体
        dashed edge/.style={thick, dashed}   % 相互作用（青色部分→黒破線）
    ]
    
        % =============================================
        % 1. 左辺: 3本のHairを持つ図
        % =============================================
        \begin{scope}[shift={(0,0)}]
            % ベースの楕円
            \draw[base] (0,0) ellipse (1.2cm and 0.6cm);
            
            % Hairの根元と先端の座標
            \coordinate (R1) at (-0.8, 0.45); \coordinate (T1) at (-1.0, 1.2);
            \coordinate (R2) at (-0.5,0.55);  \coordinate (T2) at (-0.7,1.3);
            \coordinate (R3) at (-0.1, 0.6);   \coordinate (T3) at (-0.15, 1.3);
            \coordinate (R4) at (0.8, 0.45);  \coordinate (T4) at (1.0, 1.2);
            
            % Hairの描画
            \draw[hair] (R1) -- (T1);
            \draw[hair] (R2) -- (T2);
            \draw[hair] (R3) -- (T3);
            \draw[hair] (R4) -- (T4);
            
            % --- 省略記号 (...) の追加 ---
            % R3とR4の間（上側）に配置
            \node at (0.35, 0.8) {$\cdots$};
    
            % 頂点
            \foreach \p in {R1, R2, R3, R4} \node[int] at (\p) {};
            \foreach \p in {T1, T2, T3, T4} \node[ext] at (\p) {};
        \end{scope}
    
        % =============================================
        % 等号と係数
        % =============================================
        \node at (2.0, 0.5) {\Large $= \quad 2\hbar$};
    
        % =============================================
        % 2. 中央の図: 1本目と2本目が接続
        % =============================================
        \begin{scope}[shift={(4.0,0)}]
            \draw[base] (0,0) ellipse (1.2cm and 0.6cm);
            
            \coordinate (R1) at (-0.8, 0.45); \coordinate (T1) at (-1.0, 1.2);
            \coordinate (R2) at (-0.4,0.55);  \coordinate (T2) at (-0.6,1.3);
            \coordinate (R3) at (-0.2, 0.55);   \coordinate (T3) at (-0.3, 1.3);
            \coordinate (R4) at (0.8, 0.45);  \coordinate (T4) at (1.0, 1.2);
            
            % \draw[hair] (R1) -- (T1);
            % \draw[hair] (R2) -- (T2);
            \draw[hair] (R3) -- (T3);
            \draw[hair] (R4) -- (T4);
            
            % --- 省略記号 (...) の追加 ---
            \node at (0.3, 0.8) {$\cdots$};
    
            % 【青色部分】1本目と2本目を破線で結ぶ (U字型)
            \draw[dashed edge] (T1) to[bend right=45] (T2);
            
            \foreach \p in {R3, R4} \node[int] at (\p) {};
            \foreach \p in {T1, T2, T3, T4} \node[ext] at (\p) {};
        \end{scope}
    
        % =============================================
        % 符号と係数
        % =============================================
        \node at (6.0, 0.5) {\Large $- \quad 2\hbar$};
    
        % =============================================
        % 3. 右辺: 1本目と2本目が交差接続
        % =============================================
        \begin{scope}[shift={(8.0,0)}]
            
            % --- 定数定義 ---
            \def\rx{1.2} % 横半径
            \def\ry{0.6} % 縦半径
    
            % --- 座標定義 (角度で指定) ---
            % R1: 135度 (左上)
            \coordinate (R1) at ({\rx*cos(135)}, {\ry*sin(135)}); 
            \coordinate (T1) at (-1.0, 1.2);
            
            % R2: 110度 (R1の少し右)
            \coordinate (R2) at ({\rx*cos(110)}, {\ry*sin(110)}); 
            \coordinate (T2) at (-0.6,1.3);
            
            % R3: 100度付近 (元のコードのsin(70)に合わせて調整)
            \coordinate (R3) at ({\rx*cos(100)}, {\ry*sin(70)});    
            \coordinate (T3) at (-0.3, 1.3);
            
            % R4: 45度
            \coordinate (R4) at ({\rx*cos(45)}, {\ry*sin(45)});    
            \coordinate (T4) at (1.0, 1.2);
    
            % --- 楕円の描画 (R1とR2の間を消す) ---
            \draw[base] (R1) arc [
                start angle=135,
                end angle=470,
                x radius=\rx,
                y radius=\ry
            ];
    
            % --- Hairの描画 ---
            \draw[hair] (R3) -- (T3);
            \draw[hair] (R4) -- (T4);
            
            % --- 省略記号 (...) の追加 ---
            \node at (0.35, 0.8) {$\cdots$};
    
            % --- 相互作用 (交差する破線) ---
            \draw[dashed edge] (T1) to[bend right=20] (R2);
            \draw[dashed edge] (T2) to[bend left=20] (R1);
            
            % --- 頂点の配置 ---
            \foreach \p in { R3, R4} \node[int] at (\p) {};
            \foreach \p in {T1, T2, T3, T4} \node[ext] at (\p) {};
    
        \end{scope}
    
    \end{tikzpicture}
    \]

    The second term vanishes due to the symmetry relation.
    Thus, we have the recurrence relation $W_{\mathfrak{sl}_2}(G_{p,1}) = (2\hbar C) \cdot W_{\mathfrak{sl}_2}(G_{p-1,1})$.
    
    Finally, we check the initial case explicitly:
    % --- 図2: G_{1,1} の計算 ---
    \[
    \begin{tikzpicture}[
        scale=1.2,
        baseline=(current bounding box.center),
        ext/.style={circle, fill=black, inner sep=1.5pt},
        int/.style={circle, draw=black, fill=white, thick, inner sep=2pt},
        base/.style={thick, dashed},
        hair/.style={thick, dashed},
        dashed edge/.style={thick, dashed}
    ]
        % 1. 左辺
        \begin{scope}[shift={(0,0)}]
            \draw[base] (0,0) ellipse (0.6cm and 0.6cm);
            \coordinate (R1) at (-0.4, 0.45); \coordinate (T1) at (-0.5, 1.2);
            \coordinate (R2) at (0.4, 0.45);  \coordinate (T2) at (0.5, 1.2);
            \draw[hair] (R1) -- (T1);
            \draw[hair] (R2) -- (T2);
            \foreach \p in {R1, R2} \node[int] at (\p) {};
            \foreach \p in {T1, T2} \node[ext] at (\p) {};
        \end{scope}
    
        % 等号
        \node at (1.5, 0.5) {$ = 2\hbar$};
    
        % 2. 中央
        \begin{scope}[shift={(2.8,0)}]
            \draw[base] (0,0) ellipse (0.6cm and 0.6cm);
            \coordinate (T1) at (-0.5, 1.2);
            \coordinate (T2) at (0.5, 1.2);
            \draw[dashed edge] (T1) to[bend right=45] (T2);
            \foreach \p in {T1, T2} \node[ext] at (\p) {};
        \end{scope}
    
        % 符号
        \node at (4.3, 0.5) {$ - 2\hbar$};

        % 3. 右辺
        \begin{scope}[shift={(5.6,0)}]
            \def\rx{0.6} \def\ry{0.6}
            \coordinate (R1) at ({\rx*cos(135)}, {\ry*sin(135)}); 
            \coordinate (T1) at (-0.5, 1.2);
            \coordinate (R2) at ({\rx*cos(45)}, {\ry*sin(45)});    
            \coordinate (T2) at (0.5, 1.2);
            \draw[base] (R1) arc [start angle=135, end angle=405, x radius=\rx, y radius=\ry];
            \draw[dashed edge] (T1) to[bend right=20] (R2);
            \draw[dashed edge] (T2) to[bend left=20] (R1);
            \foreach \p in {T1, T2} \node[ext] at (\p) {};
        \end{scope}

        % 結果の式
        \node[right] at (6.6, 0.5) {$ = 6\hbar C - 2\hbar C = 4\hbar C.$};
    \end{tikzpicture}
    \]
    This implies the weight is non-zero, which completes the proof.
\end{proof}

\subsection{The chord diagrams \texorpdfstring{$D(G_{p,g})$}{D(G(p,g))}} \label{subsec:The_chord_diagram_D(G(p,g))}

The chord diagram $D(G_{p,g})$ on oriented lines is obtained from the hairy graph $G_{p,g}$ as follows.
See Figure \ref{fig:ResultingChordDiagramDGpg}.

\begin{itemize}
    \item First, orient the edges of $G_{p,g}$ as shown in Figure~\ref{fig:orientedGpg}.
    Vertices on the left side have two incoming edges (called type $(\mathrm{I})$), while vertices on the right side have two outgoing edges (called type $(\mathrm{II})$).

    \item Replace each hair with an oriented line attached to two open chords:
    %図始まり========================================================================
    \begin{tikzpicture}[
    scale=0.5,
    >=Latex,
    mid arrow/.style={postaction={decorate,decoration={
        markings,
        mark=at position 0.55 with {\arrow{>}}
    }}},
    dot/.style={circle, fill=black, inner sep=1.5pt},
    dashed edge/.style={thick, dashed}
]

    \begin{scope}[shift={(0,0)}]
        % --- 座標定義 (T0, T1, T2 は削除) ---
        \coordinate (m) at (-1,0);
        \coordinate (O) at (0, 0);
        \coordinate (Top) at (3, 0);
        \coordinate (P1) at (1.0, 0);
        % P2は使われていないため削除しても良いが残します
        \coordinate (P2) at (2.0, 0); 

        % --- メインの横線 ---
        \draw[->,thick] (m) -- (Top);

        % --- 点 ---
        \foreach \p in {O,P1} {
            \node[dot] at (\p) {};
        }

        % --- 接続 (座標定義なしで記述) ---
        
        % 1. O から (0, 2) へ
        % ++(0, 2) は「直前の点(O)から x+0, y+2 の位置」という意味
        \draw[dashed edge, mid arrow] (O) -- ++(0, 1.5);
        
        % 2. (1, 2) から P1 へ
        % 始点を直接 (1, 2) と書くか、P1から見て相対的に指定する
        % ここでは「P1の真上(y=2)からP1へ」描きたいので以下のように書けます
        \path (P1) ++(0, 2) coordinate (temp); % 一時的な点を作る場合
        \draw[dashed edge, mid arrow] (P1) ++(0, 1.5) -- (P1);
        
        % もしくはもっと単純に、P1のx座標を利用して直接座標を書く
        % \draw[dashed edge, mid arrow] (1, 2) -- (P1);

    \end{scope}

\end{tikzpicture}.
%=============================図終わり================================
    Exceptionally, replace the rightmost hair on the upper edge of $G_{p,g}$ with the following diagram:
    \[
    %=======================begin-ChordsOn1LineWithgMinus1Chords===========================
    \begin{tikzpicture}[
        scale=0.4,
        >=Latex,
        mid arrow/.style={postaction={decorate,decoration={
            markings,
            mark=at position 0.55 with {\arrow{>}}
        }}},
        dot/.style={circle, fill=black, inner sep=1.5pt},
        dashed edge/.style={thick, dashed},
        brace/.style={thick, decorate, decoration={calligraphic brace, amplitude=5pt, raise=5pt}}
    ]
    
        \begin{scope}[shift={(0,0)}]
            % --- 座標定義 ---
            \coordinate (m) at (-1,0);
            \coordinate (O) at (0, 0);
            \coordinate (P1) at (1.0, 0);
            \coordinate (P2) at (2.0, 0); 
            \coordinate (P3) at (5.0, 0);
            \coordinate (P4) at (6.0, 0);
            \coordinate (P5) at (7.0, 0);
            \coordinate (Top) at (8.5, 0);
    
            % --- メインの横線 ---
            \draw[->,thick] (m) -- (Top);
    
            % ---- chords (円弧) ------
            % 【修正箇所】
            % out=90, in=90 で真上に持ち上げ、loosenessで高さを調整
            % looseness=1.5 くらいにすると大きく膨らみます
            \draw[dashed edge, mid arrow] (P1) to[out=90, in=90, looseness=1.5] (P4);
            \draw[dashed edge, mid arrow] (P2) to[out=90, in=90, looseness=1.5] (P3);
    
            % --- 点 ---
            \foreach \p in {O,P1, P2, P3, P4, P5} {
                \node[dot] at (\p) {};
            }
    
            % --- 省略記号 ---
            \coordinate (MidP2P3) at ($(P2)!0.5!(P3)$);
            
            % 2. 円弧の省略記号 (縦の点々)
            % アーチが高くなったので、点の位置も少し上に上げました (+0.5 -> +1.5)
            \node at ($(MidP2P3)+(0, 0.8)$) {$\vdots$};

            % --- 接続 (Hairs) ---
            \draw[dashed edge, mid arrow] (O) -- ++(0, 3);
            \draw[dashed edge, mid arrow] (P5) ++(0, 3) coordinate (P5_top) -- (P5);
    
            % ==========================================
            % 3. 本数の明示 (Brace)
            % ==========================================
            
            % chords の本数
            % 円弧がさらに高くなったので、ブレースの位置を上に逃がしました (3.5 -> 4.5)
            \coordinate (BraceLeft) at ($(P1)+(0, 2)$);
            \coordinate (BraceRight) at ($(P4)+(0, 2)$);
    
            \draw[brace] (BraceLeft) -- (BraceRight) node[midway, above=10pt] {$g-1$ chords};
    
        \end{scope}
    
    \end{tikzpicture}%end=======ChordOn1LineWithgMinus1Chords==================================================
    \]

    \item Connect the open ends of the chords according to the connectivity of the graph $G_{p,g}$.

    \item Label the solid lines counterclockwise starting from the leftmost hair.
    Label the $g-1$ nested chords $1, \dots, g-1$ (outer first). The remaining chords follow the order of the solid lines containing their starting points.
    \end{itemize}
    
\begin{figure}[h]
    \centering
    \begin{tikzpicture}[
        scale=1.5,
        baseline=(current bounding box.center),
        % --- 矢印の設定 ---
        >=Latex,
        % >={Straight Barb[line width=0.5pt, angle=60:5pt]},  
        % --- 線の中間に矢印を置くスタイル ---
        mid arrow/.style={postaction={decorate,decoration={
            markings,
            mark=at position 0.55 with {\arrow{>}}
        }}},
        % --- 頂点のスタイル ---
        ext/.style={circle, fill=black, inner sep=1pt},
        int/.style={circle, draw=black, fill=white, thick, inner sep=1pt},
        dashed edge/.style={thick, dash pattern=on 2pt off 1.5pt},
        % --- ラベル用スタイル ---
        lbl/.style={font=\tiny, inner sep=1pt},
        % --- ブレース（中括弧）のスタイル ---
        brace/.style={thick, decorate, decoration={calligraphic brace, amplitude=5pt, raise=5pt}}
    ]
    
        % --- 左辺 ---
        \begin{scope}[shift={(0,0)}]
            
            \def\rx{1.6} \def\ry{2.4}
    
            % ==========================================
            % 1. 座標の定義
            % ==========================================
            % Chords (右側)
            \def\angRone{30}   \coordinate (R1) at ({\rx*cos(\angRone)}, {\ry*sin(\angRone)});
            \def\angRtwo{15}   \coordinate (R2) at ({\rx*cos(\angRtwo)}, {\ry*sin(\angRtwo)});
            \def\angRthree{345} \coordinate (R3) at ({\rx*cos(\angRthree)}, {\ry*sin(\angRthree)}); % -10度
            \def\angRfour{330} \coordinate (R4) at ({\rx*cos(\angRfour)}, {\ry*sin(\angRfour)});   % -20度
    
            % Chords (左側: 180-theta)
            \def\angLone{150}  \coordinate (L1) at ({\rx*cos(\angLone)}, {\ry*sin(\angLone)});
            \def\angLtwo{165}  \coordinate (L2) at ({\rx*cos(\angLtwo)}, {\ry*sin(\angLtwo)});
            \def\angLthree{195} \coordinate (L3) at ({\rx*cos(\angLthree)}, {\ry*sin(\angLthree)});
            \def\angLfour{210} \coordinate (L4) at ({\rx*cos(\angLfour)}, {\ry*sin(\angLfour)});
    
            % Hairs (上部)
            \def\angBone{45}   \coordinate (B1) at ({\rx*cos(\angBone)}, {\ry*sin(\angBone)});
            \def\angBtwo{55}   \coordinate (B2) at ({\rx*cos(\angBtwo)}, {\ry*sin(\angBtwo)});
            \def\angBthree{125} \coordinate (B3) at ({\rx*cos(\angBthree)}, {\ry*sin(\angBthree)});
            \def\angBfour{135} \coordinate (B4) at ({\rx*cos(\angBfour)}, {\ry*sin(\angBfour)});
    
            % Hairの先端 (T1~T4)
            \path (B1) ++(0, 0.6) coordinate (T1);
            \path (B2) ++(0, 0.6) coordinate (T2);
            \path (B3) ++(0, 0.6) coordinate (T3);
            \path (B4) ++(0, 0.6) coordinate (T4);

            % ==========================================
            % 2. 楕円の分割描画 (反時計回り矢印付き)
            % ==========================================
            % 点の順番(CCW): R2->R1->B1->B2...
            
            % 右上 (I)
            \draw[dashed edge, mid arrow] (R2) arc [start angle=\angRtwo, end angle=\angRone, x radius=\rx, y radius=\ry];
            \draw[dashed edge, mid arrow] (R1) arc [start angle=\angRone, end angle=\angBone, x radius=\rx, y radius=\ry];
            \draw[dashed edge, mid arrow] (B1) arc [start angle=\angBone, end angle=\angBtwo, x radius=\rx, y radius=\ry];
            
            % B2-B3の間
            \draw[dashed edge, mid arrow] (B2) arc [start angle=\angBtwo, end angle=\angBthree, x radius=\rx, y radius=\ry];
            
            % 左上 (I/II境界)
            \draw[dashed edge, mid arrow] (B3) arc [start angle=\angBthree, end angle=\angBfour, x radius=\rx, y radius=\ry];
            \draw[dashed edge, mid arrow] (B4) arc [start angle=\angBfour, end angle=\angLone, x radius=\rx, y radius=\ry];
            
            % 左側 (II)
            \draw[dashed edge, mid arrow] (L1) arc [start angle=\angLone, end angle=\angLtwo, x radius=\rx, y radius=\ry];
            % L2-L3の間
            \draw[dashed edge, mid arrow] (L2) arc [start angle=\angLtwo, end angle=\angLthree, x radius=\rx, y radius=\ry];
            \draw[dashed edge, mid arrow] (L3) arc [start angle=\angLthree, end angle=\angLfour, x radius=\rx, y radius=\ry];
            
            % 下部 (II -> I)
            \draw[dashed edge, mid arrow] (L4) arc [start angle=\angLfour, end angle=\angRfour, x radius=\rx, y radius=\ry];
            
            % 右下 (I)
            \draw[dashed edge, mid arrow] (R4) arc [start angle=\angRfour, end angle=\angRthree, x radius=\rx, y radius=\ry];
            % R3-R2の間
            \draw[dashed edge, mid arrow] (R3) arc [start angle=\angRthree, end angle=\angRtwo+360, x radius=\rx, y radius=\ry];

            % ==========================================
            % 3. Hairの描画
            % ==========================================
            % Hairのインデックス表示用マクロ
            % \i: ループ変数, \disp: 表示する番号
            \foreach \i/\disp in {1/2p, 2/2p-1, 3/2, 4/1} {
                % 矢印: 先端(T)から根元(B)へ
                \draw[dashed edge, mid arrow] (T\i) -- (B\i);
                
                % 根元 B: (I) を付与
                % ラベル位置調整: B3,B4は左寄り、B1,B2は右寄りに配置
                \node[int, label={[lbl, xshift=(\i>2 ? 3pt : -3pt)]below:$(\mathrm{I})$}] at (B\i) {};
                
                % 先端 T: 番号を付与
                \node[ext, label={[lbl]above:$\disp$}] at (T\i) {};
            }

            % ==========================================
            % 4. Chordの描画
            % ==========================================
            \foreach \i in {1,2,3,4} {
                \draw[dashed edge, mid arrow] (R\i) -- (L\i);
                
                % 左 L: (II) を付与
                \node[int, label={[lbl]left:$(\mathrm{I})$}] at (L\i) {};
                
                % 右 R: (I) を付与
                \node[int, label={[lbl]right:$(\mathrm{II})$}] at (R\i) {};
            }

            % ==========================================
            % 5. 省略記号 (...)
            % ==========================================
            \node at (0, 2.5) {$\dots$}; % B2-B3
            \node at (-0.6, 0) {$\vdots$}; % L2-L3
            \node at (0.6, 0) {$\vdots$};  % R2-R3

            % ==========================================
            % 6. 本数の明示 (Brace)
            % ==========================================
            
            % (a) Hairの本数 (2p hairs)
            % T4(1番) から T1(2p番) までの範囲をブレースで囲む
            \coordinate (T1north) at ($(T1)+(0,0.4)$);
            \coordinate (T4north) at ($(T4)+(0,0.4)$);
            \draw[brace] (T4north) -- (T1north) node[midway, above=20pt] {$2p$ hairs};
    
            % (b) Chordの本数 (g-1 chords)
            % R1(上) から R4(下) までの範囲を右側でブレースで囲む
            % 座標を少し右にずらして配置
            \coordinate (BraceTop) at ($(R1)+(0.8,0)$);
            \coordinate (BraceBot) at ($(R4)+(0.8,0)$);
            \draw[brace] (BraceTop) -- (BraceBot) node[midway, right=8pt] {$g-1$ parallel edges};
    
        \end{scope}
    
    \end{tikzpicture}
    \caption{Orientation of $G_{p,g}$}
    \label{fig:orientedGpg}
\end{figure}

\begin{figure}
    \centering
    %begin{fig:ResultingChordDiagramDGpg}=======================================
   \begin{tikzpicture}[
        scale=0.5,
        >=Latex,
        % --- 共通スタイル ---
        mid arrow/.style={postaction={decorate,decoration={
            markings,
            mark=at position 0.5 with {\arrow{>}}
        }}},
        dot/.style={circle, fill=black, inner sep=1.5pt},
        dashed edge/.style={thick, dashed},
        brace/.style={thick, decorate, decoration={calligraphic brace, amplitude=5pt, raise=5pt}},
        lbl/.style={font=\scriptsize, inner sep=2pt}
    ]
    
        % =========================================================
        % 1. 左の図 (g-1 chords) - 縦向き
        % =========================================================
        \begin{scope}[shift={(0,0)}]
            % --- 座標定義 ---
            \coordinate (m) at (0,-1);
            \coordinate (Top) at (0, 11.5);
            \coordinate (O) at (0, 0);
            
            \coordinate (P1) at (0, 1.0);
            \coordinate (P2) at (0, 2.0); 
            \coordinate (P6) at (0, 4.2);
            \coordinate (P7) at (0, 5.8);
            \coordinate (P3) at (0, 8.0);
            \coordinate (P4) at (0, 9.0);
            \coordinate (P5) at (0, 10.0);
    
            % --- メインの縦線 (3分割) ---
            \draw[thick] (m) -- ($(P2)+(0, 0.6)$);
            \draw[thick] ($(P6)-(0, 0.6)$) -- ($(P7)+(0, 0.6)$);
            \draw[->, thick] ($(P3)-(0, 0.6)$) -- (Top);
    
            % \node at (O) [below left] {O};
    
            % --- Chords (番号追加) ---
            \draw[dashed edge, mid arrow] (P1) to[out=180, in=180, looseness=1.5] node[pos=0.5, left, lbl] {$1$} (P4);
            \draw[dashed edge, mid arrow] (P2) to[out=180, in=180, looseness=1.5] node[pos=0.5, left, lbl] {$2$} (P3);
            \draw[dashed edge, mid arrow] (P6) to[out=180, in=180, looseness=1] node[pos=0.5, left, lbl, yshift=5pt] {$g-1$} (P7);
    
            % --- 点 ---
            \foreach \p in {O,P1, P2, P3, P4, P5, P6, P7} {
                \node[dot] at (\p) {};
            }
    
            % --- 縦方向の省略記号 (\vdots) ---
            \node at ($(P2)!0.5!(P6)$) {$\vdots$};
            \node at ($(P7)!0.5!(P3)$) {$\vdots$};
            
            % --- ★復活: 横方向の省略記号 (\cdots) ---
            % Chord 2 と Chord g の間の空間に配置
            \coordinate (MidHeight) at (0, 5.0); % 中心の高さ
            \node at ($(MidHeight)-(1.8, 0)$) {$\cdots$};
    
            % --- ブレース (g-1 chords) ---
            \coordinate (BraceBottom) at ($(P1)-(3.8, 0)$);
            \coordinate (BraceTop) at ($(P4)-(3.8, 0)$);
            \draw[brace] (BraceBottom) -- (BraceTop) node[midway, left=10pt] {$g-1$ chords};
        \end{scope}

        % =========================================================
        % 2. 右の図 (2番目の線)
        % =========================================================
        \begin{scope}[shift={(3,0)}]
            \coordinate (Start) at (0, -1);
            \coordinate (End) at (0, 3);
            \draw[->,thick] (Start) -- (End);
    
            \coordinate (Q1_bot) at (0, 0);
            \coordinate (Q2_bot) at (0, 1);
            \node[dot] at (Q1_bot) {};
            \node[dot] at (Q2_bot) {};
            
            \draw[dashed edge, mid arrow] (O) -- node[midway, above, sloped, lbl] {$g$} (Q2_bot);
        \end{scope}

        % =========================================================
        % 3. ３番目の図 (3番目の線)
        % =========================================================
        \begin{scope}[shift={(6,0)}]
            \coordinate (Start2) at (0, -1);
            \coordinate (End2) at (0, 3);
            \draw[->,thick] (Start2) -- (End2);
    
            \coordinate (Q12_bot) at (0, 0);
            \coordinate (Q22_bot) at (0, 1);
            \node[dot] at (Q12_bot) {};
            \node[dot] at (Q22_bot) {};
            
            \draw[dashed edge, mid arrow] (Q1_bot) -- node[midway, above,sloped, lbl] {$g+1$} (Q22_bot);
            \draw[dashed edge, mid arrow] (Q12_bot)  -- node[midway, sloped, yshift=5pt, xshift=5pt, lbl] {$g+2$} ++(1.8, +0.9)(Q12_bot) ;
        \end{scope}
    
        % --- ドット ---
        \node at (9, 1.0) {\Large $\dots$};

        % =========================================================
        % 4. 一番右の図 (最後の線)
        % =========================================================
        \begin{scope}[shift={(12,0)}]
            \coordinate (Start3) at (0, -1);
            \coordinate (End3) at (0, 3);
            \draw[->,thick] (Start3) -- (End3);
    
            \coordinate (Q13_bot) at (0, 0);
            \coordinate (Q23_bot) at (0, 1);
            \node[dot] at (Q13_bot) {};
            \node[dot] at (Q23_bot) {};
            
            \draw[dashed edge, mid arrow] (Q13_bot) to[out=0, in=0, looseness=1.2] node[pos=0.5,sloped,xshift=-5pt, yshift=5pt , lbl] {$g+p-1$} (P5);
            \draw[dashed edge, mid arrow] (Q23_bot) ++(-1.8, -0.9)  -- node[above, sloped, lbl, xshift=-0pt] {$g+p-2$} (Q23_bot);
        \end{scope}

        % =========================================================
        % 全体のブレース
        % =========================================================
        \coordinate (TopBraceLeft) at ($(End)+(-0.1, 0.1)$);
        \coordinate (TopBraceRight) at ($(End3)+(0.1, 0.1)$);
        \draw[brace] (TopBraceLeft) -- (TopBraceRight) node[midway, above=10pt] {$2p-1$ lines};
    
    \end{tikzpicture}
    %end{fig:ResultingChordDiagramDGpg}========================================
    \caption{The resulting chord diagram $D(G_{p,g})$}
    \label{fig:ResultingChordDiagramDGpg}
\end{figure}

\begin{Not}
    We denote the ribbon presentation associated with $D(G_{p,g})$ by $P(G_{p,g})$ and we denote the resulting cycle by $c_{p,g}$.
\end{Not}

\section{Non-triviality of the cycle} \label{sec:Non-triviality}

In this section, our goal is to establish the nontriviality of the cycle $c_{p,g}$ in $H_{2p+g-1}( \bEmb_c(\R^j, \R^n) )$.
This section essentially follows the approach of \cite[Section~7]{Yos25a}.

First, we recall the basic properties of the \emph{modified configuration space integral} in \S~\ref{subsec:Basic_properties_of_the_modified configuration_space_integral}. 
Next, in \S~\ref{subsec:counting_formula}, we verify that the counting formula \cite[Theorem~7.14]{Yos25a}, which is the key to proving the nontriviality, remains valid in our setting.
Finally, we prove the nontriviality of $c_{p,g}$ in \S~\ref{subsec:Proof_of_the_non_triviality_of_psi(G(k,g))}.

\subsection{Basic properties of the modified configuration space integral} \label{subsec:Basic_properties_of_the_modified configuration_space_integral}

We recall the definition of Yoshioka's modified configuration space integral.
Recall that an element $\psi \in \bEmb_c(\R^j,\R^n)$ is a family of immersions $(\psi_u)_{u \in [0,1]}$ such that $\psi_1 =\iota$ and $\psi_0 \in \mathrm{Emb}_c(\R^j,\R^n)$.

To define the modified configuration space integral, we first introduce the following notation.

\begin{Not}
    Let $E_{s,t} = E_{s,t}(\R^j,\R^n)$ be defined by the following pullback square:
    \[\begin{tikzcd}
        {E_{s,t}} &&& {C_{s+t}(\R^n)} \\
        \\
        {\bEmb_c(\R^j,\R^n) \times C_{s}(\R^j) } &&& {C_s(\R^n)}
        \arrow[from=1-1, to=1-4]
        \arrow[from=1-1, to=3-1]
        \arrow["\lrcorner"{anchor=center, pos=0.125}, draw=none, from=1-1, to=3-4]
        \arrow["{\text{restriction map}}", from=1-4, to=3-4]
        \arrow["{\text{ev}_{u=0}}"', from=3-1, to=3-4]
    \end{tikzcd}\]
    Here, $C_k(\R^d)$ denotes the Fulton-MacPherson compactification of the configuration space $\mathrm{Conf}_k(\R^d)$.
    We denote the typical fiber over $\psi \in \bEmb_c(\R^j,\R^n)$ by $C_{s,t}(\psi)$.
\end{Not}

\begin{Not}
    Let $\mathrm{Inj}(\R^j,\R^n)$ denote the space of $\R$-linear injective maps $\R^j \to \R^n$.
    Let $\Gamma$ be a labeled decorated graph with $B(P(\Gamma))=s$ and $W(P(\Gamma))=t$.
    The associated map
    \[ F_{\Gamma}\colon E_{s,t} \longrightarrow (S^{j-1})^{|E_{\eta}(\Gamma)|} \times (S^{n-1})^{|E_{\theta}(\Gamma)|} \times (P(\mathrm{Inj}(\R^j,\R^n)))^{|B_e(\Gamma)|} \]
    is defined as follows:
    \begin{itemize}
        \item For $e=(p,q) \in E_{\theta}(\Gamma)$, the map to the factor $S^{n-1}$ is given by the composition:
        \[
            {E_{s,t}} \longrightarrow {{C_{s+t}(\R^n)}} \xrightarrow{\text{Gauss}} {S^{n-1}},
        \]
        where the Gauss map is defined by
        \[ (x_v)_{v \in V(\Gamma)} \mapsto \frac{x_q -x_p}{\|x_q-x_p \|}. \]

        \item For $e=(p,q) \in E_{\eta}(\Gamma)$, the map to the factor $S^{j-1}$ is given by the composition of the following diagram:
        \[
        E_{s,t} \longrightarrow \bEmb_c(\R^j,\R^n) \times C_s(\R^j) \twoheadrightarrow  C_s(\R^j) \xrightarrow{\text{Gauss}} S^{j-1},
        \]
        where the Gauss map is given by
        \[ (x_v)_{v \in B(\Gamma)} \mapsto \frac{x_q-x_p}{\| x_q - x_p \|}. \]

        \item For $v \in B_{e}(\Gamma)$, the map to the factor $P(\mathrm{Inj}(\R^j,\R^n))$ is defined as
        \[
            E_{s,t} \longrightarrow \bEmb_c(\R^j,\R^n) \times C_s(\R^j) \xrightarrow{\text{ev}_{s=v}} P(\mathrm{Inj}(\R^j,\R^n)).
        \]
    \end{itemize}
\end{Not}

\begin{Def}[Modified configuration space integral]
    Let $a_{d}\colon S^{d} \xrightarrow{\cong} S^d$ be the antipodal map.
    Fix volume forms $\omega_{j-1} \in A_{dR}(S^{j-1})$ and $\omega_{n-1} \in A_{dR}(S^{n-1})$ such that $a_{j-1}^{*}\omega_{j-1} = (-1)^j \omega_{j-1}$ and $a_{n-1}^{*} \omega_{n-1} = (-1)^n \omega_{n-1}$.

    Let $\pi$ be the natural projection:
    \[
    \pi\colon E_{s,t} \longrightarrow \bEmb_c(\R^j,\R^n) \times C_{s}(\R^j) \twoheadrightarrow  \bEmb_c(\R^j,\R^n).
    \]
    Then the modified configuration space integral $\overline{I}\colon DGC_{n,j} \rightarrow A_{dR}(\bEmb_c(\R^j,\R^n))$ is defined as
    \[
        \overline{I}(\Gamma) = \pi_{\ast}\left(F_{\Gamma}^{*}\left(\bigwedge_{e \in E_{\eta}} \omega_{j-1} \wedge \bigwedge_{e \in E_{\theta}} \omega_{n-1} \wedge \bigwedge_{v \in B_e} J(D_v(\Gamma)) \right)\right).
    \]
    Here, the differential form $J(D_{v}(\Gamma)) \in A_{dR}(P(\mathrm{Inj}(\R^j,\R^n)))$ depends only on the decoration $D_v(\Gamma)$ at $v$  and the fixed differential forms $\omega_{j-1}$ and $\omega_{n-1}$.
    (See \cite[Section 5.4]{Yos25b} for the definition.)
\end{Def}

In fact, the following theorem holds.

\begin{Thm}\cite[Theorem 5.20]{Yos25b}
    The modified configuration space integral $\overline{I}\colon DGC_{n,j} \longrightarrow A_{dR}(\bEmb_c(\R^j,\R^n))$ is a cochain map.
\end{Thm}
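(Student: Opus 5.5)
The plan is to deduce the cochain property from the generalized Stokes formula for fiber integration along the bundle $\pi\colon E_{s,t}\to \bEmb_c(\R^j,\R^n)$. The fibers $C_{s,t}(\psi)$ are compact manifolds with corners, since they are pullbacks of Fulton--MacPherson compactifications. For a form $\Omega$ on $E_{s,t}$ we have
\[
  d\,\pi_*\Omega \;=\; \pi_*(d\Omega) \;\pm\; (\pi^{\partial})_*\bigl(\Omega|_{\partial^{\mathrm{fib}}E_{s,t}}\bigr).
\]
We apply this to $\Omega_\Gamma = F_\Gamma^*\bigl(\bigwedge\omega_{j-1}\wedge\bigwedge\omega_{n-1}\wedge\bigwedge J(D_v(\Gamma))\bigr)$. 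The volume forms are closed, so $d\Omega_\Gamma$ reduces to the terms $\pm\, F_\Gamma^*(\cdots\wedge dJ(D_v(\Gamma))\wedge\cdots)$. By the construction of $J$ in \cite[Section 5.4]{Yos25b}, $dJ(D_v) = J(d_{Z}D_v)$ up to sign, where $d_Z$ is the differential on $Z_{n,j}=A_{n,j}\otimes BA_{n,j}$. These terms therefore reproduce exactly the part of $d_{DGC}$ acting on decorations.

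The remaining work is to decompose the fiberwise boundary into codimension-one strata and match each stratum with the remaining terms of $d_{DGC}\Gamma$. I would sort the strata into the following types.
\begin{itemize}
\item[(a)] \emph{Principal faces.} Exactly two points collide along an edge, either a $\theta$-edge between any two vertices or an $\eta$-edge between two black vertices. The restriction of $\Omega_\Gamma$ to such a face factors as $\Omega_{\Gamma/e}$ times the integral of the edge form over the collision sphere, which is $1$. These faces give $\overline{I}(\Gamma/e)$ with sign $\mathrm{sign}(e)$. Collisions of two points not joined by an edge give zero, because the form does not depend on the sphere coordinate of that face.
\item[(b)] \emph{Hidden faces.} Three or more points collide. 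I would show these vanish by the standard arguments. For a subgraph with a univalent or bivalent internal vertex, use Kontsevich's symmetry involution (the antipodal conditions on $\omega_{j-1}$ and $\omega_{n-1}$ are chosen exactly so that this involution reverses the sign). Otherwise, use a dimension count: the image of the Gauss maps has positive-dimensional fibers.
\item[(c)] \emph{Faces at infinity.} A subset of points escapes to infinity. Since the embeddings are long, the configuration is standard near infinity, and the form degenerates there by a dimension argument.
\item[(d)] \emph{Anomalous faces.} All points of a connected component collide at a point of the embedded $\R^j$. In classical configuration space integrals these are the obstruction (the ``anomaly''). In the modified integral, their contribution is a form on $P(\mathrm{Inj}(\R^j,\R^n))$ evaluated through the path of immersions $\psi_u$. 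It is exactly cancelled or absorbed by the bar-construction part of the decorations, namely the terms of $d_{DGC}$ that merge a collapsed subgraph into a decoration.
\end{itemize}

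Summing over all faces gives $d\,\overline{I}(\Gamma) = \overline{I}(d_{DGC}\Gamma)$, with signs fixed by the orientation conventions for odd $n$ and $j$. I expect the main obstacle to be case (d), together with the hidden faces in (b) whose subgraphs contain external black vertices. The decorations $Z_{n,j}$ and the evaluation $\mathrm{ev}_{s=v}$ into $P(\mathrm{Inj})$ are designed precisely so that these strata produce exact terms matched by $d_{DGC}$. My first approach there would be to identify the restriction of $\Omega_\Gamma$ to the anomalous face with $J$ applied to the bar-differential term. Concretely, I would show that the face integral factors through the Gauss-type map to $P(\mathrm{Inj}(\R^j,\R^n))$, and then check that it agrees with the defining property of $J$ on the acyclic bar complex. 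The rest of the argument is bookkeeping of orientations.
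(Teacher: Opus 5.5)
Your global framework is the natural one: apply Stokes' formula to the fiber integral along $\pi$, then compare the boundary face by face with $d_{DGC}$. (The paper gives no argument of its own and only cites \cite{Yos25b}.) However, there is a genuine gap. Your face analysis goes wrong on exactly the strata that make the modified integral differ from the classical one.

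In (a) you count a $\theta$-edge joining two black vertices as a principal face whose sphere integral is $1$. Both endpoints of such a chord edge are parametrized by $\mathbb{R}^j$, so the collision face is an $S^{j-1}$-bundle, not an $S^{n-1}$-bundle. The edge form restricts there to $G^*\omega_{n-1}$ with $G(u)=d\psi_0(x)u/\|d\psi_0(x)u\|$. Its fiber integral is the pullback along $x\mapsto d\psi_0(x)$ of a closed $(n-j)$-form on $\mathrm{Inj}(\mathbb{R}^j,\mathbb{R}^n)$. This form is not constant, and for $n-j$ even it is not even exact: it pairs nontrivially with the bottom sphere $S^{n-j}$ of $V_j(\mathbb{R}^n)\simeq\mathrm{Inj}(\mathbb{R}^j,\mathbb{R}^n)$. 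Consistently with this, the paper's $d_{PGC'}$ omits chord edges. Your bookkeeping would therefore produce contraction terms that are not in the differential and lose the terms that actually occur.

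Likewise, in (b) the hidden faces where a subgraph containing black vertices collapses onto a point of the embedded $\mathbb{R}^j$ do not vanish in general. They too contribute pullbacks under $x\mapsto d\psi_0(x)$ of forms on $\mathrm{Inj}(\mathbb{R}^j,\mathbb{R}^n)$. Your fallback ``dimension count'' is not a valid argument for at-least-trivalent collapsing subgraphs. The involution $x\mapsto -x$ about the collision point, with $n$ and $j$ odd, kills only collapsing subgraphs of odd first Betti number. So, for example, a tripod collapsing onto $\mathbb{R}^j$ is disposed of by neither argument.

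As a result, essentially all of the content sits in (d), and there you assert the cancellation rather than derive it. The missing mechanism is Stokes' formula in the path variable. Let $\alpha$ be such a face form on $\mathrm{Inj}(\mathbb{R}^j,\mathbb{R}^n)$ and set $ev_u(\psi,x)=d\psi_u(x)$. Then $d\int_0^1 ev_u^*\alpha=\pm(ev_0^*\alpha-ev_1^*\alpha)\pm\int_0^1 ev_u^*d\alpha$. Since $\psi_1=\iota$ is fixed, $ev_1^*\alpha=0$ in positive degree, so the $u=0$ face term becomes exact. Iterating this when several such terms accumulate at one external vertex is what the iterated-integral structure of $J$ on $A_{n,j}\otimes BA_{n,j}$ has to handle. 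Note also that the face term factors through $\mathrm{Inj}(\mathbb{R}^j,\mathbb{R}^n)$ via $ev_{u=0}$, not through $P(\mathrm{Inj}(\mathbb{R}^j,\mathbb{R}^n))$; the path enters only through $J$.

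To finish the argument you would need three further steps:
\begin{itemize}
\item identify each chord term and each collapse term at a black vertex with $ev_{u=0}^*$ of a specific element of $A_{n,j}$;
\item check that $d_{DGC}$ contains exactly the matching decoration-creating terms, with the right signs;
\item redo the principal-face computation of (a) when decorated external vertices collide. There $J(D_v)\wedge J(D_w)$ is evaluated at a single point and must be recognized, through a multiplicativity property of $J$, as the integrand of a graph in $DGC_{n,j}$.
\end{itemize}
None of these steps follows from the remark that the decorations are designed for this purpose.
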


Next, we review basic properties of the configuration integral.
The rest of this section follows \cite[Section 7.1]{Yos25a}.

% \subsection{Modified some paring lemmas about Configuration space integral}

\begin{Lem}[Symmetry lemma]
    Let $\Gamma$ be a labeled plain graph, where the labeling includes the orientation data of each edge,
    and let $\rho $ be an automorphism of $\Gamma$.
    Consider the involution on $C_{s,t}$ given by
    \[
        \sigma(x_1,x_2,\dots,x_s,x_{s+1},\dots,x_{s+t})
        =
        (x_{\rho^{-1}(1)},x_{\rho^{-1}(2)},\dots,x_{\rho^{-1}(s)},
         x_{\rho^{-1}(s+1)},\dots,x_{\rho^{-1}(s+t)}).
    \]
    Let $X,Y \subset C_{s,t}$ be submanifolds satisfying $\sigma(X)=Y$.
    Then
    \[
        \int_X I(\Gamma) = \epsilon(\rho)\, \int_Y I(\Gamma),
    \]
    where $\epsilon(\rho) \in \{-1,1\}$ is the sign induced by the action of $\rho$ on the labeling of $\Gamma$.
\end{Lem}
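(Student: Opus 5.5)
The argument is a change of variables, combined with how the integrand $I(\Gamma)$ transforms under relabelling of vertices. I would write $I(\Gamma)$ for the integrand
\[
F_{\Gamma}^{*}\Bigl(\textstyle\bigwedge_{e\in E_\eta}\omega_{j-1}\wedge\bigwedge_{e\in E_\theta}\omega_{n-1}\wedge\bigwedge_{v\in B_e}J(D_v(\Gamma))\Bigr)
\]
on $C_{s,t}$. The goal is the identity
\[
\sigma^{*}I(\Gamma)=\epsilon(\rho)\,I(\Gamma)
\]
as forms on $C_{s,t}$, with orientations of $X$ and $Y$ compared via $\sigma$. Since $\rho$ is an automorphism of $\Gamma$, it preserves the vertex types (white, internal black, external black) and the edge types. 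Hence $\sigma$ maps $C_{s,t}$ to itself: black points go to black points and white points to white points. It also covers the corresponding permutation of the $\R^j$-coordinates of $C_s(\R^j)$, so $\sigma$ is a diffeomorphism of the fiber that commutes with the projection to $\bEmb_c(\R^j,\R^n)$.

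\emph{Naturality of $F_\Gamma$.} For each edge $e=(p,q)$, the Gauss map of $e$ composed with $\sigma$ equals the Gauss map of the edge $\rho(e)$, possibly composed with the antipodal map. The antipodal map occurs exactly when $\rho$ sends $(p,q)$ to an edge whose labelled orientation is opposite. Likewise, the evaluation map at $v\in B_e$ composed with $\sigma$ is the evaluation map at $\rho(v)$. Therefore
\[
F_\Gamma\circ\sigma=\tau\circ F_\Gamma ,
\]
where $\tau$ permutes the sphere and $P(\mathrm{Inj})$ factors according to $\rho$ and applies $a_{j-1}$ or $a_{n-1}$ on the edges that $\rho$ reverses.

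\emph{Sign bookkeeping.} Pulling back the product form along $\tau$ produces two kinds of signs. Each reversed $\eta$-edge contributes $(-1)^j=-1$, and each reversed $\theta$-edge contributes $(-1)^n=-1$; these come from $a_{j-1}^*\omega_{j-1}=(-1)^j\omega_{j-1}$ and $a_{n-1}^*\omega_{n-1}=(-1)^n\omega_{n-1}$. Permuting the wedge factors contributes Koszul signs. Since $n,j$ are odd, $\omega_{j-1}$ and $\omega_{n-1}$ have even degree, so edge permutations give no sign. The decoration forms $J(D_v)$ are permuted along with the vertices, and their degrees must be tracked. Permuting the configuration factors via $\sigma$ changes the fiber orientation by the sign of the permutation on black points (each of dimension $j$, odd) and on white points (each of dimension $n$, odd). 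The product of these signs should be exactly the sign $\epsilon(\rho)$ defined by the orientation relations of the graph complex: a vertex swap gives $-1$ and an edge reversal gives $-1$. This is precisely why those orientation conventions were chosen for odd $n,j$.

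\emph{Conclusion and main obstacle.} With $\sigma^{*}I(\Gamma)=\epsilon(\rho)I(\Gamma)$ established up to the orientation sign of $\sigma$, the change of variables formula gives
\[
\int_Y I(\Gamma)=\int_{\sigma(X)}I(\Gamma)=\pm\int_X\sigma^*I(\Gamma),
\]
which is the claim after collecting signs. I expect the main obstacle to be the last step of the bookkeeping: checking that the decoration forms $J(D_v(\Gamma))$ and the orientation sign of $\sigma$ together reproduce exactly $\epsilon(\rho)$. For this I would rely on Yoshioka's verification of the orientation conventions in \cite{Yos25b}, in particular the argument that $\overline{I}$ is well defined on orientation classes. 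That argument is essentially this lemma for the case where $\rho$ is a relabelling, and it would be quoted rather than redone.
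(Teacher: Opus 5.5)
Your proposal is correct and follows the same route as the paper: a change of variables along $\sigma$, together with the fact that $\sigma^*\overline{I}(\Gamma)$ agrees with $\overline{I}(\Gamma)$ up to a sign equal to $\epsilon(\rho)$. The paper only asserts that sign identification in one line. Your naturality relation $F_\Gamma\circ\sigma=\tau\circ F_\Gamma$ and your sign accounting (antipodal signs on reversed edges, even-degree edge forms, odd-dimensional configuration factors) spell it out in more detail than the paper does.
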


\begin{proof}
    The following identity holds up to sign:
    \[ \int_Y \overline{I}(\Gamma) =  \int_X \sigma^{*} \overline{I}(\Gamma) = \int_X \overline{I}(\Gamma). \]
    The sign difference coincides with $\epsilon(\rho)$.
\end{proof}

\begin{Lem}[Localizing lemma {\cite{SW12}}]\label{Localizing lemma}
    Let $\Gamma$ be a decorated graph of order $k$ with $B(P(\Gamma)) = s$ and $W(P(\Gamma)) = t$.
    Let $\psi$ be the cycle associated with a chord diagram $C$.
    For each $i$, let $C_0(i)\subset C_{s,t}(\psi)$ be the locus of
    configurations for which either $S_i$ or $T_i$ contains no black
    vertices. Then
    \[
        \int_{C_0(i)}\overline I(\Gamma)\longrightarrow 0
        \qquad (y_i\to 0).
    \]
    Consequently, the same conclusion holds for
    $C_0=\bigcup_i C_0(i)$.
\end{Lem}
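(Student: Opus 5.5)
The plan follows the Sakai--Watanabe argument as adapted in \cite[Section~7]{Yos25a}, with the planetary-like systems used in place of the planetary systems there. The parameter $y_i$ is the size (the $\epsilon$-scale) of the ribbon crossings labeled $i$, that is, the radius of the fattened disks in $T_i$ and the width of the thickened bands in $S_i$. The goal is to show that, on $C_0(i)$, the integrand is asymptotically independent of the parameter $v_i\in S^{n-j-2}$, so that the fiber integral along the $(S^{n-j-2})$-factor indexed by $i$ tends to zero.

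The first step is to fix $i$ and look at a configuration in $C_0(i)$, say one in which $T_i$ contains no black vertices; the case where $S_i$ contains none is symmetric. The preceding lemma on $c_C$ says that the $v_i$-dependence of the embedding $\varphi_{P_{\boldsymbol v}}$ is supported in the preimage of a neighbourhood of the crossings labeled $i$, and this preimage lies in $S_i\cup T_i$. More precisely, the perturbation $B(v_i)$ moves only the part of the bands lying over $S_i$, and it does so inside a ball of radius $O(y_i)$ around the disks coming from $T_i$. So if no black vertex lies in $S_i$, the positions of all black vertices are independent of $v_i$. White vertices are integrated over $\R^n$ and do not depend on $v_i$ by themselves. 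Likewise, the $\eta$-edge Gauss maps and the decorations $J(D_v)$ depend only on the black-vertex data, through the $1$-jets of the embedding at those points. Hence in this case the whole map $F_\Gamma$ restricted to $C_0(i)$ factors through the projection forgetting $v_i$. The pullback form then has no component of degree $n-j-2$ in the $v_i$-direction, and it integrates to zero exactly, not merely in the limit.

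The second step, which is the real content, is the case where $S_i$ does contain black vertices but $T_i$ contains none. Here the black vertices on the perturbed bands do move with $v_i$, but only by $O(y_i)$ and only inside a small ball $U_i$ around the crossing. For the $\theta$-edges from these vertices to vertices far from $U_i$, the Gauss map changes by $O(y_i)$ in $C^1$, so that contribution vanishes as $y_i\to 0$. The delicate contributions come from white vertices that also collapse into $U_i$ together with black vertices on the bands. For these I would rescale by $y_i$ and pass to the limit. The rescaled local picture then consists only of parallel perturbed band sheets, with no disk sheet, because $T_i$ has no black vertices. Those sheets are images of a single linear $j$-plane translated by the vector $\gamma(x_2)v_i$, and this configuration is invariant under a rotation of $v_i$ combined with an ambient isometry of the normal directions. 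This invariance lets the $v_i$-integral factor through a lower-dimensional space, so it vanishes. The step I expect to be the main obstacle is this one: checking uniformity of the limit near the boundary strata of the Fulton--MacPherson compactification, where several vertices collide in $U_i$ at different rates. I would handle it by stratifying according to which vertices lie in $U_i$ and applying dominated convergence on each stratum, using that all forms are smooth on the compact space $C_{s,t}$.

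Finally, $C_0=\bigcup_i C_0(i)$ is a finite union. By inclusion--exclusion its integral is a finite signed sum of integrals over intersections $C_0(i_1)\cap\dots\cap C_0(i_r)\subset C_0(i_1)$, and each of these tends to zero by the same argument, since the estimates above hold on any subset. This gives the consequence.
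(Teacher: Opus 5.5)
The weak point is your Step~2. Step~1 is sound: if no black vertex lies in $S_i$, every ingredient of $F_\Gamma$ is independent of $v_i$. This is the ``dimensional reasons'' half of the paper's argument. Two corrections to it:
\begin{itemize}
\item You announce the case ``$T_i$ has no black vertices'', call the other case symmetric, and then actually argue the $S_i$ case. The two cases are not symmetric, because all $v_i$-dependence sits on the band side.
\item For $n-j=2$, the case the main theorem needs, the parameter sphere is $S^0$. There the conclusion must be that the two points of $S^0$ give equal values with opposite signs, not that the form has no degree-$(n-j-2)$ component along $v_i$.
\end{itemize}

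Step~2, which you rightly call the real content, has a genuine gap. Invariance of the rescaled local integrand under rotations that move $v_i$, combined with an ambient isometry, does not force the integral to vanish. The Gauss maps are only equivariant, so the pulled-back top form is invariant but not horizontal, and it does not descend to the orbit space. An $SO$-invariant volume form on the parameter sphere is invariant in exactly this sense and has nonzero integral.

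The setup has further problems:
\begin{itemize}
\item Linear isometries of the normal directions do not even preserve $S$, which is centred at $(1,0,\dots,0)$ and passes through the origin. Only those fixing its centre do, and their orbits have codimension one in $S$.
\item The perturbed sheets are not translates of a single plane, since the displacement $\gamma(x_2)v_i$ varies across the band.
\item For $n-j=2$ there is no rotation of $S^0$ at all, so the argument says nothing in the codimension-two case.
\end{itemize}

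The missing idea is the one the paper uses. On $C_0(i)$ one of the two sheets at the crossing carries no black vertex, so the integrand does not see it. Hence, in the limit $y_i\to 0$, the crossing may be allowed to pass through a self-intersection that no configuration detects. This replaces the $v_i$-family by a degenerate one that does not depend on $v_i$. Vanishing then follows from the degree count along the $S^{n-j-2}$ factor, or, for $S^0$, from the cancellation of the two resolutions of the crossing.

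To repair Step~2 you must show that the integrand over $C_0(i)$ becomes \emph{independent} of $v_i$ in the limit. Being invariant under a group that moves $v_i$ is not enough.
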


\begin{proof}
    On the locus $C_0(i)$, at least one of the two local
    pieces $S_i$ and $T_i$ contains no black vertices. Hence the associated
    configuration space integral is unaffected, in the limit $y_i\to 0$,
    by allowing the corresponding ribbon crossing to pass through a
    self-intersection.
    
    After this degeneration, the contribution from this local region
    vanishes. Indeed, it is either zero for dimensional reasons, or the
    contributions from the two resolutions of the local crossing cancel
    each other. 
\end{proof}

\begin{Not}
    Recall that we have classified the geometric components associated with each chord into target annuli and source disks.
    For each vertex label \(\alpha \in \{1, \dots, 2k\}\), we define the associated subset $X_\alpha \subset \mathbb{R}^j$ as follows:
    \[
        X_\alpha =
        \begin{cases}
            T_{\frac{\alpha +1}{2} } & (\text{if } \alpha \text{ is odd}),\\[2mm]
            S_{ \frac{\alpha}{2}} & (\text{if } \alpha \text{ is even}).
        \end{cases}
    \]
    We denote by \(X^+_\alpha\) the primary part of \(X_\alpha\).
    Recall that \(X^+_\alpha\) is the component constructed exclusively by iterations of the map $e_+$ (i.e., the "backbone" of the tree structure).
\end{Not}

By the Localizing Lemma, it suffices to consider graphs with $s = 2k$ and $t = 0$
(in particular, such graphs have no nontrivial decoration)
and configurations in which the black vertex $x_{\sigma^{-1}(\alpha)}$ lies in $X_\alpha$ for each $\alpha$.
We denote the set of such configurations by $C_{\sigma}$, and the ordering of $B(\Gamma)$ is given by the composition
\[
    \sigma \colon B(\Gamma) \xrightarrow[\cong]{\sigma} V(C) \xrightarrow[\cong]{\tau} \{1,2,\dots, 2k \}.
\]
(Recall that the ordering $\tau$ of $V(C)$ is defined such that the $(2i-1)$-th element is the initial vertex and the $(2i)$-th element is the terminal vertex of the $i$-th chord.)

\begin{Lem}[Pairing lemma {\cite[Lemma~4.9]{SW12}}]
    If $x_{\sigma^{-1}(2i)} \in S_i$ and $x_{\sigma^{-1}(2i-1)} \in T_i$
    are not connected by a dashed edge, then
    \[
        \int_{C_{\sigma}} \overline{I}(\Gamma) \longrightarrow 0
        \qquad (y_i \to 0).
    \]
\end{Lem}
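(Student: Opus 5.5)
We prove the Pairing lemma in the style of the Localizing lemma: we show that when the two points $x_{\sigma^{-1}(2i)}\in S_i$ and $x_{\sigma^{-1}(2i-1)}\in T_i$ are not joined by a dashed edge of $\Gamma$, the integrand cannot see the ribbon crossing labeled $i$ in the limit $y_i\to 0$. We then cancel the contribution by a local symmetry or dimension argument. Concretely, we split $C_\sigma$ as a product-like fibration. One factor is the parameter $v_i\in S=S^{n-j-2}$ together with the positions of the two black vertices in $S_i$ and $T_i$. The remaining factor is the rest of the configuration and parameters. We then analyze the restriction of $F_\Gamma^*(\bigwedge\omega)$ to the first factor.

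The first step is to record which Gauss maps depend on $v_i$. By construction of $c_C$ and the lemma on preimages of crossings, the perturbation $B(v_i)$ only moves the part of the embedding lying over $S_i$, within a neighborhood of size $y_i$. Hence $v_i$ enters only through the images $\psi(x)$ for $x\in S_i$. Among the edges of $\Gamma$, only those incident to $x_{\sigma^{-1}(2i)}$ depend on $v_i$, and this dependence is nontrivial in the limit only if the other endpoint lies within distance $O(y_i)$ of the crossing. Since $t=0$ (there are no white vertices), the only candidate endpoint near the crossing is the vertex in $T_i$, namely $x_{\sigma^{-1}(2i-1)}$. This uses the localization into $X_\alpha$ and the fact that distinct $X_\alpha$ are separated at scale $\gg y_i$. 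Edges to vertices far away have Gauss maps converging, uniformly in $v_i$, to maps independent of $v_i$.

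The second step is a dimension count. If no dashed edge joins the two points, then in the limit the form is pulled back from a space in which the $S^{n-j-2}$ factor of $v_i$ has been collapsed. That is, the integrand converges to a form basic with respect to projection forgetting $v_i$, so its fiber integral over $S^{n-j-2}$ vanishes. If $n-j=2$, then $S$ is $S^0$, and we instead pair the two points of $S^0$: the configurations over $v_i=\pm$ give equal limiting forms with opposite orientations, as in the cancellation of the two resolutions in the Localizing lemma, so the sum tends to $0$. Uniform convergence on the compactified fiber then gives $\int_{C_\sigma}\overline I(\Gamma)\to 0$ by dominated convergence.

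The main obstacle is the first step: justifying rigorously that edges from $x_{\sigma^{-1}(2i)}$ to other black vertices, in particular vertices on secondary parts of the same $S_i$ or $T_i$ or on the solid $\eta$-edges, do not retain $v_i$-dependence in the limit. The secondary parts consist of several disks and annuli near the same crossing. I would handle this by using the $\epsilon$-scale separation of the planetary-like system together with the fact that $\eta$-edge Gauss maps live in $C_s(\R^j)$, where they are independent of $v_i$. This is also how the original argument of \cite[Lemma~4.9]{SW12} proceeds, and I would check that the extra secondary components only introduce further $v_i$-independent factors.
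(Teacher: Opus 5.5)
Your proposal is correct and takes essentially the same route as the paper. The paper's proof just refers back to the Localizing lemma: if there is no dashed edge between the two points, the integrand does not detect the crossing labeled $i$ in the limit $y_i\to 0$, so the contribution vanishes either for dimensional reasons (your basic-form argument over $S^{n-j-2}$) or because the two resolutions cancel (your $S^0$ argument when $n-j=2$). The obstacle you flag about secondary parts is milder than you fear: on $C_\sigma$ each $X_\alpha$ contains exactly one black vertex, and since $t=0$ and the defect is $0$ every black vertex carries exactly one dashed edge, so that single dashed edge at $x_{\sigma^{-1}(2i)}$ is the only factor that can depend on $v_i$.
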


\begin{proof}
    The proof is analogous to that of Lemma~\ref{Localizing lemma}.
\end{proof}

Before stating the next lemma, we clarify the definition of the components $Y$ and $Y'$.
Recall that our planetary-like system is constructed by iterating the maps $e_+$ and $e_-$.
We refer to any subset of the form $e_{\mu_1} \circ \dots \circ e_{\mu_m}(D)$ (where $D \in \{D_+, D_-, I \times S\}$) as an \emph{elementary component}.

\begin{Lem}[Same system lemma]
    Let $\Gamma$ be a decorated graph of defect $0$ and order $k$ such that
    \[
        B_e(P(\Gamma)) = 2k .
    \]
    Let $C_{\mathrm{sys}}$ denote the set of configurations for which there exist
    two elementary components $Y,Y' \subset \mathbb{R}^j$ and points
    $y \in Y$, $y' \in Y'$ satisfying the following conditions:
    \begin{itemize}
        \item $y$ and $y'$ are connected by solid edges in $\Gamma$;
        \item $Y$ is contained in some $X_\alpha$, and $Y'$ is contained in some
              $X_\beta$, where the case $\alpha=\beta$ is allowed;
        \item there exist disjoint balls $B_Y$ and $B_{Y'}$ such that the orbit of
              $Y$ is contained in $B_Y$, and the orbit of $Y'$ is contained in
              $B_{Y'}$.
    \end{itemize}
    Then
    \[
        \int_{C_{\mathrm{sys}}} \overline{I}(\Gamma)
        \longrightarrow 0
    \]
    as the radii of $B_Y$ and $B_{Y'}$ tend to $0$.
\end{Lem}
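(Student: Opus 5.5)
The plan is to adapt the analogous ``same system'' vanishing argument of \cite[Section~7.1]{Yos25a} (itself modelled on the localization arguments of \cite{SW12}) to our planetary-like systems, using a scaling/dimension count on the region $C_{\mathrm{sys}}$. Decompose $C_{\mathrm{sys}}$ according to which black vertices lie in the orbit of $Y$, i.e.\ in $B_Y$, and which lie in $B_{Y'}$. Let $A\subset B(\Gamma)$ be the set of black vertices whose positions lie in $B_Y$. Write the configuration of the vertices of $A$ as a center $c\in B_Y$ together with a rescaled relative configuration, and similarly for the vertices $A'$ in $B_{Y'}$. As the radii of $B_Y$ and $B_{Y'}$ tend to $0$, the integrand factors in the limit. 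The Gauss maps of edges leaving $A$ depend only on the centers, while the Gauss maps of edges inside $A$ depend only on the rescaled configuration. The same holds for $A'$.

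The key observation concerns a solid edge $e=(y,y')$ joining $Y$ and $Y'$. Its direction in $S^{j-1}$ converges to the direction between the two centers $c,c'$. Since $B_Y$ and $B_{Y'}$ are disjoint and shrinking, this direction is determined by the data of the cycle family rather than by the free configuration parameters. Concretely, the orbits of $Y$ and $Y'$ are swept out by the rotation parameters $\theta\in(S^{j-1})^{g-1}$. One therefore counts dimensions. In the limit, the pulled-back form $\omega_{j-1}$ on $e$ factors through a space of dimension strictly less than $j-1$ plus the parameters already used by the other factors, so it degenerates.

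I would organize the count as follows. First show that the limit form on $C_{\mathrm{sys}}$ is pulled back from a product $Z\times Z_A\times Z_{A'}$, where $Z$ records the centers and the cycle parameters, and $Z_A$, $Z_{A'}$ are rescaled configuration spaces. Second, use the defect-$0$ and $|B_e|=2k$ hypotheses, so that every black vertex is external with exactly one dashed edge. This gives the degree of the form restricted to each factor. Third, show that the total form degree on $Z_A$ or on $Z$ exceeds its dimension, for example because $Z_A$ has a free translation and dilation in $\R^j$ not detected by the Gauss maps. Fiber integration over that factor then vanishes, in the same spirit as the Kontsevich vanishing lemma.

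The main obstacle is the dimension count on $Z$ when $Y$ and $Y'$ lie in distinct but related secondary parts. In that case a rotation parameter $\theta_i$ moves both orbits, and the count must show that the solid edge direction does not supply the missing $j-1$ degrees. I would first handle the case $\alpha=\beta$ and the primary case by citing the argument of \cite{Yos25a} verbatim. For the secondary parts, I would then argue that $e_-$ is a reflection composed with an embedding, so the images under $e_-$ are isometric copies. The contribution therefore equals, up to sign, a primary-type configuration, to which the first case applies.
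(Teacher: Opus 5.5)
You state the decisive fact yourself: the direction of the solid edge $e=(y,y')$ converges to the direction between the centers of $B_Y$ and $B_{Y'}$. But you then use it as input to a dimension count rather than as the conclusion, and the mechanism you substitute does not work. By hypothesis the \emph{entire} orbits of $Y$ and $Y'$ lie in the fixed disjoint balls $B_Y$, $B_{Y'}$, over all cycle parameters, including the rotations $\theta\in(S^{j-1})^{g-1}$ and the perturbations $v$. So on all of $C_{\mathrm{sys}}$ the Gauss map of $e$ takes values in a cap around one fixed point of $S^{j-1}$, and the radius of that cap tends to $0$. The factor $\omega_{j-1}$ attached to $e$ is therefore pulled back through a map whose image collapses to a point, and the integral tends to zero. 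That is the paper's entire proof.

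In particular, the ``main obstacle'' you single out (a rotation parameter moving both orbits so that the edge direction recovers $j-1$ degrees) is ruled out by the hypothesis itself. No reduction of secondary parts to primary ones is needed either. That is fortunate, since your isometry argument for $e_-$ ignores that the reflected copies sit differently relative to the bands and crossings, so it does not identify the contributions.

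The step you actually rely on to get vanishing is false. You claim that the Gauss maps of all edges leaving $A$ depend only on the centers. This holds for solid edges, whose directions are computed in the source $\mathbb{R}^j$. It fails for dashed edges, whose directions are computed in $\mathbb{R}^n$ after applying the embedding. Here defect $0$ and $|B_e|=2k$ force $W(\Gamma)=B_i(\Gamma)=\emptyset$, so every vertex of $A$ carries exactly one dashed chord. On the configurations that survive the Pairing Lemma, its partner lies across a ribbon crossing. There the images of $S_\alpha$ and $T_\alpha$ are close in $\mathbb{R}^n$ although far apart in $\mathbb{R}^j$, so the chord direction depends on the precise position inside $Y$; this is exactly where the $S^{n-1}$-degree in the pairing computation comes from.

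Hence the rescaled factor $Z_A$ has no translation or dilation symmetry invisible to $F_\Gamma$. The product decomposition $Z\times Z_A\times Z_{A'}$ does not hold, and the Kontsevich-type vanishing in your third step does not go through. Drop the clustering and degree count, and finish directly with the estimate on the single solid edge $e$.
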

    \begin{figure}[htbp]
        \centering
        \begin{tikzpicture}[
            scale=0.5,
            >=Latex,
            annulus/.style={draw=blue, thick, fill=blue!30, even odd rule},
            small disk/.style={draw=blue, thick, fill=blue!30},
            label line/.style={thin, draw=black},
            point/.style={circle, fill=black, inner sep=1.5pt},
        ]
            % --- Left component (Y) ---
            \begin{scope}[shift={(-3, 0)}]
                \path[annulus] (0,0) circle (1.2) (0,0) circle (1.0);
                \coordinate (LocalP1) at (30:1.1);
                \path[small disk] (LocalP1) circle (0.3);
                \coordinate (P1) at (LocalP1);
                \draw[<-, label line] (P1) ++(60:0.5) -- ++(60:0.5) node[above right] {$Y$};
            \end{scope}

            % --- Right component (Y') ---
            \begin{scope}[shift={(3, 0)}]
                \path[annulus] (0,0) circle (1.2) (0,0) circle (1.0);
                \coordinate (P2) at (180:1.1);
                \coordinate (AnnulusTop) at (90:1.1);
                \draw[<-, label line] (AnnulusTop) -- ++(90:0.8) node[above] {$Y'$};
                \node[point] at (P1) {};
                \node[point] at (P2) {};
            \end{scope}

            % --- Connection ---
            \draw[black, line width=1.8pt] (P1) -- (P2);
        \end{tikzpicture}
        \caption{Configuration where $Y$ and $Y'$ are not linked.}
        \label{fig:same_system_lemma}
    \end{figure}

\begin{proof}
    As the radii shrink, the direction vector
    \[
        \frac{x - x'}{\|x - x'\|} \in S^{j-1}
    \]
    varies only within an arbitrarily small neighborhood of a point on \(S^{j-1}\);
    hence the contribution to the integral tends to zero.
\end{proof}

\begin{Lem}[Ingoing lemma]
    Let $\Gamma$ be as above.
    Let \(C_{\mathrm{in}}\) be the set of configurations for which there exist elementary components
    \(Y, Y', Y''\) such that
    \begin{itemize}
        \item There are points \(y \in Y\), \(y' \in Y'\), and \(y'' \in Y''\),
        \item $y$ is connected by solid edges to both \(y'\) and \(y''\),
        \item both \(Y'\) and \(Y''\) lie inside \(Y\) (as sub-components in the recursive structure).
    \end{itemize}
    Then
    \[
        \int_{C_{\mathrm{in}}} \overline{I}(\Gamma)
        \longrightarrow 0
    \]
    as the radii of \(Y'\) and \(Y''\) tend to \(0\).

    \begin{figure}[htbp]
        \centering
        \begin{subfigure}{0.48\textwidth}
            \centering
            \scalebox{0.5}{
            \begin{tikzpicture}[
                scale=0.5, >=Latex,
                annulus/.style={draw=blue, thick, fill=blue!10, even odd rule},
                point/.style={circle, fill=black, inner sep=1.5pt},
                label line/.style={thin, draw=black, shorten >= 2pt}
            ]
                % Y (Outer)
                \path[annulus] (0,0) circle (5.0) (0,0) circle (4.8);
                \coordinate (LabelY) at (135:5.5);
                \draw[<-, label line] (135:5.05) -- (LabelY) node[above left] {$Y$};

                % Y' (Middle)
                \path[annulus] (0,0) circle (1.6) (0,0) circle (1.4);
                \coordinate (LabelYprime) at (95:1.8);
                \draw[<-, label line] (95:1.65) -- (LabelYprime) node[above] {$Y'$};

                % Y'' (Inner)
                \path[annulus] (0,0) circle (0.6) (0,0) circle (0.4);
                \coordinate (LabelYdoubleprime) at (30:0.8);
                \draw[<-, label line] (30:0.65) -- (LabelYdoubleprime) node[right] {$Y''$};

                % Connections
                \coordinate (P1) at (110:4.9);
                \coordinate (P2) at (110:1.5);
                \coordinate (P3) at (50:0.5);

                \draw[black, line width=1.2pt] (P1) -- (P2);
                \draw[black, line width=1.2pt] (P1) -- (P3);

                \node[point] at (P1) {};
                \node[point] at (P2) {};
                \node[point] at (P3) {};
                \end{tikzpicture}}
            \label{fig:IngoingLemmaA}
        \end{subfigure}
        \begin{subfigure}{0.3\textwidth}
            \centering
            \scalebox{0.5}{
            \begin{tikzpicture}[
                scale=0.5, >=Latex,
                annulus/.style={draw=blue!80!black, thick, fill=blue!10, fill opacity=0.6, even odd rule},
                disk/.style={draw=blue!80!black, thick, fill=blue!70, fill opacity=0.6},
                label line/.style={thin, draw=black, shorten >= 2pt}
            ]
                % Left group (Y and Y'')
                \begin{scope}[shift={(0, 0)}]
                    \path[annulus] (0,0) circle (5.0) (0,0) circle (4.8);
                    \draw[<-, label line] (135:5.05) -- ++(135:0.5) node[above left] {$Y$};
                    \path[annulus] (0,0) circle (0.8) (0,0) circle (0.6);
                    \draw[<-, label line] (135:0.85) -- ++(135:1.5) node[above left] {$Y''$};
                    \coordinate (LocalP_Disk) at (30:0.7);
                    \path[disk] (LocalP_Disk) circle (0.3);
                    \coordinate (P2) at (LocalP_Disk);
                \end{scope}

                % Right group (Y')
                \begin{scope}[shift={(2, 0)}]
                    \path[annulus] (0,0) circle (0.8) (0,0) circle (0.6);
                    \draw[<-, label line] (45:0.85) -- ++(45:0.6) node[above right] {$Y'$};
                    \coordinate (P3) at (60:0.7);
                \end{scope}

                % Connections
                \coordinate (P1) at (60:4.9);
                \draw[black, line width=1.5pt] (P1) -- (P2);
                \draw[black, line width=1.5pt] (P1) -- (P3);

                \fill[black] (P1) circle (2pt);
                \fill[black] (P2) circle (2pt);
                \fill[black] (P3) circle (2pt);
            \end{tikzpicture}
            }
            \label{fig:IngoingLemmaB}
        \end{subfigure}
        \caption{Examples of nested components $Y, Y', Y''$ in the Ingoing Lemma.}
    \end{figure}
\end{Lem}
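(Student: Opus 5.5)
The plan is to imitate the proof of the Same system lemma: exhibit an integration variable whose direction vector degenerates as the inner radii shrink. Then show that the integrand restricted to $C_{\mathrm{in}}$ factors through a space of dimension too small to support the form, or has vanishing volume.

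\textbf{Setup.} Let $y\in Y$ be the vertex joined by solid edges $e'=(y,y')$ and $e''=(y,y'')$. Since $Y'$ and $Y''$ are sub-components nested inside the elementary component $Y$, their orbits under the rotation parameters of the planetary-like system lie in a ball $B$ around the centre of the corresponding $D_+$ (or $D_-$) of $Y$. The radius of $B$ is of order the radius of $Y'$, which tends to $0$. Meanwhile $y$ stays on $Y$, which has fixed radius $\epsilon$-scale.

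\textbf{Key step.} The Gauss maps of the $\eta$-edges $e'$ and $e''$ are
\[
\frac{y'-y}{\|y'-y\|},\qquad \frac{y''-y}{\|y''-y\|}\in S^{j-1}.
\]
As the radii of $Y'$ and $Y''$ tend to $0$, both converge uniformly on $C_{\mathrm{in}}$ to the same vector $(c-y)/\|c-y\|$, where $c$ is the common centre. So the pair of Gauss maps factors, up to an error tending to zero, through the diagonal $S^{j-1}\hookrightarrow S^{j-1}\times S^{j-1}$. Since $j$ is odd, $\omega_{j-1}\wedge\omega_{j-1}$ restricted to the diagonal vanishes ($\omega_{j-1}$ has even degree $j-1$ and top degree on $S^{j-1}$). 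Hence the pullback of $\omega_{j-1}\wedge\omega_{j-1}$ along the limiting map is zero.

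\textbf{Passing to the limit.} I would make the limit rigorous by writing $y'=c+r u'$ and $y''=c+r u''$ with $r$ the radius parameter. The map $F_\Gamma$ then extends smoothly to $r=0$ on the rescaled configuration space, where it factors through the diagonal. Dominated convergence, using compactness of the Fulton--MacPherson fibre and boundedness of the other forms, then gives $\int_{C_{\mathrm{in}}}\overline I(\Gamma)\to 0$.

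\textbf{Main obstacle.} The hardest point is justifying the uniformity when $y$ itself approaches the small ball, i.e.\ when $Y'$ or $Y''$ meets a neighbourhood of $y$. Nestedness should keep $y\in Y$ at distance bounded below from the centre region. If it does not, I would split off a collar near $y$ and bound its contribution by its vanishing volume, as in the Localizing lemma.
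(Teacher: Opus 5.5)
Your proposal is essentially the paper's argument: the paper's entire proof is the one-line remark that $\omega_{j-1}\wedge\omega_{j-1}=0$ once the Gauss maps of the two solid edges at $y$ collapse to a common direction, and your rescaling argument, together with the observation that $y$ on the annulus $Y$ stays a definite distance from the shrinking inner ball, supplies details the paper omits. One minor quibble: the oddness of $j$ plays no role, since the vanishing holds purely because $2(j-1)>j-1$ for every $j\ge 2$.
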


\begin{proof}
    Since $\omega_{j-1} \wedge \omega_{j-1} = 0$, the integral vanishes in this limit.
\end{proof}

\subsection{Counting formula} \label{subsec:counting_formula}

To prove the nontriviality of the cycle, it remains only to check that the counting formula still holds. 

We recall the graph-chord pairing.

\begin{Def}[Graph--chord pairing {\cite[Definition~3.7]{Yos25a}}] \label{def:graph_chord_pairing}
    Let $C$ be a chord diagram on $s$ directed lines of order $k$, and let $\Gamma$ be a labeled plain graph of order $\ k$.  
    The pairing $\langle \Gamma, C\rangle$ is defined as follows.

    We count only those permutations 
    \[
        \sigma \colon B(\Gamma) \longrightarrow V(C)
    \]
    satisfying the following conditions:
    \begin{itemize}
        \item[$(\mathrm{I})$] 
            The number of black vertices matches the number of vertices of the chord diagram,  
            i.e.\ $|B(\Gamma)| = |V(C)| = 2k$.

        \item[$(\mathrm{II})$] 
            The permutation $\sigma$ induces a map 
            \[
                \overline{\sigma} \colon E(\Gamma) \longrightarrow E(C).
            \]

        \item[$(\mathrm{III})$] 
            If two black vertices $v_1, v_2$ lie in the same solid component of $\Gamma$,  
            then their images $\sigma(v_1)$ and $\sigma(v_2)$ lie on the same directed line of~$C$.

        \item[$(\mathrm{IV})$] 
            If a vertex $w \in V(C)$ is not on the $x$–axis,  
            then there exists a unique vertex $w'$ located below $w$ on the same oriented line such that  
            $\sigma^{-1}(w)$ is connected to $\sigma^{-1}(w')$ in $\Gamma$.
    \end{itemize}

    The sign of $\sigma$ is $+1$ if and only if  
    the orientation on $\Gamma$ obtained by transporting the orientation of $C$  
    along the assignment $\sigma \colon B(\Gamma)\to V(C)$  
    agrees with the original orientation of $\Gamma$.
\end{Def}

\begin{Lem}[{\cite[Theorem~7.11]{Yos25a}}, {\cite[Lemma~4.12]{SW12}}] \label{lem:pairing_is_1}
    Let $\Gamma$ be a labeled decorated graph.
    Let $\sigma \colon B(\Gamma) \xrightarrow{\cong} V(C)$ be a bijection satisfying all the conditions in Definition~\ref{def:graph_chord_pairing}.
    Let $C^+ \subset C_\sigma$ be the subconfiguration determined by the condition
    \[
        x_{\sigma^{-1}(\alpha)} \in X^+_{\alpha} \quad (\text{for all } \alpha).
    \]
    Then,
    \[
        \int_{C^+} \overline{I}(\Gamma) = \pm 1,
    \]
    where the sign in the equation depends on the sign of $\sigma$ in Definition~\ref{def:graph_chord_pairing}.
\end{Lem}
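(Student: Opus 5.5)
The plan is to reduce the integral over $C^+$ to a product of local degree computations, one for each chord and one for each solid edge. This follows the argument of \cite[Theorem~7.11]{Yos25a}, which applies because, by the Remark after Construction~\ref{ConstructionOfCycleAssociatedWithChordDiagram}, the primary part of our cycle recovers Yoshioka's cycle.

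First I would restrict to $C^+$. There every black vertex $x_{\sigma^{-1}(\alpha)}$ lies in the primary part $X^+_\alpha$. The secondary parts (images under some $e_-$) are then irrelevant for this integral, and the integrand only involves the ribbon crossings between primary components. By conditions (I)–(III) of Definition~\ref{def:graph_chord_pairing} and the Pairing lemma, each dashed edge of $\Gamma$ joins $x_{\sigma^{-1}(2i-1)}\in T_i^+$ and $x_{\sigma^{-1}(2i)}\in S_i^+$. In the limit $y_i\to 0$, the corresponding Gauss map into $S^{n-1}$, together with the parameter $v_i\in S^{n-j-2}$ and the position of the vertex on the annulus $I\times S$ or on the disk, has local degree $\pm1$. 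This is the standard linking computation for a ribbon crossing between a band perturbed in the direction $v_i$ and a fattened disk, as in \cite[Lemma~4.12]{SW12}.

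Second, I would handle the solid edges. Condition (IV) says that each vertex off the $x$-axis is joined by a solid edge to a unique vertex below it. For a Type~$\rII$ vertex, the solid edge goes from the primary disk $e_+(D'_+(\theta))$ toward the centre $D_+$, and its Gauss map to $S^{j-1}$ is essentially $\theta$. It therefore has degree $\pm1$ in the rotation parameter $\theta\in S^{j-1}$. For a Type~$\rI$ vertex, the vertex ranges over the annulus $I\times S\simeq I\times S^{j-1}$, and the solid edge to the centre again gives a degree $\pm1$ map. By the Same system lemma and the Ingoing lemma, any other placement of the solid edges contributes $0$ in the limit. The Fubini-type product of these local degrees therefore gives $\pm1$. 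The dimension count $k(n-j-2)+(g-1)(j-1)$ matches the parameter space, consistent with defect $0$.

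Third, I would track the sign. Each local factor carries an orientation sign, and reordering the forms to match the vertex labels of $\Gamma$ against the induced ordering $\tau$ of $V(C)$ produces the sign of $\sigma$. The main obstacle is checking that no extra sign arises from the primary part. Since only $e_+$ (orientation-preserving) is iterated in the primary part, the crossing signs $(-1)^{\#\{a\mid s_a=-\}}$ from Construction~\ref{ConstructionOfCycleAssociatedWithChordDiagram} are all $+1$. The configuration is then literally Yoshioka's, so I would invoke \cite[Theorem~7.11]{Yos25a} for the sign bookkeeping rather than recompute it.
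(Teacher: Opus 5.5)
Your proposal is correct and follows the paper's route. The paper's proof consists only of citing \cite[Theorem~7.11]{Yos25a} and noting that the primary part of the cycle coincides with Yoshioka's cycle, which is exactly the reduction you make; your sketch of the per-chord and per-solid-edge local degrees, and your observation that the crossing signs $(-1)^{\#\{a \mid s_a = -\}}$ are all $+1$ on primary disks, unpack what that citation covers. One caveat applies equally to the paper: primary bands also cross secondary disks, so the configuration is not literally Yoshioka's. Those crossings should be seen to drop out by the localizing argument, since the secondary disks carry no vertices on $C^+$.
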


\begin{proof}
    See \cite[Theorem~7.11]{Yos25a}. Note that the primary part of our cycle coincides with Yoshioka's cycle.
\end{proof}

The following lemma is the key step in the evaluation.

\begin{Lem}[Counting formula, analogue of {\cite[Theorem~7.14]{Yos25a}}]
    Let
    \[
        w = \sum_{\Gamma} \frac{w_{\Gamma}}{|\operatorname{Aut}(\Gamma)|}\,\Gamma
    \]
    be a cocycle in \(DGC_{n,j}\).
    Let $G(C)$ be the set of plain graphs $\Gamma$ such that $|\langle \Gamma, C \rangle| \neq 0$.
    We assume that each graph in $G(C)$ is equipped with the labeling induced by $C$.
    Then,
    \[
        \langle \overline{I}(w),\, c_{C} \rangle
        = \sum_{\Gamma \in G(C)} w_{\Gamma}.
    \]
\end{Lem}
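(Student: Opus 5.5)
The plan is to follow the strategy of \cite[Theorem~7.14]{Yos25a}: decompose the configuration space and discard every piece whose contribution tends to zero. We then show that only the primary configurations survive, and evaluate those with Lemma~\ref{lem:pairing_is_1}.

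First, write $\langle \overline{I}(w), c_C\rangle = \sum_\Gamma \frac{w_\Gamma}{|\operatorname{Aut}(\Gamma)|}\int_{C_{s,t}(c_C)} \overline{I}(\Gamma)$. The left side depends only on the homology class of $c_C$ and on the cohomology class of $\overline{I}(w)$, since $\overline{I}$ is a cochain map. We may therefore pass to the limit in which all parameters $y_i$ and all radii of the elementary components tend to $0$.

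By the Localizing Lemma~\ref{Localizing lemma}, only graphs with $s=2k$, $t=0$ contribute. For these, only configurations in which each $X_\alpha$ contains exactly one black vertex survive. This gives a disjoint decomposition into the pieces $C_\sigma$, indexed by bijections $\sigma\colon B(\Gamma)\to V(C)$. By the Pairing lemma, $\overline{\sigma}$ must carry dashed edges to chords, which is condition (II). The solid edges are then constrained as follows.
\begin{itemize}
\item Endpoints on different directed lines lie in disjoint balls $e_i(\mathbb{D}^j)$, so they are killed by the Same system lemma; this gives condition (III).
\item A vertex off the $x$-axis must be joined to a vertex below it through the nested structure. Otherwise it is killed by the Same system lemma (unlinked components) or by the Ingoing lemma (two solid edges into nested components); this gives condition (IV).
\end{itemize}
Thus only $\sigma$ satisfying (I)--(IV) survive.

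The main obstacle is the secondary parts. Each $X_\alpha$ is a union of primary and secondary components, and one must show that every configuration with some vertex in a secondary part contributes zero. A vertex in a component involving $e_-$ can be solid-connected to its partner in only one of two ways:
\begin{itemize}
\item to a component outside the corresponding $e_-$-image, in which case they lie in disjoint shrinking balls and the Same system lemma kills the contribution;
\item to a component inside it, in which case, by degree counting with one incoming edge per level, the configuration is forced into the pattern of the Ingoing lemma.
\end{itemize}
Where neither lemma applies directly, I would use the reflection in $e^T_-$ together with the Symmetry lemma to pair the two mirror-image contributions. The crossing signs $(-1)^{\#\{a\mid s_a=-\}}$ and the root-ordering rule of Construction~\ref{ConstructionOfCycleAssociatedWithChordDiagram} are designed so that these pairs cancel. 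Verifying this cancellation sign-by-sign, including the $n-j=2$ ordering rule, is the delicate step.

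Once the secondary contributions vanish, each admissible $\sigma$ contributes $\int_{C^+}\overline{I}(\Gamma)=\pm1$ by Lemma~\ref{lem:pairing_is_1}, with sign equal to the sign of $\sigma$. Summing over $\sigma$ and using the Symmetry lemma, the $|\operatorname{Aut}(\Gamma)|$ relabelings of a given $\sigma$ give equal contributions and cancel the denominator. With the labeling on $\Gamma\in G(C)$ induced by $C$, each such graph then contributes exactly $w_\Gamma$, which yields $\sum_{\Gamma\in G(C)} w_\Gamma$.
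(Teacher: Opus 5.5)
Your reduction steps follow the paper: the Localizing and Pairing lemmas, the Same system and Ingoing lemmas, then Lemma~\ref{lem:pairing_is_1} on $C^+$, and the Symmetry lemma to absorb $|\operatorname{Aut}(\Gamma)|$. The gap is the one step that is new in this setting, $\int_{C_\sigma\setminus C^+}\overline{I}(\Gamma)=0$, which you do not prove. Your dichotomy for a vertex in an $e_-$-image (partner outside it, or nested inside it) is asserted, not derived, and you explicitly leave open a case where ``neither lemma applies''.

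The missing idea is the paper's induction on the depth $m$ of a secondary component $e_+^m\circ e_-(\cdots)$.
\begin{itemize}
\item For $m=0$, the Localizing lemma forces a black vertex $y$ in the primary annulus $e_+(I\times S)$. The vertex $x$ in the $e_-$-image must be joined to $y$ by a solid edge. Then $x$ and $y$ lie in disjoint shrinking balls, and the Same system lemma kills the contribution.
\item For general $m$, the inductive hypothesis says all shallower secondary regions already contribute zero. Together with the Localizing lemma, this again produces a vertex $y$ in $e_+^m(I\times S)$ or in $e_+(D'_+)$ joined to $x$ by a chain of solid edges, and the Same system lemma or the Ingoing lemma applies.
\end{itemize}
So every secondary configuration vanishes on its own, and the case you leave open does not occur.

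The fallback you propose would not close the gap in any case. The Symmetry lemma only compares integrals over two regions of the same fiber that are exchanged by a permutation of configuration points induced by an automorphism of $\Gamma$. The reflection built into $e^T_-$ acts on the source $\mathbb{R}^j$ and is not a symmetry of the family $c_C$; already in $\mathcal{S}_{\mathrm{I}}$, only $D_+$ is surrounded by the annulus $I\times S$. So the reflection gives no identity between integrals. Worse, the reflection exchanging $e_-$- and $e_+$-images carries the simplest secondary regions onto primary ones. A ``mirror-pair cancellation'' would therefore also cancel the $\pm1$ supplied by Lemma~\ref{lem:pairing_is_1}. Finally, the paper's proof of the counting formula never uses the crossing signs $(-1)^{\#\{a\mid s_a=-\}}$ or the $n-j=2$ root-ordering rule. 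Once the depth induction is in place, no sign-by-sign verification on secondary parts is needed.
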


This follows from the next theorem.

\begin{Thm}[Analogue of {\cite[Theorem~7.11]{Yos25a}}, {\cite[Lemma~4.12]{SW12}}]
    Let \(C\) be a chord diagram of order \(\le k\) on \(k -g + 1\) lines,
    and let \(\psi\) be the ribbon cycle associated with \(C\).
    Let \(\Gamma\) be a decorated graph of order \(k\) with \(g\) loops and defect \(0\).
    Then,
    \[
        \overline{I}(\Gamma)(\psi)
        = \langle P(\Gamma),\, C \rangle.
    \]
\end{Thm}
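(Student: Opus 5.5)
The plan is to compute $\overline{I}(\Gamma)(\psi)$ by restricting the configuration space integral over $C_{s,t}(\psi)$ to smaller and smaller loci, in the limit where all the parameters $y_i$ tend to $0$, and then to identify what survives.

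\textbf{Step 1: reduce to bijective configurations.} I first apply the Localizing Lemma~\ref{Localizing lemma} to each chord $i$. This discards every configuration in which $S_i$ or $T_i$ contains no black vertex. Since $\Gamma$ has defect $0$ and order $k$ with $g$ loops, a count of black vertices against the $2k$ regions $X_\alpha$ forces $t=0$ and $s=2k$. It also forces exactly one black vertex in each $X_\alpha$, with trivial decorations. The integral therefore decomposes as a sum over bijections $\sigma\colon B(\Gamma)\to V(C)$ of $\int_{C_\sigma}\overline{I}(\Gamma)$.

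\textbf{Step 2: prune the bijections.} For each $\sigma$ I remove the vanishing contributions in three ways.
\begin{itemize}
\item The Pairing Lemma kills every $\sigma$ for which the vertex in $S_i$ is not joined to the vertex in $T_i$ by a dashed edge. This leaves exactly condition (II) of Definition~\ref{def:graph_chord_pairing}.
\item The Same System Lemma applies when a solid edge joins vertices in elementary components that can be separated into disjoint small balls. This yields condition (III), since different directed lines sit in disjoint balls $e_i(\mathbb{D}^j)$. It also yields the uniqueness part of condition (IV).
\item The Ingoing Lemma, using $\omega_{j-1}\wedge\omega_{j-1}=0$, rules out a vertex with two solid edges into nested subcomponents. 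This yields the remaining part of (IV), namely that the solid edge goes to a vertex below it on the same line.
\end{itemize}
A dimension count then shows that the surviving solid-edge Gauss maps, together with the rotation parameters $\theta\in (S^{j-1})^{g-1}$, exactly absorb the $(g-1)$ factors of $\omega_{j-1}$.

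\textbf{Step 3: eliminate the secondary parts.} For each surviving $\sigma$ I split $C_\sigma$ according to whether each vertex lies in the primary part $X^+_\alpha$ or in a secondary component. On $C^+$, Lemma~\ref{lem:pairing_is_1} gives the value $\pm1$, with sign equal to that of $\sigma$, because the primary part is Yoshioka's cycle.

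For configurations with some vertex in a secondary component, I would pair each such configuration with its mirror image under the reflections $e_-$. The key input is Construction~\ref{ConstructionOfCycleAssociatedWithChordDiagram}: the crossing sign $(-1)^{\#\{a\mid s_a=-\}}$ and the rule for ordering intersections near the root of the bands when $n-j=2$. These should make each reflected contribution equal, up to sign, to a primary contribution. The resulting terms should then telescope, or cancel in pairs, so that the total over all $2^{(\cdot)}$ copies equals the primary value alone. If that pairing turns out to be unavailable, the fallback is to show directly that, because of the inner $e_-$ reflection, the secondary loci fail (IV) with respect to the ordering. The Same System Lemma would then kill them.

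\textbf{Step 4: sum.} Adding the surviving contributions over all $\sigma$ gives $\langle P(\Gamma),C\rangle$.

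I expect the main obstacle to be the secondary-part analysis in Step 3. In particular, the sign bookkeeping under reflections, together with the codimension-two root ordering, must be checked to cancel precisely. This cancellation is the genuinely new ingredient compared with \cite{Yos25a}.
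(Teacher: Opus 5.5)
Your Steps 1, 2 and 4 match the paper's reduction: Localizing, Pairing, Same System and Ingoing Lemmas, followed by Lemma~\ref{lem:pairing_is_1} on $C^+$. The gap is Step 3, and its main mechanism would fail.

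\textbf{There is no reflection symmetry to pair with.} The cycle has no symmetry exchanging $e_-$-branches with primary ones. At every level, the annulus $I\times S$ of a Type~I system, and the orbit of the rotating disk $D'_+$ of a Type~II system, encircle only $D_+$, never $D_-$. The outermost system on each line is not duplicated at all. Moving the inner vertices into the $D_-$ copy therefore does not produce a region on which the integrand behaves like it does on $C^+$.

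\textbf{A concrete case.} Take a line carrying a single Type~I vertex, with solid edge $x_0x_1$ and $x_1\in I\times S$.
\begin{itemize}
\item If $x_0$ lies in the $D_+$ copy, the direction of $x_1-x_0$ sweeps $S^{j-1}$ with degree one.
\item If $x_0$ lies in the $D_-$ copy, which is centred at distance $1/2$ from the annulus of radius about $\epsilon$, the direction stays in a small neighbourhood of a fixed vector.
\end{itemize}
That region therefore contributes $0$, not $\pm$ the primary value. So nothing telescopes or cancels in pairs. The crossing signs $(-1)^{\#\{a\mid s_a=-\}}$ and the root-ordering rule for $n-j=2$ are not what disposes of the secondary terms, and the paper's proof never uses them.

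\textbf{What is needed instead.} Each secondary region must be shown to vanish on its own. The paper does this by induction on the depth $m$ of a branch $e_+^m\circ e_-(\cdots)$ containing some vertex $x$.
\begin{itemize}
\item The Localizing Lemma, together with the inductive hypothesis for $m>0$, produces a vertex $y$ in the primary annulus $e_+^m(I\times S)$ or in $e_+(D'_+)$.
\item This $y$ is joined to $x$ by a chain of solid edges.
\item The annulus $e_+^m(I\times S)$ encircles only $e_+^m(D_+)$, while $x$ sits inside $e_+^m(D_-)$.
\item Hence the Same System Lemma kills the contribution, or the Ingoing Lemma does when a vertex has two solid edges into nested pieces.
\end{itemize}

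\textbf{Your fallback.} It points in this direction but is misphrased. Condition (IV) is a property of $\sigma$ and holds uniformly on all of $C_\sigma$, so the secondary loci cannot ``fail (IV)''. What degenerates is the solid-edge Gauss map, because of where the $e_-$-branch sits relative to the orbit around $D_+$. You also need the depth induction, with the Ingoing Lemma, to handle branches nested inside primary pieces.
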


\begin{proof}
    By the Symmetry Lemma, the Same System Lemma, and the Ingoing Lemma,
    it suffices to show that if the permutation $\sigma \in S_{2k}$ is induced by an element of $\Gamma$,
    then
    \[
        \int_{C_\sigma} \overline{I}(\Gamma) = 1.
    \]
    In view of Lemma~\ref{lem:pairing_is_1}, it remains to prove that
    \[
        \int_{C_{\sigma} \setminus C^+} \overline{I}(\Gamma) = 0.
    \]
    The region $C_{\sigma} \setminus C^+$ consists of configurations where at least one point lies in a secondary part (a branch).
    Any secondary part is of the form $e_+^m \circ e_- (\dots)$ for some depth $m \ge 0$.
    We prove the vanishing of the integral by induction on the depth $m$.

    First, consider the base case ($m=0$) where a point $x$ lies in the image of $e_-$.
    By the Localizing Lemma, there is exactly one point $y$ in the image of $e_{+}(I \times S)$, and $x$ and $y$ must be connected by a solid edge.
    Thus, by the Same System Lemma, the contribution from this region is zero.

    Next, assume that the integral vanishes for configurations involving branches of the form $e_+^j \circ e_- \circ \cdots$ for all $j < m$.
    Consider a configuration where a point $x$ lies in the image of $e_+^m \circ e_-$.
    From the inductive assumption and the Localizing Lemma, there exists a point $y$ in $e_+^m(I \times S)$ or $e_+(D'_+) $.
    Thus, $x$ and $y$ are connected by a sequence of solid edges.
    Therefore, by the Same System Lemma or the Ingoing Lemma, the contribution is zero.

    Since any point not in $C^+$ must belong to some such region (for some $m$), all contributions from $C_\sigma \setminus C^+$ vanish.
    This completes the proof.
\end{proof}
   
\subsection{Proof of the nontriviality of \texorpdfstring{$c_{p,g}$}{c(p,g)}}\label{subsec:Proof_of_the_non_triviality_of_psi(G(k,g))}

Recall that there is a sequence of quasi-isomorphisms:
\[
    HGC_{n,j} \xtwoheadleftarrow{\simeq} PGC_{n,j}' \xtwoheadrightarrow{\simeq} PGC_{n,j} \xtwoheadleftarrow{\simeq} DGC_{n,j}.
\]
Let $H = \sum_{\Gamma} \frac{w_{\Gamma}}{\operatorname{Aut}(\Gamma)}\Gamma$ be a graph cocycle in $HGC_{n,j}(k,g)$. 
We construct a lift $\widetilde{H} \in DGC_{n,j}$ as follows:
First, since the map $PGC_{n,j}' \to HGC_{n,j}$ is a surjective quasi-isomorphism, we can lift $H$ to a cycle $H' \in PGC_{n,j}'$. 
Next, we project $H'$ to $PGC_{n,j}$ via the natural map. 
Finally, since the map $DGC_{n,j} \to PGC_{n,j}$ is also a surjective quasi-isomorphism, we lift the image in $PGC_{n,j}$ to a cycle $\widetilde{H}$ in $DGC_{n,j}$.

In this context, by saying that $\widetilde{H}$ is a ``lift'' of $H$ we mean that for any graph $\Gamma$ (excluding those with double edges or self-loops), the coefficient of $\Gamma$ in $\widetilde{H}$ coincides with that in $H$.

Let $^{*}gdPGC_{n,j}'$ denote the graph complex obtained as the quotient space of $PGC_{n,j}'$ by the subspace generated by non-good plain graphs.
(In other words, $^{*}gdPGC_{n,j}$ is the chain complex consisting of good plain graphs.)
Then, the following theorem holds.

\begin{Thm}[Theorem~2.2, \cite{Yos23}]
    The natural map $H_{\mathrm{top}}(^{*}PGC_{n,j}') \longrightarrow H_{top}(^\ast gdPGC_{n,j}) $ is injective.
\end{Thm}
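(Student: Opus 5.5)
The plan is to exploit how the differential interacts with the solid edges. I would first check that the subspace $N\subset {}^{*}PGC'_{n,j}$ spanned by non-good graphs is a subcomplex, and likewise the subspace spanned by good graphs. The dual differential ${}^{*}d$ is the transpose of $d_{PGC'}$, which contracts only edges that are neither loops nor chord edges. Every solid edge joins two black vertices, so every solid edge is a chord edge and is never contracted. Dually, ${}^{*}d$ only splits vertices along new dashed edges (white or internal black ones). In both directions the restriction of a graph to its solid edges is unchanged up to relabelling of the black endpoints. Hence goodness is preserved by ${}^{*}d$, and we get a splitting of chain complexes
\[
{}^{*}PGC'_{n,j} \;=\; {}^{*}gdPGC'_{n,j}\;\oplus\; N,
\]
under which the natural map to ${}^{*}gdPGC'_{n,j}$ is the projection onto the first summand. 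Its kernel on homology is therefore exactly $H_{\mathrm{top}}(N)$, and the theorem reduces to showing $H_{\mathrm{top}}(N)=0$.

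To prove this vanishing I would use the quasi-isomorphism $p_1\colon PGC'_{n,j}\to HGC_{n,j}$ of \cite{Yos25b} (Theorem~2.38) and dualize it. Since $p_1$ is a surjective chain map, its transpose ${}^{*}p_1\colon {}^{*}HGC_{n,j}\to {}^{*}PGC'_{n,j}$ is the inclusion of the span of hairy graphs, and it is a quasi-isomorphism. In particular it induces a surjection on $H_{\mathrm{top}}$. A hairy graph has no solid edges at all, so it is trivially good. Thus ${}^{*}p_1$ factors through the summand ${}^{*}gdPGC'_{n,j}$, and its image in homology lies in $H_{\mathrm{top}}({}^{*}gdPGC'_{n,j})\oplus 0$. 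Since the map is surjective onto $H_{\mathrm{top}}({}^{*}gdPGC'_{n,j})\oplus H_{\mathrm{top}}(N)$, this forces $H_{\mathrm{top}}(N)=0$, which gives injectivity.

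The step I expect to be the main obstacle is justifying the first paragraph precisely, namely that $N$ is really a direct summand as a complex and not just a subcomplex. One has to confirm from the definitions in \cite{Yos25b} that no term of $d_{PGC'}$ contracts a solid edge. One also has to handle the edge case where contracting a dashed edge at an external black vertex changes its type, and check that this never merges two black vertices carrying solid edges. If some contraction did alter the solid graph, I would instead filter by the number of solid edges and their incidence pattern. Then I would run the same argument on the associated graded, where only dashed contractions survive, and pass back via a spectral sequence argument; this converges since the complex is finite in each bidegree $(k,g)$. A secondary point is that ``top'' must mean defect $0$ with the same grading conventions on both sides, so that ${}^{*}p_1$ being a quasi-isomorphism applies degreewise in the top term.
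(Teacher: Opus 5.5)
There is a genuine gap. The decomposition ${}^{*}PGC'_{n,j}={}^{*}gdPGC'_{n,j}\oplus N$ as complexes is false, and everything after it depends on it.

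\textbf{The reading behind the splitting is inconsistent.} You obtain the splitting by reading ``a chord edge means an edge connecting two black vertices'' so that solid edges are never contracted. That reading contradicts the quasi-isomorphism ${}^{*}p_1$ you use next.
\begin{itemize}
\item Take a uni-trivalent graph such as $G_{p,g}$ and contract one hair $(w,b)$. The result $\Gamma_b$ has defect $1$, and its new black vertex carries the two other dashed edges $e_1,e_2$ of $w$.
\item Suppose only dashed edges with a white endpoint were contractible. Then no other vertex of $\Gamma_b$ can be split: the other white vertices are trivalent and the other black vertices are univalent. The only term of ${}^{*}d\Gamma_b$ would be the splitting of that black vertex back into a univalent black vertex and a trivalent white one, i.e.\ a nonzero multiple of $G_{p,g}$.
\item So $[G_{p,g}]$ would vanish in $H_{\mathrm{top}}({}^{*}PGC'_{n,j})$. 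But the paper's weight-system computation shows it is nonzero in $H_{\mathrm{top}}({}^{*}HGC_{n,j})$, so ${}^{*}p_1$ could not be a quasi-isomorphism.
\end{itemize}

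\textbf{Which edges are actually excluded.} The bookkeeping with $k=|E_\theta|-|W|$ and $l=2|E_\theta|-3|W|-|B|$ settles this. Contracting a dashed edge between two black vertices lowers both $k$ and $l$ by one, so these are the excluded chords. Contracting a solid edge preserves $k$ and raises $l$ by exactly one, as every term of $d_{PGC'}$ must.

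So in the actual complex, ${}^{*}d\Gamma_b$ also contains the graph in which the black vertex is split into two black vertices joined by a new solid edge, each carrying one of $e_1,e_2$. This is precisely the relation drawn in the proof of the lemma $\pm[G_{p,g}]=\sum_{\Gamma\in G}[\Gamma]$.

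\textbf{Why the splitting fails.} Take a black vertex of solid valence $2$ carrying two dashed edges. Split it along a new solid edge so that one half keeps both old solid edges and one dashed edge. That half has solid valence $3$, so ${}^{*}d$ of a good defect-$1$ graph can have non-good defect-$0$ terms. Non-good graphs do span a subcomplex, because solid valence $\ge 3$ and solid cycles survive such splittings; this is why ${}^{*}gdPGC'_{n,j}$ is defined as a quotient. Good graphs do not span a subcomplex.

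\textbf{What remains to be proved.} You only have a short exact sequence $0\to N\to {}^{*}PGC'_{n,j}\to {}^{*}gdPGC'_{n,j}\to 0$. Injectivity on $H_{\mathrm{top}}$ is equivalent to $H_{\mathrm{top}}(N)\to H_{\mathrm{top}}({}^{*}PGC'_{n,j})$ being zero. In other words, a top cycle that is a boundary modulo non-good chains must already be a boundary.

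The quasi-isomorphism with ${}^{*}HGC_{n,j}$ only tells you that every class has a hairy, hence good, representative, which does not address this. Proving it requires a real combinatorial argument that disposes of top-degree graphs with a black vertex of solid valence $\ge 3$ or with a solid cycle. That is the content of Yoshioka's theorem; the paper does not prove it but imports it.

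\textbf{The fallback does not close the gap.} On the associated graded of the filtration by number of solid edges, the good/non-good splitting does hold. But the computation with $\Gamma_b$ above applies verbatim there, so every uni-trivalent graph with a white vertex is already a boundary in filtration $0$. Hence ${}^{*}HGC_{n,j}$ is nowhere near quasi-isomorphic to the associated graded, and there is no surjectivity statement to run. The classes you need live on later pages, where the solid-edge-creating differentials are exactly the ones that mix good and non-good graphs.
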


By virtue of this theorem, we may assume that for any term in the cycle $\widetilde{H}$, its underlying plain graph (ignoring decorations) is good.

\begin{Thm}\label{Thm:NonTrivialityOfCycleAssociatedToGkg}
    \[
        \langle \overline{I}(\widetilde{H}), \, c_{p,g} \rangle = \pm w_{G_{p,g}}
    \]
    Here, the sign of $w_{G_{p,g}}$ depends only on the orientation of $G_{p,g}$.
\end{Thm}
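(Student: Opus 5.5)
The plan is to apply the Counting formula to the lift $\widetilde{H}$ with $C = D(G_{p,g})$. Since $\overline{I}$ is a cochain map and $\widetilde{H}$ is a cocycle, this gives
\[
    \langle \overline{I}(\widetilde{H}),\, c_{p,g} \rangle = \sum_{\Gamma \in G(D(G_{p,g}))} \widetilde{w}_{\Gamma},
\]
where $\widetilde{w}_\Gamma$ are the coefficients of $\widetilde{H}$. Everything then reduces to a combinatorial statement: describe the set $G(D(G_{p,g}))$ of plain graphs $\Gamma$ of order $2p+g-1$, Betti number $g$ and defect $0$ with $\langle \Gamma, D(G_{p,g})\rangle \neq 0$. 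By the theorem of \cite{Yos23} recalled above, we may assume every term of $\widetilde{H}$ has a good underlying plain graph. Only good graphs therefore need to be considered.

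The main step is a classification. For a good graph $\Gamma$ with $|B(\Gamma)| = 2(2p+g-1)$ and $t=0$, an admissible $\sigma$ must satisfy conditions (I)–(IV) of Definition~\ref{def:graph_chord_pairing}. Condition (II) forces the dashed edges of $\Gamma$ to be exactly the chords of $D(G_{p,g})$. Condition (III) forces each solid component (a line, by goodness) to lie on a single directed line. Condition (IV) forces the solid edges on each line to form a tree in which every non-base vertex is joined to exactly one vertex below it.

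I would go line by line, treating the $2p-1$ short lines first. Each carries two vertices, both attached to its base vertex, so the solid part is a single edge. This reproduces the local pattern of a hair attached to a trivalent vertex, i.e.\ the contraction of a hair at its trivalent vertex.

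On the long line the vertices are the base point, the $2(g-1)$ endpoints of the nested chords $1,\dots,g-1$, and the top vertex. The chains permitted by (IV) that are good are lines. The admissible patterns correspond to the orderings depicted in the figure stored in \verb|\equationchordsII|, which relate the all-dashed line to sums of solid-path configurations. Only one of these has a solid line structure compatible with $\Gamma$ being good and having total defect $0$; the others branch and are therefore non-good.

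Contracting the solid lines of such a $\Gamma$ recovers $G_{p,g}$, with the vertex types (I) and (II) as in Figure~\ref{fig:orientedGpg}. So, up to the orientation relation, $G(D(G_{p,g}))$ consists precisely of the good plain graphs whose image under $p_1$ is $G_{p,g}$.

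The next step converts $\sum_{\Gamma\in G(C)} \widetilde{w}_\Gamma$ into $\pm w_{G_{p,g}}$. Here I would use the explicit description of the zigzag $HGC \leftarrow PGC' \to PGC \leftarrow DGC$. The coefficient of a good plain graph $\Gamma$ in a lift of $H$ is determined, via the cocycle condition in $PGC'$, by the coefficient of the hairy graph obtained by contracting solid edges. Because the chosen lift agrees with $H$ on graphs without double edges or loops, exactly one $\Gamma$ contributes after applying the symmetry lemma to identify $\sigma$'s differing by automorphisms, and its coefficient is $w_{G_{p,g}}$. The sign comes from comparing the orientation transported from $C$ with that of $G_{p,g}$. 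Since $n$ and $j$ are odd, the edges are odd and the vertices even, so this comparison is independent of all other choices.

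I expect the main obstacle to be the classification step, particularly on the long line. There one must show that nested chords $1,\dots,g-1$ admit only one admissible good solid-line arrangement compatible with $G_{p,g}$. One must also rule out spurious graphs in which the $g-1$ parallel edges are realized by a different matching with the hairs. My approach would be induction on $g$, peeling off the outermost chord $1$. Condition (IV) forces its terminal vertex to connect to the vertex immediately below its outer partner, and goodness then forbids any other attachment, mirroring the recursion $W_{\mathfrak{sl}_2}(G_{p,g}) = 4\hbar\, W_{\mathfrak{sl}_2}(G_{p,g-1})$ used in Proposition~\ref{Prop:InfinitenessOfHGC}.
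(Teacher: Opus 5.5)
Your opening reduction matches the paper: you apply the counting formula to $\widetilde{H}$ with $C=D(G_{p,g})$ and use \cite{Yos23} to restrict to good graphs. The argument breaks at the classification step. Condition (IV) does not force a vertex to be joined to the vertex immediately below it. It only requires that each non-base vertex on a line has exactly one solid neighbour lower on that line. Goodness then makes all solid degrees at most $2$. So the solid part on the long line can be any path that increases away from the base vertex in both directions, i.e.\ has its unique minimum at the base vertex. All of these are lines, so none is excluded as ``branched''; the configurations in the displayed relation are all paths. The paper's proof counts $2^{2g-2}$ such graphs, so already for $g=2$ the set $G(D(G_{p,g}))$ is not a single graph. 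Your induction peeling off chord $1$ therefore rests on a false premise.

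The second step fails independently. The graphs in $G(D(G_{p,g}))$ have solid edges and no white vertices, so they are not hairy. $p_1$ sends all of them to $0$, and the lift $\widetilde{H}$ is only pinned down on hairy graphs. Contracting solid edges produces black vertices of high dashed valence and never creates the white trivalent vertices of $G_{p,g}$. Hence no individual $\widetilde{w}_\Gamma$ can be read off as $w_{G_{p,g}}$. What is missing is the homological identity
\[
\pm[G_{p,g}] \;=\; \sum_{\Gamma\in G}[\Gamma] \;\in\; H_{\mathrm{top}}({}^*gdPGC'_{n,j}).
\]
It is obtained by repeatedly applying the STU-type relation displayed in the paper. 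There, a white vertex attached by a dashed edge to a black vertex of a solid path becomes a black vertex inserted next to it on either side. Each application doubles the number of terms, and the resulting solid paths are exactly the unimodal paths above. Since $\widetilde{H}$ is a cocycle, it takes the same value on homologous cycles, which yields $\sum_{\Gamma\in G}\widetilde{w}_\Gamma=\pm w_{G_{p,g}}$. Only the sum over all of $G$ is controlled, not any single coefficient.
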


The theorem follows by applying the counting formula to $\widetilde{H}$ together with Lemma 4.3.3.

\begin{Lem}
    Let $G$ be a set of good plain graphs such that $\langle \Gamma, D(G_{p,g}) \rangle \neq 0$.
    \[
        \pm [G_{p,g}] = \sum_{\Gamma \in G} [\Gamma] \in H_{\mathrm{top}}(^{*} gdPGC_{n,j}').
    \]
    Here, the sign of the right-hand side is taken so that $\langle D(G_{p,g}), \Gamma \rangle = 1$, and the sign of the left-hand side \emph{depends only on} the choice of the labeling of $G_{p,g}$.
\end{Lem}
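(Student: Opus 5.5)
The plan is to identify explicitly the set $G$ of good plain graphs $\Gamma$ with $\langle \Gamma, D(G_{p,g})\rangle \neq 0$. Then show that their sum is a representative of $\pm[G_{p,g}]$ under the quasi-isomorphism $PGC'_{n,j}\to HGC_{n,j}$, passed to good graphs.

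\textbf{Step 1: enumerate $G$.} Conditions (I)--(IV) of Definition~\ref{def:graph_chord_pairing} force every black vertex of $\Gamma$ to sit at a vertex of $D(G_{p,g})$, and every dashed edge to sit on a chord. Condition (III) forces each solid component to lie on one directed line. Condition (IV) says that each vertex off the $x$-axis is joined by a solid edge to exactly one vertex below it on the same line. Goodness then forces the solid edges on each line to form a single path.

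\begin{itemize}
\item On each of the $2p-1$ short lines (two vertices each), the only possibility is one solid edge joining the two vertices.
\item On the long first line, the vertices are $p_0,\dots,p_{2g}$ (base point, $g-1$ nested chords, and the hair endpoint). Here one must list the linear solid paths compatible with (IV). These are exactly the trees appearing in the figure \texttt{equationchordsII}: each new vertex $p_m$ attaches either to $p_{m-1}$ or to the vertex one step further down, subject to the path remaining a line.
\end{itemize}

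So $G$ is a finite explicit family of graphs $\Gamma_\tau$, indexed by the admissible solid paths $\tau$ on the first line. Each $\Gamma_\tau$ has pairing exactly $\pm1$, with the sign normalized as in the statement.

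\textbf{Step 2: contract the solid edges.} The projection $p_1\colon PGC'_{n,j}\to HGC_{n,j}$ contracts solid-edge trees into white vertices with hairs (this is how the zigzag is set up in \cite{Yos25b}). On homology one can therefore compare classes by collapsing each solid line.

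\begin{itemize}
\item On every short line the collapse gives a bivalent or trivalent white vertex. Each hair of $G_{p,g}$ corresponds to the pair of chords attached to a short line, so each short line becomes one of the $2p$ hairs of $G_{p,g}$.
\item On the long line, the different $\tau$ differ by the local relation displayed in \texttt{equationchordsII}. That relation is exactly the IHX/STU-type identity $d(\text{graph with one more solid edge})$ in $^{*}gdPGC'_{n,j}$.
\end{itemize}

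Hence $\sum_\tau[\Gamma_\tau]$ telescopes. Inductively on $m$, the sum of the graphs whose solid path covers $p_0,\dots,p_m$ is homologous to the single graph in which $p_0,\dots,p_m$ are white and dashed-connected along the chain. At $m=2g$ this is the graph obtained from $G_{p,g}$ by reading the $g-1$ nested chords as the $g-1$ parallel edges and the circle as the chain of chords. That graph is $G_{p,g}$ itself, up to the chosen labeling.

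\textbf{Step 3: signs and the main obstacle.} Signs are fixed by transporting the orientation of $D(G_{p,g})$ along $\sigma$. Under our standing assumption that $n$ and $j$ are odd, reversing a dashed edge or swapping two black vertices changes the sign. I would check that all $\Gamma_\tau$ acquire the same relative sign under the telescoping, so that the left side is $\pm[G_{p,g}]$ with the sign depending only on the labeling of $G_{p,g}$.

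The main obstacle is Step 2 on the long line. One must verify that the homotopy relating consecutive terms uses only good graphs, so that the identity holds in $H_{\mathrm{top}}(^{*}gdPGC'_{n,j})$ and not merely in $PGC'$. One must also check that no admissible $\tau$ is missed. If the direct telescoping is awkward, I would instead apply the injectivity theorem of \cite{Yos23} in reverse: compute both sides in $^{*}PGC'$, then push forward.
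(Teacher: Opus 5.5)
Your overall route is the paper's: characterize $G$ through conditions (III)--(IV) and goodness, then connect $\sum_{\Gamma\in G}[\Gamma]$ with $[G_{p,g}]$ by iterating the STU-type relation of the displayed figure along the long line. The paper expands $G_{p,g}$ outward while you contract inward, which is immaterial. However, the mechanism you give in Step~2 is wrong, and the short-line part does not go through as written.

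The projection $p_1\colon PGC'_{n,j}\to HGC_{n,j}$ does not contract solid trees. It is the identity on hairy graphs and kills every non-hairy graph, in particular every graph with a solid edge; dually, ${}^*HGC_{n,j}\to{}^*PGC'_{n,j}$ is just the inclusion. So there is no ``collapse'' available on homology. Moreover, a collapsed short line would be a bivalent white vertex, which is not a generator of the complex.

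Both $G_{p,g}$ (no solid edges, hence good) and every $\Gamma\in G$ already live in ${}^*gdPGC'_{n,j}$, so every identification must come from boundaries there. The boundary of the graph in which one black vertex carries two dashed edges and no solid edge gives the one-term relation, which is the first equality of the figure. It says that a hair (a univalent black vertex joined to a trivalent white vertex) is homologous, up to sign, to two black vertices joined by one solid edge. This relation turns each of the $2p-1$ ordinary hairs into a short line. It is also the first step on the special hair, where $p_0$ stays black; your induction should read ``$p_0$ black, $p_1,\dots,p_m$ white''.

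Your enumeration of $G$ also needs to be made precise, because it is exactly what must match the terms of the expansion. The long line has $2g$ vertices: the top, the $2g-2$ chord endpoints, and the bottom. Condition (IV) says every vertex except the bottom one has exactly one lower solid neighbor. Hence the path has no interior local maximum and no local minimum other than the bottom vertex; that is, it is valley-shaped. This gives $2^{2g-2}$ graphs, as in the paper.

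Your rule ``attaches to $p_{m-1}$ or to the vertex one step further down'' should be replaced by the following: at each step the current frontier black vertex is replaced by two adjacent black vertices, the finalized one and the new frontier, in either order. This is precisely what the boundary relation produces. Contracting the dashed edge from the frontier to the next white vertex gives a black vertex carrying two dashed edges. Of its splittings into two solidly joined black vertices, only those keeping the solid part a path survive; the others create solid valence $3$ and vanish in the good quotient.

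So the ``obstacle'' you flag is not one: passing to ${}^*gdPGC'_{n,j}$ is exactly what makes each step have two terms. Detouring through ${}^*PGC'_{n,j}$ would only reintroduce those extra terms, and pushing an identity forward needs no injectivity anyway.
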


\begin{proof}
    The set $G$ consists of $2^{2g-2}$ graphs.
    Each graph in this set is characterized by its solid edges, which form a path of length $2g$ on the vertices $\{0,1,\dots,2g\}$ with a unique minimum at $0$.
    (In other words, the solid part of $\Gamma \in G$ traces a trajectory that is first decreasing and then increasing).
    
    The proof is based on the fact that the graph $[G_{k,g}]$ can be transformed into a sum of chord diagrams.
    The following diagram illustrates this transformation for the first few steps.
    It should be noted that the vertex indices $p_0, \dots, p_4$ in the figure correspond to the $y$-coordinates of the chord diagram in reverse order:

    \begin{center}
        \scalebox{0.4}{\usebox{\equationchordsII}}
    \end{center}

    By recursively applying this relation, the graph $[G_{k,g}]$ is eventually expanded into a linear combination of graphs.
    Observe that the solid edges appearing in this sum exactly trace the paths defined in the set $G$.
    Thus, the expansion coincides precisely with $\sum_{\Gamma \in G} [\Gamma]$.
\end{proof}

\section{The cycle is in the image of the Hurewicz map} \label{Sec:The cycle is in the image of Hurewicz map}

Finally, we prove that our cycle lies in the image of the Hurewicz map. 
The essential idea follows \cite{Wat22}: we decompose the cycle constructed in \S~\ref{subsec:Cycles associated with chord diagrams} into elementary components possessing the Brunnian property.

\subsection{Boxed ribbon presentations}

\begin{Def}[boxed ribbon presentations] \label{def of boxed ribbon presentation}
     A \emph{boxed ribbon presentation}
    $P = \mathcal{D} \cup \mathcal{B} \cup \mathcal{L} \cup \mathcal{Q}$ 
    is a neat immersion consisting of \(l\) rectangular components and \(m\) line segments in  
    $I^3$, where
    \begin{itemize}
        \item 
        $\mathcal{D} = \mathcal{D}_1 \cup \cdots \cup \mathcal{D}_l$,  
        with $\mathcal{D}_i = D_{i:0} \cup \cdots \cup D_{i:k_i}$.  
        Here each $D_{i:k}$ ($k \ge 1$) is a disjoint embedded disk, and 
        $D_{i:0}$ is the rectangle 
        \[
          I \times \{p_i\} \times [-1, - \frac{1}{2} ],
        \]
        where the points $(p_i)_i$ are distinct elements of $[-1,1]$.
        
        \item 
        $\mathcal{B} = \mathcal{B}_1 \cup \cdots \cup \mathcal{B}_l$, where $\mathcal{B}_i = B_{i:1} \cup \cdots \cup B_{i:k_i}$ consists of $k_i$ disjoint bands (each $B_{i:l} \cong I \times I$).
        
        \item 
        $\mathcal{L} = \bigcup_{j=1}^m L_j$, where each $L_j$ is the line segment \( L_j = \{q_j\} \times I \times \{1/2\}\).

       \end{itemize}

       such that 

        \begin{itemize}
        \item 
        Each band $B_{i:l}$ connects two distinct elements of $\mathcal{D}_i$  
        (either two disks or possibly the rectangle $D_{i:0}$),  
        and it is allowed to intersect other disks in $\mathcal{D}$ transversally.
        
        \item 
        Each line segment $L_j$ may also intersect the interiors of disks in $\mathcal{D}$ transversally.
    \end{itemize}
    
    Each disk may intersect several bands but at most one line segment.  
\end{Def}

    % We consider the additional structure $\mathcal{Q} = \bigcup_{i=1}^k Q_i$ as in definition \ref{Def of ribbon presentation}, each of which is a neighborhood of a crossing between disks and bands. 

    %  Furthermore, we assume that for each $1 \le i \le m$, the condition $\mathcal{D}_i \cap \mathcal{L} = \emptyset$ (called Type condition) or $\mathcal{D}_i \cap \mathcal{B} = \emptyset$ is satisfied.
    % If $\mathcal{D}_i$ satisfies type $A$, $\mathcal{D}_i$ is called \emph{Type Y}, and otherwise called \emph{Type N}.

An intersection of a line segment and a disk is also called a crossing, and such a disk is also called a leaf.
Near a crossing between a band and a line segment, we use the local chart as usual.

\begin{align*}
    D&= \{ (x_1,x_2,x_3) \in \R^3 | \, x_1^2 + x_3^2 \le 1, x_2 = 0  \} \\
    L&= \{ (x_1, x_2, x_3) \in \R^3 | \,  x_1 = x_3 =0, \, |x_2| < 3 \} 
\end{align*}

We orient $\mathcal{L}$ in the positive $x_2$-direction.

\begin{Def}
    We define a ribbon $(j+1)$-disk $V_{P}$ in $I^3 \times I^{n-j-2} \times I^{j-1}$
     \[
        V_P = \bigl( (\mathcal{D} \setminus \bigcup_{i=1}^l D_{i;0} ) \times [-\frac{1}{2}, \frac{1}{2}]^{j-1} \bigr) \cup \bigl( \bigcup_{i=1}^l D_{i;0} \times I^{j-1} \bigr) \cup \bigl( \mathcal{B} \times [-\frac{1}{4},\frac{1}{4}]^{j-1} \bigr) \subset I^3 \times \boldsymbol{0} \times I^{j-1}.
    \]
    We also define $V_P' \subset \R^3 \times \boldsymbol{0} \times \R^{j-1}$ 
    to be the manifold obtained by rounding only the corners of $V_P$ 
    that occur along the bands and disks involved in $\mathcal{D}$. 
    Note that for the parts related to the $D_{i;0}$'s, 
    we round only the corners created by their attachment to the bands.
\end{Def}

\begin{Def}
    We define an embedding  $\varphi_P\colon \bigsqcup^l I^j \sqcup \bigsqcup^m I^{n-j-1} \to I^n$ 
    whose image is given by $cl(\partial V_P' \setminus \partial (I^n) ) \sqcup (\mathcal{L} \times I^j \times \boldsymbol{0} )$.
    We take the base point of $\varphi_P\colon \bigsqcup^l I^j \sqcup \bigsqcup^m I^{n-j-1} \to I^n$ to be the standard embedding 
    \[ \bigcup_{i=1} ^l (I \times \{p_i \} \times \{ -1/2 \} \times \bold{0} \times I^{j-1}) \cup \bigcup_{i=1}^m (\{q_i\} \times I  \times \{1/2\} \times I^{n-j-2} \times \bold{0}  )  \hookrightarrow I^3 \times I^{n-j-2} \times I^{j-1}   \]
    % We define $\varphi_{P_{\boldsymbol{v}}}$ similarly.
    Then we can define a natural lift $(\varphi_{P,t})_{t \in [0,1]} $ of $\varphi_P$ to $\pbEmb(\bigsqcup^l I^j \sqcup \bigsqcup^m I^{n-j-1}, I^n)$ in a similar way.
    % Note that if we replace the $2$-disks and bands in the ribbon presentation $P$ with thickened disks, 
    % we obtain a thickened embedding modulo immersion $\varphi_P: \bigsqcup^l I^j \sqcup \bigsqcup^m I^{n-j-1} \to I^n$.
    % Here, the parameterization of the embedding is given by using the canonical path to the standard immersion ...
\end{Def}

We equip the base point of $\pbEmb(\bigsqcup^l I^j \sqcup \bigsqcup^m I^{n-j-1}, I^n)$ with the framing $(f_1, f_2, \dots , f_{n-j} )$ defined by
$f_i =  \partial_{x_{i+1}} $ on $I^j$ and 
$f_i = 
\begin{cases}
    \partial_{3} &(i=1) \\
    \partial_1 &(i=2)  \\
    \partial_{i+n-j} & (3 \le i \le j)
\end{cases}$ on $I^{n-j-2}$.

We equip $\varphi_P$ with a framing induced by the canonical path to the base point and the framing of the base point.

Let $\boldsymbol{v}=(v_1,\dots,v_k) \in (S^{n-j-2})^k$. 
Then we obtain a perturbed embedding modulo immersion $\varphi_{\boldsymbol{v}}$ in a similar way, 
and we denote the resulting cycle by $c_{P}\colon (S^{n-j-2})^k \to \pbEmb(\bigsqcup^l I^j \sqcup \bigsqcup^m I^{n-j-1} , I^n)$.
% we set $V_{P_{\boldsymbol{v}}}$ as ...

\subsection{Type \texorpdfstring{$\rI$}{I} boxed ribbon presentation} \label{sebsec:TypeIribbonPresentation}
We construct the boxed ribbon presentation corresponding to a type $\rI$ vertex.

\begin{Def}
    We define the \emph{Type $\rI$ boxed ribbon presentation}, denoted by $P_{\rI}$, as the ribbon presentation shown in Figure~\ref{fig:MyRibbonTypeI}.
    We denote the component containing $D_0$ (resp. $D_0'$) by $\mathcal{D}$ (resp. $\mathcal{D}'$).
    % Note that the crossing of the disk $D_1$ (resp. $D_2$) with $L_1$ is positive (resp. negative).
    \begin{figure}[h]
        \centering
        \scalebox{1.0}{\usebox{\MyRibbonTypeI}}
        \caption{The Type $\rI$ boxed ribbon presentation}
        \label{fig:MyRibbonTypeI}
    \end{figure}
\end{Def}

\begin{Not}
    Let $c_{\rI}'$ denote the element $  c_{P_{\rI}}' \in \pi_{n-j-2} \left( \bEmb_{\partial}(I^j\sqcup I^j \sqcup I^{n-j-1},I^n)\right)$.
    % We define $V_{\rI}$  similarly.
\end{Not}

\begin{Prop} \label{type i belongs to trivial component}
    If $n-j > 2$, the element $ c_{\rI}'$ described above lies in the trivial component.
\end{Prop}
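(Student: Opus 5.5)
Since $c'_{\rI}$ lies in $\pi_{n-j-2}$ and $n-j-2\ge 1$, the claim concerns only the basepoint of the family, i.e. the path component of $\varphi_{P_{\rI}}$ (equivalently of $\varphi_{P_{\rI,\boldsymbol v}}$ for any fixed $\boldsymbol v$). The plan is to write down an explicit path in $\bEmb_\partial(I^j\sqcup I^j\sqcup I^{n-j-1},I^n)$ from $\varphi_{P_{\rI}}$ to the standard embedding, lifting the canonical path through immersions of Definition~\ref{def of canonical path}.

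First, I would locate the crossings of $P_{\rI}$ in Figure~\ref{fig:MyRibbonTypeI}. There are three: the band $B_+$ passes through the disk $D'$; the disk $D_+$ is pierced by the line $L$; and $D_-$ is pierced by $L$. Each crossing produces an intersection of two submanifolds, of dimensions $j$ and $j$, or $j$ and $n-j-1$. The point is that when $n-j>2$ the extra normal room lets one resolve these crossings one at a time by isotopies rather than by passing through self-intersections.

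The key steps are as follows.

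\begin{enumerate}
\item Perturb the band $B_+$ at its crossing with $D'$ in the direction of a fixed $v\in S^{n-j-2}$, as in Notation~\ref{perturbated band}. After this the thickened band and the thickened disk $D'\times[-\tfrac12,\tfrac12]^{j-1}$ are disjoint in $I^n$.
\item Observe that the $\mathcal D'$ component, consisting of $D'$, its band, and $D_0'$, is now an embedded ribbon disk that bounds a $(j+1)$-ball disjoint from everything else. It can therefore be retracted onto $D_0'$ by an isotopy: shrink the band and then the disk. This is the ($m2$) step, now performed among embeddings.
\item Pull the remaining component $\mathcal D$, together with $D_\pm$, $B_+$ and the twisted band, back to $D_0$ along the bands. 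At every stage the only obstructions are the crossings of $D_\pm$ with $L$. Since $D_+$ and $D_-$ cross $L$ with opposite signs and are joined through the core of $\mathcal D$, I would first slide $D_-$ along $L$ toward $D_+$. Because $n-j-2\ge1$, the bands can then be pushed off each other in the extra $\R^{n-j-2}$ direction. The two tubes around $L$ can then be cancelled by the S-move that removes a pair of opposite crossings, namely S4/S7 followed by S3.
\item Check that the resulting path of embeddings, concatenated with the canonical immersion path of the target, is homotopic through immersions, rel endpoints, to the canonical path of $\varphi_{P_{\rI}}$. This holds because both paths are constructed from the same disk-and-band retraction and differ only by a small normal perturbation.
\end{enumerate}

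I expect the third step to be the main obstacle: showing that the two opposite crossings of $\mathcal D$ with $L$ really cancel by an isotopy, rather than merely a regular homotopy. The danger is that the linking of $L$ with the $(j-1)$-sphere boundary of a tube could survive.

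My first attempt would be a dimension count. The tubes around $D_\pm$ are boundaries of $(j+1)$-dimensional thickenings, and $L\times I^j$ has dimension $n-j-1$. Sliding one thickened disk past the other along $L$ sweeps a $(j+2)$-dimensional track. This track meets the other $j$-dimensional sheet in dimension $(j+2)+j-n=2j+2-n$, which is less than $j$ precisely when $n-j>2$; a general position argument should then make the slide avoid it. If that fails, I would fall back to comparing with the $n-j=2$ case, where the twisted band produces a genuine Hopf-type linking, and check that this linking class lives in $\pi_0$ of an $(n-j-2)$-sphere bundle, which is connected when $n-j>2$.
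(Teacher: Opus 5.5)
Your argument is correct and is essentially the paper's proof. Push $B_+$ off $D'$ using the extra $\R^{n-j-2}$ direction (the only place $n-j>2$ is needed), retract $D'$ onto $D_0'$, and cancel the two opposite crossings of $L$ with $D_\pm$ by an S4-move; your step 4 on the compatibility of the immersion paths is left implicit in the paper's remark that these moves preserve embeddings modulo immersions, so you do not need the closing general-position count, which would fail anyway (an expected intersection dimension $2j+2-n<j$ is only avoided by general position when it is negative).
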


\begin{proof}
    First, since $n-j > 2$, we can perturb the band so that $B_+$ and $D_+$ are disjoint;
     see Figure~\ref{fig:MyRibbonTypeIPerturbed}.
    We then pull back $D'$ to $D_0'$ and perform an S4-move. 
        
    \begin{figure}[h]
        \centering
        \scalebox{1.0}{\usebox{\MyRibbonTypeIPerturbed}}
        \caption{The perturbed Type $\rI$ boxed ribbon presentation}
        \label{fig:MyRibbonTypeIPerturbed}
    \end{figure}
\end{proof}

Using the path from the ribbon presentation $P_{\rI}$ employed in Proposition~\ref{type i belongs to trivial component}, we change the base point of $c_{\rI}'$ to the trivial one.
We denote the resulting map by $ {c_{\rI}} \colon S^{n-j-2} \longrightarrow \bEmb_{\partial}(I^j\sqcup I^j \sqcup I^{n-j-1},I^n)$.

\begin{Prop} \label{Prop:TypeIisBrunnian}
    $[{c}_{\rI}]$ becomes trivial if any disk component or line segment is removed.
\end{Prop}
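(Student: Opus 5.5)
The plan is to treat the three possible removals separately: the component $\mathcal{D}$ (containing $D_0$, $D_\pm$, $B_\pm$), the component $\mathcal{D}'$ (containing $D_0'$, $D'$ and its band), and the line segment $L$. In each case I would show that the restricted family $S^{n-j-2}\to \bEmb_{\partial}$ is homotopic, through families of embeddings modulo immersions, to a constant family. I would then check that the base-point change used to define $c_{\rI}$ carries this constant family to the trivial point. Throughout, the only crossing labelled by the sphere parameter $v$ is $B_+\cap D'$. The crossings $L\cap D_+$ and $L\cap D_-$ are unperturbed and carry opposite signs.

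\emph{Removing $\mathcal{D}'$.} The only $v$-dependent crossing disappears, so the family is constant in $v$. A constant map $S^{n-j-2}\to \bEmb_\partial$ represents the trivial element once its value lies in the base component. Here the value is the restriction of $\varphi_{P_{\rI}}$. The path of Proposition~\ref{type i belongs to trivial component} (pull $D'$ back to $D_0'$, then the S4-move), restricted to $\mathcal{D}\cup L$, connects it to the base point. So after the base-point change the class is $0$.

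\emph{Removing $\mathcal{D}$.} What remains is $\mathcal{D}'\cup L$. The disk $D'$ meets only $B_+$, which has been removed, and $L$ meets only $D_\pm$. Hence $D'$ is a node, and pulling it back to $D_0'$ gives an isotopy, independent of $v$, to the standard boxed embedding. The family is again constant and lies in the base component, so it is trivial.

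\emph{Removing $L$.} This is the case I expect to be the main obstacle, since the $v$-dependent crossing survives. Now $D_-$ and $D_+$ are nodes. First retract $D_-$ along $B_-$ to $D_0$; this uses S3/S1-type moves and is independent of $v$. Next shorten $B_+$ from its free end, sliding the node $D_+$ down the band until it sits just past the crossing with $D'$. On the perturbed side I would lift $D_+$ together with the band in the same direction $\gamma\cdot v$, so that $D'$ is avoided for every $v$ simultaneously. Then push the cap $D_+\cup(\text{remaining band})$ through the level of $D'$ along the direction $v$. Because the perturbed band already lies off $D'$ there, this produces an isotopy of embeddings, continuous in $v$, from $\varphi_{P_{\rI,\boldsymbol v}}$ to a presentation in which $D'$ is no longer pierced. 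The result is independent of $v$ and lies in the base component by the same pulling-back moves as before. The delicate points are continuity of this isotopy in $v$ near the support of the bump function $\gamma$, and compatibility with the canonical immersion paths and framings, so that the homotopy really lives in $\bEmb_\partial$ and not only in $\Emb_\partial$. I would handle this by performing all moves inside the normal directions $\R^{n-j-2}$, where the canonical path of Definition~\ref{def of canonical path} is defined uniformly.
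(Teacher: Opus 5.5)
Your handling of the removal of $\mathcal{D}$ (pull the node $D'$ back to $D_0'$) and of $\mathcal{D}'$ (the family contracts, and the restricted base-point path, i.e.\ the S4-move, handles the base point) matches the paper. For the removal of $L$, your plan (retract $D_-$, retract $D_+$ along $B_+$ past $D'$, then pull back the now uncrossed $D'$) is exactly the paper's ``pull back $D_\pm$ and then pull back $D'$''. A minor point: after removing $\mathcal{D}'$ the family is not literally constant, since $B_+(v)$ still moves. It contracts by shrinking the perturbation, because nothing obstructs it any more.

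The gap is in your justification of the retraction of $D_+$ past $D'$. You claim that, after lifting in the direction $\gamma\cdot v$, the band avoids $D'$ for every $v$ simultaneously. You then claim the retraction is an isotopy because the perturbed band already lies off $D'$. Both claims fail at $\boldsymbol{v}=\boldsymbol{0}$. The sphere $S=\{(x_3-1)^2+\dots+x_{n-j+1}^2=1\}$ contains the origin, and $\varphi_{P_{\boldsymbol 0}}=\varphi_P$ is the base point of the family; there $B_+$ is unperturbed and pierces $D'$. This is not an artifact of the chosen representative. Any sphere of displacements homotopic to $S$ in the complement of the locus where the edge of the band hits $\partial D'$ must also contain displacements at which the band pierces the interior of $D'$. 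So the avoidance mechanism you describe cannot be what makes the retraction legitimate. The delicate point is the base point, not continuity near the support of $\gamma$.

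The missing observation is that a band ending in a node can be retracted \emph{through} a disk it pierces. First absorb the node $D_+$ into the end of $B_+$. This step is necessary: $D_+\times[-\frac12,\frac12]^{j-1}$ is as thick as $D'\times[-\frac12,\frac12]^{j-1}$, so their boundaries would collide. The moving end of the band then has cross-section $[-\frac12,\frac12]\times[-\frac14,\frac14]^{j-1}$. At $\boldsymbol v=\boldsymbol 0$ this cross-section lies in the interior of the thickened disk $D'\times[-\frac12,\frac12]^{j-1}$. Hence the boundary of the band part, cap included, never meets the boundary of the thickened $D'$, and the retraction is an isotopy of $\partial V'_{P_{\boldsymbol v}}$. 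For general $v\in S$, the cap meets $\partial D'$ only for displacements in the same locus that the sides of the band must already avoid for $\varphi_{P_{\boldsymbol v}}$ to be an embedding at all, and $S$ avoids that locus. So the retraction is an isotopy depending continuously on $v$. It ends at a $v$-independent presentation in which $D'$ is a node, which you then pull back to $D_0'$. With this observation in place of the avoidance claim, your argument goes through and coincides with the paper's.
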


\begin{proof}
    If $\mathcal{D}$ is removed, we pull back $D'$ to $D_0$.
    Next, if $\mathcal{D}'$ is removed, we perform an S4-move.
    Finally, if $L$ is removed, we pull back $D_{\pm}$ and then pull back $D'$.
\end{proof}

Thus, to simplify the notation, we isotope $c_{\rI}$ so that the restriction of $c_{\rI}$ to the $I^j$ associated with the $\mathcal{D}'$ component is constant. 
We regard $c_{\rI}$ as an element in $\bEmb_{\partial}(\nu (I^j) \cup \nu(I^{j}) \cup \nu(I^{n-j-1}), I^n  )$.
Here $\nu(I^j)$ and $\nu(I^{n-j-1})$ are closed tubular neighborhoods.
The notation $\overline{\Emb}_{\partial}$ is used by abuse of notation:
the subscript $\partial$ also records that the boundary identifications
of these tubular neighborhoods, equivalently their boundary framings,
are fixed once and for all, independently of the parameter of the family.

We define $b_1, b_2 \in H_{n-j-1}(I^n \setminus (\nu(I^j) \cup \nu(I^j) \cup \nu(I^{n-j-1}) ))$ so that $b_1$ (resp. $b_2$) is a normal sphere of $I^j \subset I^n$ associated with $\mathcal{D}$ (resp. $\mathcal{D}'$ ).  
$b_1' \in H_{j}(I^n \setminus (\nu(I^j) \cup \nu(I^j) \cup \nu(I^{n-j-1}) ) ) $ is taken so that $b_1'$ is a normal sphere of $I^{n-j-1} \subset I^n$ associated with $L$.

\subsection{Type \texorpdfstring{$\rII$}{II} boxed ribbon presentation} \label{subsec:TypeIIribbonPresentation}

We introduce the Type $\mathrm{II}$ ribbon presentation, which corresponds to a Type $\rII$ vertex.

\begin{Def}
    Let 
    \[
        c_{\rII}' \colon S^{j-1} \longrightarrow \bEmb_{\partial} \bigl(I^j \bigsqcup I^{n-j-1} \bigsqcup I^{n-j-1},\, I^n\bigr)
    \]
    be the family of embeddings modulo immersions obtained from the ribbon presentation shown in Figure~\ref{fig:MyRibbonTypeII} 
    by rotating one of the stems associated with $D_+'$ once around the stem associated with $D_+$. 
    % In other words, \(c_{\rII}\) parametrizes the admissible embeddings produced by this one-turn motion of the stem.  
    Again, for further details, see \cite[Section 6.2]{Yos25a}. 
    We call this family the Type II cycle.
    
     \begin{figure}[h]
        \begin{center}
        \scalebox{1.0}{\usebox{\MyRibbonTypeII}}
        \end{center}
        \caption{The Type $\rII$ boxed ribbon presentation}
        \label{fig:MyRibbonTypeII}
    \end{figure}    
\end{Def}

\begin{Prop} \label{Type ii is in trivial component}
    The above $c_{\rII}'$ is a family in the trivial component.
\end{Prop}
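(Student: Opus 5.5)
Since the family $c_{\rII}'$ is parametrized by a connected space $S^{j-1}$, it lies in a single path component of $\bEmb_{\partial}(I^j \sqcup I^{n-j-1} \sqcup I^{n-j-1}, I^n)$. It therefore suffices to exhibit a path from one member of the family, say the unrotated configuration of Figure~\ref{fig:MyRibbonTypeII}, to the base point. This is the same strategy as in Proposition~\ref{type i belongs to trivial component}, but here no perturbation of bands is needed. In the Type~$\rII$ presentation the only crossings are those of the disks $D_\pm$ and $D'_\pm$ with the line segments $L$ and $L'$; no band passes through a disk. The red and blue bands merely connect $D_+, D_-, D_+', D_-'$ to the root rectangle $D_0$ without any ribbon intersection.

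Fix the parameter $\theta$ equal to the starting position of the rotation. The path will consist of two steps.

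First, I will pull each disk $D_+, D_-$ (respectively $D_+', D_-'$) off the line $L$ (respectively $L'$) along its band toward $D_0$. Since the bands are embedded and meet no other disk, this is the move (m2) of Definition~\ref{def of canonical path}. However, it has to be done as an isotopy in the complement of $\mathcal{L}\times I^j$. For this I will observe that each leaf meets its line in a pair of crossings of opposite sign: $D_+$ is positive and $D_-$ is negative on $L$, with the bands attached on the same side. So the disk together with its thickened band can first be slid along $L$ in the $x_2$-direction. After that, the positive and negative tubes can be merged by an S3/S1-type cancellation, exactly as in the proof of Proposition~\ref{Prop:TypeIisBrunnian}, where ``pull back $D_\pm$'' removed the crossings with $L$.

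Second, once all crossings with $L, L'$ are gone, the resulting thickened disk system is an embedded ribbon disk with no self-intersections. I will shrink it back onto $D_0$ by an isotopy, reaching the standard embedding. The path component of the lift in $\bEmb$ is then determined by the canonical path. Concatenating the isotopy with the canonical immersion path gives a loop in $\Imm$, and I must check that this loop is null-homotopic. Here the framing convention fixed before \S\ref{sebsec:TypeIribbonPresentation} is used. The canonical path of Definition~\ref{def of canonical path} was built from the same moves (m1), (m2), so the two paths agree up to reparametrization.

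The main obstacle is the first step. Cancelling the pair of crossings of $D_+$ and $D_-$ with $L$ requires that the two bands approach $L$ from compatible sides. Otherwise one picks up a nontrivial linking of the normal sphere $b_1'$ with the $j$-sphere. I would verify this by a linking-number computation in $I^n\setminus\nu(L)$. The linking numbers of the two leaves with $L$ are $+1$ and $-1$, the bands to $D_0$ are unlinked from $L$, and the total is $0$. Since $n-j-1\ge 1$ and the relevant complement is simply connected in the needed range, vanishing linking suffices to perform the cancellation.
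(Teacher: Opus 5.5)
Your route is the paper's. Its whole proof is ``perform an S4-move twice'': cancel the oppositely signed leaves $D_+,D_-$ on $L$ and $D'_+,D'_-$ on $L'$, which leaves the bare $D_0$ and the two segments, i.e.\ the base point. Your reduction to the unrotated member by connectedness of $S^{j-1}$ is implicit there; it needs $j\ge 2$, which holds for $j=n-2$.

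The gap is in the step you yourself call the main obstacle. Cancelling a pair of leaves on a segment is exactly what the S4-move of Figure~\ref{fig:SMoves} provides. S1 (merging disk--band--disk into one disk) and S3 (absorbing a node on a band) never change which disks are pierced by $L$, so no ``S3/S1-type cancellation'' removes these crossings. Proposition~\ref{Prop:TypeIisBrunnian} contains no such cancellation either: there $D_\pm$ are pulled back only after $L$ has been deleted.

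More seriously, the linking-number computation you propose cannot certify that the cancellation is possible. Total linking zero with $L$ only says that the $j$-component is null-homotopic in the complement of $L$. What you need is an isotopy in the complement of $L\cup L'$ that keeps the $j$-component embedded. In codimension two, the case $j=n-2$ you care about, null-homotopy does not give isotopy; for $j=1$, $n=3$ the Whitehead link already has linking number zero and is not split. Simple connectivity of the complement of $L$ does not address this.

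What has to be checked is positional, and it is read off Figure~\ref{fig:MyRibbonTypeII}. There are no band--disk crossings. At the unrotated parameter, the stems of $D_-$ and $D_+$ leave $D_0$ at adjacent points with no other stem between them (root order $D_-,D_+,D'_+,D'_-$). So this pair is literally in S4-position, and the same holds for $D'_+,D'_-$ on $L'$.

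Your treatment of the lift is also off. The canonical path of Definition~\ref{def of canonical path} and your isotopy do not ``agree up to reparametrization'': move (m1) pushes the leaves through $L$ and $L'$, which an isotopy of the link never does. What you need instead is one of two things. Either use that the S-moves are deformations preserving embeddings modulo immersions, i.e.\ they connect the natural lifts; this is how the paper uses them when it recalls them. Or give an honest argument that the resulting loop in the immersion space of the $j$-component is null-homotopic. With these corrections your argument becomes the paper's two S4-moves.
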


\begin{proof}
    Perform an S4-move twice.
\end{proof}

Using the path to the basic ribbon presentation determined by the deformation of the ribbon presentation $c_{\rII}'$ employed in Proposition~\ref{Type ii is in trivial component}, we change the base point of $c_{\rII}'$ to the trivial one.  
The resulting map is denoted by ${c_{\rII}}$.

\begin{Prop} \label{Prop:TypeIIBrunnianLink}
     $ c_{\mathrm{II}}$  is trivial if any line segment or connected component of disks is removed.
\end{Prop}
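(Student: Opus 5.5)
The plan is to argue component by component, in the same spirit as the proof of Proposition~\ref{Prop:TypeIisBrunnian}. The Type~$\rII$ presentation has two disk components, say $\mathcal{D}$ and $\mathcal{D}'$, containing $D_+,D_-$ and $D_+',D_-'$ respectively. It also has two line segments $L$ and $L'$, and the family $c_{\rII}$ is parametrized by the rotation parameter $\theta \in S^{j-1}$ of the stem of $D_+'$ around the stem of $D_+$. The guiding observation is that the whole $S^{j-1}$-family is produced by one stem winding around another. If either participant of this winding, or the linking that makes the winding visible, is forgotten, then the rotation extends over a disk $D^j$. Equivalently, it can be undone by a null-homotopy that is independent of $\theta$. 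Since the base point of $c_{\rII}$ was moved to the trivial one along the path of Proposition~\ref{Type ii is in trivial component} (two S4-moves), I also need the null-homotopy to be compatible with that path. So in each case I will exhibit an explicit deformation of the family of ribbon presentations, after deleting the chosen component, to the basic presentation, through a family of presentations parametrized by $\theta$.

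\textbf{Case 1: removing a line segment, say $L$.} Then $D_+$ and $D_-$ are no longer leaves, so they can be pulled back along their bands to $D_0$ (the ``pull back'' move used for $m2$). This must be done uniformly in $\theta$. The stem of $D_+'$ rotates around the stem of $D_+$, and once $D_+$ has no crossing, the stem of $D_+$ together with $D_+$ retracts into a small neighborhood of $D_0$. The rotating stem then sweeps a sphere around an object that has become a trivial ribbon piece. Hence the rotation can be shrunk: parametrize the retraction of the $D_+$-stem by $s\in[0,1]$, and note that at $s=1$ the rotation of $D_+'$ is around a point of $D_0$, where it is isotopic to the constant family. The case of removing $L'$ is symmetric: pull back $D_\pm'$ to $D_0$, after which the rotating stem carries no crossing and its rotation extends over $D^j$ trivially.

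\textbf{Case 2: removing a disk component.} If $\mathcal{D}'$ is removed, the parameter acts trivially on what remains, since the rotating stem has disappeared. The family is therefore constant, and it equals the base point after the remaining single S4-move, which is compatible with the chosen base-point path. If $\mathcal{D}$ is removed, the stem of $D_+'$ rotates around nothing, that is, around an empty region in $I^3\times I^{j-1}$. Contracting the orbit radius $\epsilon\to 0$ gives a null-homotopy of the $S^{j-1}$-family onto the constant family. After that, $D_\pm'$ can be pulled back by an S4-move as in Proposition~\ref{Type ii is in trivial component}.

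\textbf{Main obstacle.} I expect the main difficulty to be Case~1 and the base-point bookkeeping. One must check that the null-homotopies are genuinely families in $\bEmb_\partial$, which means checking that the immersion paths to $\iota$ (the canonical paths of Definition~\ref{def of canonical path}) deform continuously along with the embeddings. One must also check that the composite with the base-change path of Proposition~\ref{Type ii is in trivial component} yields a loop that is null in $\pi_{j-1}$ and not merely freely null-homotopic. To handle this, I will first use the fact that all deformations here are supported away from the boundary framings fixed in the definition of $\bEmb_\partial$. I will then treat the path $m1$, $m2$ as a parametrized version over $S^{j-1}\times[0,1]$, relying on the fact that the pull-back moves commute with the rotation whenever the rotated stem is not involved in a crossing.
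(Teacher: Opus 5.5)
Your Case~2 rests on a misreading of the Type~$\rII$ presentation. In Figure~\ref{fig:MyRibbonTypeII} there is a single base rectangle $D_0$, and all four bands (to $D_+$, $D_-$, $D'_+$, $D'_-$) attach to it. This is why $c'_{\rII}$ takes values in $\bEmb_{\partial}(I^j\sqcup I^{n-j-1}\sqcup I^{n-j-1},I^n)$: there is one disk component and two line segments $L,L'$. The splitting into two disk components $\mathcal{D}$ and $\mathcal{D}'$ belongs to the Type~$\rI$ presentation, not Type~$\rII$.

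As a result, the two removals you analyze in Case~2 are not removals of components. There is no forgetful map that deletes part of a connected $I^j$. Meanwhile the case that actually needs treatment, forgetting the unique disk component, is never addressed. That case is trivial, which is why the paper says there is nothing to show. The rotation only moves the stem of $D'_+$, which belongs to the disk component. After forgetting that component, only the two straight components coming from $L$ and $L'$ remain. They do not depend on $\theta$, the base-change path leaves them unchanged, and they already agree with the base point. Replace Case~2 by this remark. The content of your Case~2 (rotation around an empty region, contracted by $\epsilon\to 0$) is really the second half of Case~1.

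Your Case~1 is the paper's argument. You forget $L$ (resp.\ $L'$) and pull back $D_\pm$ (resp.\ $D'_\pm$) uniformly in $\theta$; the rotation is then around nothing and contracts. The paper adds one S4-move to bring the remaining presentation, in which the other segment still threads two disks, to the basic one. You may omit that step: a family that is null at the old base point stays null after transport along the base-change path. Likewise, the worry in your last paragraph is not a real obstacle. For a map out of a sphere, a free null-homotopy already gives a based one, because the $\pi_1$-action fixes the trivial class.
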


\begin{proof}
    If $L$ is removed, pull back $D_+$ and $D_-$, then perform an S4-move.
    Similarly, if $L'$ is removed, pull back $D_+'$ and $D_-'$, then perform an S4-move.
    There is nothing to show in the last case.
\end{proof}

We define $b_1 \in H_{n-j-1}(I^n \setminus (\nu(I^j) \cup \nu(I^{n-j-1}) \cup \nu(I^{n-j-1}) ))$ so that $b_1$ is a normal sphere of $I^j \subset I^n$ associated with $\mathcal{D}$ .  
$b_1', b_2' \in H_{j}(I^n \setminus (\nu(I^j) \cup \nu(I^{n-j-1}) \cup \nu(I^{n-j-1}) ) ) $ are taken so that $b_1'$ (resp. $b_2'$) is a normal sphere of $I^{n-j-1} \subset I^n$ associated with $L$ (resp. $L'$).

\subsection{Iterated surgery}

In this section, to prove that $c_{p,g}$ is in the image of the Hurewicz map, we reconstruct the cycle using an idea based on families of clasper surgery (\cite{Wat09a}).

First, for each white vertex of $G_{p,g}$, we label the half-edges of $G_{p,g}$.

\begin{Mysec}[Labeling of the half-edges of $G_{p,g}$]
    Each edge of $G_{p,g}$ is oriented as in Figure~\ref{fig:orientedGpg}.
    For each vertex $v$, we equip the incoming (resp.\ outgoing) half-edges attached to $v$ with a labeling as follows:

\begin{itemize}
    \item For a \emph{white vertex incident to a hair}: the hair is labeled by $1$, and the other incoming half-edge is labeled by $2$. The unique outgoing edge is labeled by $1$.
    \item For a \emph{white vertex on the left-hand side circle}: the incoming half-edge contained in the circle is labeled by $1$, and the other incoming half-edge is labeled by $2$. The unique outgoing edge is labeled by $1$.
    \item For the \emph{remaining white vertices}: the unique incoming half-edge is labeled by $1$. The outgoing half-edge in the circle is labeled by $1$, and the other outgoing edge is labeled by $2$.
\end{itemize}
\end{Mysec}

% --- [Definition] Y-link construction from Arrow Graph ---
\medskip
Recall the notion of Y-graph (\cite{Gou99}).

\begin{Def}[Y-graph associated with $G_{p,g}$]
Let $\Gamma= G_{p,g}$ and assume that each edge of $G_{p,g}$ is oriented and each half-edge incident to each white vertex is labeled. 
Given a framed embedding $f\colon \Gamma \to \R^{n-1} \times \mathbb{R} $ satisfying $f(\Gamma) \cap \R^j =  f(B(\Gamma))$, we associate a Y-link as follows:

\begin{enumerate}
    \item[(1)] For each edge $e$ (except for hairs of $\Gamma$), let $P(e) \subset \Gamma$ be a small neighborhood of the midpoint of $e$.
    Choose $P(e)$ such that it is disjoint from other edges or vertices of $\Gamma$. If $e \neq e'$, then $P(e) \cap P(e') = \emptyset$, and $P(e) \cap f(e)$ is an interval.
    
    \item[(2)] Decompose the oriented closed interval $P(e) \cap f(e)$ into three sub-intervals: $f(e) \cap P(e)= [a,b] \cup [b,c] \cup [c,d]$. Remove the middle interval $[b,c]$ and attach a suitably rescaled standard Hopf link $S^j \cup S^{n-j-1}$ such that $S^{j}$ is attached at $b$ and $S^{n-j-1}$ is attached at $c$.
    Here, the standard Hopf link is chosen so that its linking number is 1. 
    
    \item[(3)] Remove the neighborhood of $v \in B(\Gamma)$ and attach a small sphere normal to $\R^j$ (standard normal sphere), which has linking number $1$ with $\R^j$.
\end{enumerate}

This procedure yields a disjoint union $G= G_1 \cup \dots \cup G_{|W(\Gamma)|}$. We call each component $G_i$ a \emph{Y-graph}.
There are two types of Y-graph components, depending on whether the corresponding vertex is Type~$\rI$ or Type~$\rII$.

\begin{figure}[htbp]
    \centering
    \includegraphics[width=0.5\linewidth]{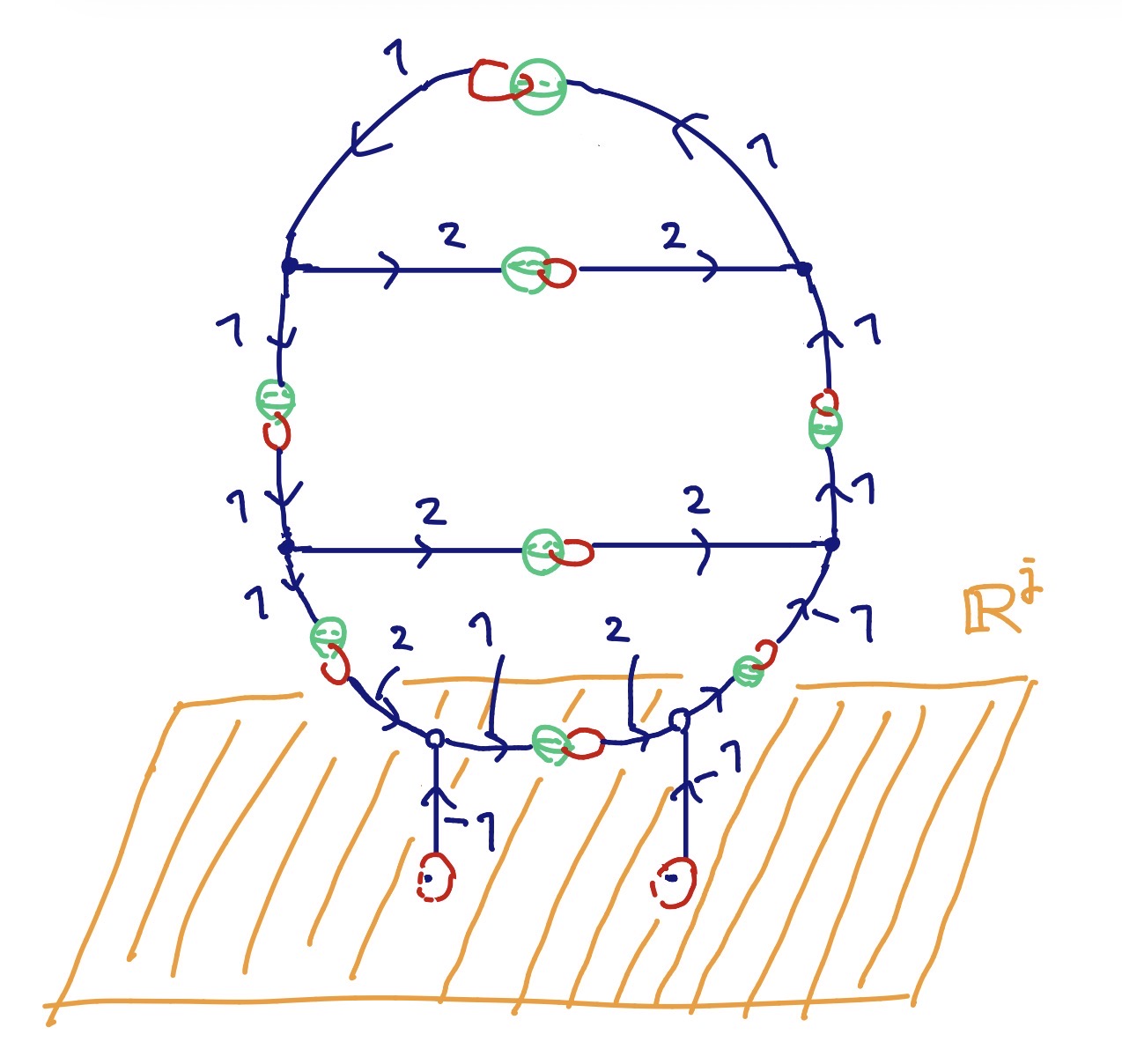}
    \caption{Y-graphs $G$ for $\Gamma= G_{1,3}$}
    \label{fig:Y-graphs_G}
\end{figure}

\begin{figure}[htbp]
    \centering
    \begin{subfigure}{0.48\linewidth}
        \centering
        \includegraphics[width=0.5\linewidth]{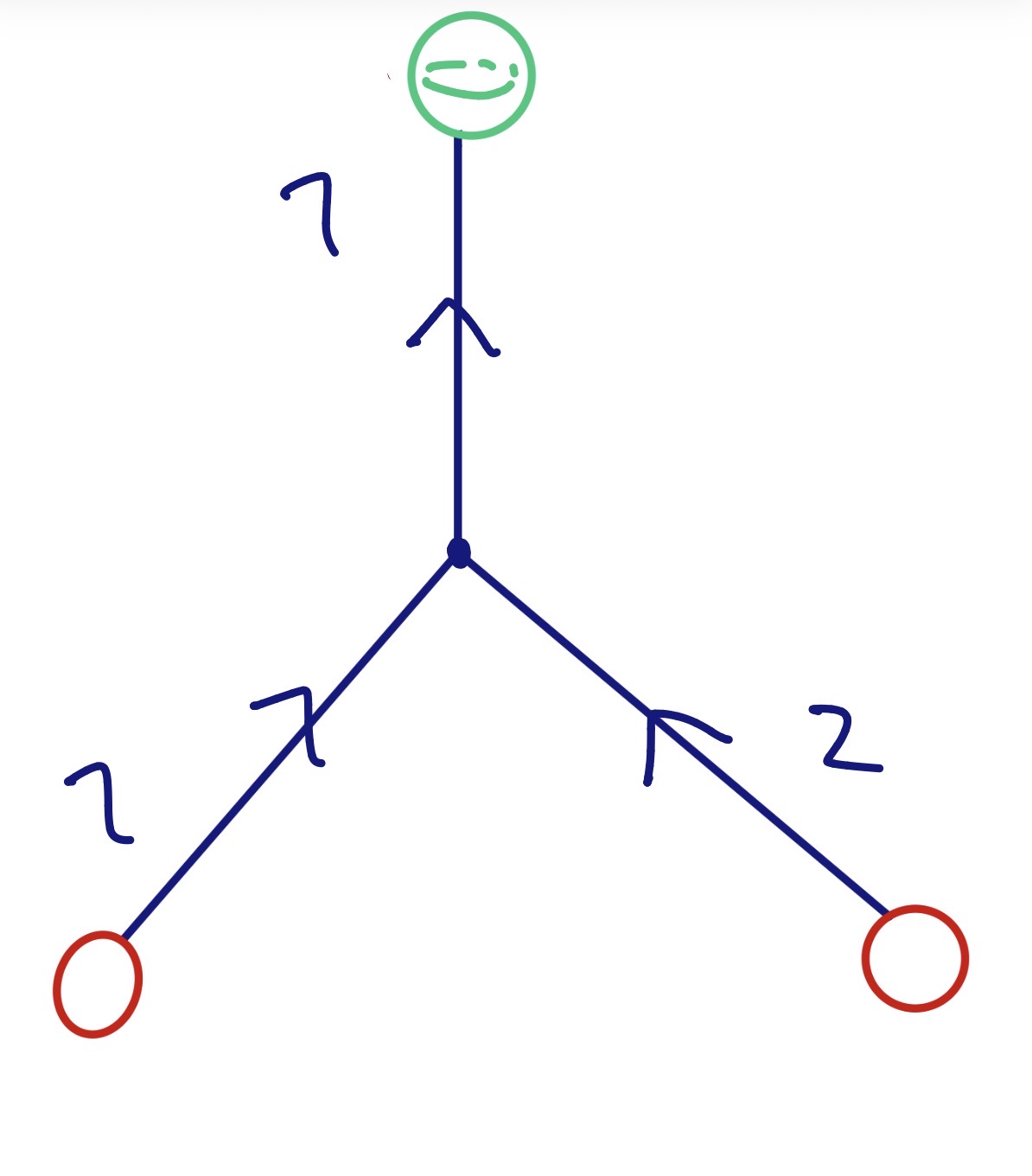}
        \caption{Type $\rI$}
        \label{fig:Type_I_G_i}
    \end{subfigure}
    \hfill
    \begin{subfigure}{0.48\linewidth}
        \centering
        \includegraphics[width=0.5\linewidth]{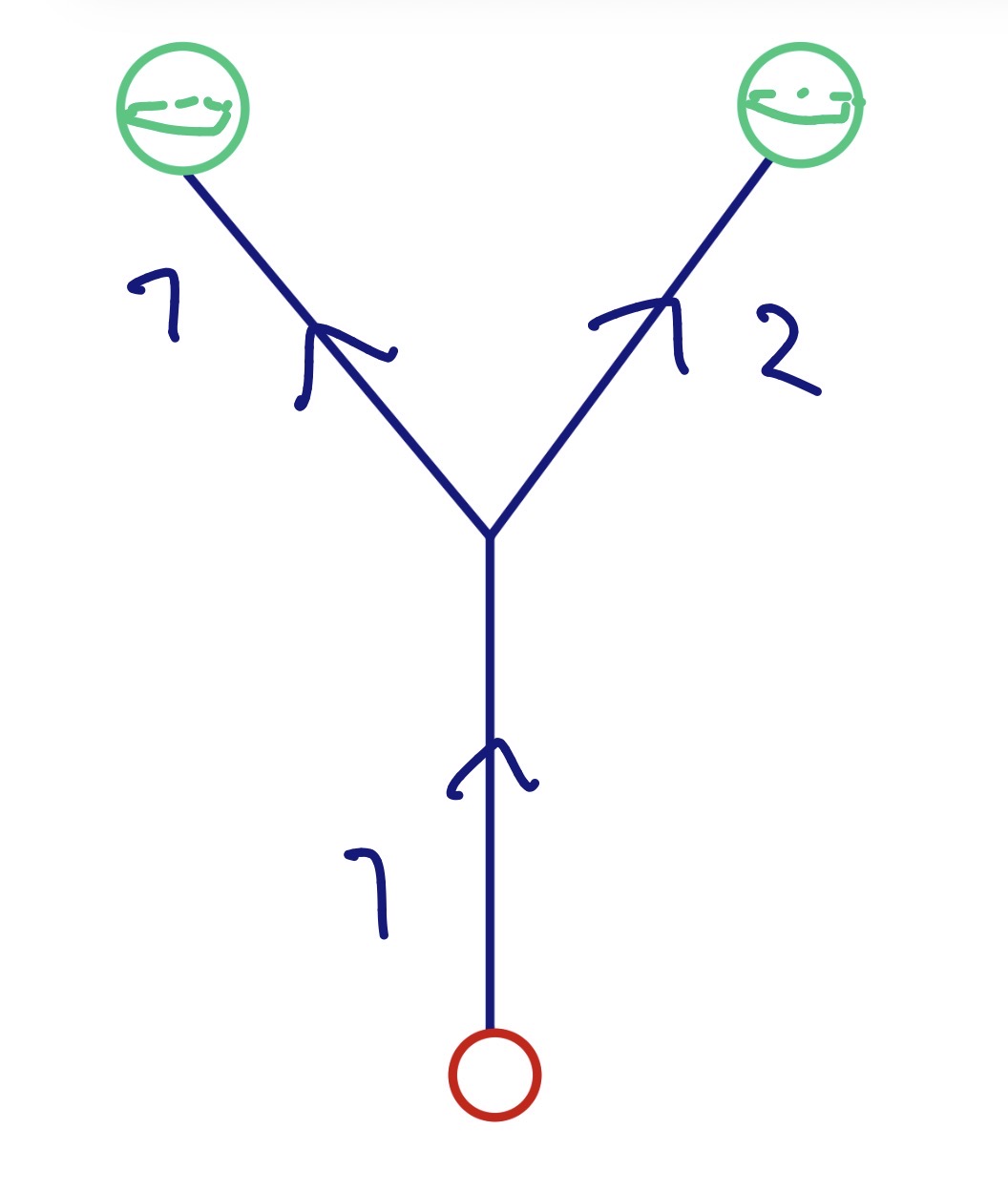}
        \caption{Type $\rII$}
        \label{fig:Type_II_G_i}
    \end{subfigure}
    \caption{Figures of Y-graph $G_i$}
    \label{fig:Y-graph_G_i}
\end{figure}

\end{Def}

Thicken $G_k \subset \R^n$ and denote the resulting manifold by $V_k$.

% --- [Identification] Geometric setup for Type I and II ---
\medskip

\begin{Mysec}[Identification of thickened Y-graphs with standard models]\label{Mysec:IdentificationOfThickenedY-graphswithstandardmodels}    
    If $G_k$ corresponds to a Type~$\rI$ vertex, we identify $V_{k}$ with 
    \[
        I^n \setminus (\nu(I^j) \cup \nu(I^j) \cup \nu(I^{n-j-1}))
    \]
    as described in \S~\ref{sebsec:TypeIribbonPresentation}, in such a way that $b_i$ corresponds to the link associated with the $i$-th incoming half-edge, and $b_1'$ corresponds to the component of the Hopf link associated with the outgoing half-edge.
    
    Similarly, if $G_k$ corresponds to a Type~$\rII$ vertex, we identify $V_{k}$ with 
    \[
        I^n \setminus (\nu(I^j) \cup \nu(I^{n-j-1}) \cup \nu(I^{n-j-1}))
    \]
    as in \S~\ref{subsec:TypeIIribbonPresentation} so that $b_1$ corresponds to the link associated with the incoming half-edge, and $b_i'$ to the component of the Hopf link associated with the $i$-th outgoing half-edge.
\end{Mysec}

Let $V_{k [i]}$ be a closed tubular neighborhood of the union of $G_k$ and the standard spanning disk of the component of the Hopf link associated with the $i$-th incoming half-edge attached to the $k$-th white vertex.
Define the complement $j$-handle $h_{k[i]}^{j} = \overline{ V_{k [i]} \setminus V_{k}}$. 
Similarly, for the $i$-th outgoing edge, define $V^{[i]}_{k}$ as the union of the tubular neighborhood of the standard spanning disk of the component of the Hopf link associated with the $i$-th outgoing edge and $V_{k}$. The $n-j-1$ handle $h_k^{n-j-1[i]}$ is defined analogously.

% --- [Construction] Main inductive construction of the cycle ---
Relabel $W(G_{p,g})$ so that the leftmost white vertex on the top edge is the first. Starting from that vertex, order the remaining vertices following the counter-clockwise direction.
Let $r \le 2(g+p-1)$ and let $B\{ r \} = S_1 \times S_2 \times \dots \times S_{r}$, where
\[
S_i = \begin{cases}
    S^{n-j-2} & (i \text{ is Type }\rI), \\
    S^{j-1} & (i \text{ is Type } \rII).
\end{cases}
\]

By identifying the relevant subspace
\[
V_k \cup h_{k[1]}^{j} \cup h_{k[2]}^{j} \cup h_k^{n-j-1[1]}
\quad \text{or} \quad
V_k \cup h_{k[1]}^{j} \cup h_k^{n-j-1[1]} \cup h_k^{n-j-1[2]},
\]
according as \(k\) is of Type~\(\rI\) or Type~\(\rII\), with the copy of
\(I^3\) in the boxed ribbon presentation appearing in
\ref{Mysec:IdentificationOfThickenedY-graphswithstandardmodels}, we regard
the cycle \(c_T\) associated with the Type~\(T\) ribbon presentation as an element
\[
    c_{T} \in \left[ S_k,  \mathrm{hofib}\left( \Emb_{\partial}(h_{k[1]}^j , V_{k[1] } ) \to \Emb_{\partial}\left(h_{k[1]}^j, V_{k[1]} \cup \textstyle\bigcup_i h_k^{n-j-1[i] }\right) \right) \right],
\]
where the base point is the inclusion
\(h_{k[1]}^j \hookrightarrow V_{k[1]}\).

We now reconstruct $c_{p,g}$ by iterated surgery. 

\begin{Con}[Reconstruction of $c_{p,g}$ by iterated surgery]\label{con:c(p,g)byClasperSurgery}
    Let $i \in \{1, 2, \dots, 2p \}$ and let $H^j_i = h^j_{k(i)[1]}$, where the $i$-th hair is attached to the $k(i)$-th white vertex.

    For $r \in \{ 1,2, \dots, 2g-1 \}$, we inductively define submanifolds $\mathcal{V}_r \subset \widetilde{\mathcal{V}}_r \subset \R^n$ and a map
    \[
        c_{r}\colon B\{ r \} \to \mathrm{hofib}( \Emb_{\partial}(H_1^j, \mathcal{V}_r) \to \Emb_{\partial}(H_1^j, \widetilde {\mathcal{V}}_r))
    \]
    as follows.

    First, for $r=1$, we define $\mathcal{V}_1 = V_{1}$ and $\widetilde{\mathcal{V}}_1 = V_{1}^{[1]}$.
    The map
    \[ 
        c_1\colon B \{ 1 \} \to \mathrm{hofib}( \Emb_{\partial}(H_1^j , \mathcal{V}_1) \to \Emb_{\partial}(H_1^j, \widetilde{ \mathcal{V}_1})) 
    \]
    is defined simply by $c_{\rI}$.

    Next, assume that $\mathcal{V}_r$, $\widetilde{\mathcal{V}}_r$, and $c_r$ have been constructed.
    If the $(r+1)$-st white vertex is of type $T$, we take $c_{T}$ to be a Type~$T$ cycle:
    \[
        c_{T}\colon S_{r+1} \to \mathrm{hofib}\left( \Emb_{\partial}(h^j_{r+1[1]} , V_{{r+1}[1]}) \to \Emb_{\partial}(h^j_{r+1,[1]}, {V}_{r+1[1]}\cup \bigcup_i h_{r+1}^{n-j-1[i]} ) \right).
    \]
    We set $\mathcal{V}_{r+1} = \mathcal{V}_r \cup V_{r+1[1]} \cup V_{r}^{[1]}$ and $\widetilde{ \mathcal{V}}_{r+1} = \widetilde{ \mathcal{V}}_r \cup V_{r+1[1]} \cup ( \bigcup_i V_{r+1}^{[i]} )$, and we define \
    \[
        c_{r+1}\colon B\{ r +1 \} \to \mathrm{hofib}( \Emb_{\partial}(H_1^j, \mathcal{V}_{r+1}) \to \Emb_{\partial}(H_1^j, \widetilde {\mathcal{V}}_{r+1}))
    \]

    as
    
    \[ 
        (c_{r+1})_t = 
        \begin{cases}
            (c_T)_{2t} \circ (c_{r})_0  &(0 \le t \le 1/2) \\
            (c_r)_{2t-1} &(1/2 \le t \le 1 )
        \end{cases} 
    \]

    For the subsequent range $r \in \{ 2g, \dots, 2g+2p-2 \}$, we extend the construction to
    \[
        c_r\colon B \{ r \} \to \mathrm{hofib}( \Emb_{\partial}(H_1^j \cup \dots \cup H^j_{r-2g+2} , \mathcal{V}_r) \to \Emb_{\partial}(H_1^j \cup \dots \cup H^j_{r-2g+2} , \widetilde{ \mathcal{V}}_r)).
    \]
    Specifically, we set $\mathcal{V}_{r} = \mathcal{V}_{r-1} \cup V_{r[1]} \cup V_{r-1}^{[1]} \cup H_{r-2g+2}$ and $\widetilde{ \mathcal{V}}_{r} = \widetilde{ \mathcal{V}}_{r-1} \cup \mathcal{V}_{r} \cup V_{r}^{[1]}$, and define $c_{r+1}$ by
    \[ 
        (c_{r+1})_t(x) = 
        \begin{cases} 
            (c_{r})_t (x) & (x \in H_1 \cup \cdots \cup H_r), \\
            (c_{\rI})_t(x) & (x \in H_{r+1}).
        \end{cases} 
    \]

    Finally, we obtain a long embedding modulo immersion $c_{p,g} \in \overline{\Emb}_c(\R^j, \R^n)$ defined by
    \[ 
        c_{p,g}(x)=
        \begin{cases}
            c_{2g+2p-2}(x) & (x \in H_1 \cup \cdots \cup H_{2p} ), \\
            x & (\text{otherwise}).
        \end{cases}
    \]
\end{Con}

% The constructed cycle coincides with the one constructed in \S~\ref{subsec:Cycles associated with chord diagrams} when the chord diagram is $D(G_{p,g})$, up to the parameterization of the path to the trivial immersion.
Recall that $c_T$ is the cycle obtained from $c_T'$ by a change of base point (see Proposition~\ref{type i belongs to trivial component} and Proposition~\ref{Type ii is in trivial component}).

\begin{Lem}\label{Lem:SameHomologycpgIfReplacecTWithc'T}
    For each white vertex $v$ of $G_{p,g}$, replacing the cycle $c_{T}$ associated with $v$ by $c'_{T}$ does not change the resulting homology class in $\bEmb_c(\R^j, \R^n)$.
\end{Lem}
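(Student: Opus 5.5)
The point is that $c_T$ and $c'_T$ are the same family of embeddings modulo immersions, read with different base points, and the change of base point is a concatenation with a fixed path. That path is independent of the sphere parameter, so it should only shift the class of the total family by something degenerate or null-homologous. Concretely, let $\gamma_T$ be the path from the basic ribbon presentation to the unperturbed presentation $P_T$ used in Proposition~\ref{type i belongs to trivial component} (resp.\ Proposition~\ref{Type ii is in trivial component}). By definition, $c_T$ is $c'_T$ with the base point moved along $\gamma_T$. Using $\gamma_T$ and its reverse, we get a homotopy between $c_T$ and the concatenation $\gamma_T^{-1}\ast c'_T\ast\gamma_T$, taken in the homotopy fibre appearing in Construction~\ref{con:c(p,g)byClasperSurgery}. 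In other words, on the parameter space $S_k$ we have a free homotopy from $c_T$ to $c'_T$, along which the base point $c_T(\ast)$ travels along $\gamma_T$.

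Next we insert this homotopy into the iterated construction. I would treat one white vertex $v=k$ at a time and proceed by induction on $r$. The maps $c_{r+1}$ are built from $c_T$ and $c_r$ by two operations: the time-concatenation $(c_T)_{2t}\circ(c_r)_0$ followed by $(c_r)_{2t-1}$, and the disjoint gluing over the $H_i$ in the second stage. Both operations are continuous in their inputs. Hence a free homotopy $c_T\simeq c'_T$ over $S_k$ induces a homotopy of the total family $B\{2g+2p-2\}\to\bEmb_c(\R^j,\R^n)$, provided the gluing stays well defined during the homotopy. The ambient manifolds $\mathcal V_r,\widetilde{\mathcal V}_r$ do not move: the path $\gamma_T$ is supported inside the model copy of $I^3\times I^{n-j-3}\times I^{j-1}$ identified with $V_k\cup h^j_{k[1]}\cup\cdots$, and it is rel boundary. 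So the homotopy is through maps of the same form, and freely homotopic maps from the closed manifold $B\{r\}$ give the same fundamental class image in $H_\bullet(\bEmb_c(\R^j,\R^n))$.

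The main obstacle is checking that $\gamma_T$ really lives in the relevant homotopy fibre $\mathrm{hofib}(\Emb_\partial(h^j_{k[1]},V_{k[1]})\to\Emb_\partial(h^j_{k[1]},V_{k[1]}\cup\bigcup_i h^{n-j-1[i]}_k))$, rather than merely in the embedding space of the full boxed presentation. The moves used in Propositions~\ref{type i belongs to trivial component} and~\ref{Type ii is in trivial component} are the perturbation off $D_+$ (which uses $n-j>2$), pulling back disks, and S4-moves. These move disks through the lines $L,L'$, and that is allowed only after the handles $h^{n-j-1[i]}_k$ are attached. So I would verify the following: during $\gamma_T$, the component carrying $h^j_{k[1]}$ stays embedded in $V_{k[1]}$ away from the line components, and any crossing changes happen only in the target $V_{k[1]}\cup\bigcup_i h^{n-j-1[i]}_k$, where they are canonically nullhomotoped by the same resolutions that define the path to the trivial immersion. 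If this fails, the fallback is to avoid the fibre: compare the two families directly in $\bEmb_c(\R^j,\R^n)$ after the final gluing, where the whole of $\gamma_T$ is available. There the difference between the two total families is a family over $B\{r\}$ with one factor collapsed by the base point change, so it is degenerate and null in homology.
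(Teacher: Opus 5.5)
Your main argument is essentially the paper's proof. The paper simply observes that $c_T$ is obtained from $c'_T$ by moving the base point along a fixed path, i.e.\ by a free homotopy over $S_k$, which the iterated surgery carries to a homotopy of the total family, so the homology class is unchanged.

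The obstacle in your last paragraph is misdiagnosed, so the fallback is not needed. The base-point-change paths of Propositions~\ref{type i belongs to trivial component} and~\ref{Type ii is in trivial component} consist of pushing $B_+$ off $D'$, pulling back the then crossing-free disk $D'$, and S4-moves. These are isotopies of the whole boxed presentation and never pass a disk through $L$ or $L'$. Disks are pulled through a line only in the Brunnian arguments, after that line has been deleted.
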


\begin{proof}
    This follows from the definition of iterated surgery and the fact that $c_{T}$ is obtained from $c_T'$ by a change of base point.
\end{proof}

\begin{Lem} \label{Lem:CoinsidenceOfTwoCycle}
    The cycle constructed in Construction~\ref{con:c(p,g)byClasperSurgery} represents the same homology class as $c_{p,g}$ constructed in \S~\ref{sec:Construction of the cycle}.
\end{Lem}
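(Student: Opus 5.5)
By Lemma~\ref{Lem:SameHomologycpgIfReplacecTWithc'T}, I may replace every $c_T$ in Construction~\ref{con:c(p,g)byClasperSurgery} by the unbased cycle $c_T'$, which comes directly from the boxed ribbon presentations $P_{\rI}$ and $P_{\rII}$. The goal is then to show that the iterated gluing of these boxed presentations is, up to isotopy of ribbon presentations through S-moves and reparametrization of the canonical paths, exactly the ribbon presentation $P(G_{p,g})$ of Construction~\ref{ConstructionOfCycleAssociatedWithChordDiagram}. The parameter spheres must match under this identification: each $S^{n-j-2}$ factor labelling a crossing, and each $S^{j-1}$ factor given by rotating the stem of $D'_+$.

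\textbf{Step 1 (gluing of two boxed presentations).} Gluing a Type~$T$ box into the handle $h^j_{r+1[1]}$ of $V_{r+1[1]}$ produces a composite boxed ribbon presentation. The gluing identifies the rectangle $D_0$ of the new box with the disk on the $r$-th box that carries the incoming half-edge, and identifies the line segment $L$ (or $L'$) of the $r$-th box with the Hopf-link component that the next box links. Concretely, the line $L$ meeting $D_\pm$ is replaced by the entire band system of the glued box, passing through $D_\pm$. Performing S1-moves then splits each leaf so that every disk meets one band, as in the Lemma preceding the local model. This is precisely the rule $\mathcal{R}\circ\mathcal{R}'$: attach a copy of $\mathcal{R}$ at $D_+$ and a reflected copy at $D_-$. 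The reflection accounts for the sign $(-1)^{\#\{a\mid s_a=-\}}$ and for the ordering near band roots in the case $n-j=2$.

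\textbf{Step 2 (induction along the counterclockwise order).} I apply Step 1 inductively over $r=1,\dots,2g-1$, following the cyclic order of white vertices. This assembles the nested chords $1,\dots,g-1$ on the first line. For $r\ge 2g$, gluing further Type~$\rI$ boxes along the hairs $H_i$ adds the remaining $2p-1$ lines of $D(G_{p,g})$. At each step I check that the crossings contributed by a box are labelled by the chord of $D(G_{p,g})$ corresponding to that box's edge, and that the family parameter is transported to the same sphere factor.

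\textbf{Step 3 (comparison of families).} I compare the concatenation formula $(c_{r+1})_t$, which first runs $c_T$ and then $c_r$, with the canonical path of Definition~\ref{def of canonical path}, which applies moves (m1) and (m2). Both are paths in the space of immersions to the trivial embedding, and they are built from the same ribbon moves performed in a different order. The space of such resolution paths is contractible, since it consists of choices of pulling disks through bands. So the two lifts to $\bEmb_c$ are homotopic as families, and hence homologous.

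\textbf{Main obstacle.} I expect Step 1 to be the hardest. Matching the clasper-style identification of $V_k$ with the boxed model, including the framings $f_i$ and the choice of which Hopf component plays the role of $L$, to the disk-and-band composition requires careful bookkeeping of orientations and signs, especially the reflected copy at $D_-$. I would first verify this for the smallest case $g=2$, $p=1$ by explicit pictures, and then argue that the identification is local, so that the general case follows by induction.
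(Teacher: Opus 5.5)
Your Steps~1--2 follow the paper's route. The paper's proof rewrites the iterated surgery as a sequential composition $P'\circ_{1,1}P$ of embedded boxed ribbon presentations along the edges of $G_{p,g}$. It uses the base-point lemma to replace each $c_T$ by $c'_T$, and then asserts that the composite \emph{is} $P(G_{p,g})$ with its cycle $c_{p,g}$.

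One correction in Step~1: as written, your ``concrete'' local move, with $L$ replaced by bands of the new box passing through $D_\pm$, is not the gluing. In $P'\circ_{1,1}P$ the segment $L_1$ is deleted. Each disk $D$ that $L_1$ pierced is arranged to meet the new box in a parallel copy of the base rectangle $D'_{1;0}$, and there it is extended by a parallel copy of the whole tree $\mathcal{B}'_1\cup\mathcal{D}'_1$. So the new box's incoming component is absorbed into the old one. Your version would instead keep that component separate, with its own base, and linked with $D_\pm$, which is a different presentation. It is the grafting that yields ``a copy of $\mathcal{R}$ at $D_+$ and a reflected copy at $D_-$''. The S1-moves are then needed at disks such as $D'$ of a Type~$\rI$ box, which become pierced by several parallel copies of one band.

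The genuine gap is Step~3. The claim that ``the space of such resolution paths is contractible'' is false as stated, and ``choices of pulling disks through bands'' is not a space you have defined. For a fixed long embedding $f$, the space of paths in $\Imm_c(\R^j,\R^n)$ from $f$ to $\iota$ is homotopy equivalent to $\Omega\Imm_c(\R^j,\R^n)\simeq\Omega^{j+1}V_{n,j}$ by Smale--Hirsch. This is not contractible, not even rationally. Two lifts of the same family $K\to\Emb_c(\R^j,\R^n)$ therefore differ by a map $K\to\Omega\Imm_c(\R^j,\R^n)$ acting on $\bEmb_c(\R^j,\R^n)$, which can a priori change the homology class. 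The pairing with $\overline{I}$ genuinely sees the path, because the decoration forms $J(D_v)$ are pulled back along the evaluation into $P(\mathrm{Inj}(\R^j,\R^n))$. In addition, the ``path'' in the surgery construction is a path of embeddings of $H_1^j$ into the enlarged manifolds $\widetilde{\mathcal{V}}_r$, so you would first have to say which immersion path it induces.

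To close the gap you have two options. The first is to prove that the two lifts agree on the nose; this is what the paper's assertion that the composite is \emph{exactly} $c_{p,g}$ amounts to. Concretely, you would check that unwinding the concatenation, first $(c_T)_{2t}\circ(c_r)_0$ and then $(c_r)_{2t-1}$, reproduces the (m1)--(m2) resolution performed box by box. The second is to write down an explicit homotopy rel endpoints that is continuous in the sphere parameters. For example, arrange that the crossing resolutions and retractions belonging to different boxes are supported in disjoint balls, so that reordering them is realized by the product parametrization.
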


\medskip

To prove this lemma, we reinterpret Construction~\ref{con:c(p,g)byClasperSurgery} in terms of compositions of embedded boxed ribbon presentations.

% --- [Reinterpretation] Connection to Boxed Ribbon Presentations ---

\begin{Mysec}[Compositions of embedded boxed ribbon presentations]\label{def:BoxedCompositionOfEmbeddedRibbonPresentation}
Let $P$ and $P'$ be boxed ribbon presentations embedded in $\R^3$ that satisfy the following local model.

Consider a Hopf link in $\R^3$:
\begin{align*}
    C\colon & z^2 + x^2=1, y=0 \\
    C'\colon & y^2+(z-1)^2 =1, x=0
\end{align*}
where the orientations of the standard spanning disks are given by $dz \wedge dx$ and $dy \wedge dz$, respectively.
Equip the Hopf link with $0$-framing.

We identify $I^3 \setminus \nu( \{q_1\} \times I \times \{1/2 \} )$ with $\nu(C)$, and $I^3 \setminus \nu(I \times \{p_1 \} \times \{-1/2 \} )$ with $\nu(C')$, in such a way that the normal sphere of $I \times \{ p_1 \} \times \{ -1/2 \}$ (resp.\ the normal sphere of $\{ q_1 \} \times I \times \{ 1/2\}$) corresponds to $C'$ (resp.\ $C$).
The union of the tubular neighborhoods of the spanning disks of $C$ and $C'$ identifies with $I^3$; this is the $I^3$ associated with the composed presentation $P''$.

Take a segment $L_1 \subset P$ and a connected component of disks $\mathcal{D}_1' \subset P'$.
We define the composition of embedded boxed ribbon presentations in $\R^3$, denoted by $P''=P' \circ_{1,1} P$, as follows.

Set $\mathcal{L}'' = (\mathcal{L} \cup \mathcal{L}') \setminus L_1$. We define $\mathcal{B}'' \cup \mathcal{D}''$ by replacing some disks in $\mathcal{B} \cup \mathcal{D}$.

For each disk $D$ in $\mathcal{D}$ intersecting $L_1$, assume that
\[
    D \cap I_{P'}^3
    =
    \{(x,y,z) \in I^3 \mid y=a_D,\ -1 \le z \le -1/2\},
\]
where the constants $a_D$ are distinct for distinct disks $D$.
In other words, $D \cap I_{P'}^3$ is a parallel copy of the base rectangle $D_{1;0}'$.

Choose a thickening
\[
    (\mathcal{B}_1' \cup \mathcal{D}_1') \times [-\delta,\delta] \longrightarrow I^3
\]
of the inclusion $\mathcal{B}_1' \cup \mathcal{D}_1' \subset I^3$ such that
$(\mathcal{B}_1' \cup \mathcal{D}_1') \times \{0\}$ maps to
$\mathcal{B}_1' \cup \mathcal{D}_1'$, and, for each $\alpha \in [-\delta,\delta]$,
\[
    D_{0,1}' \times \{\alpha\}
\]
maps to
\[
    \{(x,y,z) \in I^3 \mid y=\alpha,\ -1 \le z \le -1/2\}.
\]
Replace $D \cap I_{P'}^3$ by the image of
$(\mathcal{B}_1' \cup \mathcal{D}_1') \times \{a_D\}$.
This modification of $\mathcal{B} \cup \mathcal{D}$ yields a union of disks and bands,
which we denote by $(\mathcal{B} \cup \mathcal{D})^{\flat}$.

Then set
\[
    \mathcal{D}'' \cup \mathcal{B}''
    =
    (\mathcal{D} \cup \mathcal{B})^{\flat}
    \cup
    \bigcup_{i \neq 1} (\mathcal{D}'_i \cup \mathcal{B}'_i).
\]

\end{Mysec}

\begin{Exa}
    Here is an example of the local model for the composition of two Type~$\rI$ ribbon presentations.
    At the two crossings labeled "2", the bands are perturbed simultaneously.

    \begin{figure}
        \centering
        \includegraphics[width=0.3\linewidth]{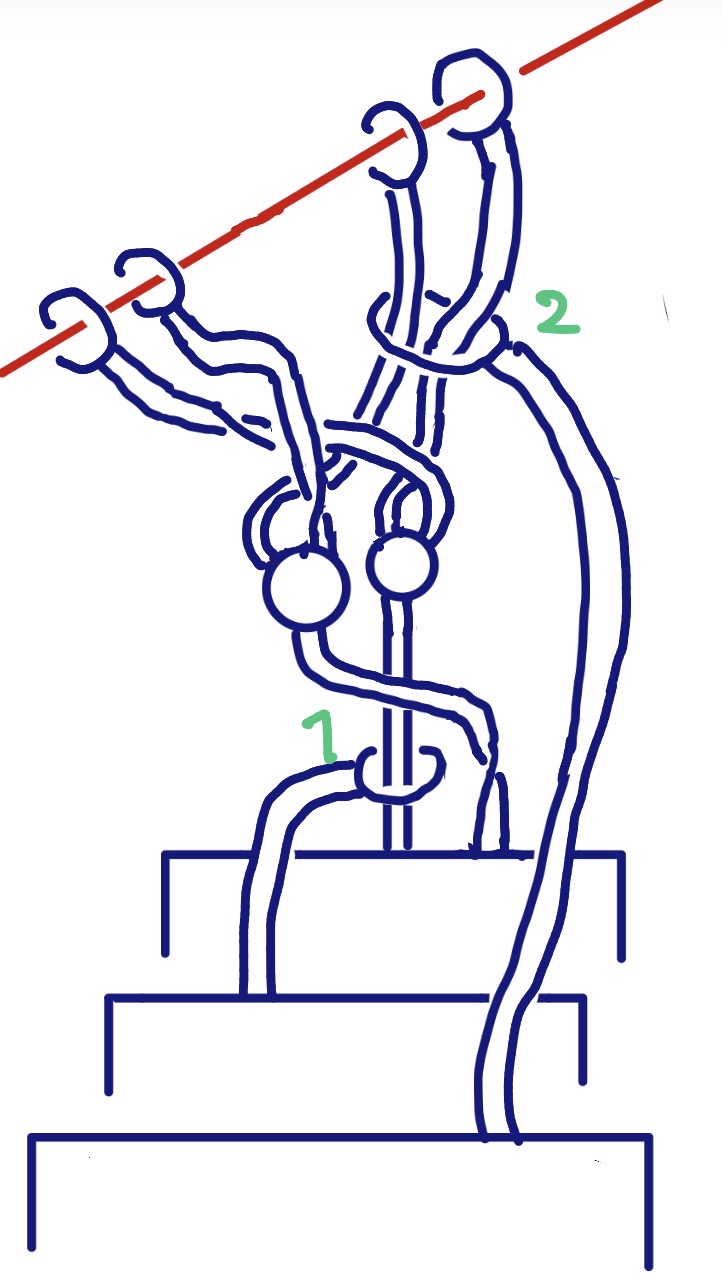}
        \caption{Compositions $c_{\rI}' \circ_{1,1} c_{\rI}'$}
        \label{fig:compositions_of_Type_I}
    \end{figure}
\end{Exa}

% Label名からスペースを除去しました（推奨）
\begin{Mysec}[Reinterpretation of Construction~\ref{con:c(p,g)byClasperSurgery} via embedded boxed ribbon presentations]
\label{mysec:Reinterpretation_of_Construction_Via_Clasper_Surgery}

    Let $P_0$ be a ribbon presentation with $2p$ labeled free disks
    $D_1, \dots, D_{2p}$ and $2p$ bands with base disks.
    Recall that the cycle associated with $P_0$ represents a trivial embedding
    modulo immersion.

    We define Y-graphs
    \[
        \check{G} = \check{G}_1 \cup \dots \cup \check{G}_{2p+g-1} \subset \R^3
    \]
    associated with $G_{p,g}$.
    We arrange them so that
    \[
        f(\Gamma) \cap P_0 = f(B(\Gamma)),
    \]
    and assume that each $B(\Gamma)$ lies in the interior of a distinct free disk
    of $P_0$.
    To obtain $\check{G}$, we replace a neighborhood of each $f(B(\Gamma))$
    by a small sphere normal to the corresponding free disk.
    Moreover, for each edge, we replace the middle interval of the edge by the
    standard Hopf link.

    To reinterpret the surgery construction in terms of ribbon presentations,
    we first assign a local model to each component.
    We identify the thickened component
    $\check{V}_i \subset \R^3$, that is, the thickening of $\check{G}_i$, with
    \[
        I^3 \setminus
        \bigl(\nu(\partial_+ D_{0}) \cup \nu(\partial D'_0) \cup \nu(L)\bigr)
    \]
    if $\check{G}_i$ is of Type~$\rI$, and with
    \[
        I^3 \setminus
        \bigl(\nu(\partial_+ D_{0}) \cup \nu(L) \cup \nu(L')\bigr)
    \]
    if $\check{G}_i$ is of Type~$\rII$.
    Here $\partial_+ D_0$ denotes the closure
    \[
        \overline{\partial D_{0} \setminus \partial I^3}.
    \]
    Thus, for each component $\check{G}_i$ of type $T$, we embed a Type~$T$
    ribbon presentation into $\check{V}_i$.
    For each edge of $G_{p,g}$, we compose the two adjacent ribbon presentations
    using the local model described in
    \S~\ref{def:BoxedCompositionOfEmbeddedRibbonPresentation}.

    We now reinterpret the iterated surgery in
    Construction~\ref{con:c(p,g)byClasperSurgery} as a sequential composition
    of these boxed ribbon presentations.

    First, we perform the composition corresponding to the leftmost hair.
    This corresponds to the initial step of the construction for $r=1$, where
    the map $c_1$ is defined simply by $c_{\rI}$.

    Next, for $r \in \{1, \dots, 2g-1\}$, we perform the compositions
    corresponding to the inductive definition
    \[
        c_{r+1} = (c_T \circ c_r) \ast c_r.
    \]
    Geometrically, this corresponds to composing along the left-hand dashed
    edges, which connect Type~$\rI$ vertices in the counter-clockwise direction
    along the circle of $G_{p,g}$, and then along the edges attached to the
    Type~$\rII$ vertex from bottom to top.

    Finally, for $r \in \{2g, \dots, 2g+2p-2\}$, the construction extends to
    the remaining hairs.
    We perform the compositions along the top dashed edges from right to left.
    This corresponds to the final step of
    Construction~\ref{con:c(p,g)byClasperSurgery}, where the map is extended
    to include the remaining hairs.
\end{Mysec}

\begin{proof}[Proof of Lemma~\ref{Lem:CoinsidenceOfTwoCycle}]
    First, the cycle obtained in Section~\ref{mysec:Reinterpretation_of_Construction_Via_Clasper_Surgery} coincides with the one constructed in Construction~\ref{con:c(p,g)byClasperSurgery}.
    This follows from the discussion in ~\ref{mysec:Reinterpretation_of_Construction_Via_Clasper_Surgery}, combined with Lemma~\ref{Lem:SameHomologycpgIfReplacecTWithc'T} (invariance under base point change).
    
    Consequently, the cycle constructed in \ref{mysec:Reinterpretation_of_Construction_Via_Clasper_Surgery} is exactly the $c_{p,g}$ constructed in \S~\ref{sec:Construction of the cycle}.
\end{proof}

\subsection{Proof that the cycle lies in the image of the Hurewicz map}

\begin{Prop} \label{Thm:HurewiczMap}
    The associated cycle ${c}_{p,g}$ is homotopic to a map that factors through the quotient map 
    \[ q \colon (S^{j-1})^{g-1} \times (S^{n-j-2})^{2p+g-1}  \longrightarrow S^{(n-j-2)(2p+g-1) + (j-1)(g-1)}. \]
\end{Prop}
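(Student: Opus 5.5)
The plan is to work with the reconstruction of $c_{p,g}$ by iterated surgery (Construction~\ref{con:c(p,g)byClasperSurgery}); by Lemma~\ref{Lem:CoinsidenceOfTwoCycle} it represents the same class as the cycle of \S~\ref{sec:Construction of the cycle}. The factorization through $q$ will be built inductively on $r$. The claim to carry along is the following: for each $r$, the map $c_r\colon B\{r\}\to \mathrm{hofib}(\cdots)$ is homotopic, rel the base point, to a map that sends the fat wedge $\{(s_1,\dots,s_r)\mid \text{some } s_i=*\}$ to the base point. Such a map factors through the smash product $S_1\wedge\cdots\wedge S_r$, which is a sphere of the right dimension. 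Since the product of spheres has a CW structure whose fat wedge is exactly the complement of the top cell, this is the same as factoring through $q$.

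\emph{Base case.} Here I would use the Brunnian properties. The maps $c_{\rI}$ and $c_{\rII}$ are based at the trivial element after the change of base point in Propositions~\ref{type i belongs to trivial component} and~\ref{Type ii is in trivial component}. Each is a map from a single sphere $S_k$, so the fat wedge is just the base point, and the base case holds trivially.

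\emph{Inductive step.} We have $c_{r+1}=(c_T\circ c_r)\ast c_r$, regarded as a map of $B\{r\}\times S_{r+1}$.
\begin{itemize}
\item On $B\{r\}\times\{*\}$, $c_T$ is constant, and the concatenation reduces to $c_r$ followed by the canonical contraction. The resulting element of the homotopy fiber into $\widetilde{\mathcal V}_{r+1}$ is null in the relative sense precisely because the handles $h^{n-j-1[i]}_{r+1}$ have been added.
\item On $(\text{fat wedge of }B\{r\})\times S_{r+1}$, the induction hypothesis makes $c_r$ trivial. The composite then becomes $c_T$ precomposed with a trivial embedding, and this is null because of the Brunnian property of $c_T$ (Propositions~\ref{Prop:TypeIisBrunnian},~\ref{Prop:TypeIIBrunnianLink}). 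Removing the component that $c_r$ would have fed in trivializes $c_T$.
\end{itemize}
These nullhomotopies must agree on the overlap $(\text{fat wedge})\times\{*\}$. I would arrange this by choosing them naturally: all of them are induced by the same canonical pull-back moves (S4 moves, pulling back disks), which are defined uniformly in the parameters. For the hair range $r\ge 2g$, the map is defined componentwise on disjoint handles $H_i$, so the product map is a smash of already-based factors and the claim follows directly.

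\emph{Main obstacle.} I expect the difficulty to be making these nullhomotopies coherent over the whole fat wedge, and not merely over each face $\{s_i=*\}$ separately. A Brunnian statement in the bare homotopy-class sense is not enough here. I would try to prove a parametrized version: the Brunnian nullhomotopies of $c_T$ are realized by explicit isotopies of ribbon presentations, namely pulling back the disk attached to the removed component. Because these isotopies have supports in disjoint boxes $I^3$ for different white vertices, they commute, and this yields the compatible cubical family of nullhomotopies. A Puppe/cofibration argument, using the homotopy extension property of the fat wedge inclusion, then deforms $c_{2g+2p-2}$ to a map constant on the fat wedge. Hence $c_{p,g}\simeq f\circ q$, and $[c_{p,g}]$ is the image under the Hurewicz map of $[f]\in\pi_{(n-j-2)(2p+g-1)+(j-1)(g-1)}$.
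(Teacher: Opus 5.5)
There is a genuine gap. It is not the coherence problem you single out.

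\textbf{The missing Brunnian property.} Your inductive hypothesis says only that $c_r$ is homotopic to a map constant on the fat wedge. That does not control the face $B\{r\}\times\{*\}$ of $c_{r+1}$. On that face $c_{r+1}$ is just $c_r$ pushed into the new homotopy fiber. It dies because $\mathcal{V}_{r+1}\supset V_r^{[1]}$, i.e.\ the outgoing handle $h_r^{n-j-1[1]}$ of the $r$-th vertex is filled in on the \emph{source} side. The mechanism you give, enlarging the target $\widetilde{\mathcal V}_{r+1}$ by the handles $h^{n-j-1[i]}_{r+1}$, does not kill a class of a homotopy fiber.

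So, alongside the factorization, you must carry a Brunnian property of the \emph{factored} map $\overline{c}_r\colon S_1\wedge\cdots\wedge S_r\to\mathrm{hofib}(\cdots)$: it becomes null once $h_r^{n-j-1[1]}$ is filled. This is what makes the nullhomotopy on that face automatically constant on the fat wedge.

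Geometry does not give you this directly. Filling $h_{r+1}^{n-j-1[1]}$ kills the new $c_T$ by Propositions~\ref{Prop:TypeIisBrunnian} and~\ref{Prop:TypeIIBrunnianLink}. Combined with the inductive property of $\overline{c}_r$, this shows only that the two-factor map $\overline{c}_{r+1}\circ\psi$ becomes null, where $\psi\colon S^d\times S_{r+1}\to S^d\wedge S_{r+1}$. Passing from that to nullity of $\overline{c}_{r+1}$ itself is exactly the injectivity of $\psi^*\colon[S^{d+d'},Y]_*\to[S^d\times S^{d'},Y]_*$ from Lemma~\ref{Lem:InjectivityOfDegree1Map}, which your proposal never uses. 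The paper's induction carries precisely these two statements: the factorization, and the Brunnian property of $\overline{c}_r$.

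\textbf{The coherence obstacle disappears.} Once the induction is set up this way, you do not need a cubical family of commuting isotopies. That step is also doubtful as stated: composing boxed ribbon presentations inserts one vertex's presentation into parallel copies of another's disks, so the supports are nested, not disjoint. After replacing $c_r$ by $\overline{c}_r\circ q_r$, the map $c_{r+1}$ factors through $S^d\times S_{r+1}$ via $q_r\times\mathrm{id}$, and the fat wedge of $B\{r+1\}$ lands in $S^d\vee S_{r+1}$. Since $[S^d\vee S^{d'},Y]_*=[S^d,Y]_*\times[S^{d'},Y]_*$ and the wedge inclusion is a cofibration, it suffices that the two summands are null separately.

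\textbf{The hair range.} Your argument there is incorrect. A map assembled componentwise from based factors is essentially a product of based maps, and such a product need not factor through the smash product (the identity of $S^1\times S^1$ is a counterexample). The factorization in that range needs the same filled-handle Brunnian input as in the circle range.
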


To prove this proposition, we need the following lemma:

\begin{Lem} \label{Lem:InjectivityOfDegree1Map}
    Let $d, d'$ be nongegative integers and let $\psi\colon S^{d} \times S^{d'} \to S^{d+d'}$ be the quotient map obtained by collapsing the wedge sum $S^{d} \vee S^{d'}$.
    Let $Y$ be a pointed space.
    Then the induced map $\psi^{*}\colon [S^{d+d'}, Y ]_{\ast} \to [S^d \times S^{d'}, Y]_{\ast}$ is injective.
\end{Lem}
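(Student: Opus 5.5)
The plan is to use the cofibration sequence attached to the top cell of the product of spheres. We have
\[
S^{d}\vee S^{d'} \xrightarrow{\ j\ } S^{d}\times S^{d'} \xrightarrow{\ \psi\ } S^{d+d'} \xrightarrow{\ \delta\ } \Sigma(S^{d}\vee S^{d'}) = S^{d+1}\vee S^{d'+1},
\]
where the middle space is $(S^d\vee S^{d'})\cup_w e^{d+d'}$ with $w$ the Whitehead product, and $\delta$ is the Puppe connecting map. Applying $[-,Y]_*$ gives a sequence that is exact in the extended sense. In particular, $\psi^*(a)=\psi^*(b)$ holds if and only if $b$ lies in the orbit of $a$ under the action of the group $[\Sigma(S^d\vee S^{d'}),Y]_*$ on $[S^{d+d'},Y]_*$. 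This action is induced by the coaction $\mu\colon S^{d+d'}\to S^{d+d'}\vee \Sigma(S^d\vee S^{d'})$. So injectivity of $\psi^*$ reduces to showing that this action is trivial.

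First I dispose of the degenerate cases. If $d=0$, then $S^0\times S^{d'}$ is $S^{d'}$ with a disjoint basepointed copy of $S^{d'}$, and $\psi$ collapses the copy containing the base point. Hence $\psi$ has a pointed section, the inclusion of the other copy, and injectivity is immediate. The case $d'=0$ is symmetric. From now on I assume $d,d'\ge 1$.

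The key input is the classical splitting $\Sigma(S^d\times S^{d'})\simeq S^{d+1}\vee S^{d'+1}\vee S^{d+d'+1}$. In particular $\Sigma j$ admits a left homotopy inverse $r$. Since consecutive maps in the Puppe sequence compose to null maps, $(-\Sigma j)\circ\delta\simeq *$, and therefore
\[
\delta\simeq r\circ\Sigma j\circ\delta\simeq *.
\]
Next I use that $S^{d+d'}=\Sigma S^{d+d'-1}$ and that the coaction for the cofiber of $j$ is compatible with this suspension structure. Concretely, the coaction is $\mu\simeq (\mathrm{id}\vee\delta)\circ\nu$, with $\nu$ the pinch map $S^{d+d'}\to S^{d+d'}\vee S^{d+d'}$. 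This is the standard description of the coaction when the cofiber is a suspension and the attaching data come from a suspended situation. Since $\delta\simeq *$, it follows that $\mu$ is homotopic to the inclusion of the first summand. Hence $a^{\omega}=a+\delta^*\omega=a$ for all $\omega$, the action is trivial, and $\psi^*$ is injective.

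The main obstacle is this identification of the coaction as $(\mathrm{id}\vee\delta)\circ\nu$. In general a map $S^{m}\to S^{m}\vee \Sigma A$ is not determined by its two projections: there can be Whitehead-product cross terms in $\pi_m(S^m\vee\Sigma A)$. So I would justify the formula carefully, either by citing the co-H-structure argument for cofibers that are co-H-spaces (Arkowitz), or by the following fallback, which I expect to be cleaner. Suppose $\psi^*a=\psi^*b$ and fix a pointed homotopy $H$ between them. Restricted to $(S^d\vee S^{d'})\times I$, $H$ gives a loop of maps, i.e. an element $\omega$ of $[\Sigma(S^d\vee S^{d'}),Y]_*$ (via $\Sigma$ of the wedge). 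Then $b-a=\delta^*\omega$ in the group $\pi_{d+d'}(Y)$, using that $\pi_{d+d'}$ is a group because $d+d'\ge 2$. Since $\delta\simeq *$, this gives $b=a$.
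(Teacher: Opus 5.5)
For $d,d'\ge 1$ your argument is essentially the paper's. Both use the Puppe sequence of the top cell and the suspension splitting, which gives a left homotopy inverse of $\Sigma j$. From this the connecting map $\delta=\pm\Sigma\phi$ is null: you prove $\delta\simeq *$ directly, while the paper shows $(\Sigma\phi)^*$ is trivial for every $Y$, which is equivalent. Hence the action of $[\Sigma(S^d\vee S^{d'}),Y]_*$ is trivial.

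Your caution about the coaction is justified. The paper simply asserts that the action ``is induced by $(\Sigma\phi)^*$''. However, you need neither Arkowitz nor your fallback, and the fallback's identity $b-a=\delta^*\omega$ rests on the same unproved identification. A dimension count settles it. Put $m=d+d'$ and $\Sigma A=S^{d+1}\vee S^{d'+1}$. Then $S^m\times\Sigma A$ is obtained from $S^m\vee\Sigma A$ by attaching cells of dimensions $m+d+1$ and $m+d'+1$, both at least $m+2$. So $\pi_m(S^m\vee\Sigma A)\to\pi_m(S^m)\times\pi_m(\Sigma A)$ is an isomorphism, and no Whitehead cross terms can occur in degree $m$. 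Hence $\mu$ is determined up to homotopy by its two projections $\mathrm{id}$ and $\delta\simeq *$, so $\mu$ is homotopic to the inclusion of the first summand.

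The degenerate case, however, contains a genuine error. For $d=0<d'$, the inclusion $x\mapsto(p,x)$ of the non-base copy sends the base point to $(p,*)$, not to $(*,*)$, so it is not a pointed section. In fact no pointed section exists: a pointed map from the connected $S^{d'}$ must land in $\{*\}\times S^{d'}$, which $\psi$ collapses.

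More precisely, $[S^0\times S^{d'},Y]_*\cong[S^{d'},Y]_*\times[S^{d'},Y]$, with \emph{free} homotopy classes in the second factor, and $\psi^*$ is $a\mapsto(0,[a]_{\mathrm{free}})$. Its fibres are therefore the $\pi_1(Y)$-orbits in $\pi_{d'}(Y)$. For $d'=1$ and $Y=S^1\vee S^1$, the distinct classes $xy$ and $yx$ are conjugate, so $\psi^*$ is not injective. Thus the lemma is false for $d=0<d'$ (and symmetrically $d'=0<d$) for general $Y$. The paper's remark that ``the case $d=0$ or $d'=0$ is immediate'' has the same defect as your argument.

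What does survive in that case is that $\psi^*(a)$ trivial implies $a$ trivial, because the $\pi_1(Y)$-orbit of $0$ is $\{0\}$. This trivial-kernel form is how the lemma is invoked in the proof of Proposition~\ref{Thm:HurewiczMap}, and the degenerate case does arise there, since $S^{n-j-2}=S^0$ when $n-j=2$. So you should do one of the following:
\begin{itemize}
\item restrict the injectivity claim to $d,d'\ge 1$ (or $d=d'=0$);
\item assume $\pi_1(Y)$ acts trivially on $\pi_{d+d'}(Y)$;
\item weaken the conclusion in the degenerate case to triviality of the kernel of $\psi^*$.
\end{itemize}
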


\begin{proof}
    The case where $d=0$ or $d'=0$ is immediate.
    Consider the product CW-structure of $S^d \times S^{d'}$.
    Let
    \[
        \phi \colon S^{d+d'-1} \longrightarrow S^d \vee S^{d'}
    \]
    be the attaching map of the $(d+d')$-cell, and let
    \[
        i \colon S^d \vee S^{d'} \longrightarrow S^d \times S^{d'}
    \]
    be the standard inclusion.
    Then we have the following cofibration sequence:
    \[
        S^{d+d'-1}
        \xrightarrow{\phi}
        S^d \vee S^{d'}
        \xrightarrow{i}
        S^d \times S^{d'}
        \xrightarrow{\psi}
        S^d \wedge S^{d'}
        \xrightarrow{\Sigma \phi}
        \Sigma(S^d \vee S^{d'})
        \xrightarrow{\Sigma i}
        \Sigma(S^d \times S^{d'}).
    \]

    Note that $\Sigma i$ admits a left homotopy inverse.
    Indeed, this follows from the suspension splitting
    \[
        \Sigma(S^d \times S^{d'})
        \simeq
        \Sigma S^d \vee \Sigma S^{d'} \vee \Sigma(S^d \wedge S^{d'}),
    \]
    under which $\Sigma i$ identifies with the inclusion of the first two wedge
    summands; see, for example, the proof of
    \cite[Proposition~4I.1]{Hat02}.

    Applying pointed homotopy classes into a pointed space Y, we obtain the
    exact sequence of pointed sets
    \[
        [\Sigma(S^d \times S^{d'}),Y]
        \xrightarrow{(\Sigma i)^*}
        [\Sigma(S^d \vee S^{d'}),Y]
        \xrightarrow{(\Sigma\phi)^*}
        [S^d \wedge S^{d'},Y]
        \xrightarrow{\psi^*}
        [S^d \times S^{d'},Y].
    \]

    Since $\Sigma i$ admits a left homotopy inverse, the induced map
    \[
        (\Sigma i)^*
        \colon
        [\Sigma(S^d \times S^{d'}),Y]
        \longrightarrow
        [\Sigma(S^d \vee S^{d'}),Y]
    \]
    is surjective.
    Hence, by exactness, $(\Sigma\phi)^*$ sends every element to the base point.

    Recall that, by exactness of the Puppe sequence in the sense of pointed
    sets, two elements
    \[
        f,g \in [S^d \wedge S^{d'},Y]
    \]
    have the same image under $\psi^*$ if and only if they lie in the same orbit
    under the natural action of
    \[
        [\Sigma(S^d \vee S^{d'}),Y].
    \]
    This action is induced by $(\Sigma\phi)^*$.
    Since $(\Sigma\phi)^*$ is trivial, this action is trivial.
    Therefore, if $\psi^*(f)=\psi^*(g)$, then $f$ is homotopic to $g$.
    Thus $\psi^*$ is injective.

    This completes the proof.
\end{proof}

\begin{proof}[Proof of Proposition~\ref{Thm:HurewiczMap}]
    We show that $c_r$ factors through a map of degree $1$, and that the factored map 
    \[ \overline{c}_r\colon S_1 \wedge \dots \wedge S_{r} (= S^d) \to \mathrm{hofib}( \Emb_{\partial}(H_1^j, \mathcal{V}_r) \to \Emb_{\partial}(H_1^j, \widetilde {\mathcal{V}}_r)) \]
    lies in the image of the map 
    \[ \mathrm{hofib}( \Emb_{\partial}(H_1^j, \mathcal{V}_r) \to \Emb_{\partial}(H_1^j, \widetilde {\mathcal{V}}_r)) \to \mathrm{hofib}( \Emb_{\partial}(H_1^j, \mathcal{V}_r \cup h_r^{[1]}) \to \Emb_{\partial}(H_1^j, \widetilde {\mathcal{V}}_r)). \]
    
    Let us denote the base points of the spaces by $\ast$.
    % Let us denote Type~I and Type~II cycles by \bar{c}_{I} and \bar{c}_{II}, respectively.
    
    The base case, where the cycle consists of a single primitive component is trivial because the cycle satisfies the Brunnian property by Proposition~\ref{Prop:TypeIisBrunnian} and Proposition~\ref{Prop:TypeIIBrunnianLink}.

    For the inductive step, we explicitly show the case $r \le 2g-2$; the remaining cases are verified similarly.
    Assume that $c_{r-1}$ factors through a degree $1$ map $B\{r-1\} \to S_1 \wedge \dots \wedge S_{r-1} = S^d$, and that the factored map $\overline{c}_{r-1}$ has the Brunnian property with respect to forgetting each $H^j $ and $ h^{n-j-1[1]}_{r} $
    Let $c_{T} \colon S_r \to \mathrm{hofib}( \Emb_{\partial}(h^j_{r[1]} , V_{{r+1}[1]}) \to \Emb_{\partial}(h^j_{r[1]}, {V}_{r[1]}\cup \bigcup_i h_{r}^{[i]} ))$ be the Type~$T$ cycle corresponding to the $r$-th white vertex.

    By the construction of $c_r$, it is defined on the product $S^d \times S_r$. We denote this map by $c_{r}'$.
    If we restrict $c_r'$ to $S^d \times \{\ast \}$, the restriction is trivial because $\overline{c}_{r-1}$ is trivial if we forget $h_{r-1}^{n-j-1[1]}$.
    Similarly, the restriction to $\{\ast\} \times S_r$ is trivial.
    Thus, the map $c_r'$ factors through the quotient $S^{d} \times S_r \to S^d \wedge S_r$. Let $\overline{c}_r$ denote this factored map.
    
    If we forget $h_{r}^{n-j-1[1]}$, then $c_T$ becomes trivial.
    Since $\overline{c}_{r-1}$ is trivial if we forget $h_{r-1}^{n-j-1[1]}$, $c_{r}'$ is trivial if we forget $h_{r}^{n-j-1[1]}$.
    By Lemma~\ref{Lem:InjectivityOfDegree1Map}, the factored map $\overline{c}_r$ also possesses the same property.
\end{proof}

\begin{Thm} \label{Thm:NonTriviality}
    Let $c_{p,g}\colon (S^{j-1})^{g-1} \times (S^{n-j-2})^{2p+g-1} \to \bEmb_c(\R^j, \R^n)$ be the cycle associated with the chord diagram $D(G_{p,g})$.
    Let $\omega\colon H_{\mathrm{top}} (^{*}HGC(k,g)) \to \C$ be an $\mathfrak{sl}_2$-weight system, and let $H = \sum_{\Gamma } \frac{\omega(\Gamma)}{\mathrm{Aut} (\Gamma)}$ be a cocycle on $HGC^{\mathrm{top}}_{n,j}$.
    Then, $\langle \overline{I}(H), [\overline{c}_{p,g}] \rangle = \omega(G_{p,g}) \neq 0$.
    Moreover, $[{c}_{p,g}]$ lies in the image of the Hurewicz map.
\end{Thm}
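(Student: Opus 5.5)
The plan is to assemble the two halves of the statement from results already established: the pairing computation of Section~\ref{sec:Non-triviality} and the factorization through a sphere in Proposition~\ref{Thm:HurewiczMap}. Throughout I work with $j=n-2$, $n\ge5$ odd, so that Proposition~\ref{Prop:InfinitenessOfHGC} applies.

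\textbf{The pairing.} Take $\omega=W_{\mathfrak{sl}_2}$, viewed as a linear functional on $\mathcal{B}=H_{\mathrm{top}}({}^{*}HGC_{n,j})$. Since $\mathcal{B}$ is the top homology of the dual complex, $\omega$ is a top-degree cocycle, so $H=\sum_\Gamma \frac{\omega(\Gamma)}{|\mathrm{Aut}(\Gamma)|}\Gamma$ is a cocycle in $HGC_{n,j}^{\mathrm{top}}$. Along the zigzag of surjective quasi-isomorphisms I lift $H$ to $\widetilde{H}\in DGC_{n,j}$. By \cite[Theorem~2.2]{Yos23} I may assume every term of $\widetilde{H}$ has a good underlying plain graph.

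Theorem~\ref{Thm:NonTrivialityOfCycleAssociatedToGkg} then gives $\langle \overline{I}(\widetilde{H}),c_{p,g}\rangle=\pm w_{G_{p,g}}$. The counting formula expresses the left side as $\sum_{\Gamma\in G(D(G_{p,g}))}w_\Gamma$. The subsequent lemma identifies $\sum_{\Gamma\in G}[\Gamma]$ with $\pm[G_{p,g}]$ in $H_{\mathrm{top}}({}^{*}gdPGC'_{n,j})$, and the injectivity theorem pulls this identification back to the hairy-graph homology. Hence the pairing equals $\pm\omega(G_{p,g})$. After fixing the orientation of $G_{p,g}$, this value is a nonzero multiple of $(2\hbar C)^{p}(4\hbar)^{g-1}$ by the recursion in Proposition~\ref{Prop:InfinitenessOfHGC}, and in particular it is nonzero. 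One point needs checking here: the weight system must be evaluated at numerical values of $\hbar$ and $C$, so I specialize to $\hbar=C=1$ to get a $\mathbb{Q}$- or $\mathbb{C}$-valued cocycle. Since $\overline{I}$ is a cochain map, the number depends only on the homology class $[c_{p,g}]$.

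\textbf{The Hurewicz part.} By Lemma~\ref{Lem:CoinsidenceOfTwoCycle}, $c_{p,g}$ represents the same homology class as the iterated-surgery cycle of Construction~\ref{con:c(p,g)byClasperSurgery}. By Proposition~\ref{Thm:HurewiczMap}, that cycle is homotopic to $\overline{c}_{p,g}\circ q$, where
\[
q\colon (S^{j-1})^{g-1}\times(S^{n-j-2})^{2p+g-1}\to S^N,\qquad N=(n-j-2)(2p+g-1)+(j-1)(g-1),
\]
is the quotient map, which has degree one. Therefore $(c_{p,g})_*[\text{fundamental class}]=(\overline{c}_{p,g})_*[S^N]$, which is by definition the Hurewicz image of $[\overline{c}_{p,g}]\in\pi_N$.

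\textbf{Main obstacle.} The step I expect to be hardest is the sign and normalization bookkeeping in the first part. The automorphism factors $|\mathrm{Aut}(\Gamma)|$ in the counting formula must match the pairing convention of the footnote, so that $w_{G_{p,g}}$ is exactly $\omega(G_{p,g})$ up to a nonzero constant. In addition, the lift $\widetilde{H}$ must agree with $H$ on the good graphs contributing to $G(D(G_{p,g}))$. I would first verify that these graphs contain no double edges or self-loops, so that "lift" preserves their coefficients, and then invoke the chain of lemmas.
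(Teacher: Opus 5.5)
Your proposal is correct and follows essentially the same route as the paper. The pairing identity comes from Theorem~\ref{Thm:NonTrivialityOfCycleAssociatedToGkg} (the counting formula, the expansion lemma and the injectivity theorem), combined with the nonvanishing of $W_{\mathfrak{sl}_2}(G_{p,g})$ from Proposition~\ref{Prop:InfinitenessOfHGC}; the Hurewicz statement comes from Proposition~\ref{Thm:HurewiczMap}, with Lemma~\ref{Lem:CoinsidenceOfTwoCycle} identifying the two constructions and $q$ having degree one, and you simply make explicit steps the paper's two-line proof leaves implicit, such as specializing $\hbar$ and $C$ to obtain a scalar-valued cocycle.
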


\begin{proof}
    The first part follows from Theorem~\ref{Thm:NonTrivialityOfCycleAssociatedToGkg}.
    The second part is a consequence of Proposition~\ref{Thm:HurewiczMap}.
\end{proof}

\small
\printbibliography

\end{document}